\newtheorem{thm}{Theorem}
\newtheorem{prp}[thm]{Proposition}
\newtheorem{lem}[thm]{Lemma}
\newtheorem{cor}[thm]{Corollary}
\newtheorem{con}[thm]{Conjecture}
\theoremstyle{definition}
\newtheorem{dfn}[thm]{Definition}
\newtheorem{exm}[thm]{Example}
\newtheorem{rem}[thm]{Remark}
\newtheorem{prb}[thm]{Problem}
\newcommand{\N}{\mathbb{N}}
\newcommand{\R}{\mathbb{R}}
\newcommand{\dSdw}{\Delta}
\newcommand{\uSdw}{\rotatebox[origin=c]{180}{$\Delta$}}
\newcommand{\iso}{\cong}
\def\id{\operatorname{id}}
\def\Lvl{\operatorname{Lvl}}
\def\Span{\operatorname{Span}}
\def\Hilb{\operatorname{Hilb}}
\def\im{\operatorname{in}}
\def\IMS{\operatorname{IMS}}
\def\IMV{\operatorname{IMV}}
\def\IMI{\operatorname{IMI}}
\def\Starts{\operatorname{Starts}}
\def\start{\operatorname{start}}
\def\Block{\operatorname{Block}}
\def\scd{\operatorname{scd}}
\def\ICscd{\operatorname{ICscd}}
\def\Hasse{\operatorname{Hasse}}
\def\Star{\operatorname{Star}}
\def\Pow{\operatorname{Pow}}
\def\DVP{\operatorname{DVP}}
\def\Spider{\operatorname{Spider}} 
\title{Macaulay Posets and Rings}
\author{Nikola Kuzmanovski}
\date{\vspace{-5ex}}
\begin{document}
	
\maketitle
\begin{abstract}
	Macaulay posets are posets in which an analog of the Kruskal--Katona Theorem holds.
	Macaulay rings (also called Macaulay-Lex rings) are rings in which an analog of Macaulay's Theorem for lex ideals holds.
	The study of both of these objects started with Macaulay almost a century ago.
	Since then, 
	these two branches have developed separately over the past century,
	with the last link being the Clements--Lindström Theorem.
	
	For every ring that is the quotient of a polynomial ring by a homogeneous ideal we define the poset of monomials.
	Under certain conditions,
	we prove a Macaulay Correspondence Theorem,
	a ring is Macaulay if and only if its poset of monomials is Macaulay.
	Furthermore, the tensor product of rings corresponds to the Cartesian product of the posets of monomials.
	This allows us to transfer results between rings and posets.
	The Macaulay Correspondence Theorem generalizes a theorem by Shakin, 
	and allows ideals that are not monomial with orders that are different from the lexicographic one.
	By using this translation, 
	we give several answers to a problem posed by Mermin and Peeva,
	a positive answer to Hoefel's question about applying Macaulay poset theory to ring theory,
	and deduce several other results in both algebra and extremal combinatorics.
	
	A new proof of the Mermin--Murai Theorem on colored square-free rings is presented by using star posets.
	We extend the Mermin--Murai Theorem to rings that are not square free by using the spider Macaulay Theorem of Bezrukov and Elsässer.
	Using the Mermin--Peeva and Shakin results about adding a variable to a ring such that it remains Macaulay,
	we give an answer to a question posed by Bezrukov and Leck about taking the product of a Macaulay poset with a chain.
	Some results of Chong also give answers to the Bezrukov--Leck problem.
	All of these results have a common feature.
	They involve the tensor product of rings whose Hasse graphs of the poset of monomials are trees.
	We call such rings, tree rings.
	We give a classification of Macaulay rings that are the tensor product of a tree ring.
	Finally, we show that there are Macaulay rings that are not the tensor product of tree rings,
	and present the first examples of Macaulay rings that are not quotients by a monomial ideal and not quotients by a toric ideal.

\end{abstract} \tableofcontents{\tiny }
\section{Introduction}\label{Introduction}

Macaulay posets (Definition \ref{Macaulay_Posets_Definition}) are posets in which an analog of the Kruskal--Katona Theorem holds.
They have been extensively studied in the literature on discrete extremal problems, 
appear as topics in textbooks \cite{BelaBook, engel_1997, FranklBook, HarperBook},
and survey papers have been devoted to them \cite{Bezrukov2000,Bezrukov2004}.
In many situations, they are special cases of more general vertex isoperimetric problems on graphs \cite{Bezrukov2004, BezrukovSergeiL.2009ASPo, BollobásBéla1991Caii, engel_1997, HarperBook}.

The Macaulay property on posets provides several applications in pure mathematics and engineering.
If a poset is Macaulay with a rank-greedy order then a solution to the maximum weight ideal problem follows.
Edge-isoperimetric problems on graphs reduce to the maximum weight ideal problem,
whence the Macaulay property gives solutions to edge-isoperimetric problems as well.
These implications were observed by Bezrukov in \cite{BezrukovSergeiL.1999Oaei},
and have been included among other discrete extremal problems in \cite{engel_1997, HarperBook}.
Edge-isoperimetric problems give solutions to many other problems.
The survey \cite{BezrukovEdgeSurvey} and the book \cite{HarperBook} go over some,
including the wirelength problem, 
the bisection width and edge congestion problem, 
and graph partitioning problems.
It is interesting to note that solving the edge-isoperimetric problem on the Petersen graph \cite{Bezrukov2000AnEP} was motivated by application to parallel processing.
For an application in pure mathematics,
Daykin \cite{DaykinD.E1974EfK} showed that the Erdös--Ko--Rado Theorem follows from the Kruskal--Katona Theorem.

The study of Macaulay rings (Definition \ref{Macaulay_Rings_and_Ideals_Definition})
started almost a century ago with Macaulay \cite{MacaulayF.S.1927SPoE}.
Macaulay's Theorem says that for every homogeneous ideal in a polynomial ring over a field,
there exists a monomial lex ideal with the same Hilbert function.
There has been a lot of interest in generalizing Macaulay's Theorem to quotients of polynomial rings
\cite{GasharovVesselin2008Hfot, GasharovVesselin2011Hsam,MerminJeff2010Bnol, MerminJeffrey2006Li, MerminJeffrey2007Hfal, MURAISATOSHI2011Frol, ShakinD.2007Mi, ShakinDA2001SgoM}.
Lex ideals play an important role in Hartshorne’s proof that the Hilbert scheme is connected \cite{Hartshorne1966ConnectednessOT}.
Another very remarkable result is due to Bigatti \cite{Bigatti}, Hulett \cite{Hulett} and Pardue \cite{PardueKeith1996Dcog},
which states that lex ideals in a polynomial ring have the largest graded Betti numbers among all ideals with the same Hilbert function.

We have two main contributions to both of these analogs of Macaulay theorem stated in terms of posets and stated in terms of rings.
The first (Section \ref{Translating_Between_Posets_and_Rings}) is being able to translate Macaulay theorems between posets and rings.
The second (Section \ref{Quotients_by_Monomial_Ideals} and Section \ref{Quotients_by_Binomial_Ideals}) is applying these translation techniques to deduce new and old facts for both theories.
Section \ref{Advanced_Orders} is dedicated to giving general classes of orders that unify Macaulay theorems under a single guise.
A lot of questions come up based on the previous sections and we list these problems in Section \ref{Open_Problems}.

The main dictionary results translating between rings and posets are Theorem \ref{Macaulay_Correspondence_Theorem} and Theorem \ref{The_Cartesian_and_Tensor_Correspondence}.
Another useful tool for translating between posets and rings is Bezrukov's Dual Lemma \ref{Bezrukov_Dual_Lemma},
but this is not new.
These theorems are achieved by defining the poset of monomials (see Section \ref{The_Poset_Of_Monomials})
for every quotient of a polynomial ring by a homogeneous ideal.
The results that follow are deduced by applying these translation theorems.
These applications give many answers to a problem posed by Mermin and Peeva \ref{The_General_Mermin_Peeva_Problem}.
As a byproduct, we give a positive answer to a question of Hoefel \cite{Hoefel}.
Hoefel wanted to know if Macaulay poset theory can be applied to the theory of Macaulay rings.
The answer is an overwhelming yes,
and furthermore the implications go both ways.

We show that the Mermin--Murai Theorem \ref{MerminMurai} follows from the Macaulay problem on star posets,
which was settled two decades earlier.
It turns out that the Mermin--Murai Theorem holds for rings that are not square-free.
This is given in Theorem \ref{Bezrukov_Elsässer_Macaulay_Theorem}.
From a combinatorial viewpoint we give an answer (Corollary \ref{Bezrukov_Leck_Chain}) to a problem by Bezrukov and Leck,
which asks which Macaulay posets remain Macaulay after we take their cartesian product with a chain.
This is done by using a theorem independently proven by Mermin–Peeva and Shakin \ref{Mermin_Peeva_and_Shakin} that allows us to add a variable to a Macaulay ring, such that it remains Macaulay.
Chong's results in Section \ref{Products_With_Multiset_Lattices} also give answers to the Bezrukov--Leck problem.

All the above mentioned rings have something in common.
They can be decomposed into a tensor product of rings,
where the Hasse diagram of the poset of monomials of every individual ring is a tree.
In Theorem \ref{treeRingClassification} we obtain a classification of such rings,
where we use copies of one ring in the product.
Going in another direction, 
we show (Theorem \ref{Leck_Ring_Macaulay}) that there are Macaulay rings that are a tensor product, 
not involving any rings with this tree condition.

All the results up until now do not use the full power of the machinery developed in Section \ref{Translating_Between_Posets_and_Rings}.
In Section \ref{Quotients_by_Binomial_Ideals} we give the first examples of Macaulay rings that are not quotients by a monomial or toric ideal,
see Corollary \ref{Tori_Rings_Macaulay} and Corollary \ref{Diamond_Macaulay}.
These corollaries are obtained by using all of the ideas and results developed in Section \ref{Translating_Between_Posets_and_Rings}. \section{Translating Between Posets and Rings}\label{Translating_Between_Posets_and_Rings}

This section introduces all the tools needed to translate results between posets and rings.
The results obtained in this section are key to understanding and proving claims in Section \ref{Quotients_by_Monomial_Ideals}, Section \ref{Quotients_by_Binomial_Ideals}, and Section \ref{Open_Problems}.

\numberwithin{thm}{subsection}
\subsection{Partially Ordered Sets}\label{Posets}

This section introduces definitions and notation for partially ordered sets needed for the rest of the paper.
Most definitions here are based on Engel's book \cite{engel_1997}.
However, some notation is slightly different compared to \cite{engel_1997}, and some new concepts are introduced.

We define the set of natural numbers $\N$ to include $0$.
For $n\in \N$ we define $[n]=\{1,\dots, n\}$ and $[n]_0 = \{0,\dots , n-1\}$.
We also define $[\infty] = \N \setminus \{0\}$ and $[\infty]_0 = \N$.

A \textit{partially ordered set} (\textit{poset}) is a pair $\mathscr{P}=(S,\mathcal{O})$, where $S$ is a set and $\mathcal{O}$ is a partial order (a reflexive, antisymmetric and transitive relation) on $S$.
We will often abuse notation conflating $\mathscr{P}$ and $S$.
For $a,b\in \mathscr{P}$ we say $a \leq_{\mathscr{P}} b$ iff $(a,b)\in \mathcal{O}$.
We write $a <_{\mathscr{P}} b$ iff $a\leq_{\mathscr{P}} b$ and $a\neq b$.
Also, we say that $b$ \textit{covers} of $a$ iff $a < b$ and there is no $x\in \mathscr{P}$ such that $a < x < b$.
We say that $a$ is a \textit{minimum element (resp. maximum)} of $\mathscr{P}$ iff whenever we have $x\in\mathscr{P}$ such that $x\leq a$ (resp. $x\geq a$) then $x=a$.
The \textit{Hasse graph} of $\mathscr{P}$ is $\Hasse(\mathscr{P}) = (V,E)$,
where $V=\mathscr{P}$ and two vertices are connected by an edge if one covers the other.
For $A\subseteq S$ we denote the \textit{restriction} of $\mathcal{O}$ to $A$ by $\mathcal{O} |_{A}$.
Furthermore we call $(A, \mathcal{O} |_A)$ a \textit{subposet}.

Suppose that $S$ is a set.
A \textit{grid point/lattice point/multiset} on $S$ is a function $f:S \rightarrow \N$.
If $S$ is finite then we define the size of $f$ by $|f| = \sum_{x\in S} f(x)$,
and we say that $f$ has dimension $|S|$.
If $S= \{a_1,\dots, a_d\}$ and we consider $\ell_1,\dots, \ell_d\in \N \cup \{\infty\}$,
with $d\geq 1$,
then we define the \textit{set of all grid points/lattice points/multisets on} $S$ \textit{and lengths} $(\ell_1,\dots, \ell_d)$ by
\begin{align*}
	M_S(\ell_1,\dots, \ell_d) = \{f:S\rightarrow \N \bigm | f(a_1)< \ell_1,\dots, f(a_d)< \ell_d\}.
\end{align*}
Note that one can identify $f\in M_{[d]}(\ell_1,\dots, \ell_d)$ with $(f(1),\dots, f(d))\in \N^d$.

Suppose that $f,g$ are multisets on $S$ with $|S|=d\geq 1$ and consider $\ell_1,\dots, \ell_d\in \N \cup \{\infty\}$.
We say that $f$ is a  \textit{submultiset} of $g$ and write $f\subseteq g$ if and only if for all $x\in S$ we have $f(x) \leq g(x)$.
Then $\subseteq$ is a partial order on $M_S(\ell_1,\dots, \ell_d)$.
The \textit{lattice of multisets of dimension} $d$ \textit{on} $S$ \textit{and lengths} $(\ell_1,\dots, \ell_d)$ is the poset $\mathscr{M}_S(\ell_1,\dots, \ell_d) = (M_S(\ell_1,\dots, \ell_d), \subseteq).$

\begin{dfn}[Ranked Posets]\label{Ranked_Posets_Definition}
	Suppose that we have a poset $\mathscr{P}$ and a function $r:\mathscr{P} \rightarrow \N$.
	The function $r$ is called a \textit{rank function} on $\mathscr{P}$ if the following conditions hold:
	\begin{enumerate}
		\item There is a minimum element $x\in \mathscr{P}$ such that $r(x)=0$.
		\item Whenever we have $a,b\in \mathscr{P}$ such that $b$ covers $a$, then we must have $r(a)+1=r(b)$.
	\end{enumerate}
	We also call $\mathscr{P}$ a \textit{ranked poset} if such an $r$ exists.
	We define the \textit{rank of} $\mathscr{P}$ to be
	\begin{align*}
		r(\mathscr{P}) = \sup_{a\in \mathscr{P}} r(a).
	\end{align*}
\end{dfn}

\begin{exm}
	Any multiset lattice of the form $\mathscr{M}=\mathscr{M}_{[d]}(\ell_1,\dots, \ell_d)$ is ranked with $r:\mathscr{M} \rightarrow \N$ given by $r(f) = \sum_{x\in [d]}f(x)$ for all $f\in \mathscr{M}$.
	Furthermore, $r$ is the unique rank function on $\mathscr{M}$.
\end{exm}

\begin{dfn}[Cartesian Product of Posets]\label{Cartesian_Product_Of_Posets_Definition}
	Suppose that $d\in \N$ with $d\geq 1$ and consider posets $\mathscr{P}_i = (S_i, \mathcal{O}_i)$ for all $i\in [d]$.
	First, put $S = S_1\times \cdots \times S_d$.
	Next, we define a partial order $\mathcal{O}$ on $S$, such that
	for any $(x_1,\dots , x_d),(y_1, \dots y_d)\in S$ we have
	\begin{align*}
		(x_1,\dots, x_d) \leq_{\mathcal{O}} (y_1,\dots , y_d) \mbox{ if and only if for all } i\in [d] \mbox{ we have } x_i \leq_{\mathcal{O}_i} y_i.
	\end{align*}
	We write
	\begin{align*}
		\mathcal{O} = \mathcal{O}_1 \times \cdots \times \mathcal{O}_d.
	\end{align*}
	The poset $\mathscr{P} = (S, \mathcal{O})$ is called the \textit{Cartesian product of} $(\mathscr{P}_1,\dots ,\mathscr{P}_d)$ and we write
	\begin{align*}
		\mathscr{P} = \mathscr{P}_1\times \cdots \times \mathscr{P}_d.
	\end{align*}
	Next, suppose that $\mathscr{P}_1,\dots, \mathscr{P_d}$ have rank functions $r_1,\dots, r_d$ respectively.
	We define the \textit{product rank function} $r:\mathscr{P} \rightarrow \N$ such that $r((x_1,\dots, x_d)) = r_1(x_1)+\cdots + r_d(x_d)$ for all $(x_1,\dots, x_d)\in \mathscr{P}$.
\end{dfn}

Given posets $\mathscr{P}_1$ and $\mathscr{P}_2$, 
we write $\mathscr{P}_1 \iso \mathscr{P}_2$ to mean that $\mathscr{P}_1$ and $\mathscr{P}_2$ are isomorphic, 
i.e. there exists a bijection $\sigma : \mathscr{P}_1 \rightarrow \mathscr{P}_2$ such that for all $a, b \in \mathscr{P}_1$, 
$a \leq b$ if and only if $\sigma(a) \leq \sigma(b)$.

Suppose that we have a poset $\mathscr{P} = (S, \mathcal{O})$.
We say that $\mathcal{O}$ is a \textit{total order} on $S$ if for all $a,b\in S$,
we have $a\leq b$ or $b\leq a$.
If $\mathcal{O}$ is a total order then we call $\mathscr{P}$ a \textit{totally ordered set} or \textit{toset} for brevity.
All tosets considered in this paper will be isomorphic to $[n]$ with the standard order, where $n\in \N \cup \{\infty\}$.

\begin{dfn}[Indices, Elements and Intervals]\label{Indices_Elements_And_Intervals_Definition}
	Let $\mathscr{T}$ be a toset.
	Then there is a bijective function $\sigma: \mathscr{T} \rightarrow [n]$ such that for any $a,b\in \mathscr{T}$ we have $a\leq b$ iff $\sigma(a) \leq \sigma(b)$.
	Let $x,y\in \mathscr{T}$ with $x \leq y$ and any $p,q\in [n]$ with $p\leq q$.
	The \textit{index of} $x$ is $\mathscr{T}(x) = \sigma(x)$.
	The \textit{element of} $p$ is $\mathscr{T}^{-1}(p) = \sigma^{-1}(p)$.
	The \textit{closed (resp. half open )interval between} $x$ \textit{and} $y$ is $\mathscr{T}[x,y] = \{a\in \mathscr{T} \bigm | x\leq a \leq y\}$ (resp. $\mathscr{T}[x,y) = \{a\in \mathscr{T} \bigm | x\leq a < y\}$).
	The \textit{closed (resp. half open )interval between} $p$ \textit{and} $q$ is $\mathscr{T}[p,q] = \mathscr{T}[\mathscr{T}^{-1}(p), \mathscr{T}^{-1}(q)]$ (resp. $\mathscr{T}[p,q) = \mathscr{T}[\mathscr{T}^{-1}(p), \mathscr{T}^{-1}(q))$).
	We call $\mathscr{T}[q] = \mathscr{T}[1,q]$ the \textit{initial segment of size $q$ in} $\mathscr{T}$.
	If the total order of $\mathscr{T}$ is called $\mathcal{O}$,
	we call $\mathscr{T}[q]$ the \textit{initial segment of size $q$ of} $\mathcal{O}$.
	For a finite set $A\subseteq \mathscr{T}$ we will often write $\mathscr{T}[A]$ to mean $\mathscr{T}[|A|]$.
\end{dfn}

\begin{prp}[Decomposition of Multiset Lattices]\label{Decomposition_Of_Multiset_Latices}
	Suppose that $d\in \N$ with $d\geq 1$
	Consider tosets $\mathscr{T}_1,\dots, \mathscr{T}_d$ and put $\mathscr{T}=\mathscr{T}_1\times \cdots \times \mathscr{T}_d$.
	One has,
	\begin{align*}
		\mathscr{T} \iso \mathscr{M}_{[d]}(|\mathscr{T}_1|,\dots, |\mathscr{T}_d|),
	\end{align*}
	where the isomorphism sends $(x_1,\dots, x_d)\in \mathscr{T}$ to $(\mathscr{T}(x_1)-1,\dots, \mathscr{T}(x_d)-1)$.
\end{prp}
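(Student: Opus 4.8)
The plan is to exhibit the claimed map explicitly, produce an inverse for it, and then verify that both the map and its inverse are order-preserving; all of this reduces to unwinding the definitions of the index function (Definition \ref{Indices_Elements_And_Intervals_Definition}), the Cartesian product order (Definition \ref{Cartesian_Product_Of_Posets_Definition}), and the submultiset order (Definition \ref{Lattice_Of_Multisets_Definition}).

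First I would fix notation: for each $i\in[d]$ write $n_i=|\mathscr{T}_i|\in\N\cup\{\infty\}$, so that the index function is a bijection $\mathscr{T}_i\rightarrow[n_i]$ which, by Definition \ref{Indices_Elements_And_Intervals_Definition}, satisfies $a\leq_{\mathscr{T}_i}b$ iff $\mathscr{T}_i(a)\leq\mathscr{T}_i(b)$. Define $\sigma:\mathscr{T}\rightarrow\mathscr{M}_{[d]}(n_1,\dots,n_d)$ by $\sigma(x_1,\dots,x_d)=(\mathscr{T}_1(x_1)-1,\dots,\mathscr{T}_d(x_d)-1)$, identifying a tuple in $\N^d$ with a function in $M_{[d]}$ as in the remark after Definition \ref{Set_Of_Multisets_Definition}. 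The first point to check is that $\sigma$ is well defined, i.e. that its image actually lies in $M_{[d]}(n_1,\dots,n_d)$: for each $i$ we have $\mathscr{T}_i(x_i)\in[n_i]$, hence $\mathscr{T}_i(x_i)-1\in[n_i]_0$, which is precisely the set of permitted values of the $i$-th coordinate; when $n_i=\infty$ this reads $\mathscr{T}_i(x_i)-1\in\N$ and the constraint is vacuous.

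Next I would define the candidate inverse $\tau:\mathscr{M}_{[d]}(n_1,\dots,n_d)\rightarrow\mathscr{T}$ by $\tau(f)=(\mathscr{T}_1^{-1}(f(1)+1),\dots,\mathscr{T}_d^{-1}(f(d)+1))$; this is well defined since $f(i)+1\in[n_i]$ whenever $f(i)\in[n_i]_0$. That $\tau\circ\sigma=\id_{\mathscr{T}}$ and $\sigma\circ\tau=\id$ is immediate from the fact that $\mathscr{T}_i(\cdot)$ and $\mathscr{T}_i^{-1}(\cdot)$ are mutually inverse (Definition \ref{Indices_Elements_And_Intervals_Definition}), so condition (1) of Definition \ref{Poset_Isomorphism_Definition} holds. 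For condition (2) I would chain equivalences: $(x_1,\dots,x_d)\leq_{\mathscr{T}}(y_1,\dots,y_d)$ iff $x_i\leq_{\mathscr{T}_i}y_i$ for all $i$ (Definition \ref{Cartesian_Product_Of_Posets_Definition}) iff $\mathscr{T}_i(x_i)\leq\mathscr{T}_i(y_i)$ for all $i$ (Definition \ref{Indices_Elements_And_Intervals_Definition}) iff $\mathscr{T}_i(x_i)-1\leq\mathscr{T}_i(y_i)-1$ for all $i$ iff $\sigma(x_1,\dots,x_d)\subseteq\sigma(y_1,\dots,y_d)$ (Definition \ref{Lattice_Of_Multisets_Definition}).

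There is no real obstacle here; the only point requiring a little care is the possible presence of $\infty$ among the $n_i$, which is handled uniformly by the conventions $[\infty]=\N\setminus\{0\}$ and $[\infty]_0=\N$ from Definition \ref{Natural_Numbers_Definition}, so that the ``$<\ell_i$'' constraints defining $M_{[d]}$ behave correctly in both the finite and infinite cases. Everything else is bookkeeping around the index shift by one.
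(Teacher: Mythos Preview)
Your proof is correct. The paper states this proposition without proof, treating it as immediate from the definitions, so your explicit verification by constructing the inverse and checking order-preservation coordinatewise is exactly the natural argument; there is nothing to compare against.
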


\begin{dfn}[Cube Diagrams]\label{Cube_Diagrams_Definition}
	Suppose that $\mathscr{T}_1,\dots, \mathscr{T}_d$ are tosets and put $\mathscr{T}=\mathscr{T}_1\times \dots \times \mathscr{T}_d$, 
	where $d\in \N$ is nonzero.
	Take $x=(x_1,\dots, x_d)\in \mathscr{T}$.
	Then we can consider $(\mathscr{T}_1(x_1) - 1,\dots, \mathscr{T}_d(x_d) - 1)\in \R^d$.
	For the point $(\mathscr{T}_1(x_1) - 1,\dots, \mathscr{T}_d(x_d) - 1)$ we consider the \textit{upwards cube of side length $1$ in} $\R^d$, with lower left corner at the given point,
	\begin{align*}
		\prod_{i=1}^d [\mathscr{T}_i(x_i),\mathscr{T}_i(x_i)+1].
	\end{align*}
	Thus, for each $x=(x_1,\dots, x_d)\in \mathscr{T}$ there is a unique upwards cube in $\R^d$.
	For a set $A\subseteq \mathscr{T}$ we call the collection of all upwards cubes from elements in $A$ the \textit{cube diagram of} $A$ in $\mathscr{T}$.
\end{dfn}

\begin{figure}
	\centering
	\begin{subfigure}[t]{0.23\textwidth}
		\includegraphics[width=\textwidth]{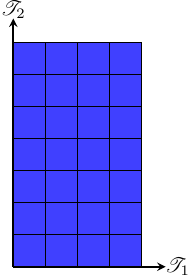}
		\caption{$\emptyset$.}
	\end{subfigure}
	\hfill
	\begin{subfigure}[t]{0.23\textwidth}
		\includegraphics[width=\textwidth]{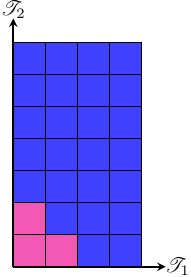}
		\caption{$\{(0,0),(0,1),(1,0)\}$.}
	\end{subfigure}
	\hfill
	\begin{subfigure}[t]{0.23\textwidth}
		\includegraphics[width=\textwidth]{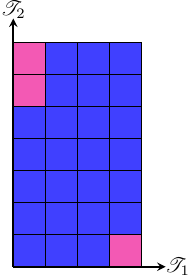}
		\caption{$\{(0,5),(0,6),(3,0)\}$.}
	\end{subfigure}
	\hfill
	\begin{subfigure}[t]{0.23\textwidth}
		\includegraphics[width=\textwidth]{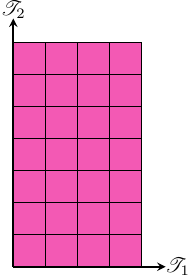}
		\caption{$\mathscr{M}_{[2]}(4,7)$.}
	\end{subfigure}
	
	\caption{Cube diagrams for some subsets in $\mathscr{M}_{[2]}(4,7)$.}
	\label{Cube_Diagrams_4x7}
\end{figure}

\begin{figure}
	\centering
	\begin{subfigure}[t]{0.75\textwidth}
		\includegraphics[width=\textwidth]{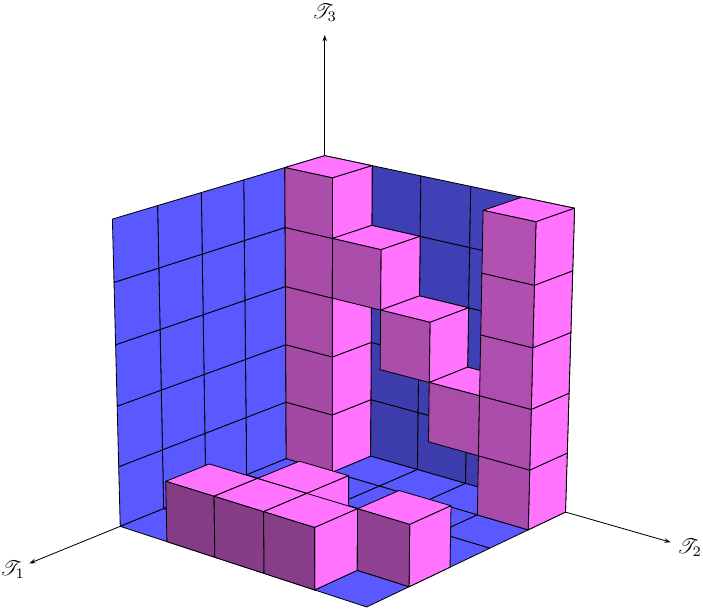}
	\end{subfigure}
	
	\begin{subfigure}[t]{0.46\textwidth}
		\includegraphics[width=\textwidth]{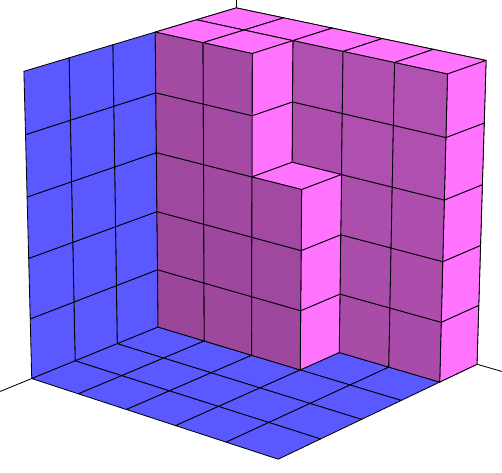}
	\end{subfigure}
	\hfill
	\begin{subfigure}[t]{0.46\textwidth}
		\includegraphics[width=\textwidth]{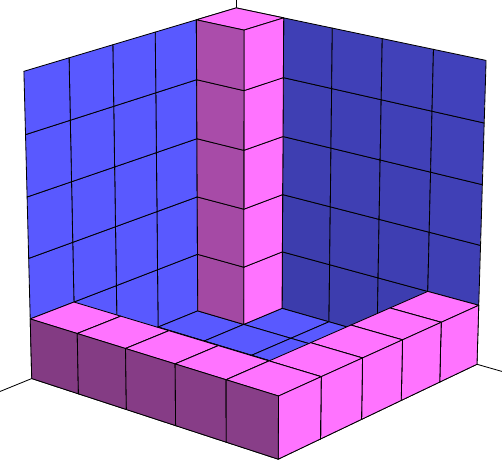}
	\end{subfigure}
	
	\caption{Cube diagrams of some sets in $\mathscr{M}_{[3]}(5,5,5)$.}\label{Cube_Diagrams_5x5x5}
\end{figure}

\begin{exm}
	Some cube diagrams of some sets in $\mathscr{M}_{[2]}(4,7)$ are shown in Figure \ref{Cube_Diagrams_4x7}.
	Some cube diagrams of sets in $\mathscr{M}_{[3]}(5,5,5)$ are shown in Figure \ref{Cube_Diagrams_5x5x5}.
\end{exm}

\begin{exm}
	$\mathscr{M}_{[d]}(\ell_1,\dots, \ell_d)$ has a cube diagram given by all the upward cubes in $[0,\ell_1]\times \cdots \times [0,\ell_d]$ hyperrectangle in $\R^d$.
\end{exm}

\begin{dfn}[Lexicographic Order]\label{Lexicographic_Order_Definition}
	Suppose that we have tosets $\mathscr{T}_1, \dots , \mathscr{T}_d$ for $d\in \N$ with $d\geq 1$.
	Consider $\mathscr{T} = \mathscr{T}_1\times \cdots \times \mathscr{T}_d$ and $x=(x_1,\dots, x_d),y=(y_1,\dots, y_d)\in \mathscr{T}$.
	We define the \textit{lexicographic order} on $\mathscr{T}$, $\mathcal{L}_{\mathscr{T}}$ to be a total order on $\mathscr{T}$,
	such that $x<_{\mathcal{L}_{\mathscr{T}}} y$ 
	iff for some $i\in \{1,\dots, d-1\}$ we have $x_1=y_1,\dots, x_i=y_i$ and $x_{i+1} <_{\mathscr{T}_{i+1}} y_{i+1}$.
	We abuse notation and treat $\mathscr{T}$ as its ground set, and define the toset $\mathscr{T}_{\mathcal{L}}=(\mathscr{T}, \mathcal{L}_{\mathscr{T}})$.
\end{dfn}

\begin{figure}
	\centering
	\begin{subfigure}[t]{0.16\textwidth}
		\includegraphics[width=\textwidth]{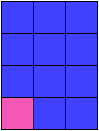}
	\end{subfigure}
	\hfill
	\begin{subfigure}[t]{0.16\textwidth}
		\includegraphics[width=\textwidth]{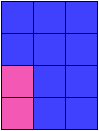}
	\end{subfigure}
	\hfill
	\begin{subfigure}[t]{0.16\textwidth}
		\includegraphics[width=\textwidth]{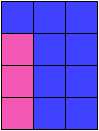}
	\end{subfigure}
	\hfill
	\begin{subfigure}[t]{0.16\textwidth}
		\includegraphics[width=\textwidth]{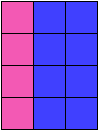}
	\end{subfigure}
	\hfill
	\begin{subfigure}[t]{0.16\textwidth}
		\includegraphics[width=\textwidth]{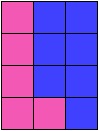}
	\end{subfigure}
	\hfill
	\begin{subfigure}[t]{0.16\textwidth}
		\includegraphics[width=\textwidth]{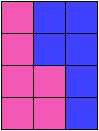}
	\end{subfigure}

	\vspace{0.5cm}
	\begin{subfigure}[t]{0.16\textwidth}
		\includegraphics[width=\textwidth]{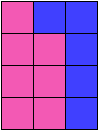}
	\end{subfigure}
	\hfill
	\begin{subfigure}[t]{0.16\textwidth}
		\includegraphics[width=\textwidth]{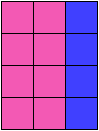}
	\end{subfigure}
	\hfill
	\begin{subfigure}[t]{0.16\textwidth}
		\includegraphics[width=\textwidth]{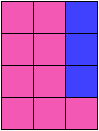}
	\end{subfigure}
	\hfill
	\begin{subfigure}[t]{0.16\textwidth}
		\includegraphics[width=\textwidth]{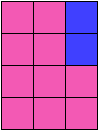}
	\end{subfigure}
	\hfill
	\begin{subfigure}[t]{0.16\textwidth}
		\includegraphics[width=\textwidth]{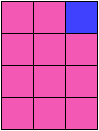}
	\end{subfigure}
	\hfill
	\begin{subfigure}[t]{0.16\textwidth}
		\includegraphics[width=\textwidth]{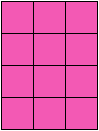}
	\end{subfigure}

	\caption{Lexicographic order in $\mathscr{M}_{[2]}(3,4)$.}\label{Lex_3x4}
\end{figure}

\begin{exm}
	A visualization of lexicographic order in $\mathscr{M}_{[2]}(3,4)$ can be seen in Figure \ref{Lex_3x4}.
	Notice how we fill up the space column by column.
\end{exm}

Note that any multiset lattice is a poset given by the standard partial order on the product of tosets (Proposition \ref{Decomposition_Of_Multiset_Latices}).
Lexicographic orders are total orders on multiset lattices.
We will often consider a partially ordered set equipped with another order (typically a total order) on its elements which is different from the partial order.
In the case of multiset lattices and lexicographic orders we have that the total order is an extension of the partial order, but we will not require this in general.
This observation leads us to the following definition.

For a positive integer $n$, an \textit{$n$-poset} is a tuple $\mathscr{P} = (S,\mathcal{O}_1,\dots, \mathcal{O}_n)$,
where $S$ is a set and for all $i\in [n]$ we have that $\mathcal{O}_i$ is a partial order on $S$.
For $d\in \N$ with $d\geq 1$, by $\mathfrak{S}_d$ we denote the \textit{set of all permutations on} $[d]$.
A subposet of an $n$-poset is just a restriction with respect to all partial orders.

\begin{dfn}[Domination Order]\label{Domination_Order_Definition}
	Suppose that $d\in \N$ with $d\geq 1$ and consider tosets $\mathscr{T}_1,\dots, \mathscr{T}_d$.
	Also, put $\mathscr{T}= \mathscr{T}_1\times \cdots \times \mathscr{T}_d$.
	For any $\pi \in \mathfrak{S}_d$ we define $\mathcal{D}_{\pi}$, the {\em domination order induced by $\pi$ on} $\mathscr{T}$,
	such that for any $x=(x_1,\dots, x_d),y=(y_1,\dots, y_d)\in \mathscr{T}$ we have
	\begin{align*}
		x \leq_{\mathcal{D}_\pi} y \text{ iff } (x_{\pi(1)},\dots, x_{\pi(d)}) \leq_{\mathscr{T}_{\mathcal{L}}} (y_{\pi(1)},\dots, y_{\pi(d)}).
	\end{align*}
	Then we define the toset $\mathscr{T}_{\pi} = (\mathscr{T}, \mathcal{D}_{\pi})$.
\end{dfn}

\begin{rem}
	The lexicographic order on a product of tosets is a domination order induced by the identity
	permutation, i.e. $\mathscr{T}_{\mathcal{L}} = \mathscr{T}_{\id_{[d]}}$.
\end{rem}

Domination orders are often employed.
The most common one of these orders, 
other than the lexicographic one, 
is the colexicographic order.

\begin{dfn}[Colexicographic Order]\label{Colexicographic_Order_Definition}
	Suppose that $d\in \N$ with $d\geq 1$ and consider tosets $\mathscr{T}_1,\dots, \mathscr{T}_d$
	Also, put $\mathscr{T}= \mathscr{T}_1\times \cdots \times \mathscr{T}_d$.
	Let $\pi \in \mathfrak{S}_d$ be the permutation such that $\pi(i) = d-i+1$ for all $i\in \{1,\dots, d\}$.
	The \textit{colexicographic order} $\mathcal{C}_{\mathscr{T}}$ on $\mathscr{T}$ is defined to be the domination order $\mathcal{D}_{\pi}$.
	We also write $\mathscr{T}_{\mathcal{C}}$ for $\mathscr{T}_\pi$.
	Another way to view the colexicographic order is to take any $x=(x_1,\dots, x_d),y=(y_1,\dots, y_d)\in \mathscr{T}$ and define $x<_{\mathcal{C}_{\mathscr{T}}} y$ 
	iff for some $i\in \{2,\dots, d+1\}$ we have $x_d=y_d,\dots, x_i=y_i$ and $x_{i-1} <_{\mathscr{T}_{i+1}} y_{i-1}$.
\end{dfn}

\begin{figure}
	\centering
	\begin{subfigure}[t]{0.19\textwidth}
		\includegraphics[width=\textwidth]{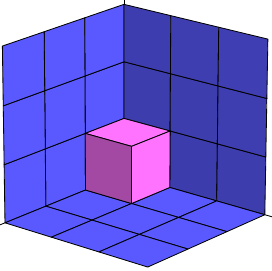}
	\end{subfigure}
	\hfill
	\begin{subfigure}[t]{0.19\textwidth}
		\includegraphics[width=\textwidth]{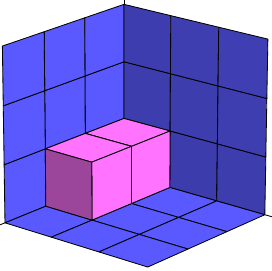}
	\end{subfigure}
	\hfill
	\begin{subfigure}[t]{0.19\textwidth}
		\includegraphics[width=\textwidth]{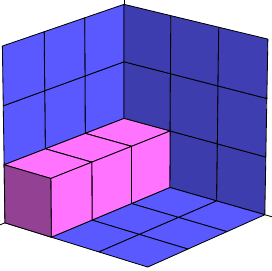}
	\end{subfigure}
	\hfill
	\begin{subfigure}[t]{0.19\textwidth}
		\includegraphics[width=\textwidth]{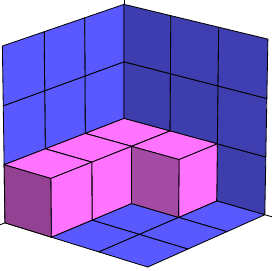}
	\end{subfigure}
	\hfill
	\begin{subfigure}[t]{0.19\textwidth}
		\includegraphics[width=\textwidth]{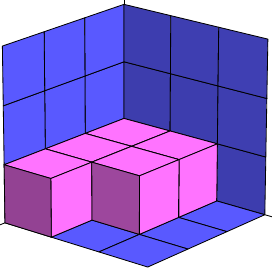}
	\end{subfigure}

	\vspace{0.1cm}
	
	\begin{subfigure}[t]{0.19\textwidth}
		\includegraphics[width=\textwidth]{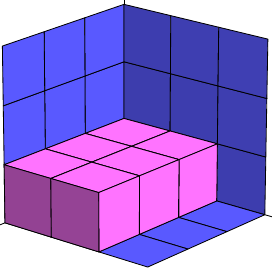}
	\end{subfigure}
	\hfill
	\begin{subfigure}[t]{0.19\textwidth}
		\includegraphics[width=\textwidth]{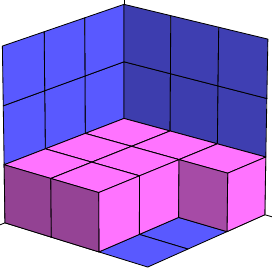}
	\end{subfigure}
	\hfill
	\begin{subfigure}[t]{0.19\textwidth}
		\includegraphics[width=\textwidth]{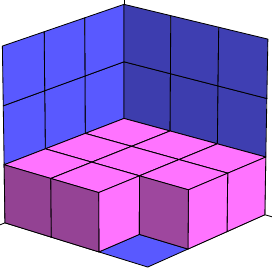}
	\end{subfigure}
	\hfill
	\begin{subfigure}[b]{0.19\textwidth}
		\centering
		\raisebox{0.5\hsize}{\includegraphics[width=0.5\textwidth]{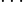}}
	\end{subfigure}
	\hfill
	\begin{subfigure}[t]{0.19\textwidth}
		\includegraphics[width=\textwidth]{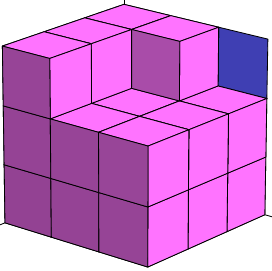}
	\end{subfigure}

	\vspace{0.1cm}
	
	\begin{subfigure}[t]{0.19\textwidth}
		\includegraphics[width=\textwidth]{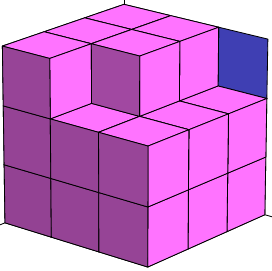}
	\end{subfigure}
	\hfill
	\begin{subfigure}[t]{0.19\textwidth}
		\includegraphics[width=\textwidth]{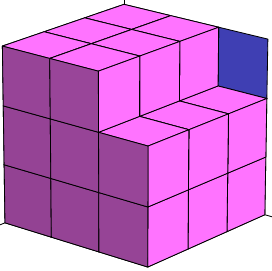}
	\end{subfigure}
	\hfill
	\begin{subfigure}[t]{0.19\textwidth}
		\includegraphics[width=\textwidth]{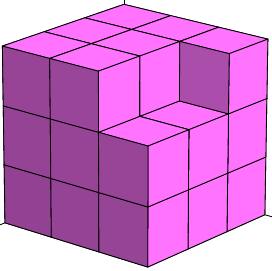}
	\end{subfigure}
	\hfill
	\begin{subfigure}[t]{0.19\textwidth}
		\includegraphics[width=\textwidth]{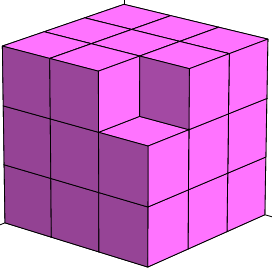}
	\end{subfigure}
	\hfill
	\begin{subfigure}[t]{0.19\textwidth}
		\includegraphics[width=\textwidth]{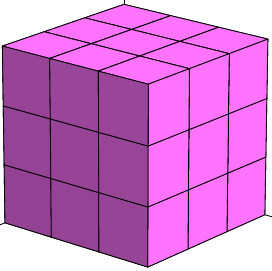}
	\end{subfigure}
	
	\caption{Colexicographic order in $\mathscr{M}_{[3]}(3,3,3)$.}
	\label{Colex_3x3x3}
\end{figure}

\begin{exm}
	A visualization of colexicographic order in $\mathscr{M}_{[3]}(3,3,3)$ can be seen in Figure \ref{Colex_3x3x3}.
\end{exm}

\begin{dfn}[Levels]\label{Level_Definition}
	Suppose that $\mathscr{P}$ is a ranked poset with rank function $r$.
	For $i\in \N$ the \textit{$i$-th level of} $\mathscr{P}$ is the set of all elements of rank $i$.
	This set will be denoted by $\Lvl_{i,r} = r^{-1}(\{i\})$.
	We will often just write $\Lvl_{i, \mathscr{P}}$ when the rank function is clear,
	and  $\Lvl_{i}$ when the poset is clear as well.
\end{dfn}

\begin{exm}
	The levels of $\mathscr{M}_{[d]}(3,3,3)$ can be seen in Figure \ref{Levels_3x3x3}.
\end{exm}

\begin{figure}
	\centering
	\begin{subfigure}[t]{0.23\textwidth}
		\includegraphics[width=\textwidth]{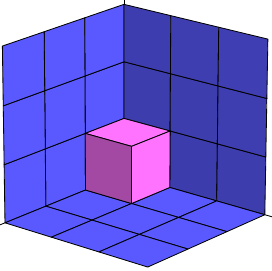}
		\subcaption{Rank $0$ elements.}
	\end{subfigure}
	\hfill
	\begin{subfigure}[t]{0.23\textwidth}
		\includegraphics[width=\textwidth]{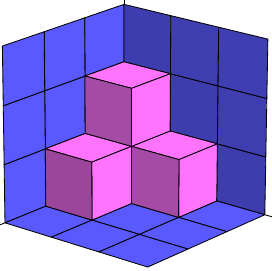}
		\subcaption{Rank $1$ elements.}
	\end{subfigure}
	\hfill
	\begin{subfigure}[t]{0.23\textwidth}
		\includegraphics[width=\textwidth]{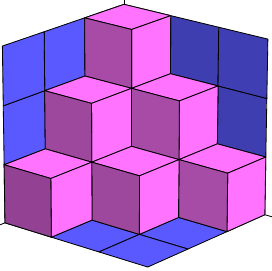}
		\subcaption{Rank $2$ elements.}
	\end{subfigure}
	\hfill
	\begin{subfigure}[t]{0.23\textwidth}
		\includegraphics[width=\textwidth]{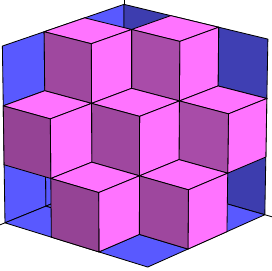}
		\subcaption{Rank $3$ elements.}
	\end{subfigure}

	\vspace{0.3cm}
	
	\begin{subfigure}[t]{0.23\textwidth}
		\includegraphics[width=\textwidth]{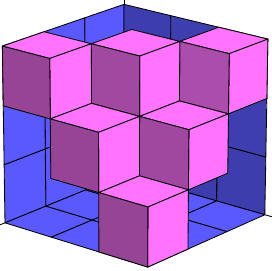}
		\subcaption{Rank $4$ elements.}
	\end{subfigure}
	\hfil
	\begin{subfigure}[t]{0.23\textwidth}
		\includegraphics[width=\textwidth]{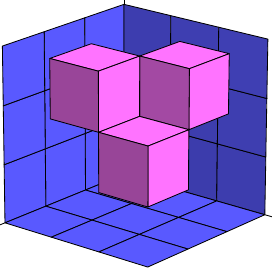}
		\subcaption{Rank $5$ elements.}
	\end{subfigure}
	\hfil
	\begin{subfigure}[t]{0.23\textwidth}
		\includegraphics[width=\textwidth]{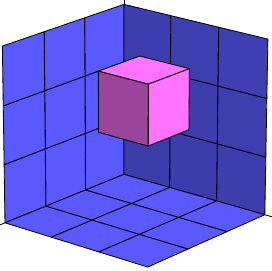}
		\subcaption{Rank $6$ elements.}
	\end{subfigure}
	
	\caption{The levels of $\mathscr{M}_{[3]}(3,3,3)$.}
	\label{Levels_3x3x3}
\end{figure}

\begin{dfn}[Shadows]\label{Shadows_Definition}
	Suppose that $\mathscr{P}$ is a poset and $a,b\in \mathscr{P}$.
	We say that $b$ is a \textit{lower shadow point} (resp. \textit{upper shadow point}) of $a$ iff $a$ covers $b$ (resp. $b$ covers $a$).
	By $\dSdw_{\mathscr{P}}(a)$ (resp. $\uSdw_{\mathscr{P}}(a)$) we denote the \textit{set of all lower shadow points (resp. upper shadow points) of} $a$ in $\mathscr{P}$.
	Similarly, for any $A\subseteq \mathscr{P}$ we define
	\begin{align*}
		\dSdw_{\mathscr{P}}(A) = \bigcup_{a\in A} \dSdw_{\mathscr{P}}(a) \text{ and }  \uSdw_{\mathscr{P}}(A) = \bigcup_{a\in A} \uSdw_{\mathscr{P}}(a).
	\end{align*}
	The subscript $\mathscr{P}$ will often be omitted.
\end{dfn}

\begin{dfn}[Dual of a Poset]
	For an $n$-poset $\mathscr{P} = (S,\mathcal{O}_1,\dots, \mathcal{O}_n)$ we define the \textit{dual} of $\mathscr{P}$ to be the $n$-poset $\mathscr{P}^\ast = (S, \mathcal{O}_1^\ast, \dots, \mathcal{O}_n^\ast)$,
	where for any $a,b\in S$ and any $i\in [n]$ we have $a\leq_{\mathcal{O}_i^\ast} b$ iff $b\leq_{\mathcal{O}_i} a$.
\end{dfn}

\begin{prp}[Properties of Duals]\label{Properties_of_Duals}
	For posets $\mathscr{P}, \mathscr{P}_1,\dots, \mathscr{P}_d$ and $A\subseteq \mathscr{P}$ we have:
	\begin{enumerate}
		\item $(\mathscr{P}_1\times \cdots \times \mathscr{P}_d)^\ast = \mathscr{P}_1^\ast \times \cdots \times \mathscr{P}_d^\ast$.
		\item $\dSdw_{\mathscr{P}}(A) = \uSdw_{\mathscr{P}^\ast}(A)$.
		\item $\uSdw_{\mathscr{P}}(A) = \dSdw_{\mathscr{P}^\ast}(A)$.
	\end{enumerate}
\end{prp}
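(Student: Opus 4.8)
The plan is to unwind the definitions in all three cases; none of the statements requires any genuine idea beyond the observation that reversing a partial order reverses its covering relation, so I would isolate that fact first and use it for (2) and (3), handling (1) separately by a direct computation on ground sets.

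For (1), I would first note that both $(\mathscr{P}_1\times \cdots \times \mathscr{P}_d)^\ast$ and $\mathscr{P}_1^\ast \times \cdots \times \mathscr{P}_d^\ast$ have underlying set $S_1\times \cdots \times S_d$, so it suffices to check that the two partial orders coincide. Take $x=(x_1,\dots,x_d)$ and $y=(y_1,\dots,y_d)$. Then $x$ is below $y$ in $(\mathscr{P}_1\times \cdots \times \mathscr{P}_d)^\ast$ iff $y$ is below $x$ in $\mathscr{P}_1\times \cdots \times \mathscr{P}_d$ iff $y_i\leq_{\mathscr{P}_i} x_i$ for every $i\in[d]$ iff $x_i\leq_{\mathscr{P}_i^\ast} y_i$ for every $i\in[d]$ iff $x$ is below $y$ in $\mathscr{P}_1^\ast\times \cdots \times \mathscr{P}_d^\ast$, where the first and third equivalences are the definition of the dual and the second and fourth are the definition of the Cartesian product. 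If one wants the statement at the level of $n$-posets, the identical computation applies verbatim to each of the $n$ coordinate orders, so I would state it once and remark on this.

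For (2) and (3), the key lemma is that for any poset $\mathscr{P}$ and $a,b\in\mathscr{P}$, $b$ covers $a$ in $\mathscr{P}$ if and only if $a$ covers $b$ in $\mathscr{P}^\ast$. Indeed $a<_{\mathscr{P}} b$ iff $b<_{\mathscr{P}^\ast} a$ by definition of the dual, and the condition ``there is no $x$ with $a<_{\mathscr{P}} x<_{\mathscr{P}} b$'' is literally the same statement as ``there is no $x$ with $b<_{\mathscr{P}^\ast} x<_{\mathscr{P}^\ast} a$'' since $u<_{\mathscr{P}} v \iff v<_{\mathscr{P}^\ast} u$ for all $u,v\in\mathscr{P}$. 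Granting this, for a single element $a$ we get $b\in\dSdw_{\mathscr{P}}(a)$ iff $a$ covers $b$ in $\mathscr{P}$ iff $b$ covers $a$ in $\mathscr{P}^\ast$ iff $b\in\uSdw_{\mathscr{P}^\ast}(a)$, which is the singleton case of (2); the singleton case of (3) is the same assertion applied to $\mathscr{P}^\ast$ in place of $\mathscr{P}$, using $(\mathscr{P}^\ast)^\ast=\mathscr{P}$. The general case then follows by taking unions, $\dSdw_{\mathscr{P}}(A)=\bigcup_{a\in A}\dSdw_{\mathscr{P}}(a)=\bigcup_{a\in A}\uSdw_{\mathscr{P}^\ast}(a)=\uSdw_{\mathscr{P}^\ast}(A)$, and symmetrically for (3).

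There is no real obstacle here; the only point that deserves a moment's care is verifying that the covering relation is genuinely symmetric under dualization, that is, that ``no element strictly in between'' is invariant under reversing the order, which is immediate once written out as above. Everything else is a mechanical chain of ``iff''s through the relevant definitions.
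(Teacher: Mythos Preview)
Your proof is correct. The paper states this proposition without proof, treating it as an elementary consequence of the definitions; your argument is exactly the straightforward unwinding one would expect, and the care you take in checking that ``no element strictly in between'' is invariant under dualization is appropriate.
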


\begin{exm}
	The dual of the lexicographic order restricted to $\Lvl_{2}$ of $\mathscr{M}_{[3]}(3,3,3)$ can be seen in Figure \ref{Level_2_Lex_Dual_3x3x3}.
\end{exm}

\begin{figure}
	\centering
	\begin{subfigure}[t]{0.23\textwidth}
		\includegraphics[width=\textwidth]{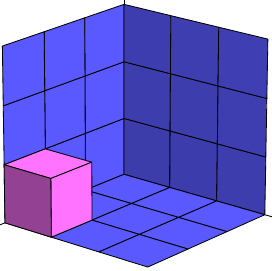}
	\end{subfigure}
	\hfil
	\begin{subfigure}[t]{0.23\textwidth}
		\includegraphics[width=\textwidth]{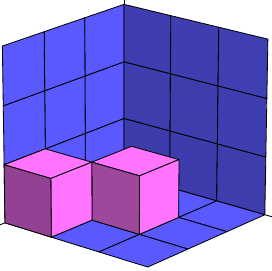}
	\end{subfigure}
	\hfil
	\begin{subfigure}[t]{0.23\textwidth}
		\includegraphics[width=\textwidth]{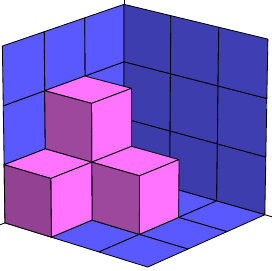}
	\end{subfigure}

	\vspace{0.3cm}
	
	\begin{subfigure}[t]{0.23\textwidth}
		\includegraphics[width=\textwidth]{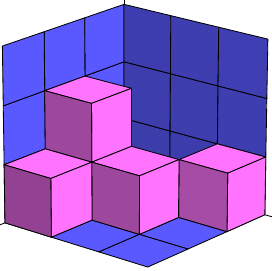}
	\end{subfigure}
	\hfil
	\begin{subfigure}[t]{0.23\textwidth}
		\includegraphics[width=\textwidth]{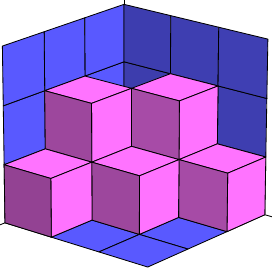}
	\end{subfigure}
	\hfil
	\begin{subfigure}[t]{0.23\textwidth}
		\includegraphics[width=\textwidth]{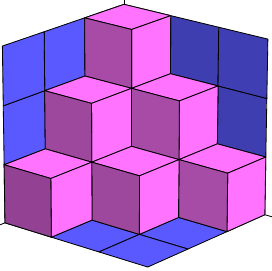}
	\end{subfigure}

	\caption{The dual of the lexicographic order restricted to $\Lvl_{2}$ of $\mathscr{M}_{[3]}(3,3,3)$.}
	\label{Level_2_Lex_Dual_3x3x3}
\end{figure}

\begin{dfn}\label{Ranked_Subposet}
	Suppose that $\mathscr{P}$ is ranked.
	Then we denote the \textit{ranked subposet up to rank} $n$ of $\mathscr{P}$ by $\mathscr{P}_{\leq n}$,
	and define it to be the restriction of $\mathscr{P}$ to
	\begin{align*}
		\bigcup_{i=0}^n \Lvl_{i}.
	\end{align*}
\end{dfn}

The following definition allows us to talk about different dimensional structures in a multiset lattice.

\begin{dfn}[Subproducts]\label{Subproducts_Definition}
	Suppose that $d\geq 1$ and we have tosets $\mathscr{T}_1,\dots, \mathscr{T}_d$, 
	and put $\mathscr{T}=\mathscr{T}_1\times \cdots \times \mathscr{T}_d$.
	For a set of coordinates $S=\{p_1 < \dots < p_k \}\subseteq [d]$ we define the \textit{subproduct of} $\mathscr{T}$ \textit{under} $S$ by
	\begin{align*}
		\mathscr{T}_S = \mathscr{T}_{p_1} \times \cdots \times \mathscr{T}_{p_k},
	\end{align*} 
	where $\mathscr{T}_{\emptyset} = \emptyset$.
	We say that $\mathscr{T}_S$ \textit{has dimension} $k$.
	Let $\overline{S} = [d]\setminus S$ and we can write $\overline{S}=\{q_1 < \cdots < q_{d-k}\}$.
	For $x=(x_{q_1},\dots, x_{q_{d-k}})\in \mathscr{T}_{\overline{S}}$ we define the \textit{subproduct at} $x$ \textit{under} $S$ \textit{of} $\mathscr{T}$ by
	\begin{align*}
		\mathscr{T}_S(x) = \{(y_1,\dots, y_d)\in \mathscr{T} \bigm | y_a=x_a \text{ for all } a\in \overline{S}\}.
	\end{align*}
\end{dfn}

\begin{exm}
	Some products can be seen in Figure \ref{subproductsExample}.
\end{exm}

\begin{figure}
	\centering
	\begin{subfigure}[t]{0.3\textwidth}
		\includegraphics[width=\textwidth]{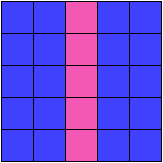}
		\caption{$(\mathscr{M}_{[2]}(5,5))_{\{2\}}(2)$.}
	\end{subfigure}
	\hfill
	\begin{subfigure}[t]{0.3\textwidth}
		\includegraphics[width=\textwidth]{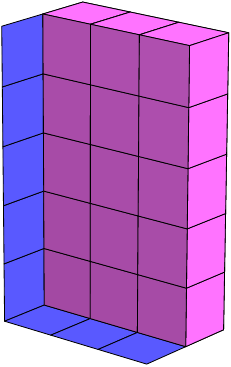}
		\caption{$(\mathscr{M}_{[3]}(2,3,5))_{\{2,3\}}(0)$.}
	\end{subfigure}
	\hfill
	\begin{subfigure}[t]{0.3\textwidth}
		\includegraphics[width=\textwidth]{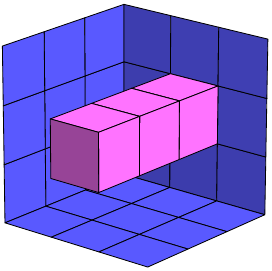}
		\caption{$(\mathscr{M}_{[3]}(3,3,3))_{\{1\}}((1,1))$.}
	\end{subfigure}
	
	\caption{Some subproducts of multiset lattices.}
	\label{subproductsExample}
\end{figure} \subsection{Macaulay Posets}\label{Macaulay_Posets}

Macaulay posets are posets for which similar results to the famous Kruskal--Katona Theorem hold \cite{Katona1966, kruskal_1963, cubeMacaulayFirst}.
They have been extensively studied in extremal combinatorics and entire chapters have been dedicated to them in textbooks \cite{engel_1997, HarperBook}.

\begin{dfn}[Macaulay Posets]\label{Macaulay_Posets_Definition}
	Suppose that we have a $2$-poset $\mathscr{T} = (S,\mathcal{O}_1, \mathcal{O}_2)$.
	Furthermore, suppose that $\mathcal{O}_2$ is a total order and $\mathscr{P} = (S,\mathcal{O}_1)$ is a ranked poset such that all the levels are finite.
	Then we can consider the levels of $\mathscr{P}$, and restrict the order $\mathcal{O}_2$ on $\Lvl_i$ for each $i\in [r(\mathscr{P})+1]_0$,
	thus creating a totally ordered set out of each level of $\mathscr{P}$.
	This in turn means that we can consider initial segments in each level of $\mathscr{P}$, 
	and write $\Lvl_i[q]$ to mean the initial segment of size $q$ of the $i$-th level of $\mathscr{P}$ under the order $\mathcal{O}_2$.
	For a set $A\subseteq \Lvl_{i}$ we define $\Lvl_{i}[A] = \Lvl_{i}[|A|]$.
	
	We say that $\mathscr{T}$ is \textit{Macaulay} if for any $i\in [r(\mathscr{P})]$ and any $A\subseteq \Lvl_i$ we have
	\begin{align*}
		\dSdw_{\mathscr{P}}(\Lvl_i[A]) \subseteq \Lvl_{i-1}[\dSdw_{\mathscr{P}}(A)].
	\end{align*}
	We will often say that $\mathcal{O}_2$ is a Macaulay order on $\mathscr{P}$, or that $\mathscr{P}$ forms a Macaulay poset with $\mathcal{O}_2$.
	Similarly, we will often write $(\mathscr{P}, \mathcal{O}_2)$ is Macaulay.
	Notation will sometimes be abused and we will treat $\mathscr{T}$ as the ranked poset $\mathscr{P}$.
	For example, we will write $r(\mathscr{T})$ to mean $r(\mathscr{P})$.
\end{dfn}

Macaulay posets are often discussed in an equivalent form.
The following lemma has been observed by many authors.

\begin{prp}[\cite{engel_1997}, Proposition 8.1.1]\label{Macaulay_Basic_Equivalence}
	Suppose that $\mathscr{P}$ is a ranked poset and $\mathcal{O}$ is a total order on $\mathscr{P}$.
	The $2$-poset $(\mathscr{P}, \mathcal{O})$ is Macaulay if and only if for each $i\in [r(P)]$ and any $A\subseteq \Lvl_{i}$ we have:
	\begin{enumerate}
		\item (Nestedness) Initial segments have the smallest shadows,
		\begin{align*}
			|\dSdw_{\mathscr{P}}(\Lvl_{i}[A])| \leq |\dSdw_{\mathscr{P}}(A)|.
		\end{align*}
		\item (Continuity) The shadow of an initial segment is an initial segment,
		\begin{align*}
			\dSdw_{\mathscr{P}}(\Lvl_{i}[A]) = \Lvl_{i-1}[\dSdw_{\mathscr{P}}(\Lvl_{i}[A])].
		\end{align*}
	\end{enumerate}
\end{prp}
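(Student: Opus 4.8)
The plan is to prove the two directions of the equivalence separately, after isolating the one elementary fact about tosets that both directions need: initial segments of a fixed toset are linearly ordered by inclusion (that is, $\mathscr{T}[p]\subseteq \mathscr{T}[q]$ whenever $p\le q$), and a subset of a toset which happens to be an initial segment equals the initial segment of its own size. I would state this as a one-line observation up front, together with the remark that if $a$ has rank $i$ then every lower shadow point of $a$ has rank $i-1$ (immediate from Definition \ref{Ranked_Posets_Definition}(2)), so $\dSdw_{\mathscr{P}}$ carries subsets of $\Lvl_i$ into $\Lvl_{i-1}$. All the sets in sight are finite by the standing hypothesis that the levels of $\mathscr{P}$ are finite, so ``contained with equal cardinality implies equal'' is available throughout.

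For the forward direction, assume $(\mathscr{P},\mathcal{O})$ is Macaulay, i.e. $\dSdw_{\mathscr{P}}(\Lvl_i[A])\subseteq \Lvl_{i-1}[\dSdw_{\mathscr{P}}(A)]$ for all $i\in[r(P)]$ and all $A\subseteq\Lvl_i$. Nestedness is immediate: the right-hand side is the initial segment of $\Lvl_{i-1}$ of size $|\dSdw_{\mathscr{P}}(A)|$ (unwinding the convention $\mathscr{T}[B]=\mathscr{T}[|B|]$ from Definition \ref{Indices_Elements_And_Intervals_Definition}), so passing to cardinalities in the inclusion gives $|\dSdw_{\mathscr{P}}(\Lvl_i[A])|\le |\dSdw_{\mathscr{P}}(A)|$. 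For Continuity, apply the Macaulay inclusion to $A'=\Lvl_i[A]$; since $A'$ is itself an initial segment we have $\Lvl_i[A']=\Lvl_i[A]$, so the inclusion reads $\dSdw_{\mathscr{P}}(\Lvl_i[A])\subseteq \Lvl_{i-1}[\dSdw_{\mathscr{P}}(\Lvl_i[A])]$. The shadow on the left lies in $\Lvl_{i-1}$, and the set on the right is by definition the initial segment of $\Lvl_{i-1}$ of that exact cardinality; two finite sets of equal size, one inside the other, are equal, which is precisely the Continuity equation.

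For the converse, assume Nestedness and Continuity and fix $i$ and $A\subseteq\Lvl_i$. By Continuity, $\dSdw_{\mathscr{P}}(\Lvl_i[A])=\Lvl_{i-1}[\dSdw_{\mathscr{P}}(\Lvl_i[A])]$ is an initial segment of $\Lvl_{i-1}$ of size $|\dSdw_{\mathscr{P}}(\Lvl_i[A])|$; by Nestedness this size is at most $|\dSdw_{\mathscr{P}}(A)|$; hence, by the nesting-of-initial-segments observation, it is contained in $\Lvl_{i-1}[\dSdw_{\mathscr{P}}(A)]$. That is the Macaulay inclusion, completing the proof.

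I do not expect any genuine obstacle here; this is a reformulation rather than a theorem with content. The only places to be careful are the bookkeeping ones: that $\dSdw_{\mathscr{P}}$ of a subset of $\Lvl_i$ lands in $\Lvl_{i-1}$, that all cardinalities involved are at most the size of the relevant level so that the notation $\Lvl_{i-1}[\,\cdot\,]$ is defined, and keeping straight the two readings of $\mathscr{T}[B]$ versus $\mathscr{T}[q]$. Having the sublemma on nested initial segments stated explicitly makes both directions essentially one line each.
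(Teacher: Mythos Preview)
Your proposal is correct. The paper does not actually supply its own proof of this proposition; it simply records the statement and refers the reader to Proposition 8.1.1 in Engel's book \cite{engel_1997}, so there is nothing substantive to compare against, and your argument is exactly the standard one that reference would give.
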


We make another useful observation concerning Macaulay posets.
In algebraic terms, the following lemma says that Macaulay posets satisfy a direct limit property.

\begin{prp}\label{Macaulay_Direct_Limit}
	Suppose that for each $i\in \N$ we have a Macaulay poset $\mathscr{T}_i$.
	Furthermore, 
	for any $i\in \N$ suppose that $\mathscr{T}_i$ is a subposet of $\mathscr{T}_{i+1}$ with respect to both orders,
	the rank function of $\mathscr{T}_i$ is a restriction of the rank function of $\mathscr{T}_{i+1}$,
	and
	\begin{align*}
		\bigcup_{a=0}^i \mathscr{T}_a
	\end{align*}
	is finite.
	Then 
	\begin{align*}
		\mathscr{T} = \bigcup_{i=0}^\infty \mathscr{T}_i
	\end{align*}
	is Macaulay.
\end{prp}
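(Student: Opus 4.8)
The plan is to verify the defining inclusion of Definition \ref{Macaulay_Posets_Definition} for $\mathscr{T}$ head-on. Equip $\mathscr{T}$ with the union $\mathcal{O}_1$ of the partial orders of the $\mathscr{T}_i$, the union $\mathcal{O}_2$ of the total orders, and the union $r$ of the rank functions (well defined since these are compatible restrictions). Given a rank $j$ and a subset $A$ of the $j$-th level of $\mathscr{T}$, the idea is that every finite piece of data that occurs --- $A$ itself, the initial segment $\Lvl_j[A]$, the shadows $\dSdw_{\mathscr{T}}(\Lvl_j[A])$ and $\dSdw_{\mathscr{T}}(A)$, and the initial segment $\Lvl_{j-1}[\dSdw_{\mathscr{T}}(A)]$ --- already lies inside a single $\mathscr{T}_i$; one then transports the problem into $\mathscr{T}_i$ and invokes its Macaulay property.

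First I would check that $(\mathscr{T},\mathcal{O}_1)$ with $r$ is a ranked poset (Definition \ref{Ranked_Posets_Definition}) with finite levels. A rank-$0$ minimum of $\mathscr{T}_0$ remains a minimum in every $\mathscr{T}_i$, hence in $\mathscr{T}$ (anything strictly below it would have negative rank); and if $b$ covers $a$ in $\mathscr{T}$ then, choosing $i$ with $a,b\in\mathscr{T}_i$, $b$ still covers $a$ in $\mathscr{T}_i$, since passing to a subposet cannot create new elements strictly between $a$ and $b$; hence $r(a)+1=r(b)$. The same device --- work inside a finite $\mathscr{T}_i$ containing both points and use a saturated chain there --- gives $x<_{\mathscr{T}}y\Rightarrow r(x)<r(y)$. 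Concerning finiteness of the levels of $\mathscr{T}$: this does not follow from finiteness of the individual $\mathscr{T}_i$ alone (take a growing ``fan'' of rank-$1$ atoms over a common minimum), so I would record it as part of the hypotheses --- equivalently, that for each rank $j$ the levels $\Lvl_{j,\mathscr{T}_i}$ stabilize in $i$ --- which is how the statement is meant and what happens in the applications. Under it, every subset of a level of $\mathscr{T}$, and every shadow of such a subset, is finite.

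The one genuinely structural observation is that cover relations between two consecutive levels are not disturbed by restricting to $\mathscr{T}_i$: for each $i$, each $j$, and any $X\subseteq\Lvl_{j,\mathscr{T}_i}$ one has $\dSdw_{\mathscr{T}_i}(X)=\dSdw_{\mathscr{T}}(X)\cap\mathscr{T}_i$. Indeed, if $b\in X$ covers $a$ in $\mathscr{T}_i$ then $r(a)=j-1$, and no $z\in\mathscr{T}$ can satisfy $a<_{\mathscr{T}}z<_{\mathscr{T}}b$, since then $j-1<r(z)<j$; so $b$ covers $a$ in $\mathscr{T}$ too, and the reverse inclusion is clear. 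The complementary fact is a localization of initial segments: if an initial segment $\Lvl_{k,\mathscr{T}}[m]$ of a level of $\mathscr{T}$ happens to lie in $\mathscr{T}_i$, then $\Lvl_{k,\mathscr{T}}[m]=\Lvl_{k,\mathscr{T}_i}[m]$. This holds because the $m$ elements of $\Lvl_{k,\mathscr{T}}[m]$ are, by definition of an initial segment, $\mathcal{O}_2$-below every element of $\Lvl_{k,\mathscr{T}}$ outside the segment, in particular below every other element of $\Lvl_{k,\mathscr{T}_i}$; since $\mathcal{O}_2$ on $\Lvl_{k,\mathscr{T}_i}$ is the restriction of $\mathcal{O}_2$ on $\Lvl_{k,\mathscr{T}}$, these are exactly the $m$ smallest elements of $\Lvl_{k,\mathscr{T}_i}$.

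To conclude, fix $j\in[r(\mathscr{T})]$ and $A\subseteq\Lvl_{j,\mathscr{T}}$ (the case $A=\emptyset$ being trivial), and pick $i$ so large that $\mathscr{T}_i$ contains the five finite sets $A$, $\Lvl_{j,\mathscr{T}}[A]$, $\dSdw_{\mathscr{T}}(\Lvl_{j,\mathscr{T}}[A])$, $\dSdw_{\mathscr{T}}(A)$ and $\Lvl_{j-1,\mathscr{T}}[\dSdw_{\mathscr{T}}(A)]$. The two facts above then yield $\Lvl_{j,\mathscr{T}}[A]=\Lvl_{j,\mathscr{T}_i}[A]$, $\dSdw_{\mathscr{T}}(A)=\dSdw_{\mathscr{T}_i}(A)$, $\dSdw_{\mathscr{T}}(\Lvl_{j,\mathscr{T}}[A])=\dSdw_{\mathscr{T}_i}(\Lvl_{j,\mathscr{T}_i}[A])$, and $\Lvl_{j-1,\mathscr{T}}[\dSdw_{\mathscr{T}}(A)]=\Lvl_{j-1,\mathscr{T}_i}[\dSdw_{\mathscr{T}}(A)]$, so that, using the Macaulay property of $\mathscr{T}_i$ applied to $A\subseteq\Lvl_{j,\mathscr{T}_i}$,
\[
\dSdw_{\mathscr{T}}\bigl(\Lvl_{j,\mathscr{T}}[A]\bigr)=\dSdw_{\mathscr{T}_i}\bigl(\Lvl_{j,\mathscr{T}_i}[A]\bigr)\subseteq \Lvl_{j-1,\mathscr{T}_i}\bigl[\dSdw_{\mathscr{T}_i}(A)\bigr]=\Lvl_{j-1,\mathscr{T}}\bigl[\dSdw_{\mathscr{T}}(A)\bigr],
\]
which is precisely the inclusion demanded by Definition \ref{Macaulay_Posets_Definition}. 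I expect the real work to be in the structural preliminaries above: confirming that $\mathscr{T}$ is a genuine ranked poset with finite levels and, above all, that the cover relation between levels $j-1$ and $j$ is unaffected by passing to $\mathscr{T}_i$; once those are secured, the choice of $i$ and the displayed chain are routine bookkeeping.
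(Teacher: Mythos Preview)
Your approach is the same as the paper's: verify the defining inclusion of Definition \ref{Macaulay_Posets_Definition} for $\mathscr{T}$ by localizing any finite amount of level-$j$ data inside some $\mathscr{T}_i$ and invoking the Macaulay property there. The paper's proof is a two-line sketch (``the property is level-by-level; we have an ascending chain; done''), whereas you actually carry out the bookkeeping --- checking that covers, shadows, and initial segments survive the passage between $\mathscr{T}$ and $\mathscr{T}_i$.

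Your observation about finiteness of the levels of $\mathscr{T}$ is well taken: the stated hypothesis (each $\mathscr{T}_i$ finite) does not by itself force $\Lvl_{j,\mathscr{T}}$ to be finite, and the paper silently assumes this. In the applications (e.g.\ deducing Theorem \ref{Macaulay_1927} from Theorem \ref{Clements_Lindstrom_Theorem}) the levels do stabilize, so the intended reading is exactly the one you record. This is a point the paper glosses over rather than a flaw in your argument.
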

\begin{proof}
	Definition \ref{Macaulay_Posets_Definition} is based on a property for each specific level.
	Since we have an ascending chain of subposets the claim follows immediately.
\end{proof}

The central idea of using Macaulay posets together with Hilbert functions rests on an observation made by Bezrukov concerning the duals of Macaulay posets.
A proof of the following lemma can be found in \cite{engel_1997} Proposition 8.1.2.

\begin{lem}[Bezrukov's Dual Lemma \cite{bezrukovDual} 1994]\label{Bezrukov_Dual_Lemma}
	Suppose that $\mathscr{P}$ is a ranked poset with $r(P)< \infty$, and that $\mathcal{O}$ is a total order on $\mathscr{P}$.
	We consider the $2$-poset $\mathscr{T} = (\mathscr{P}, \mathcal{O})$.
	Then $\mathscr{T}$ is Macaulay iff $\mathscr{T}^\ast$ is Macaulay. 
\end{lem}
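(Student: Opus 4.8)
The statement is that for a ranked poset $\mathscr{P}$ with finite rank $N = r(P)$ and a total order $\mathcal{O}$ on its elements, the $2$-poset $\mathscr{T} = (\mathscr{P}, \mathcal{O})$ is Macaulay if and only if the dual $\mathscr{T}^\ast = (\mathscr{P}^\ast, \mathcal{O}^\ast)$ is Macaulay.

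Let me think about what the dual does. The dual poset $\mathscr{P}^\ast$ reverses the partial order. Since $\mathscr{P}$ is ranked with rank function $r$ and finite rank $N$, the dual $\mathscr{P}^\ast$ is ranked with rank function $r^\ast(a) = N - r(a)$. So $\Lvl_{i, \mathscr{P}^\ast} = \Lvl_{N-i, \mathscr{P}}$. Also, by Proposition "Properties of Duals", $\dSdw_{\mathscr{P}^\ast}(A) = \uSdw_{\mathscr{P}}(A)$.

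The total order $\mathcal{O}^\ast$ is the reverse of $\mathcal{O}$. So an initial segment of size $q$ in $(\Lvl_{i,\mathscr{P}^\ast}, \mathcal{O}^\ast)$ equals a "final segment" of size $q$ in $(\Lvl_{N-i, \mathscr{P}}, \mathcal{O})$. If $\Lvl_{N-i,\mathscr{P}}$ has $m_{N-i}$ elements, then the initial segment of size $q$ under $\mathcal{O}^\ast$ is the complement of the initial segment of size $m_{N-i} - q$ under $\mathcal{O}$.

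So the dual Macaulay condition says: for any $A \subseteq \Lvl_{i, \mathscr{P}^\ast} = \Lvl_{N-i, \mathscr{P}}$, the down-shadow in $\mathscr{P}^\ast$ (= up-shadow in $\mathscr{P}$) of the $\mathcal{O}^\ast$-initial segment on $A$ is contained in the $\mathcal{O}^\ast$-initial segment on $\Lvl_{i-1, \mathscr{P}^\ast}$ generated by the up-shadow of $A$. Translating via complements: if $B$ is a set and $B^c$ denotes its complement in the appropriate level, then the up-shadow of $(\text{final segment on } B)$ is a final segment (continuity), and final segments have smallest up-shadows (nestedness).

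The cleanest route uses the equivalent formulation in Proposition "Macaulay Basic Equivalence" (nestedness + continuity). I would prove:

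**Step 1.** Establish the dictionary between $\mathscr{T}$ and $\mathscr{T}^\ast$: ranks, levels, shadows (via "Properties of Duals"), and the fact that $\mathcal{O}^\ast$-initial segments on a level are complements of $\mathcal{O}$-initial segments on that level.

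**Step 2.** Prove the following self-dual statement equivalent to Macaulayness: *for every level $i$ and every $A \subseteq \Lvl_i$, $\uSdw((\Lvl_i \setminus A)) \supseteq \Lvl_{i+1} \setminus \uSdw(\text{something})$* — i.e., reformulate "initial segments have smallest down-shadows and down-shadows of initial segments are initial segments" into a statement about final segments and up-shadows. Concretely, for a finite poset $Q$ and finite sets $X \subseteq \Lvl_i$, the key combinatorial identity is that the complement of the up-shadow of $X$ inside $\Lvl_{i+1}$ equals the set of elements in $\Lvl_{i+1}$ all of whose down-shadow avoids $X$... but that is not quite $\dSdw$ of the complement. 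The actual duality is cleaner: one shows that the Macaulay condition (continuity + nestedness, $\dSdw$-version) for $\mathcal{O}$ on $\mathscr{P}$ is literally the same statement as the Macaulay condition ($\uSdw$-version, with $\mathcal{O}^\ast$, using final segments) on $\mathscr{P}$, and the $\uSdw$-version for $\mathscr{P}$ with $\mathcal{O}^\ast$ *is* the $\dSdw$-version for $\mathscr{P}^\ast$ with $\mathcal{O}^\ast$.

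**Step 3.** The heart of the matter: show that "$\dSdw$-nestedness + $\dSdw$-continuity for $\mathcal{O}$" implies "$\uSdw$-nestedness + $\uSdw$-continuity for $\mathcal{O}$ with final segments". For continuity this is a counting/complementation argument: if $I$ is an $\mathcal{O}$-initial segment in $\Lvl_i$ and $F = \Lvl_i \setminus I$ is the corresponding final segment, one checks $\uSdw(F) = \Lvl_{i+1} \setminus \{ x \in \Lvl_{i+1} : \dSdw(x) \subseteq I \}$, and the set $\{x : \dSdw(x) \subseteq I\}$ turns out to be an $\mathcal{O}$-initial segment in $\Lvl_{i+1}$ precisely because of $\dSdw$-continuity applied to $I$ and to initial segments slightly larger than $I$; then its complement is a final segment. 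For nestedness, a similar complementation: among all sets $F$ of fixed size in $\Lvl_i$, the up-shadow is smallest for the final segment iff among all sets $I$ of the complementary size in $\Lvl_i$, a certain "up-closure-avoiding" quantity is extremal for initial segments, which is exactly $\dSdw$-nestedness on $\Lvl_{i+1}$. By symmetry (the dual of the dual is the original, and $N - i$ ranges over all levels as $i$ does), the converse implication is free.

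**Main obstacle.** The bookkeeping in Step 3 — carefully matching complements of initial segments with final segments across two adjacent levels, and verifying that "$\dSdw(x) \subseteq I$" cuts out an initial segment — is the delicate part; one must use continuity at *every* threshold, not just at $|I|$. I would lean on the monotone structure: as $q$ increases, $\Lvl_i[q]$ grows, its shadow $\dSdw(\Lvl_i[q])$ grows through initial segments by continuity, and dually the set of elements of $\Lvl_{i+1}$ whose shadow lies below the $\mathcal{O}$-cutoff grows through initial segments. Everything else is the routine dictionary of Step 1. Given that the paper cites Engel's Proposition 8.1.2 for the full argument, I would state the dictionary precisely, carry out the continuity complementation in detail, note the nestedness complementation follows by the same counting, and invoke self-duality ($\mathscr{T}^{\ast\ast} = \mathscr{T}$) to get both directions at once.
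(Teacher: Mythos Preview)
The paper does not give its own proof of this lemma; it simply cites Engel's book (Proposition 8.1.2). Your complementation approach is exactly the standard argument found there, so in that sense you are on the right track.

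There is, however, one genuine imprecision in your Step~3 that would bite if you tried to write it out. You separate the work as ``$\dSdw$-continuity gives $\uSdw$-continuity'' and ``$\dSdw$-nestedness gives $\uSdw$-nestedness'', but neither implication holds on its own; each of the two upper-shadow conclusions requires \emph{both} lower-shadow hypotheses. Concretely, to show that $J=\{x\in \Lvl_{i+1}:\dSdw(x)\subseteq I\}$ is an $\mathcal{O}$-initial segment when $I$ is, suppose $x<_{\mathcal{O}} y$ and $\dSdw(y)\subseteq I$ but $\dSdw(x)\not\subseteq I$. Let $q$ be maximal with $\dSdw(\Lvl_{i+1}[q])\subseteq I$; then $x\notin \Lvl_{i+1}[q]$. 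Setting $A=\Lvl_{i+1}[q]\cup\{y\}$ (here $y\notin\Lvl_{i+1}[q]$ since $x<_{\mathcal O} y$), we have $|A|=q+1$ and $\dSdw(A)\subseteq I$, so $|\dSdw(\Lvl_{i+1}[q+1])|\le |\dSdw(A)|\le |I|$ by \emph{nestedness}, and then $\dSdw(\Lvl_{i+1}[q+1])\subseteq I$ by \emph{continuity}, contradicting maximality of $q$. The nestedness half is symmetric: given any $C\subseteq \Lvl_i$ with $|C|=|I|$ and $m=|\{x:\dSdw(x)\subseteq C\}|$, nestedness gives $|\dSdw(\Lvl_{i+1}[m])|\le |C|=|I|$, but you still need continuity to conclude $\dSdw(\Lvl_{i+1}[m])\subseteq I$ and hence $|J|\ge m$. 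A small example with $\dSdw(a)=\{2\}$, $\dSdw(b)=\{1,2\}$, $\dSdw(c)=\{1,2,3\}$ on $\Lvl_i=\{1,2,3\}$ satisfies $\dSdw$-nestedness but not $\dSdw$-continuity, and indeed fails $\uSdw$-nestedness for final segments. Once you intertwine the two hypotheses as above, your outline goes through, and the biconditional follows from $(\mathscr{T}^\ast)^\ast=\mathscr{T}$ as you note.
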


In \cite{bezrukovDual}, general isoperimetric problems were considered.
The formulation of Lemma \ref{Bezrukov_Dual_Lemma} given above appears in \cite{engel_1997}.
If $r(\mathscr{P}) = \infty$ then $\mathscr{P}^\ast$ is not ranked, 
so $\mathscr{P}^\ast$ can't be Macaulay because it is not even ranked.
However, we can still extract a lot of information about $\mathscr{P}^\ast$ if we know that $\mathscr{P}$ is Macaulay.

\begin{lem}\label{Macaulay_Equivalence_Strong}
	Suppose that $\mathscr{T} = (P, \mathcal{O}', \mathcal{O})$, 
	where $\mathscr{P} = (P, \mathcal{O}')$ is a ranked poset,
	$\mathcal{O}$ is a total order on $P$,
	and every level is finite.
	The following are equivalent:
	\begin{enumerate}
		\item $\mathscr{T}$ is Macaulay.
		\item For every $n\in \N$, $\mathscr{T}_{\leq n}$ is Macaulay.
		\item For any $i\in [n]_0$ and any $A\subseteq \Lvl_{i, \mathscr{P}}$ we have
		\begin{align*}
			\uSdw_{\mathscr{P}}(\Lvl_{i,\mathscr{P}}^\ast[A]) &\subseteq \Lvl^\ast_{i+1, \mathscr{P}}[\uSdw_{\mathscr{P}}(A)],
		\end{align*}
		where $\Lvl_{i,\mathscr{P}}^\ast$ means to take the $i$-th level of $P$, 
		impose the total order induced by $\mathcal{O}$, 
		and then take the dual.
		\item For any $i\in [n]_0$ and any $A\subseteq \Lvl_{i, \mathscr{P}}$ we have
		\begin{align*}
			|\uSdw_{\mathscr{P}}(\Lvl^\ast_{i,\mathscr{P}}[A])| &\leq |\uSdw_{\mathscr{P}}(A)|,\\
			\uSdw_{\mathscr{P}}(\Lvl^\ast_{i, \mathscr{P}}[A]) &= \Lvl_{i+1, \mathscr{P}}^\ast[\uSdw_{\mathscr{P}}(\Lvl^\ast_{i, \mathscr{P}}[A])].
		\end{align*}
	\end{enumerate}
\end{lem}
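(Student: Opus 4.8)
The plan is to funnel all four conditions through the finite truncations $\mathscr{T}_{\leq n}$, because when $r(\mathscr{P})=\infty$ the dual poset $\mathscr{P}^{\ast}$ is not ranked, so Bezrukov's Dual Lemma \ref{Bezrukov_Dual_Lemma} cannot be applied to $\mathscr{T}$ directly, whereas each $\mathscr{T}_{\leq n}$ has finite rank $n$ and its dual is ranked. First I would prove $(1)\Leftrightarrow(2)$. For $i\leq n$, deleting the elements of rank $>n$ destroys no covering relation among elements of rank $\leq n$ (in a ranked poset, $x<b$ forces $r(x)<r(b)$), so the $i$-th level of $\mathscr{T}_{\leq n}$, the order $\mathcal{O}$ restricted to it, and the lower-shadow map all agree with those of $\mathscr{T}$; hence the Macaulay condition of Definition \ref{Macaulay_Posets_Definition} for $\mathscr{T}_{\leq n}$ is exactly that for $\mathscr{T}$ restricted to $i\in[n]$. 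This yields $(1)\Rightarrow(2)$ at once and $(2)\Rightarrow(1)$ on taking the union over $n$; alternatively, $(2)\Rightarrow(1)$ is the instance $\mathscr{T}_i=\mathscr{T}_{\leq i}$ of Proposition \ref{Macaulay_Direct_Limit}, whose hypotheses are immediate here (nested subposets with respect to both orders, restricted rank functions, finite partial unions since all levels are finite).

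Next I fix $n$ and invoke Bezrukov's Dual Lemma \ref{Bezrukov_Dual_Lemma}: since $r(\mathscr{P}_{\leq n})=n<\infty$, the $2$-poset $\mathscr{T}_{\leq n}$ is Macaulay if and only if $(\mathscr{T}_{\leq n})^{\ast}$ is. The remaining work is to unwind the latter statement. By Proposition \ref{Properties_of_Duals}, the ranked poset underlying $(\mathscr{T}_{\leq n})^{\ast}$ carries the rank function $x\mapsto n-r(x)$; its level of rank $j$ is $\Lvl_{n-j,\mathscr{P}}$ equipped with the order dual to $\mathcal{O}$ (that is, $\Lvl^{\ast}_{n-j,\mathscr{P}}$ in the notation of the lemma), and its lower-shadow map is $\uSdw_{\mathscr{P}}$.

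Substituting these data into Definition \ref{Macaulay_Posets_Definition} for $(\mathscr{T}_{\leq n})^{\ast}$ and re-indexing by $i=n-j$, so that $j$ ranging over $[n]$ corresponds to $i$ ranging over $[n]_0$, the condition becomes precisely: for all $i\in[n]_0$ and all $A\subseteq\Lvl_{i,\mathscr{P}}$ one has $\uSdw_{\mathscr{P}}(\Lvl^{\ast}_{i,\mathscr{P}}[A])\subseteq\Lvl^{\ast}_{i+1,\mathscr{P}}[\uSdw_{\mathscr{P}}(A)]$, i.e. condition (3) for that $n$; ranging over all $n$ gives $(2)\Leftrightarrow(3)$. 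Running the identical substitution through the nestedness–continuity reformulation of Proposition \ref{Macaulay_Basic_Equivalence}, applied to $(\mathscr{T}_{\leq n})^{\ast}$ in place of Definition \ref{Macaulay_Posets_Definition}, turns the nestedness clause into $|\uSdw_{\mathscr{P}}(\Lvl^{\ast}_{i,\mathscr{P}}[A])|\leq|\uSdw_{\mathscr{P}}(A)|$ and the continuity clause into $\uSdw_{\mathscr{P}}(\Lvl^{\ast}_{i,\mathscr{P}}[A])=\Lvl^{\ast}_{i+1,\mathscr{P}}[\uSdw_{\mathscr{P}}(\Lvl^{\ast}_{i,\mathscr{P}}[A])]$, which are exactly the two displayed equations of (4); hence $(2)\Leftrightarrow(4)$ as well, closing the cycle of equivalences.

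The one delicate point is the bookkeeping in the dualization step: one must keep careful track of how the rank function, the levels, the total order carried by each level (hence its initial segments), and the shadow maps all transform under $(-)^{\ast}$ combined with the truncation to $\mathscr{P}_{\leq n}$, and in particular keep straight the reflection $j\mapsto n-j$ that exchanges the top of $\mathscr{T}_{\leq n}$ with the bottom of $(\mathscr{T}_{\leq n})^{\ast}$. None of this is deep once it is set up correctly, but it is easy to misplace an index; it is also exactly the reason the whole argument is routed through the finite truncations rather than through $\mathscr{T}$ itself.
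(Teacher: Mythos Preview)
Your proposal is correct and follows essentially the same route as the paper: establish $(1)\Leftrightarrow(2)$ via Definition~\ref{Macaulay_Posets_Definition} and Proposition~\ref{Macaulay_Direct_Limit}, then apply Bezrukov's Dual Lemma~\ref{Bezrukov_Dual_Lemma} to each finite truncation $\mathscr{T}_{\leq n}$ and unwind the dual using Proposition~\ref{Properties_of_Duals} and Proposition~\ref{Macaulay_Basic_Equivalence} to obtain $(3)$ and $(4)$. You supply more of the index bookkeeping than the paper does, but the argument is the same.
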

\begin{proof}
	We have that (1) implies (2) by Definition \ref{Macaulay_Posets_Definition}.
	Proposition \ref{Macaulay_Direct_Limit} gives us that (2) implies (1).
	For all $n\in \N$, $r(\mathscr{T}_{\leq n})< \infty$, so we have that $\mathscr{T}^\ast_{\leq n}$ is Macaulay by Bezrukov's Dual Lemma \ref{Bezrukov_Dual_Lemma}.
	Thus, (2) is equivalent to (3) by Proposition \ref{Properties_of_Duals}
	Finally, (3) is equivalent to (4) by Lemma \ref{Macaulay_Basic_Equivalence} and Proposition \ref{Properties_of_Duals}.
\end{proof}

The oldest result on Macaulay posets is due to Macaulay.

\begin{thm}[Macaulay (the dual problem was considered) \cite{MacaulayF.S.1927SPoE} 1927]\label{Macaulay_1927}
	If $\mathscr{M} = \mathscr{M}_{[d]}(\infty,\dots, \infty)$ then $(\mathscr{M}, \mathcal{L}_{\mathscr{M}})$ is Macaulay.
\end{thm}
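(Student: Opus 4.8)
The statement is the classical Kruskal–Katona/Macaulay theorem for the infinite multiset lattice $\mathscr{M}_{[d]}(\infty,\dots,\infty)$ equipped with lexicographic order, which is the combinatorial shadow of Macaulay's theorem on Hilbert functions of polynomial rings. Since each level $\Lvl_i$ of $\mathscr{M}_{[d]}(\infty,\dots,\infty)$ is finite (it is the set of monomials of degree $i$ in $d$ variables), Lemma \ref{Macaulay_Equivalence_Strong} tells us it suffices to verify the defining inclusion $\dSdw_{\mathscr{M}}(\Lvl_i[A]) \subseteq \Lvl_{i-1}[\dSdw_{\mathscr{M}}(A)]$ for each $i$ and each $A \subseteq \Lvl_i$. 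The plan is to prove this directly by induction on the dimension $d$, which is the standard route for the multiset (divided power) version of Kruskal–Katona.

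\textbf{Base case and reduction.} For $d = 1$ the lattice is a chain, every level is a single point, and the statement is trivial. For the inductive step, I would split $\Lvl_i$ according to the value of the first coordinate. Writing an element of $\mathscr{M}_{[d]}(\infty,\dots,\infty)$ as $(a_1,\dots,a_d)$ with $a_1+\cdots+a_d = i$, the lexicographic order compares first coordinates first; so an initial segment $\Lvl_i[q]$ of size $q$ consists of all monomials with $a_1$ large, together with an initial segment (in the lex order on the remaining $d-1$ coordinates) of the monomials with $a_1$ equal to some threshold value $t$. The key structural fact is that the subproduct of $\mathscr{M}$ obtained by fixing $a_1 = t$ is isomorphic to $\mathscr{M}_{[d-1]}(\infty,\dots,\infty)$ shifted up by $t$ in rank, and this isomorphism is order-preserving for both the partial order and the lex order (Proposition \ref{Decomposition_Of_Multiset_Latices} and the definition of lex order identify these slices). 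The shadow map on $\mathscr{M}$ decomposes accordingly: the lower shadow of a monomial with $a_1 = t > 0$ either decreases $a_1$ to $t-1$ or decreases one of the later coordinates while keeping $a_1 = t$.

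\textbf{Main computation.} Given $A \subseteq \Lvl_i$, it suffices to treat $A$ itself an initial segment $\Lvl_i[q]$ (since $\Lvl_i[\dSdw(A)] \supseteq \Lvl_i[\dSdw(\Lvl_i[A])]$ always, and we want to compare sizes; more precisely one uses the equivalent formulation in Proposition \ref{Macaulay_Basic_Equivalence}: show initial segments minimize the shadow size, plus continuity). Decompose $\Lvl_i[q]$ as the full block $\{a_1 > t\} \cap \Lvl_i$ together with $B$, an initial lex segment among $\{a_1 = t\} \cap \Lvl_i \cong \Lvl_{i-t,\mathscr{M}_{[d-1]}}$. The shadow of the full block $\{a_1 \ge t+1\} \cap \Lvl_i$ is exactly $\{a_1 \ge t\} \cap \Lvl_{i-1}$ (decreasing $a_1$ hits everything, and decreasing a later coordinate stays inside). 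The shadow of $B$ lies in $\{a_1 = t\} \cap \Lvl_{i-1}$ (from decreasing a later coordinate) plus $\{a_1 = t-1\} \cap \Lvl_{i-1}$ (from decreasing $a_1$); the former piece is, by the induction hypothesis applied inside $\mathscr{M}_{[d-1]}$, contained in an initial lex segment of the appropriate level, and the latter piece is the image of $B$ under the coordinate shift and is contained in an initial segment of $\{a_1 = t-1\} \cap \Lvl_{i-1}$ of size $|B|$ — all of which assembles to show $\dSdw(\Lvl_i[q])$ is itself an initial lex segment of $\Lvl_{i-1}$, giving both continuity and (by the induction hypothesis's size bound on the $d-1$ slice) the nestedness inequality. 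I would then invoke Lemma \ref{Macaulay_Equivalence_Strong} to pass from all the finite truncations $\mathscr{M}_{\le n}$ back to the full infinite lattice.

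\textbf{Main obstacle.} The delicate point is the bookkeeping in the inductive step: one must verify that the shadow of $B$, which has \emph{two} components landing in two different first-coordinate slices, nests correctly with the shadow of the full block above it — i.e. that the $\{a_1 = t-1\}$-component of $\dSdw(B)$ together with the full slices $\{a_1 \ge t\}$ exactly fills an initial segment, with no gap at the $a_1 = t$ slice. This requires knowing that the $\{a_1 = t\} \cap \Lvl_{i-1}$ slice is \emph{completely} covered: it is covered by $\dSdw$ of the $\{a_1 = t+1\}$-block when $q$ is large enough, but when $B$ is small one instead needs that the induction hypothesis gives continuity \emph{within} that slice. Handling the boundary between "$B$ small" and "$B$ large" cleanly — equivalently, checking that an initial lex segment always has an initial-segment shadow no matter where the cut falls — is where the argument needs care, and is essentially the content of the classical cascade/binomial-representation argument dressed in poset language.
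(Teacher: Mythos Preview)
The paper does not prove Theorem \ref{Macaulay_1927} directly: it is cited as Macaulay's classical 1927 result, and immediately after stating the Clements--Lindstr\"om Theorem \ref{Clements_Lindstrom_Theorem} the paper simply remarks that Macaulay's theorem follows from it via Proposition \ref{Macaulay_Direct_Limit} (take the direct limit of the finite lattices $\mathscr{M}_{[d]}(\ell,\dots,\ell)$ as $\ell \to \infty$). So the paper's ``proof'' is a one-line reduction to another cited result.

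Your approach is genuinely different: a direct induction on the dimension $d$, slicing by the first coordinate. This is the classical route and is in principle sound, but your sketch has one cosmetic error and one real gap. The cosmetic error: under the paper's convention (Definition \ref{Lexicographic_Order_Definition}), an initial segment of $\Lvl_i$ consists of elements with $a_1$ \emph{small}, not large, so your decomposition should read ``full block $\{a_1 < t\}$ together with a partial initial segment at $a_1 = t$''. The argument is symmetric, so this is only a matter of convention.

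The real gap is in nestedness. Your continuity argument (the shadow of an initial segment is an initial segment) is reasonably outlined, but for nestedness you write ``it suffices to treat $A$ itself as an initial segment'', and this begs the question. To conclude $|\dSdw(\Lvl_i[|A|])| \le |\dSdw(A)|$ for \emph{arbitrary} $A$, you must compress $A$ toward an initial segment without increasing its shadow. The standard device is two-stage compression: first replace each first-coordinate slice $A_t = A \cap \{a_1 = t\}$ by the initial lex segment of the same size inside that slice (using the $(d{-}1)$-dimensional induction hypothesis), and then argue that mass can be pushed from higher slices to lower ones without increasing the shadow. Your sketch omits this second step entirely; the ``main obstacle'' paragraph only addresses the continuity side. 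Without the pushing-across-slices argument, the nestedness half of Proposition \ref{Macaulay_Basic_Equivalence} is not established.
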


A famous result concerning Macaulay posets is the Kruskal--Katona Theorem.
This result first appeared in print in 1959 \cite{cubeMacaulayFirst} by Schützenberger, but the proof is incomplete.
Later it was proved by Kruskal in 1963 \cite{kruskal_1963}, and independently discovered by Katona in 1966 \cite{Katona1966}.
It is a special case of Harper's vertex isoperimetric inequality from 1966 \cite{Harper1966OptimalNA}.

\begin{thm}\label{KruskalKatona}
	If $\mathscr{M} = \mathscr{M}_{[d]}(2,\dots , 2)$ then $(\mathscr{M}, \mathcal{L}_{\mathscr{M}})$ is Macaulay. 
\end{thm}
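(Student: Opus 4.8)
The plan is to deduce the statement from the equivalent ``nestedness $+$ continuity'' form of Proposition~\ref{Macaulay_Basic_Equivalence}, after first translating into the classical language of set systems. Identify $\mathscr{M}_{[d]}(2,\dots,2)$ with the subsets of $[d]$ via $f\mapsto\{i:f(i)=1\}$; then $\Lvl_k$ is the family of $k$-element subsets, the lower shadow $\dSdw(S)$ of a $k$-set consists of the $(k-1)$-sets obtained by deleting one element, and the restriction of $\mathcal{L}_{\mathscr{M}}$ to $\Lvl_k$ orders $k$-sets so that $S$ precedes $T$ precisely when the smallest index on which the two characteristic vectors differ lies in $T$. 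Relabelling $i\mapsto d+1-i$ is an isomorphism of the ranked poset $\mathscr{M}$ that commutes with $\dSdw$ and carries this order to the usual colexicographic order; so it suffices to prove the statement for colex, i.e.\ the Kruskal--Katona theorem, which I would reprove by compressions.

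First I would introduce, for $1\le s<t\le d$ and a family $\mathcal{F}\subseteq\Lvl_k$, the shift $C_{s,t}(\mathcal{F})$ obtained by replacing each $S\in\mathcal{F}$ with $t\in S$ and $s\notin S$ by $(S\setminus\{t\})\cup\{s\}$ whenever the latter set is not already in $\mathcal{F}$. A short case analysis on $(k-1)$-sets gives $\dSdw(C_{s,t}(\mathcal{F}))\subseteq C_{s,t}(\dSdw(\mathcal{F}))$, hence $|\dSdw(C_{s,t}(\mathcal{F}))|\le|\dSdw(\mathcal{F})|$ while $|C_{s,t}(\mathcal{F})|=|\mathcal{F}|$; and since $\sum_{S\in\mathcal{F}}\sum_{i\in S}i$ strictly decreases under any nontrivial shift, iterating brings $\mathcal{F}$ after finitely many steps to a family $\mathcal{G}$ that is stable under every $C_{s,t}$, of the same size and with shadow no larger. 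For such a \emph{left-compressed} $\mathcal{G}$ (the cases $k\in\{0,d\}$ are immediate, so assume $0<k<d$) I would split along the last coordinate: let $\mathcal{A}=\{S\setminus\{d\}:d\in S\in\mathcal{G}\}$ and $\mathcal{B}=\{S\in\mathcal{G}:d\notin S\}$, regarded as families of subsets of $[d-1]$. Both are left-compressed there; stability under the shifts $C_{s,d}$ forces $\mathcal{A}\subseteq\dSdw(\mathcal{B})$ (take $S=T\cup\{d\}\in\mathcal{G}$ with $T\in\mathcal{A}$, $T\neq[d-1]$, and $s=\min([d-1]\setminus T)$, so $T\cup\{s\}\in\mathcal{G}$), and counting shadows according to whether they contain $d$ yields $|\dSdw(\mathcal{G})|=|\dSdw(\mathcal{B})|+|\dSdw(\mathcal{A})|$, with the two shadows now taken inside $\mathscr{M}_{[d-1]}(2,\dots,2)$.

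Induction on $d$ then bounds $|\dSdw(\mathcal{B})|$ and $|\dSdw(\mathcal{A})|$ from below by the shadows of the colex initial segments of the respective sizes, and optimizing over the split $|\mathcal{A}|+|\mathcal{B}|=|\mathcal{G}|$ produces the sharp inequality $|\dSdw(\mathcal{G})|\ge|\dSdw(\mathcal{C})|$, where $\mathcal{C}\subseteq\Lvl_k$ is the colex initial segment of size $|\mathcal{G}|$; this gives nestedness. Running the same last-coordinate split on $\mathcal{C}$ itself — a colex initial segment is left-compressed, and in it the inclusion $\mathcal{A}\subseteq\dSdw(\mathcal{B})$ is an equality of colex initial segments — shows inductively that $\dSdw(\mathcal{C})$ is again a colex initial segment, which is continuity; together with the first paragraph this shows $(\mathscr{M},\mathcal{L}_{\mathscr{M}})$ is Macaulay. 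The one genuinely delicate point is the optimization step, i.e.\ converting the two inductive bounds on $\mathcal{A}$ and $\mathcal{B}$ into the single sharp bound for $\mathcal{G}$; this is cleanest when phrased through the binomial (``cascade'') representation of $|\mathcal{G}|$, and everything else is bookkeeping. Alternatively one may simply invoke the statement as the special case $\ell_1=\dots=\ell_d=2$ of the Clements--Lindström theorem, or, using Bezrukov's Dual Lemma~\ref{Bezrukov_Dual_Lemma} and self-duality of the cube, run the entire argument with upper shadows instead.
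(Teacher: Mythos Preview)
The paper does not prove this statement. It is presented as the classical Kruskal--Katona Theorem, with references to Sch\"utzenberger, Kruskal, Katona, and Harper, and no argument beyond that; the Clements--Lindstr\"om Theorem is likewise stated immediately afterwards without proof, and the only deduction the paper makes in this block is that Macaulay's original theorem follows from Clements--Lindstr\"om via the direct-limit Proposition~\ref{Macaulay_Direct_Limit}. So your closing remark --- that one may simply invoke the case $\ell_1=\dots=\ell_d=2$ of Theorem~\ref{Clements_Lindstrom_Theorem} --- is exactly the spirit in which the paper treats this result.

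Your compression argument is one of the standard self-contained proofs of Kruskal--Katona and the outline is correct: the identification with set systems, the relabelling $i\mapsto d+1-i$ carrying $\mathcal{L}_{\mathscr{M}}$ to colex, the inclusion $\dSdw(C_{s,t}(\mathcal{F}))\subseteq C_{s,t}(\dSdw(\mathcal{F}))$, the termination via a decreasing weight, the split $\mathcal{A}\subseteq\dSdw(\mathcal{B})$, and the shadow decomposition $|\dSdw(\mathcal{G})|=|\dSdw(\mathcal{B})|+|\dSdw(\mathcal{A})|$ are all fine. You are right to flag the optimization step as the one genuinely nontrivial point: left-compressed families need not be colex initial segments (e.g.\ $\{12,13,14\}$ in $\binom{[4]}{2}$), so the inductive bounds on $|\dSdw(\mathcal{A})|$ and $|\dSdw(\mathcal{B})|$ separately do not assemble into the sharp bound without either the cascade representation or a further monotonicity argument. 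If you wish to avoid that bookkeeping altogether, the cleaner variant is to use $UV$-compressions (with $|U|=|V|$), which push a left-compressed family all the way to the colex initial segment and dispense with the induction on $d$.
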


Clements and Lindström completely settled the Macaulay problem for finite multiset lattices.

\begin{thm}[Clements--Lindström \cite{ClementsG.F.1969Agoa} 1969]\label{Clements_Lindstrom_Theorem}
	If $\mathscr{M} = \mathscr{M}_{[d]}(\ell_1,\dots ,\ell_d)$ with $\ell_1 \leq \ell_2 \leq \cdots \leq \ell_d < \infty$ then $(\mathscr{M}, \mathcal{L}_{\mathscr{M}})$ is Macaulay.
\end{thm}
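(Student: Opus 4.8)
The plan is to prove the statement by induction on the dimension $d$ together with a secondary induction on $\ell_1 + \cdots + \ell_d$, using the ``compression'' method of Clements and Lindström. The base case $d = 1$ is a chain, where every level has at most one element and the Macaulay property is vacuous. For the inductive step, fix $\mathscr{M} = \mathscr{M}_{[d]}(\ell_1, \dots, \ell_d)$ with $\ell_1 \le \cdots \le \ell_d < \infty$, fix a rank $i$, and let $A \subseteq \Lvl_i$. By Proposition \ref{Macaulay_Basic_Equivalence} it suffices to verify nestedness, $|\dSdw(\Lvl_i[A])| \le |\dSdw(A)|$, together with continuity; and in fact continuity for lex initial segments in a multiset lattice is a direct check, so the heart of the matter is the nestedness inequality.

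The key construction is the family of compression operators indexed by the last coordinate: decompose $\mathscr{M}$ as $\mathscr{M}' \times \mathscr{T}_d$ where $\mathscr{M}' = \mathscr{M}_{[d-1]}(\ell_1, \dots, \ell_{d-1})$ and $\mathscr{T}_d$ is a chain with $\ell_d$ elements. For each value $j \in \{0, \dots, \ell_d - 1\}$ the fiber $A_j = \{x \in \mathscr{M}' : (x, j) \in A\}$ sits inside a level of $\mathscr{M}'$, and one replaces $A_j$ by the lex initial segment $\Lvl[A_j]$ of the same size in that level — this is the ``$d$-th coordinate compression.'' First I would show that this compression does not increase the shadow: this follows because within a fixed fiber the shadow splits into a horizontal part (shadows taken inside $\mathscr{M}'$) and a vertical part (the map $j \mapsto j-1$), the horizontal part shrinks by the inductive hypothesis applied to $\mathscr{M}'$, and the vertical part is handled by a comparison of successive fibers. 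Then I would show that a set stable under all coordinate compressions (a ``compressed'' set) is so rigid that one can compute its shadow directly and compare it against that of the lex initial segment of the same cardinality — this is the classical Clements–Lindström lemma, and it is where the hypothesis $\ell_1 \le \cdots \le \ell_d$ is used, to ensure that pushing mass toward the smaller coordinates is always ``legal.'' Iterating compressions terminates because each strictly decreases a suitable potential (e.g. the sum of lex-ranks of the elements of $A$), and the limit is a compressed set.

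Concretely the steps are: (1) reduce to nestedness via Proposition \ref{Macaulay_Basic_Equivalence} and dispatch continuity by a direct description of lex initial segments; (2) define the $d$-th coordinate compression and prove $|\dSdw(C_d(A))| \le |\dSdw(A)|$ using the inductive hypothesis on $\mathscr{M}'$; (3) iterate to obtain a compressed set $B$ with $|B| = |A|$ and $|\dSdw(B)| \le |\dSdw(A)|$; (4) analyze compressed sets — show that a compressed set which is not already a lex initial segment can be further reduced (in the secondary induction on $\sum \ell_i$, by restricting to a subproduct with some $\ell_k$ decreased) without increasing the shadow, so that ultimately $|\dSdw(\Lvl_i[|A|])| \le |\dSdw(B)| \le |\dSdw(A)|$.

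The main obstacle is step (4): understanding the structure of a fully compressed set and showing that lex initial segments are extremal among them. A compressed set need not be a lex initial segment — for instance mass can be ``stacked'' above a lex segment in the lower coordinates — and the argument must peel off the top fiber, apply the $(d-1)$-dimensional statement to what remains in that fiber while simultaneously tracking the interaction with the fiber below it, which is exactly the delicate bookkeeping that makes the ordering hypothesis $\ell_1 \le \cdots \le \ell_d$ essential. I expect this to require a careful case analysis on whether the top nonempty fiber is ``full'' or ``partial,'' mirroring the original Clements–Lindström proof, rather than any slicker route; the compression reductions in steps (2)--(3) are comparatively routine once the fiber decomposition of the shadow is written out.
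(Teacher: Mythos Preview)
The paper does not prove this theorem: it is stated with a citation to the 1969 Clements--Lindstr\"om paper and treated as background, so there is no ``paper's own proof'' to compare your proposal against.

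That said, your outline is the classical compression argument, essentially the method of the original paper (and of the exposition in Engel's book, which the present paper also cites). The overall architecture --- induction on $d$, fiber decomposition along the last coordinate, compression using the inductive hypothesis on $\mathscr{M}'$, and then a separate analysis showing that fully compressed sets are dominated by lex initial segments --- is correct, and you are right that step~(4) is where the work and the ordering hypothesis $\ell_1 \le \cdots \le \ell_d$ live. One small caution: the ``secondary induction on $\sum \ell_i$'' you invoke in step~(4) is not quite the mechanism that appears in the standard proofs; the usual route after compression is a direct local-move argument (if a compressed set is not a lex segment, exhibit two specific elements whose swap keeps the set compressed and does not enlarge the shadow, then iterate), rather than passing to a smaller ambient lattice. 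Your plan would still go through, but you may find the local-move formulation cleaner to execute than restricting to a subproduct.
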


We must remark that the above three classical results of Macaulay posets were originally not stated like above.
Macaulay worked with monomials in $K[x_1,\dots, x_d]$.
Clements and Lindström worked with upper shadows and the dual of the lexicographic order.
There are many proofs of the Kruskal--Katona Theorem using different techniques.
Many authors did not use the lexicographic order, but some domination order, which in a lot of cases was the colexicographic order. 

Notice that Macaulay's Theorem follows from the Clements--Lindström Theorem by using Proposition \ref{Macaulay_Direct_Limit}.
Similarly, we get the following corollary.

\begin{cor}\label{Macaulay_Full_Multiset_Theorem}
	If $\mathscr{M}$ is any multiset lattice then $\mathscr{M}$ forms a Macaulay poset with some domination order. 
\end{cor}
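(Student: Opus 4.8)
The plan is to deduce this from the Clements--Lindström Theorem~\ref{Clements_Lindstrom_Theorem} by two reductions: first permute the coordinates so that the repetition limits become nondecreasing, and then realize an infinite limit as a direct limit of finite ones via Proposition~\ref{Macaulay_Direct_Limit}.

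First I would normalize the situation. Every multiset lattice is isomorphic to one of the form $\mathscr{M}=\mathscr{M}_{[d]}(\ell_1,\dots,\ell_d)$ with $d\geq 1$ and $\ell_1,\dots,\ell_d\in\N\cup\{\infty\}$, and by Proposition~\ref{Decomposition_Of_Multiset_Latices} this $\mathscr{M}$ is the Cartesian product of the tosets $[\ell_1],\dots,[\ell_d]$, so domination orders on it are defined. Choose $\pi\in\mathfrak{S}_d$ with $\ell_{\pi(1)}\leq\ell_{\pi(2)}\leq\cdots\leq\ell_{\pi(d)}$ and consider the coordinate permutation
\begin{align*}
	\phi\colon\mathscr{M}_{[d]}(\ell_1,\dots,\ell_d)\longrightarrow\mathscr{M}_{[d]}(\ell_{\pi(1)},\dots,\ell_{\pi(d)}),\qquad (x_1,\dots,x_d)\longmapsto(x_{\pi(1)},\dots,x_{\pi(d)}).
\end{align*}
This is a poset isomorphism (Definition~\ref{Poset_Isomorphism_Definition}) that carries the rank function to the rank function, since a coordinate sum is unchanged by a permutation, and --- directly from Definition~\ref{Domination_Order_Definition} --- carries the domination order $\mathcal{D}_\pi$ to the lexicographic order. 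Hence $\phi$ matches levels with levels, initial segments of levels with initial segments of levels, and lower shadows with lower shadows, so $(\mathscr{M}_{[d]}(\ell_1,\dots,\ell_d),\mathcal{D}_\pi)$ is Macaulay if and only if $(\mathscr{M}_{[d]}(m_1,\dots,m_d),\mathcal{L})$ is, where $m_i=\ell_{\pi(i)}$ and $m_1\leq\cdots\leq m_d$. So it is enough to show that for any $m_1\leq\cdots\leq m_d$ in $\N\cup\{\infty\}$ the lexicographic order is Macaulay on $\mathscr{M}_{[d]}(m_1,\dots,m_d)$.

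If all $m_i$ are finite this is exactly Clements--Lindström~\ref{Clements_Lindstrom_Theorem}. Otherwise let $k$ be the largest index with $m_k<\infty$ (with the convention $m_0=0$ if there is no such index), so that $m_{k+1}=\cdots=m_d=\infty$. Fix a strictly increasing sequence $n_0<n_1<n_2<\cdots$ of natural numbers with $n_0\geq m_k$, and for $i\in\N$ put
\begin{align*}
	\mathscr{T}_i=(\mathscr{M}_{[d]}(m_1,\dots,m_k,n_i,\dots,n_i),\ \mathcal{L}).
\end{align*}
Each $\mathscr{T}_i$ is a finite multiset lattice with nondecreasing repetition limits, hence Macaulay by Clements--Lindström~\ref{Clements_Lindstrom_Theorem}; moreover $\mathscr{T}_i$ is a subposet of $\mathscr{T}_{i+1}$ for both the partial order and the lexicographic order --- the defining conditions for $\subseteq$ and for $\mathcal{L}$ only involve the coordinates of the two points being compared, not the ambient box --- the rank function of $\mathscr{T}_i$ (the coordinate sum) is the restriction of that of $\mathscr{T}_{i+1}$, and $\bigcup_{a=0}^i\mathscr{T}_a=\mathscr{T}_i$ is finite. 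So Proposition~\ref{Macaulay_Direct_Limit} applies and $\bigcup_{i=0}^{\infty}\mathscr{T}_i$ is Macaulay. This union equals $(\mathscr{M}_{[d]}(m_1,\dots,m_d),\mathcal{L})$, because any $f\colon[d]\to\N$ with $f(j)<m_j$ for $j\leq k$ lies in $\mathscr{T}_i$ once $n_i$ exceeds $f(j)$ for all $j>k$. This gives the reduced claim, hence the corollary, and exhibits $\mathcal{D}_\pi$ with $\pi$ sorting the limits as the required domination order.

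I do not expect a genuine obstacle: all the depth sits in the Clements--Lindström Theorem, and what remains is bookkeeping. The two points to handle carefully are that the coordinate permutation $\phi$ really intertwines $\mathcal{D}_\pi$ with the lexicographic order (which is precisely the content of Definition~\ref{Domination_Order_Definition}, once one reads the permuted tuple as an element of the reordered product) and that the ambient repetition limits play no role in the definitions of $\subseteq$ and of $\mathcal{L}$, so that the chain $\mathscr{T}_0\subseteq\mathscr{T}_1\subseteq\cdots$ meets the hypotheses of Proposition~\ref{Macaulay_Direct_Limit} verbatim.
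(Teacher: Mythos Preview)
Your argument is correct and follows precisely the approach the paper indicates: reduce to the Clements--Lindstr\"om Theorem by permuting coordinates (which accounts for the phrase ``some domination order''), then handle any infinite lengths via the direct-limit Proposition~\ref{Macaulay_Direct_Limit}. The paper states this only tersely (``Similarly, we get the following corollary''), and you have carefully supplied the details it omits.
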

 \subsection{The Poset of Monomials}\label{The_Poset_Of_Monomials}

We assume that all rings are commutative and have $1$.
$K$ will always denote a field.
A graded ring is a ring $R$ together with a decomposition $R = \bigoplus_{i=0} R_i$ (as an abelian group) such that $R_iR_j \subseteq R_{i+j}$ for all $i,j \in N$. 
We say that an element $x\in R$ is \textit{homogeneous} if there exists $i\in \N$ such that $x\in R_i$.
Then for any nonzero $f\in R$ we can uniquely write $f=f_1+\cdots + f_n$, 
where each nonzero $f_j$ belongs to some $R_{i_j}$.
The ring elements $f_1,\dots, f_n$ are called the \textit{homogeneous components} of $f$.
For any nonzero $x\in R_i$ we define the \textit{degree} of $x$ in $R$ to be $\deg_R(x) = i$.

\begin{dfn}[Homogeneous Ideal]\label{Homogeneous_Ideal_Definition}
	An ideal $I$ in a graded ring $R$ is said to be \textit{homogeneous} if and only if $I$ is generated by homogeneous elements.
	If $R$ is a graded ring and $H$ is a homogeneous ideal of $R$ then $S=R/H$ is graded with $\Lvl_{i,S} = \frac{\Lvl_{i,R}+H}{H}$.
	In particular, for any homogeneous $f\in R$ such that $f+H$ is nonzero, we have $\deg_S(f+H) = \deg_R(f)$.
	We say that this is the induced grading on $R/H$ from $R$.
\end{dfn}

\begin{prp}[Properties of Homogeneous Ideals]\label{Properties_of_Homogeneous_Ideals}
	Suppose that $R=\bigoplus_{i=0}^\infty R_i$ is a graded ring and $I$ is an ideal of $R$.
	The following are equivalent:
	\begin{enumerate}
		\item $I$ is homogeneous.
		\item $I=\bigoplus_{i=0}^\infty (R_i \cap I)$.
		\item Let $f\in R$. One has, $f\in I$ iff each of the homogeneous components of $f$ are in $I$.
	\end{enumerate}
\end{prp}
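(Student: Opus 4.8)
The plan is to prove the three conditions are equivalent by the cycle $(1)\Rightarrow(2)\Rightarrow(3)\Rightarrow(1)$, which is the standard routine for homogeneous ideals. Throughout, the key structural fact I will use repeatedly is that multiplication by a homogeneous element of degree $j$ maps $R_i$ into $R_{i+j}$, so that if $g=\sum_k g_k$ is the decomposition of $g$ into homogeneous components (with $g_k\in R_{i_k}$, distinct degrees $i_k$) and $h\in R_j$ is homogeneous, then $hg=\sum_k hg_k$ is precisely the homogeneous decomposition of $hg$ (after discarding any zero terms), since the $hg_k$ sit in distinct degrees $i_k+j$.

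For $(1)\Rightarrow(2)$: one inclusion, $\bigoplus_i(R_i\cap I)\subseteq I$, is immediate since each $R_i\cap I\subseteq I$ and $I$ is closed under addition. For the reverse inclusion I take a generating set of homogeneous elements $\{e_\lambda\}$ for $I$ (using Definition \ref{Homogeneous_Ideal_Definition}), write an arbitrary $f\in I$ as a finite sum $f=\sum_\lambda r_\lambda e_\lambda$ with $r_\lambda\in R$, expand each $r_\lambda$ into its homogeneous components, and observe via the multiplication-by-homogeneous remark that every resulting summand is homogeneous and lies in $I$; collecting summands of equal degree shows each homogeneous component of $f$ lies in $R_i\cap I$, hence $f\in\bigoplus_i(R_i\cap I)$.

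For $(2)\Rightarrow(3)$: given the direct sum decomposition $I=\bigoplus_i(R_i\cap I)$, take any $f\in R$ with homogeneous decomposition $f=f_1+\cdots+f_n$, where $f_j\in R_{i_j}$ with the $i_j$ distinct. If each $f_j\in I$, then $f\in I$ trivially. Conversely, if $f\in I$, then by the direct sum decomposition $f$ has a unique expression as a sum of elements from the distinct summands $R_i\cap I$; comparing this with the (unique) homogeneous decomposition of $f$ inside $R=\bigoplus R_i$ forces each $f_j\in R_{i_j}\cap I$, so each homogeneous component lies in $I$. For $(3)\Rightarrow(1)$: for each $f\in I$ all its homogeneous components lie in $I$; the set of all homogeneous components arising this way is a set of homogeneous elements of $I$ whose $R$-span (indeed whose sums) recover every element of $I$, so $I$ is generated by homogeneous elements, i.e.\ homogeneous.

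I do not anticipate a genuine obstacle here — the proposition is foundational and every step is a short formal manipulation. The one point requiring mild care is the uniqueness argument in $(2)\Rightarrow(3)$: one must be explicit that the homogeneous decomposition of an element of $R=\bigoplus_i R_i$ is unique, and that an element of $\bigoplus_i(R_i\cap I)$ expressed through that internal direct sum has components that coincide with its components in the ambient $\bigoplus_i R_i$ (this is just functoriality of the direct sum under the inclusions $R_i\cap I\hookrightarrow R_i$). Everything else is bookkeeping with finite sums and the grading of multiplication.
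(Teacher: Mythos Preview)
Your proof is correct and follows the standard cyclic argument $(1)\Rightarrow(2)\Rightarrow(3)\Rightarrow(1)$. The paper itself does not supply a proof of this proposition; it is stated as a well-known foundational fact and used freely thereafter, so there is nothing to compare against. Your write-up is fine as a self-contained justification.
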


\begin{dfn}[Homogeneous Components/Levels in Graded Rings]\label{Levels_in_Graded_Rings_Definition}
	If $R=\bigoplus_{i=0}^\infty R_i$ is a graded ring, then we define the \textit{$i$-th level/homogeneous component} of $R$ by $\Lvl_{i,R} = R_i$.
\end{dfn}

The \textit{standard grading} on $R=K[x_1,\dots, x_d]$ is given by making $\Lvl_{i,R}$ be all homogeneous polynomials of degree $i$.
Then for a homogeneous ideal $H\subseteq R$ we define the \textit{standard grading} on $R/H$ to be induced by the standard grading on $R$.
For the rest of this section let $R=K[x_1,\dots, x_d]$ and $S=R/H$, where $H$ is a homogeneous ideal of $R$.
Furthermore, we assume that $H\neq R$, which means $1\not\in H$.

A \textit{monomial} of $S$ is a nonzero element that has the form $x_1^{p_1}\cdots x_d^{p_d} + H$, where $p_1,\dots, p_d\in \N$.
An ideal $H\subseteq S$ is called \textit{monomial} if it is generated by monomials. 
Every monomial ideal in $S$ is a homogeneous ideal.

\begin{dfn}[Set of Monomials and Monomial Division]\label{Set_of_Monomials_and_Monomial_Division_Definition}
	The \textit{set of all monomials} of $S$ will be denoted by $M_S$.
	For $f,g\in M_S$ we say $f|g$ iff there exists $a\in M_S$ such that $g=af$.
	We say that $f$ \textit{divides} $g$.
\end{dfn}

\begin{prp}\label{Monomial_Partial_Order}
	Monomial division is a partial order on $M_S$.
\end{prp}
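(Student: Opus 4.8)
The plan is to check the three defining properties of a partial order --- reflexivity, transitivity, and antisymmetry --- for the relation $\mid$ on $M_S$, using the standard grading on $S$ furnished by Proposition \ref{Quotients_of_Graded_Rings_by_Homogeneous_Ideals_are_Graded} together with the degree identity it records, namely that $\deg_S(g+H)=\deg_R(g)$ whenever $g\in R$ is homogeneous with $g+H\neq 0$.

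First I would dispatch reflexivity: since $1\notin H$ by our standing assumption, the class $1+H = x_1^0\cdots x_d^0 + H$ is a nonzero monomial of $S$, and the identity $f=(1+H)f$ shows $f\mid f$ for every $f\in M_S$. Next, for transitivity, given $g=af$ and $h=bg$ with $a,b\in M_S$, write $a=x_1^{p_1}\cdots x_d^{p_d}+H$ and $b=x_1^{q_1}\cdots x_d^{q_d}+H$; then $ba=x_1^{p_1+q_1}\cdots x_d^{p_d+q_d}+H$ again has monomial form, and $h=b(af)=(ba)f$. Since $h\neq 0$ we must have $ba\neq 0$, so $ba\in M_S$ and hence $f\mid h$. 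Both of these steps rest on the observation that a product of monomials of $S$ is a monomial of $S$ precisely when it is nonzero.

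The substantive step is antisymmetry, and this is where the grading does real work. Suppose $g=af$ and $f=bg$ with $a,b\in M_S$. By Definition \ref{Monomials_and_Monomial_Ideals_Definition} I may pick monomials $u,v,s,t$ of $R$ --- which are homogeneous in the standard grading --- with $f=u+H$, $g=v+H$, $a=s+H$, $b=t+H$. Since $g=(s+H)(u+H)=su+H$ is nonzero, $su\notin H$, so Proposition \ref{Quotients_of_Graded_Rings_by_Homogeneous_Ideals_are_Graded} gives $\deg_S(g)=\deg_R(su)=\deg_R(s)+\deg_R(u)=\deg_R(s)+\deg_S(f)$; symmetrically $\deg_S(f)=\deg_R(t)+\deg_S(g)$. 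Adding the two equalities and cancelling forces $\deg_R(s)+\deg_R(t)=0$, hence $\deg_R(s)=\deg_R(t)=0$. The only monomial of $R$ of degree $0$ is $1$, so $s=t=1$, whence $a=b=1+H$ and $f=g$. This completes the verification that $\mid$ is a partial order. The one place requiring genuine care --- and the only point where the homogeneity of $H$ (and the hypothesis $1\notin H$) is actually used --- is ensuring the degree arithmetic is legitimate, i.e.\ that the relevant monomials of $R$ avoid $H$ so that their degrees in $R$ and in $S$ coincide.
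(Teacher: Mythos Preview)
Your proof is correct and follows essentially the same approach as the paper's: reflexivity via $1+H\in M_S$, transitivity via $ba\neq 0$ (else $h=0$), and antisymmetry via the degree additivity forced by the grading, concluding $\deg(a)=\deg(b)=0$ so $a=b=1+H$. The only difference is cosmetic --- you are more explicit in distinguishing $\deg_R$ from $\deg_S$ and in invoking Proposition~\ref{Quotients_of_Graded_Rings_by_Homogeneous_Ideals_are_Graded}, whereas the paper simply writes $\deg$ throughout.
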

\begin{proof}
	If $q\in M_S$ then $q|q$, since $q=(1+H)q$, and note that $1+H\in M_S$ because $H\neq R$.
	Thus, monomial division is reflexive.
	
	Next, we handle antisymmetry.
	So, suppose that $p,q\in M_S$ with $p|q$ and $q|p$.
	Then there exist $a,b\in M_S$ such that $q=ap$ and $p=bq$.
	Hence, $\deg(q)=\deg(a)+\deg(p)$ and $\deg(p)=\deg(b)+\deg(q)$.
	Thus, $\deg(q) = \deg(a)+\deg(b)+\deg(q)$,
	which gives $0=\deg(a)+\deg(b)$.
	As $\deg(a),\deg(b)\geq 0$ we must have that $\deg(a)=\deg(b)=0$.
	However, there is only one monomial of degree $0$, which forces $a=b=1+H$.
	Therefore, $p=q$ as desired.
	
	We are left to handle transitivity.
	Suppose that $p,q,m\in M_S$ such that $p|q$ and $q|m$.
	Then there are $a,b\in M_S$ such that $q=ap$ and $m=bq$.
	Thus, $m=(ba)p$.
	Well, $ba\neq 0$, since otherwise $m=0$.
	Hence, $ba\in M_S$, and we have that $p|m$.
	So, monomial division is transitive.
	
	Therefore, monomial division is a partial order on $M_S$.
\end{proof}

\begin{dfn}[Poset of Monomials]\label{Poset_of_Monomials_Definition}
	Considering Proposition \ref{Monomial_Partial_Order}, monomial division will be called the \textit{monomial partial order} on $S$ from now on.
	The \textit{poset of monomials} of $S$ is defined to be the set of all monomials together with the monomial partial order,
	and we denote by $\mathscr{M}_S$.
	Furthermore, we define the \textit{rank} of an element of $\mathscr{M}_S$ to be its degree,
	and this makes $\mathscr{M}_S$ into a ranked poset.
\end{dfn}

\begin{lem}\label{Monomial_Partial_Order_Representatives}
	Suppose that $S=K[x_1,\dots, x_d]/H$, 
	where $H$ is a homogeneous ideal,
	and let $m_1,m_2\in \mathscr{M}_S$.
	We have that $m_1|m_2$ iff there are $p_1,\dots, p_d,q_1,\dots, q_d\in \N$ such that
	\begin{enumerate}
		\item $m_1 = x_1^{p_1}\cdots x_d^{p_d} + H$.
		\item $m_2 = x_1^{q_1}\cdots x_d^{q_d} + H$.
		\item $p_1\leq q_1, \dots ,p_d\leq q_d$.
	\end{enumerate}
\end{lem}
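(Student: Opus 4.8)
The plan is to prove the two implications separately; both are short and come down to unwinding the definition of monomial division in $S$ (Definition \ref{Set_of_Monomials_and_Monomial_Division_Definition}). The only point that needs a little care is that when $H$ is not a monomial ideal a single monomial of $S$ may admit several exponent vectors as representatives (for instance $x_1^2 + H$ and $x_2^2 + H$ can coincide), so the lemma is only asserting that \emph{some} pair of representatives of $m_1$ and $m_2$ is related coordinatewise, not that every pair is.

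For the ``if'' direction, assume (1)--(3). Then $q_i - p_i \ge 0$ for every $i$, so $n = x_1^{q_1 - p_1}\cdots x_d^{q_d - p_d}$ is a monomial of $R$, and in $S$ we compute $(n+H)\bigl(x_1^{p_1}\cdots x_d^{p_d} + H\bigr) = x_1^{q_1}\cdots x_d^{q_d} + H = m_2$. Since $m_2$ is a monomial of $S$ it is nonzero, so the factor $n+H$ must be nonzero, hence $n+H$ is itself a monomial of $S$, i.e. $n + H \in M_S$. Thus $m_1 \mid m_2$.

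For the ``only if'' direction, assume $m_1 \mid m_2$. By definition there is $a \in M_S$ with $m_2 = a\,m_1$. By Definition \ref{Monomials_and_Monomial_Ideals_Definition} we may fix exponents with $m_1 = x_1^{p_1}\cdots x_d^{p_d} + H$ and $a = x_1^{t_1}\cdots x_d^{t_d} + H$; multiplying in $S$ gives $m_2 = a\,m_1 = x_1^{p_1+t_1}\cdots x_d^{p_d+t_d} + H$. Setting $q_i = p_i + t_i$ yields representations as in (1) and (2), and $p_i \le q_i$ for all $i$ because $t_i \ge 0$, which is (3).

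I do not expect a genuine obstacle: the statement is engineered to be precisely the concrete shadow of divisibility. The one subtlety worth flagging in the write-up is the asymmetry that emerges from the argument: a representative exponent vector for $m_1$ can be chosen arbitrarily (from any valid representative), and then multiplying by any representative of the quotient $a$ produces a representative of $m_2$ that dominates it; so the lemma should not be misread as claiming that for every representation of $m_2$ there is a dominated representation of $m_1$, only the existence of one compatible pair.
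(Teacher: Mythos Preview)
Your proof is correct and follows essentially the same route as the paper's: both directions are handled by unwinding the definition of monomial division, and your ``only if'' argument is identical to the paper's. Your ``if'' direction is in fact slightly more careful than the paper's one-line dismissal, since you explicitly verify that the cofactor $n+H$ is nonzero (hence lies in $M_S$) before concluding $m_1\mid m_2$.
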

\begin{proof}
	The $\Longleftarrow$ direction follows from the definition of monomial division in $K[x_1,\dots, x_d]$.
	So, suppose that $m_1|m_2$ and we prove the $\implies$ direction.
	Then there exists $a\in \mathscr{M}_S$ such that $m_2=am_1$.
	Thus, 
	\begin{align*}
		m_1 &= x_1^{p_1}\cdots x_d^{p_d} + H,\\
		a &= x_1^{t_1}\cdots x_d^{t_d} + H,
	\end{align*}
	for some $p_1,\dots, p_d,t_1,\dots, t_d\in \N$.
	Then $m_2 = x_1^{p_1+t_1}\cdots x_d^{p_d+t_d}+H$,
	and the claim holds by setting $q_1 = p_1+t_1,\dots, q_d = p_d+t_d$.
\end{proof}

The poset of monomials of a ring has many nice properties.
One crucial property is that its shadow functions correspond to multiplying or dividing by the variables.
That is, the partial order is compatible with the multiplication of the ring.
A rigorous statement of this observation is given for the upper shadows in the next lemma, and we leave the downwards shadow version as an exercise.

\begin{lem}[Upper Shadow in The Poset of Monomials]\label{Upper_Shadow_in_The_Poset_of_Monomials}
	If $A\subseteq \mathscr{M}_S$ then
	\begin{align*}
		\uSdw (A) = \bigcup_{j=1}^d \{m(x_j+H) \bigm | m\in A \text{ and } m(x_j+H) \neq 0\}.
	\end{align*}
\end{lem}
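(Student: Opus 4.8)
The plan is to prove the two inclusions separately, using the structure of monomials in $S = K[x_1,\dots,x_d]/H$ together with Lemma \ref{Monomial_Partial_Order_Representatives}, which tells us that division among monomials of $S$ is witnessed at the level of exponent vectors of representatives.

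First I would establish the inclusion $\supseteq$. Take $m \in A$ and $j \in [d]$ with $m(x_j+H) \neq 0$. Write $m = x_1^{p_1}\cdots x_d^{p_d}+H$; then $m(x_j+H) = x_1^{p_1}\cdots x_j^{p_j+1}\cdots x_d^{p_d}+H$ is a nonzero monomial of $S$, and $m$ divides it via the monomial $x_j+H$, so $m \leq_{\mathscr{M}_S} m(x_j+H)$ in the monomial partial order. It remains to check that $m(x_j+H)$ actually \emph{covers} $m$, i.e.\ there is no monomial strictly between them. This follows because $\deg_S(m(x_j+H)) = \deg_S(m)+1$ by Proposition \ref{Quotients_of_Graded_Rings_by_Homogeneous_Ideals_are_Graded}, and rank in $\mathscr{M}_S$ is degree (Definition \ref{Poset_of_Monomials_Definition}); any monomial $m'$ with $m \leq m' \leq m(x_j+H)$ has $\deg_S(m) \leq \deg_S(m') \leq \deg_S(m)+1$, and if $\deg_S(m') = \deg_S(m)$ then $m | m'$ with the quotient of degree $0$ forces $m' = m$ (the only degree-$0$ monomial is $1+H$), while $\deg_S(m') = \deg_S(m)+1$ with $m' | m(x_j+H)$ forces $m' = m(x_j+H)$ similarly. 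Hence $m(x_j+H) \in \uSdw(m) \subseteq \uSdw(A)$.

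For the inclusion $\subseteq$, take $m' \in \uSdw(A)$, so there is $m \in A$ with $m' \in \uSdw_{\mathscr{M}_S}(m)$, meaning $m'$ covers $m$. By Lemma \ref{Monomial_Partial_Order_Representatives} there are exponent vectors with $m = x_1^{p_1}\cdots x_d^{p_d}+H$, $m' = x_1^{q_1}\cdots x_d^{q_d}+H$, and $p_i \leq q_i$ for all $i$. Since $m'$ covers $m$ we have $\deg_S(m') = \deg_S(m)+1$, so $\sum_i q_i = \sum_i p_i + 1$; combined with $p_i \leq q_i$ this forces $q_i = p_i$ for all $i$ except one index $j$ where $q_j = p_j+1$. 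Therefore $m' = x_1^{p_1}\cdots x_j^{p_j+1}\cdots x_d^{p_d}+H = m(x_j+H)$, and $m(x_j+H) = m' \neq 0$. Thus $m'$ lies in the right-hand side.

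The one subtlety worth being careful about — and the only place the argument is not completely mechanical — is the interaction between degree in $S$ and the exponent vectors: a priori, when $H$ is not monomial, a nonzero monomial $x_1^{p_1}\cdots x_d^{p_d}+H$ might coincide with another monomial having a different exponent vector, or the sum $\sum p_i$ might not equal $\deg_S$. But Proposition \ref{Quotients_of_Graded_Rings_by_Homogeneous_Ideals_are_Graded} guarantees that a nonzero homogeneous element of $R$ keeps its degree in $S$, so $\deg_S(x_1^{p_1}\cdots x_d^{p_d}+H) = \sum_i p_i$ whenever this monomial is nonzero in $S$; this is exactly what lets the "exactly one coordinate increases by one" argument go through. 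Given Lemma \ref{Monomial_Partial_Order_Representatives} and this degree bookkeeping, both inclusions are short, so I do not expect a genuine obstacle.
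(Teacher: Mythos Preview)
Your proof is correct and follows essentially the same two-inclusion strategy as the paper. The only difference is in the $\subseteq$ direction: you invoke Lemma~\ref{Monomial_Partial_Order_Representatives} to pass to exponent vectors and compare coordinate sums, whereas the paper argues more directly that if $m'$ covers $m$ then $m' = am$ for some monomial $a$ with $\deg_S(a)=1$, and any degree-$1$ monomial of $S$ is necessarily some $x_j+H$. Your route is slightly longer but equally valid, and your discussion of the degree bookkeeping via Proposition~\ref{Quotients_of_Graded_Rings_by_Homogeneous_Ideals_are_Graded} is the right justification (and is what the paper is implicitly using as well).
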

\begin{proof}
	The backwards inclusion follows from the definition of $\mathscr{M}_S$.
	So, we prove the forward inclusion.
	Let $p\in \uSdw(A)$.
	Then there exists $m\in A$ such that $m|p$ and $\deg(m) +1 = \deg(p)$.
	Hence, $p=am$ for some $a\in \mathscr{M}_S$.
	However, $\deg(m) +1 = \deg(p)$ forces $\deg(a)=1$.
	Thus, $a=x_j+H$ for some $j\in[d]$.
	Therefore the claim holds.
\end{proof}

We are going to make an even stronger statement about the upper shadow function in Lemma \ref{Monomial_Space_iff_Ideal},
but before that we need a few more definitions.

\begin{dfn}[Graded Vector Spaces]\label{Graded_Vector_Spaces_Definition}
	A $K$-subspace $V$ of $S$ is said to be \textit{graded} if for every $i\in \N$ there is a $K$-subspace $V_i$ of $\Lvl_{i,S}$ such that
	\begin{align*}
		V = \bigoplus_{i=0}^\infty V_i.
	\end{align*}
	We call $V_i$ the $i$-th \textit{graded component/level} of $V$ and write $\Lvl_{i,V} = V_i$.
\end{dfn}

\begin{prp}
	If $I$ is a homogeneous ideal of $S$ then $I$ is a graded vector space.
\end{prp}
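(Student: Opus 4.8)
The plan is to reduce the statement to Proposition \ref{Properties_of_Homogeneous_Ideals} applied to the graded ring $S$, and then upgrade the resulting direct-sum decomposition of abelian groups to one of $K$-vector spaces.

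First I would recall that $S = R/H$ is graded by Proposition \ref{Quotients_of_Graded_Rings_by_Homogeneous_Ideals_are_Graded}, so $S = \bigoplus_{i=0}^\infty \Lvl_{i,S}$ as an abelian group. Since $I$ is a homogeneous ideal of $S$, part (2) of Proposition \ref{Properties_of_Homogeneous_Ideals} (applied with $S$ in place of $R$) gives $I = \bigoplus_{i=0}^\infty (\Lvl_{i,S}\cap I)$ as an abelian group. Setting $\Lvl_{i,I} = \Lvl_{i,S}\cap I$, it then remains only to check that each $\Lvl_{i,I}$ is a $K$-subspace of $\Lvl_{i,S}$, after which Definition \ref{Graded_Vector_Spaces_Definition} applies verbatim.

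For the subspace check I would first observe that $\Lvl_{0,S} = K\cdot(1+H)$: indeed $\Lvl_{0,R} = K$ in the standard grading, and since $H$ is homogeneous with $1\notin H$, no nonzero scalar lies in $H$, so $\Lvl_{0,S} = (K+H)/H \cong K$. The $K$-vector space structure on $S$ is precisely multiplication by these degree-$0$ elements. Now $\Lvl_{i,S}$ is a $K$-subspace of $S$ because the grading is multiplicative, so $\Lvl_{0,S}\cdot\Lvl_{i,S}\subseteq\Lvl_{i,S}$; and $I$ is a $K$-subspace of $S$ because $I$ is an ideal, hence closed under multiplication by every element of $S$, in particular by $\Lvl_{0,S}$. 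Therefore $\Lvl_{i,I} = \Lvl_{i,S}\cap I$, being an intersection of two $K$-subspaces, is itself a $K$-subspace of $\Lvl_{i,S}$. Together with the direct-sum decomposition from the previous paragraph, this says exactly that $I$ is a graded vector space.

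There is no genuine obstacle here; the only point deserving a moment's care is that the abelian-group decomposition supplied by Proposition \ref{Properties_of_Homogeneous_Ideals} is compatible with scalar multiplication, and this follows, as above, from $K$ sitting inside $S$ in degree $0$ together with the multiplicativity of the grading and the ideal property of $I$.
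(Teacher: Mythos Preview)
Your proof is correct and follows essentially the same approach as the paper, which simply says the result follows from Proposition \ref{Properties_of_Homogeneous_Ideals}. You have merely spelled out the details---in particular the check that the abelian-group decomposition is compatible with the $K$-action---that the paper leaves implicit.
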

\begin{proof}
	Follows from Proposition \ref{Properties_of_Homogeneous_Ideals}.
\end{proof}

\begin{dfn}[Monomial Spaces]\label{Monomial_Spaces_Definition}
	A graded vector space is called a \textit{monomial space} if it has a basis consisting of monomials.
\end{dfn}

\begin{lem}\label{Monomial_Space_iff_Ideal}
	A monomial space $V\subseteq S$ is an ideal iff for all $i\in \N$ we have $\uSdw(\Lvl_{i,V}\cap \mathscr{M}_S)\subseteq \Lvl_{i+1,V}\cap \mathscr{M}_S$.
\end{lem}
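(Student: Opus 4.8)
The plan is to prove both implications directly, using Lemma~\ref{Upper_Shadow_in_The_Poset_of_Monomials} to identify upper shadows with multiplication by the variables, together with the observation that a monomial space is, level by level, spanned by the monomials it contains. For the latter: if $B$ is a monomial basis of $V$, then each monomial in $B$ is homogeneous, so $B\cap\Lvl_{i,S}$ spans $\Lvl_{i,V}$ (extract the degree-$i$ component of an expansion in $B$), and consequently $\Lvl_{i,V}=\Span_K(\Lvl_{i,V}\cap\mathscr{M}_S)$. I would record this at the outset. Since $V$ is graded, I also use $V\cap\Lvl_{i,S}=\Lvl_{i,V}$ freely.

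For the forward direction, assume $V$ is an ideal and fix $i\in\N$ and a monomial $m\in\Lvl_{i,V}\cap\mathscr{M}_S\subseteq V$. By Lemma~\ref{Upper_Shadow_in_The_Poset_of_Monomials}, every element of $\uSdw(m)$ has the form $m(x_j+H)$ for some $j\in[d]$ with $m(x_j+H)\neq0$; since $m\in V$ and $V$ is an ideal, $m(x_j+H)\in V$, and being a nonzero monomial of degree $i+1$ it lies in $\Lvl_{i+1,V}\cap\mathscr{M}_S$. Taking the union over all $m\in\Lvl_{i,V}\cap\mathscr{M}_S$ gives $\uSdw(\Lvl_{i,V}\cap\mathscr{M}_S)\subseteq\Lvl_{i+1,V}\cap\mathscr{M}_S$.

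For the converse, assume the shadow containments hold. As $V$ is already a $K$-subspace, it suffices to show $rf\in V$ whenever $r\in S$ and $f\in V$. Because $S$ and $V$ are graded and multiplication respects the grading, and every element decomposes into homogeneous components, I may assume $f\in\Lvl_{i,V}$ and $r\in\Lvl_{k,S}$ homogeneous. The key reduction is to $k=1$: since $\Lvl_{k,S}$ is spanned over $K$ by products $(x_{j_1}+H)\cdots(x_{j_k}+H)$ of $k$ variables, an induction on $k$ (base cases $k=0,1$, inductive step peeling off one variable and applying the hypothesis to the degree-$(i+1)$ element produced) reduces everything to the claim that $(x_j+H)\Lvl_{i,V}\subseteq\Lvl_{i+1,V}$ for all $i$ and all $j\in[d]$. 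To prove this claim, write $f\in\Lvl_{i,V}$ as $f=\sum_m c_m m$ with $c_m\in K$ and the sum over $m\in\Lvl_{i,V}\cap\mathscr{M}_S$, using the spanning statement above. Then $(x_j+H)f=\sum_m c_m\,m(x_j+H)$, and each term $m(x_j+H)$ is either $0$ or a monomial in $\uSdw(m)\subseteq\uSdw(\Lvl_{i,V}\cap\mathscr{M}_S)\subseteq\Lvl_{i+1,V}\cap\mathscr{M}_S$; in either case it lies in the subspace $\Lvl_{i+1,V}$, so $(x_j+H)f\in\Lvl_{i+1,V}$. Unwinding the reductions yields $rf\in V$ for all $r\in S$ and $f\in V$, so $V$ is an ideal.

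I expect the only delicate point to be the bookkeeping in the reduction to degree-one multipliers — in particular, organising the induction on $k$ so that each successive multiplication by a single variable stays inside $V$ by the claim just established — together with the preliminary verification that a monomial basis of $V$ restricts to a monomial spanning set of each graded piece $\Lvl_{i,V}$, which is what lets one expand an arbitrary homogeneous element of $V$ in monomials from $V$.
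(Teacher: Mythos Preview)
Your proof is correct and follows essentially the same route as the paper's: both directions go through Lemma~\ref{Upper_Shadow_in_The_Poset_of_Monomials}, and the converse is obtained by reducing to multiplication by a single variable via induction. The paper's proof compresses your argument into the phrase ``with a few induction arguments we get that $V$ is an ideal''; your version spells out exactly those inductions, and also makes explicit the observation that $\Lvl_{i,V}=\Span_K(\Lvl_{i,V}\cap\mathscr{M}_S)$, which the paper leaves implicit in the definition of a monomial space.
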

\begin{proof}
	The forward direction follows by Lemma \ref{Upper_Shadow_in_The_Poset_of_Monomials}.
	So, suppose that for all $i\in \N$ we have $\uSdw(\Lvl_{i,V}\cap \mathscr{M}_S)\subseteq \Lvl_{i+1,V}\cap \mathscr{M}_S$.
	Then Lemma \ref{Upper_Shadow_in_The_Poset_of_Monomials} implies that for every $m\in \Lvl_{i,V}\cap \mathscr{M}_S$ and for every $j\in [d]$ we have that $m(x_j+H)\in \Lvl_{i+1,V}\cap \mathscr{M}_S$,
	since $V$ being a group implies $0\in V$.
	Therefore, with a few induction arguments we get that $V$ is an ideal.
\end{proof}

\begin{dfn}\label{Downsets_and_Upsets_Definition}
	For an arbitrary poset $A\subseteq \mathscr{P}$ we say that $A$ is a \textit{downset} iff whenever $x\in A$ and $y\leq x$ then $y\in A$.
	Similarly, we say that $A$ is an \textit{upset} iff whenever $x\in A$ and $x\leq y$ then $y\in A$.
	Downsets are sometimes called order ideals.
\end{dfn}

Using the results so far, 
one can make some useful observations about the poset of monomials of a ring.
First, notice that $\mathscr{M}_R\cap H$ is an upset.
Second, if we have a monomial $m\in H$ then $m+H=0\in S$.
Thus, Lemma \ref{Upper_Shadow_in_The_Poset_of_Monomials} and Lemma \ref{Monomial_Space_iff_Ideal} say that if we have a monomial $m\in H$,
then the upset defined by $m$ in $\mathscr{M}_R$ is deleted in $\mathscr{M}_S$.
This deletion operation can be seen in Figure \ref{Poset_of_Monomials_Example_a}.
Using this we can conclude the poset isomorphism in Example \ref{Grid_Ring_Poset}.

\begin{dfn}[Infinite Variable Powers]\label{Infinite_Variable_Powers_Definition}
	For $i\in [d]$ we define $x_i^\infty = 0\in R$.
\end{dfn}

\begin{exm}\label{Grid_Ring_Poset}
	If $H=(x_1^{p_1},\dots, x_d^{p_d})$ for some $p_1,\dots, p_d\in \N \cup \{\infty\}$ then $\mathscr{M}_S \iso \mathscr{M}_{[d]}(p_1,\dots, p_d)$.
\end{exm}

Another useful observation from these ideas is that domination orders on $\mathscr{M}_S$ are well-defined.
Something even stronger is true.

\begin{prp}[Monomial Quotients Produce Subposets]\label{Monomial_Quotients_Produce_Subposets}
	If $H$ is a monomial ideal then $\mathscr{M}_S$ is isomorphic to a subposet of $\mathscr{M}_R$.
	In particular, if $\mathcal{O}$ is a total order on $\mathscr{M}_R$, 
	then there exists a well-defined order on $\mathscr{M}_S$ induced from the isomorphism and $\mathcal{O}$.
\end{prp}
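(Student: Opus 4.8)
The plan is to exhibit an explicit isomorphism onto a subposet. Define $\Phi\colon \mathscr{M}_S\to \mathscr{M}_R$ by sending a monomial $m = x_1^{p_1}\cdots x_d^{p_d}+H$ of $S$ to the monomial $x_1^{p_1}\cdots x_d^{p_d}$ of $R$. The first and essential step is to check that $\Phi$ is well defined, i.e.\ that if $x^p+H = x^q+H$ are both nonzero in $S$ then $x^p = x^q$ in $R$. If $\deg_R(x^p)\neq \deg_R(x^q)$ this is automatic since $H$ is homogeneous. Otherwise both monomials have some degree $i$ and $x^p - x^q \in \Lvl_{i,R}\cap H$. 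Here I would use the monomial hypothesis on $H$: by Proposition \ref{Properties_of_Homogeneous_Ideals} the space $\Lvl_{i,R}\cap H$ is spanned over $K$ by those degree-$i$ monomials that lie in $H$, so in the monomial expansion of $x^p - x^q$ every monomial occurring with nonzero coefficient belongs to $H$; since $x^p,x^q\notin H$ (their cosets are nonzero), this forces $x^p - x^q = 0$.

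Once $\Phi$ is well defined, it is immediately injective (if $\Phi(m_1)=\Phi(m_2)$ then $m_1$ and $m_2$ are cosets of the same monomial of $R$), and its image is exactly $\mathscr{N} := \{\, x^p\in M_R : x^p\notin H \,\}$, viewed as a subposet of $\mathscr{M}_R$. I then claim $\Phi$ is a poset isomorphism $\mathscr{M}_S\to\mathscr{N}$ in the sense of Definition \ref{Poset_Isomorphism_Definition}: it is a bijection with inverse $\Psi\colon x^p\mapsto x^p+H$, so it only remains to verify that order is preserved in both directions, namely $m_1\mid m_2$ in $\mathscr{M}_S$ iff $\Phi(m_1)\mid\Phi(m_2)$ in $\mathscr{M}_R$. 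This is exactly Lemma \ref{Monomial_Partial_Order_Representatives}: since $\Phi$ records the \emph{unique} monomial representative of each $m_i$, the condition ``there exist representatives with exponent vectors $p\leq q$ coordinatewise'' is literally the statement $\Phi(m_1)\mid\Phi(m_2)$. Hence $\mathscr{M}_S\iso\mathscr{N}$, a subposet of $\mathscr{M}_R$. (One may further note that $\mathscr{N}$ is a downset in $\mathscr{M}_R$ since $M_R\cap H$ is an upset, though the statement does not require this.)

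For the ``in particular'' clause: given a total order $\mathcal{O}$ on $\mathscr{M}_R$, take its restriction $\mathcal{O}|_{\mathscr{N}}$ to the ground set of $\mathscr{N}$ in the sense of Definition \ref{Restricted_Orders_and_Subposets}; a restriction of a total order is still total. Transport it back along the bijection $\Phi$: declare $m_1\leq m_2$ in $\mathscr{M}_S$ iff $\Phi(m_1)\leq_{\mathcal{O}}\Phi(m_2)$. Since $\Phi$ is a bijection this relation is reflexive, antisymmetric, transitive, and total, so it is a well-defined total order on $\mathscr{M}_S$ induced by $\mathcal{O}$ and the isomorphism. I expect the only real obstacle to be the well-definedness of $\Phi$, which is precisely where the monomial-ideal hypothesis is needed — for a general homogeneous ideal two distinct monomials of $R$ could become equal in $S$, breaking the definition of $\Phi$ — while everything else is bookkeeping built on Lemma \ref{Monomial_Partial_Order_Representatives} and the definition of subposet.
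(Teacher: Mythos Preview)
Your proposal is correct and follows the same approach as the paper: define the map $\Phi(x^p+H)=x^p$ and argue it is an embedding of posets. The paper's proof is a one-liner that asserts this map is injective, while you give the fuller argument---in particular you correctly isolate well-definedness of $\Phi$ as the place where the monomial hypothesis is actually used, and you invoke Lemma~\ref{Monomial_Partial_Order_Representatives} to verify that divisibility is preserved and reflected, points the paper leaves implicit.
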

\begin{proof}
	Since $H$ is a monomial ideal, the function $\mathscr{M}_S \rightarrow \mathscr{M}_R$ that sends $x_1^{p_1}\cdots x_d^{p_d}+H$ to $x_1^{p_1}\cdots x_d^{p_d}$ is injective,
	whence we get that $\mathscr{M}_S$ is isomorphic to a subposet of $\mathscr{M}_R$.
\end{proof}

Another useful connection to make is to understand what happens when we have binomials $m_1-m_2\in H$, where $m_1,m_2\in \mathscr{M}_R$.
Well, in this case we have $(m_1-m_2)+H=0\in S$, whence $m_1+H=m_2+H$.
That is, we consider the two monomials $m_1$ and $m_2$ in $\mathscr{M}_R$ and identify them as the same element in the poset $\mathscr{M}_S$.
This can be seen in Figure \ref{Poset_of_Monomials_Example_b}

\begin{figure}
	\centering
	\begin{subfigure}[t]{0.40\textwidth}
		\includegraphics[width=\textwidth]{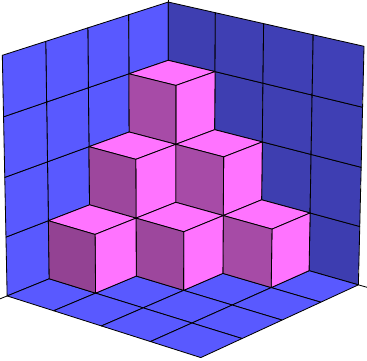}
		\caption{Cube diagram of $\mathscr{M}_S$ when $H=(\Lvl_3)$ and $d=3$.}\label{Poset_of_Monomials_Example_a}
	\end{subfigure}
	\hfill
	\begin{subfigure}[t]{0.55\textwidth}
		\includegraphics[width=\textwidth]{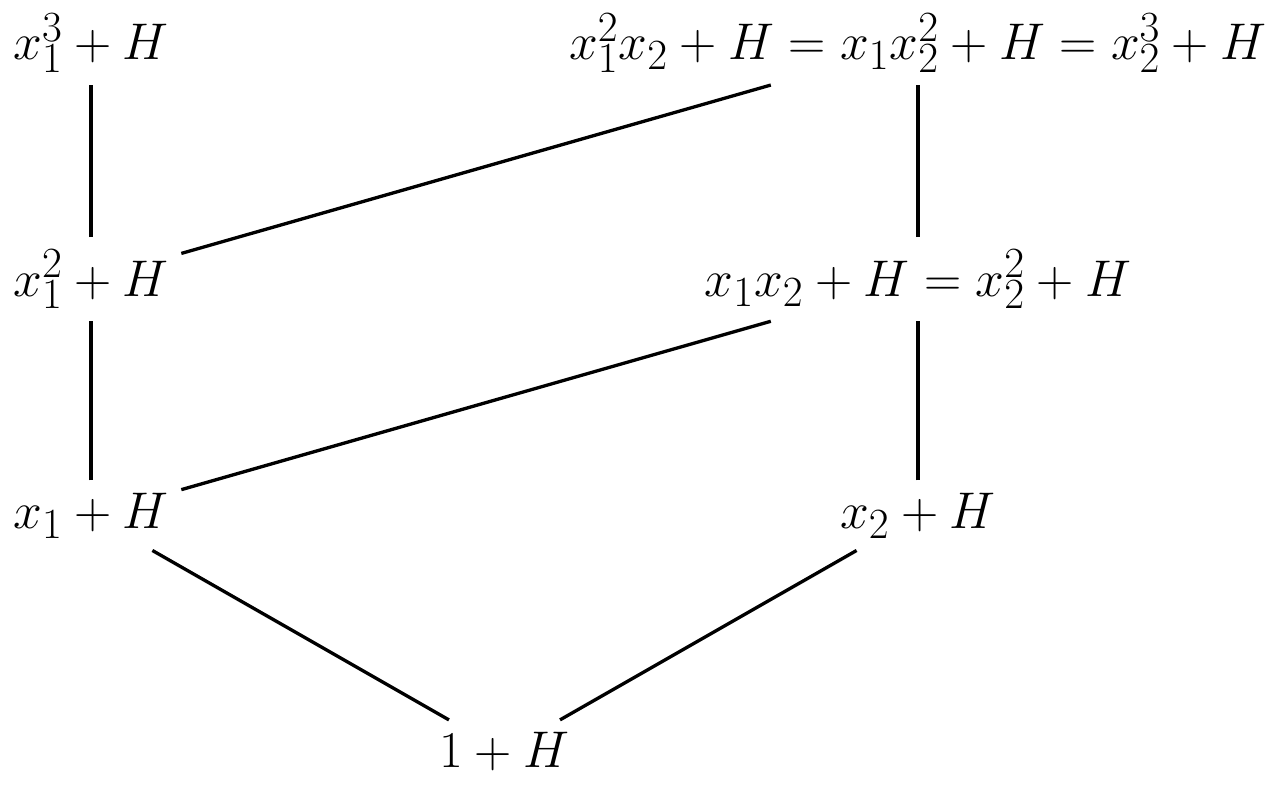}
		\caption{Hasse Graph of $\mathscr{M}_S$ when $H=(\Lvl_4) + (x_1x_2-x_2^2)$ and $d=2$.}\label{Poset_of_Monomials_Example_b}
	\end{subfigure}
	\hfill
	
	\caption{Posets of monomials for different $H$.}
	\label{Poset_of_Monomials_Example}
\end{figure}

\begin{dfn}[Posets Representable by Rings]\label{Posets Representable by Rings}
	We say that a poset $\mathscr{P}$ is \textit{representable by a ring} iff there exists a ring $S$ such that $\mathscr{P} \iso \mathscr{M}_S$.
\end{dfn}

By Example \ref{Grid_Ring_Poset} we have that any multiset lattice is representable by a ring. \subsection{Initial Monomial Vector Spaces and Ideals}\label{Initial_Monomial_Vector_Spaces_and_Ideals}

In this section, $R=K[x_1,\dots, x_d]$ and $S=R/H$, where $H$ is a homogeneous ideal of $R$ and $K$ is a field.
Furthermore, we assume that $H\neq R$, which means $1\not\in H$.

\begin{dfn}[Hilbert Functions]\label{Hilbert_Functions_Definition}
	For a graded subspace $V$ of $S$ we define the \textit{Hilbert function} of $V$ to be $\Hilb_V:\N\rightarrow \N$ such that for all $i\in \N$ we have
	\begin{align*}
		\Hilb_V(i) = \dim_K \Lvl_{i,V}.
	\end{align*}
\end{dfn}

If $V$ is a vector space over a field $K$ and $A\subseteq V$,
then by $\Span_K(A)$ we denote the span of the set $A$.

\begin{rem}
	In commutative algebra, 
	the initial monomial of a ring element $f$ is defined to be the largest monomial with non-zero coefficient, 
	and w.r.t. a monomial order (not an arbitrary total order).
	In this paper we use the smallest monomial and arbitrary total orders.
\end{rem}

\begin{dfn}[Initial Monomial Sets, Vector Spaces and Ideals]\label{Initial_Monomial_Sets_Vector_Spaces_and_Ideals_Definition}
	Suppose that $\mathcal{O}$ is a total order on $\mathscr{M}_{S}$.
	Note that $\Lvl_{i,R} = \Span_K(\Lvl_{i, \mathscr{M}_R})$,
	whence $\Lvl_{i, S} = \Span_K(\Lvl_{i, \mathscr{M}_S})$.
	Thus, each $\Lvl_{i,S}$ has a basis consisting of monomials.
	Let $B_i=\{m_{1,i},\dots, m_{n_i,i}\}$ be a basis of monomials for $\Lvl_{i,S}$.
	Then for any nonzero $f\in \Lvl_{i,S}$ we can uniquely write $f=a_{f,1}m_{1,i} + \cdots + a_{f,n_i}m_{n_i,i}$,
	such that $a_{f,1},\dots, a_{f,n_i}\in K$, $m_{1,i},\dots, m_{n_i,i}\in \mathscr{M}_S$ and $m_{1,i} \leq_{\mathcal{O}} \cdots \leq_{\mathcal{O}} m_{n_i,i}$.
	The \textit{initial monomial} of $f$ under $\mathcal{O}$ and $B_i$ is defined to be the smallest $m_{j,i}$ (with respect to $\mathcal{O}$) such that $a_{f,j} \neq 0$, 
	and we denote it by $\im_{\mathcal{O},B_i}(f)$.
	Next, let $B=(B_i)_{i=0}^\infty$, and we call $B$ a \textit{leveled monomial basis} of $S$.
	For a homogeneous ideal $I\subseteq S$ we define:
	\begin{enumerate}
		\item The \textit{initial monomial set of degree} $i\in \N$ under $\mathcal{O}$ and $B_i$,
		\begin{align*}
			\IMS_{\mathcal{O},B_i}(i,I) = \{\im_{\mathcal{O},B_i}(f)\bigm | f\in \Lvl_{i,I}\}.
		\end{align*}
		\item The \textit{initial monomial set} under $\mathcal{O}$ and $B$,
		\begin{align*}
			\IMS_{\mathcal{O},B}(I) = \bigcup_{i=0}^\infty \IMS_{\mathcal{O},B_i}(i,I).
		\end{align*}
		\item The \textit{initial monomial vector space} $\mathcal{O}$ and $B$,
		\begin{align*}
			\IMV_{\mathcal{O},B}(I) = \bigoplus_{i=0}^\infty \Span_K(\IMS_{\mathcal{O},B_i}(i,I)).
		\end{align*}
		\item The \textit{initial monomial ideal} under $\mathcal{O}$ and $B$, which is the ideal generated by $\IMS_{\mathcal{O},B}(I)$, 
		\begin{align*}
			\IMI_{\mathcal{O},B}(I) = \left( \IMS_{\mathcal{O},B}(I) \right).
		\end{align*}
	\end{enumerate}
	We will often omit indices as the total order and leveled monomial basis will be fixed. 
\end{dfn}

\begin{lem}[General Hilbert Function Equalities and Inequalities]\label{General_Hilbert_Function_Equalities_and_Inequalities}
	Suppose that $\mathcal{O}$ is a total order on $S$ and $B$ is a leveled monomial basis for $S$.
	If $I\subseteq S$ is a homogeneous ideal then $\Hilb_I(i) = \Hilb_{\IMV(I)}(i) \leq \Hilb_{\IMI(I)}(i)$ for all $i\in \N$.
\end{lem}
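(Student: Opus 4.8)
The plan is to prove the two assertions separately: first the equality $\Hilb_I = \Hilb_{\IMV(I)}$, then the inequality $\Hilb_{\IMV(I)} \le \Hilb_{\IMI(I)}$. The equality is the heart of the matter and is a level-by-level dimension count. Fix $i \in \N$ and work inside the finite-dimensional $K$-vector space $\Lvl_{i,S}$, which has the monomial basis $B_i$. I would show that the initial monomial map restricted to $\Lvl_{i,I} \setminus \{0\}$, composed with taking spans, realizes an equality $\dim_K \Lvl_{i,I} = |\IMS_{\mathcal{O},B_i}(i,I)|$. The standard argument: order a basis of $\Lvl_{i,I}$ by a ``Gaussian elimination with respect to $\mathcal{O}$'' procedure so that the chosen basis elements $g_1, \dots, g_k$ of $\Lvl_{i,I}$ have pairwise distinct initial monomials under $\mathcal{O}$ and $B_i$. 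Concretely, pick $g_1 \in \Lvl_{i,I}$ nonzero, look at $\im(g_1)$; then among elements of $\Lvl_{i,I}$ whose expansion in $B_i$ has zero coefficient on $\im(g_1)$, pick $g_2$, and so on, at each stage reducing an arbitrary element by the $g_j$'s already chosen to kill the leading monomials we have used. This shows simultaneously (a) the $g_j$ span $\Lvl_{i,I}$, hence $k = \dim_K \Lvl_{i,I}$, and (b) $\{\im(g_1), \dots, \im(g_k)\}$ is exactly $\IMS_{\mathcal{O},B_i}(i,I)$, since any element of that set is some $\im(f)$ with $f \in \Lvl_{i,I}$, and reducing $f$ by the $g_j$'s shows $\im(f)$ must coincide with one of the $\im(g_j)$. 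Then $\Hilb_{\IMV(I)}(i) = \dim_K \Span_K(\IMS_{\mathcal{O},B_i}(i,I)) = |\IMS_{\mathcal{O},B_i}(i,I)| = k = \Hilb_I(i)$, where the second equality holds because a set of distinct monomials is linearly independent.

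For the inequality, I would argue that for each $i$ one has the containment $\IMS_{\mathcal{O},B_i}(i,I) \subseteq \Lvl_{i, \IMI(I)} \cap \mathscr{M}_S$ of sets of monomials, simply because $\IMS_{\mathcal{O},B}(I)$ is by definition a generating set for the ideal $\IMI(I)$, so every one of its degree-$i$ members lies in $\IMI(I)$ and is a monomial of degree $i$. Taking $K$-spans inside $\Lvl_{i,S}$ gives $\Lvl_{i,\IMV(I)} = \Span_K(\IMS_{\mathcal{O},B_i}(i,I)) \subseteq \Lvl_{i,\IMI(I)}$, and comparing dimensions yields $\Hilb_{\IMV(I)}(i) \le \Hilb_{\IMI(I)}(i)$ for all $i$.

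The main obstacle is purely bookkeeping in the equality step: making precise the ``reduce by already-chosen basis elements'' argument so that it genuinely produces a basis of $\Lvl_{i,I}$ whose initial monomials are distinct and exhaust $\IMS_{\mathcal{O},B_i}(i,I)$. The subtlety is that $\mathcal{O}$ is an arbitrary total order on $\mathscr{M}_S$, not an admissible monomial order with respect to multiplication — but this does not matter here, because the whole computation takes place within the single finite level $\Lvl_{i,S}$ and never needs compatibility of $\mathcal{O}$ with the ring structure; we only use that $\mathcal{O}$ restricts to a total order on the finite basis $B_i$, so ``smallest monomial with nonzero coefficient'' is well-defined and a finite induction terminates. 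One should also note that $\Lvl_{i,I}$ is finite-dimensional since it is a subspace of $\Lvl_{i,S} = \Span_K(\Lvl_{i,\mathscr{M}_S})$, and that the leveled-monomial-basis hypothesis guarantees such a $B_i$ exists; both are recorded in Definition \ref{Initial_Monomial_Sets_Vector_Spaces_and_Ideals_Definition}. With that in place, the rest is routine linear algebra and the statement follows.
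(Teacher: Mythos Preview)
Your proof is correct and is essentially the paper's own argument: the paper forms the coefficient matrix of a basis of $\Lvl_{i,I}$ with respect to $B_i$ (ordered by $\mathcal{O}$), row-reduces it, and reads off the initial monomials from the pivot columns---exactly your Gaussian-elimination-by-leading-term procedure phrased in matrix language. One phrasing caution: your justification ``a set of distinct monomials is linearly independent'' is not true in $S$ in general (this is precisely what the paper's notion of \emph{level linear independence} isolates, and it can fail); what you actually need, and what does hold, is that $\IMS_{\mathcal{O},B_i}(i,I) \subseteq B_i$ and $B_i$ is a basis by hypothesis.
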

\begin{proof}
	The inequality follows from the Standard Grading on $S$ and the relation $\IMV(I) \subseteq \IMI(I)$.
	We deal with the equality next.
	
	For each $i\in \N$ there exists a basis of homogeneous polynomials $C_i=\{f_{1,i},\dots, f_{k_i,i}\}$ for $\Lvl_{i,I}$.
	Then for each $i\in \N$ and $f\in C_i$ we can write $f = a_{f,1}m_{1}+\cdots + a_{f,n_i}m_{n_i}$,
	where $B_i = \{m_{1},\dots, m_{n_i}\}$ and $m_{1}<_{\mathcal{O}} \dots <_{\mathcal{O}} m_{n_i}$.
	
	Then we form the matrix $M_i$, such that its rows are the coefficients of the homogeneous polynomials in $C_i$ with respect to $B_i$ and $\mathcal{O}$.
	For $1\leq r \leq k_i$ and $1\leq c \leq n_i$ we define
	\begin{align*}
		M_i = (a_{f_{r,i},c})_{r,c}
	\end{align*}
	Let $M_i'$ be the row reduced echelon form of $M_i$.
	Thus, the row spaces of $M_i$ and $M_i'$ are identical,
	and the nonzero rows of $M_i'$ are a basis for the row space of $M_i$,
	which is isomorphic to $\Span_K(C_i)$ as $K$-vector spaces.
	Since $C_i$ is a basis for $\Span_K(C_i)$, 
	the number of non-zero rows of $M_i'$ must equal $|C_i| = k_i$, 
	hence $M_i'$ does not have any zero rows.
	Let $f_{j,i}'$ be the homogeneous polynomial corresponding to the $j$-th row of $M_{i}'$ and put $C_i'=\{f_{1,i}',\dots, f_{k_i,i}'\}$.
	Observe that $C_i'$ is a basis for $Span_K(C_i)$ = $Lvl_{i,I}$.
	Then $C_i'\subseteq \Lvl_{i, I}$ and we define $B_i'=\{\im(f_{1,i}'),\dots, \im(f_{k_i,i}')\} \subseteq \Lvl_{i, \IMV(I)}$.
	
	First, note that $|B_i'|=k_i$,
	since the leading entries of the rows in $M_i'$ are the only nonzero entries in their own columns.
	Next, $B_i'$ is linearly independent,
	since $B_i'\subseteq B_i$ and $B_i$ is linearly independent.
	We show that $\Span_K(B_i') = \Lvl_{i, \IMV(I)}$, whence $B_i'$ is a basis for $\Lvl_{i, \IMV(I)}$.
	The containment $\subseteq$ follows right away, since $B_i'\subseteq \Lvl_{i, \IMV(I)}$.
	Thus, we just need to show $\IMS(i,I)\subseteq \Span_K(B_i')$.
	So, let $m\in \IMS(i,I)$.
	Then there exists $f\in \Lvl_{i,I}$ such that $m=\im(f)$.
	Thus, we can write $f=b_1f_{1,i}'+\cdots b_{k_i}f_{k_i,i}'$.
	Let $j\in [k_i]$ be the smallest integer such that $b_j\neq 0$.
	Hence, $m=\im(f_{j,i}')$ because $B_i$ is linearly independent and the leading entries of the rows in $M_i'$ are the only nonzero entries in their own columns.
	Therefore, $\Span_K(B_i') = \Lvl_{i, \IMV(I)}$ and $B_i'$ is a basis for $\Lvl_{i, \IMV(I)}$.
	
	Finally,
	\begin{align*}
		\Hilb_I(i) = |C_i| = |C_i'|=|B_i'|=\Hilb_{\IMV(I)}(i).
	\end{align*}
\end{proof}

Example \ref{IMIStict} shows that the inequality in Lemma \ref{General_Hilbert_Function_Equalities_and_Inequalities} can be strict.

\begin{exm}\label{IMIStict}
	Consider $S=K[x,y]/(0)$.
	We define a total order $\mathcal{O}$ on $\mathscr{M}_S$ that totally orders by degree first, 
	then the monomials of degree $1$ are ordered based on lexicographic order $\mathcal{L}$,
	and finally the monomials of degree other than $1$ are ordered based on the colexicographic order $\mathcal{C}$. 
	More formally, $m_1 \leq_{\mathcal{O}} m_2$ if and only if
	\begin{enumerate}
		\item $\deg(m_1) < \deg(m_2)$,
		\item or $\deg(m_1) = \deg(m_2)$ and
		\begin{enumerate}
			\item if $\deg(m_1) = \deg(m_2) = 1$ then $m_1 \leq_{\mathcal{L}} m_2$.
			\item if $\deg(m_1) = \deg(m_2) \neq 1$ then $m_1 \leq_{\mathcal{C}} m_2$.
		\end{enumerate}
	\end{enumerate}
	Now consider the homogeneous ideal $I=(x+y)$.
	Then $x^2+xy\in I$ and $xy+y^2\in I$ and let $B_I = \{x^2+xy,xy+y^2\}$.
	Suppose that $f\in I$ is homogeneous and $\deg(f)=2$.
	Then $f=g(x+y)$ for some homogeneous $g\in S$ with $\deg(g)=1$.
	Thus, $g=ax+by$ for some $a,b\in K$.
	Hence, $f\in \Span_K(B_I)$.
	Therefore, $\Hilb_{I}(2)=2$.
	
	Next, we consider $\IMI(I)$ with the order $\mathcal{O}$, and note that there is a unique leveled monomial basis of $S$.
	Well, $y\in \IMI(I)$ because $x+y\in I$, whence $y^2,xy\in \IMI(I)$.
	Also, $x^2+xy\in I$, whence $x^2\in \IMI(I)$.
	So, $\Hilb_{\IMI(I)}(2) = 3$.
	
	Therefore, $\Hilb_I(2) = \Hilb_{\IMV(I)}(2) < \Hilb_{\IMI(I)}(2)$.
	This situation occurs because $\mathcal{O}$ is not monomial order (Definition \ref{Monomial_Order_Definition}).
\end{exm}

We would like to have $\Hilb_{I}=\Hilb_{\IMI(I)}$.
The following lemma tells us when this happens.

\begin{lem}[General Hilbert Function Equalities]\label{General_Hilbert_Function_Equalities}
	Suppose that $\mathcal{O}$ is a total order on $S$ and $B$ is a leveled monomial basis for $S$.
	Furthermore, let $I\subseteq S$ be a homogeneous ideal of $S$.
	$\IMV(I)$ is an ideal iff $\IMV(I) = \IMI(I)$ iff $\Hilb_I = \Hilb_{\IMV(I)} = \Hilb_{\IMI(I)}$.
\end{lem}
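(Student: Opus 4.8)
The plan is to show that the three conditions are equivalent by proving the cycle of implications
\[
\text{(a) }\IMV(I)\text{ is an ideal}\ \Longrightarrow\ \text{(b) }\IMV(I)=\IMI(I)\ \Longrightarrow\ \text{(c) }\Hilb_I=\Hilb_{\IMV(I)}=\Hilb_{\IMI(I)}\ \Longrightarrow\ \text{(a)}.
\]
Throughout I would lean on four facts that are either immediate or already available. First, $\IMV(I)\subseteq\IMI(I)$, which was used in the proof of Lemma~\ref{General_Hilbert_Function_Equalities_and_Inequalities} (it also follows directly: $\IMI(I)$ is a graded ideal containing $\IMS(I)$, so it contains $\Span_K(\IMS(i,I))=\Lvl_{i,\IMV(I)}$ for every $i$). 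Second, $\IMI(I)$ is by definition an ideal, and being generated by monomials it is graded. Third, $\IMS(I)\subseteq\IMV(I)$, since every $m\in\IMS(i,I)$ lies in $\Span_K(\IMS(i,I))=\Lvl_{i,\IMV(I)}$. Fourth, because $S$ carries the standard grading induced from $K[x_1,\dots,x_d]$, each $\Lvl_{i,S}$ is finite-dimensional, so a graded subspace is determined by its Hilbert function together with any graded superspace containing it. And Lemma~\ref{General_Hilbert_Function_Equalities_and_Inequalities} already supplies $\Hilb_I=\Hilb_{\IMV(I)}$.

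The three implications are then short. For (a)~$\Rightarrow$~(b): if $\IMV(I)$ is an ideal, it contains $\IMS(I)$, hence it contains the ideal $\IMI(I)=(\IMS(I))$ generated by that set; together with $\IMV(I)\subseteq\IMI(I)$ this yields $\IMV(I)=\IMI(I)$. For (b)~$\Rightarrow$~(c): $\IMV(I)=\IMI(I)$ gives $\Hilb_{\IMV(I)}=\Hilb_{\IMI(I)}$, and Lemma~\ref{General_Hilbert_Function_Equalities_and_Inequalities} adds $\Hilb_I=\Hilb_{\IMV(I)}$. For (c)~$\Rightarrow$~(a): from $\Hilb_{\IMV(I)}=\Hilb_{\IMI(I)}$, the inclusion $\IMV(I)\subseteq\IMI(I)$ of graded spaces with finite-dimensional levels forces $\Lvl_{i,\IMV(I)}=\Lvl_{i,\IMI(I)}$ for all $i$, hence $\IMV(I)=\IMI(I)$, which is an ideal.

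I expect the only place needing a little care — the closest thing to an obstacle — is the bookkeeping that $\IMV(I)$ is genuinely a monomial space, with the $\IMS(i,I)$ serving as a levelwise monomial basis (distinct basis monomials of $\Lvl_{i,S}$, hence linearly independent), so that the containment $\IMS(I)\subseteq\IMV(I)$ and the Hilbert-function comparisons are all literally meaningful; and the remark that finite-dimensionality of the graded pieces of $S$ upgrades "equal Hilbert functions" to "equal subspaces." Both are routine. As an alternative to the generation argument in (a)~$\Leftrightarrow$~(b), one could instead invoke Lemma~\ref{Monomial_Space_iff_Ideal} for the monomial space $\IMV(I)$ together with Lemma~\ref{Upper_Shadow_in_The_Poset_of_Monomials}, but the argument above via "$\IMI(I)$ is the smallest ideal containing $\IMS(I)$" is shorter and avoids passing through shadows.
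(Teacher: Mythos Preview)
Your proof is correct and follows essentially the same route as the paper's: the paper also argues (a)$\Rightarrow$(b) via $\IMV(I)\subseteq\IMI(I)$ together with the observation that an ideal containing the generating set $\IMS(I)$ must contain $\IMI(I)$, invokes Lemma~\ref{General_Hilbert_Function_Equalities_and_Inequalities} for (b)$\Rightarrow$(c), and closes the loop by noting that equal finite dimensions plus inclusion force $\Lvl_{i,\IMV(I)}=\Lvl_{i,\IMI(I)}$ for all $i$. Your write-up is a bit more explicit about the finite-dimensionality of the graded pieces and the monomial-basis bookkeeping, but the argument is the same.
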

\begin{proof}
	First, we handle the forward directions.
	If $\IMV(I)$ is an ideal then $\IMV(I) = \IMI(I)$, since $\IMV(I)\subseteq \IMI(I)$ and $\IMV(I)$ contains the generating set of $\IMI(I)$.
	If $\IMV(I) = \IMI(I)$ then $\Hilb_I = \Hilb_{\IMV(I)} = \Hilb_{\IMI(I)}$ from Lemma \ref{General_Hilbert_Function_Equalities_and_Inequalities}.
	
	The backwards directions holds because $\dim_K \Lvl_{i,\IMV(I)} = \dim_K \Lvl_{i, \IMI(I)}$  and $\Lvl_{i,\IMV(I)} \subseteq \Lvl_{i, \IMI(I)}$ implies $\Lvl_{i,\IMV(I)} = \Lvl_{i, \IMI(I)}$.
	Thus, $\IMV(I)=\IMI(I)$.
	Furthermore, $\IMV(I)=\IMI(I)$ implies that $\IMV(I)$ is an ideal.
\end{proof}

When assumed, the following two properties will force $\IMV(I)$ to be an ideal.

\begin{dfn}[Monomial Order]\label{Monomial_Order_Definition}
	A total order $\mathcal{O}$ on $\mathscr{M}_S$ is called a \textit{monomial order} if whenever we have $m_1,m_2\in \mathscr{M}_S$ such that $m_1< m_2$,
	and we have $m\in \mathscr{M}_S$ such that $mm_1,mm_2\in \mathscr{M}_S$,
	then $mm_1 < mm_2$.
\end{dfn}

Monomial orders are very common.
For example, any domination order on $\mathscr{M}_R$ is a monomial order.
Considering Proposition \ref{Monomial_Quotients_Produce_Subposets} we see that this implies that there is always a monomial order on $\mathscr{M}_S$ when $H$ is a monomial ideal.
The main property of monomial orders that we are interested in is their existence.
In all of our results, the specific monomial order does not matter and is independent of the other properties.
When a monomial order computing the Hilbert function of a homogeneous ideal can be reduced to the case of computing the Hilbert function of a monomial ideal,
by using Theorem \ref{Reducing_Homogeneous_Ideals_to_Monomial_Ideals}.

\begin{dfn}[Level Linear Independence]\label{Level_Linear_Independence_Definition}
	We say that $S$ is \textit{level linearly independent} if for all $i\in \N$ we have that $\Lvl_{i,\mathscr{M}_S}$ is linearly independent.
	We will often say that $\mathscr{M}_{S}$ is level linearly independent and we just mean that $S$ is level linearly independent.
\end{dfn}

Definition \ref{Level_Linear_Independence_Definition} is important to the proof of the Macaulay Correspondence Theorem \ref{Macaulay_Correspondence_Theorem}.
As the following result shows, it can also be used to preserve the Hilbert function when constructing initial monomial ideals.

\begin{thm}[Reducing Homogeneous Ideals to Monomial Ideals]\label{Reducing_Homogeneous_Ideals_to_Monomial_Ideals}
	Suppose that $\mathcal{O}$ is a monomial order on $\mathscr{M}_S$ and suppose $S$ is level linearly independent.
	If $I$ is a homogeneous ideal of $S$ then $\Hilb_I = \Hilb_{\IMV(I)} = \Hilb_{\IMI(I)}$.
\end{thm}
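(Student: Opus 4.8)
By Lemma \ref{General_Hilbert_Function_Equalities} it suffices to show that $\IMV(I)$ is an ideal, or equivalently (by Lemma \ref{Monomial_Space_iff_Ideal}) that for every $i \in \N$ we have $\uSdw(\Lvl_{i,\IMV(I)} \cap \mathscr{M}_S) \subseteq \Lvl_{i+1, \IMV(I)} \cap \mathscr{M}_S$. First I would note that, because $S$ is level linearly independent, each level $\Lvl_{i, \mathscr{M}_S}$ is an honest basis of $\Lvl_{i,S}$, so there is a \emph{unique} leveled monomial basis $B$, the initial monomial of a homogeneous $f \in \Lvl_{i,I}$ is literally the $\mathcal{O}$-least monomial appearing with nonzero coefficient in the unique expansion of $f$ in the monomial basis, and $\Lvl_{i, \IMV(I)} \cap \mathscr{M}_S = \IMS(i,I)$. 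So the task reduces to: given $m \in \IMS(i,I)$ and $j \in [d]$ with $m(x_j + H) \neq 0$, show $m(x_j+H) \in \IMS(i+1,I)$.

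The plan is the standard Gröbner-style leading-term argument adapted to this setting. Pick $f \in \Lvl_{i,I}$ with $\im(f) = m$; write $f = a\,m + \sum_{n >_{\mathcal{O}} m} a_n\, n$ with $a \neq 0$, the sum over monomials $n$ of degree $i$. Then $(x_j+H) f \in \Lvl_{i+1,I}$ since $I$ is an ideal, and expanding gives $(x_j+H)f = a\, m(x_j+H) + \sum_{n >_{\mathcal{O}} m} a_n\, n(x_j+H)$, where some terms $n(x_j+H)$ may vanish in $S$. The monomial order hypothesis is exactly what is needed: for every monomial $n$ with $n >_{\mathcal{O}} m$ and $n(x_j+H), m(x_j+H) \in \mathscr{M}_S$ we get $n(x_j+H) >_{\mathcal{O}} m(x_j+H)$. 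Hence every surviving monomial in the tail of $(x_j+H)f$ is strictly $\mathcal{O}$-larger than $m(x_j+H)$, which still appears with coefficient $a \neq 0$. Here I must be slightly careful that the expansion of $(x_j+H)f$ in the basis $B_{i+1}$ is genuinely the one just written — but this is immediate because $\{m(x_j+H) : m \in \Lvl_{i, \mathscr{M}_S},\ m(x_j+H) \neq 0\}$ consists of elements of the monomial basis $\Lvl_{i+1,\mathscr{M}_S}$ (level linear independence again), so no cancellation or reexpansion occurs beyond terms collapsing to $0$. Therefore $\im\big((x_j+H)f\big) = m(x_j+H)$, so $m(x_j+H) \in \IMS(i+1,I)$, as required.

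Putting the pieces together: the displayed containment $\uSdw(\IMS(i,I)) \subseteq \IMS(i+1,I)$ holds for all $i$ by Lemma \ref{Upper_Shadow_in_The_Poset_of_Monomials}, so $\IMV(I)$ is an ideal by Lemma \ref{Monomial_Space_iff_Ideal}, and then Lemma \ref{General_Hilbert_Function_Equalities} yields $\Hilb_I = \Hilb_{\IMV(I)} = \Hilb_{\IMI(I)}$.

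The main obstacle, and the one place the two hypotheses are genuinely used, is controlling cancellation when multiplying by $x_j + H$: without level linear independence the "monomials" in a level are not a basis, the initial monomial need not be well defined from the expansion, and terms in the tail of $(x_j+H)f$ could conspire to cancel or to recombine into $m(x_j+H)$ with coefficient $0$; without the monomial-order property a tail monomial $n$ could satisfy $n(x_j+H) <_{\mathcal{O}} m(x_j+H)$ and become the new initial monomial, breaking the argument — this is precisely what goes wrong in Example \ref{IMIStict}. So the heart of the proof is verifying that, under both hypotheses, $m(x_j+H)$ survives as the strict $\mathcal{O}$-minimum of the support of $(x_j+H)f$.
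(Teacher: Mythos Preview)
Your proof is correct and follows essentially the same approach as the paper: reduce via Lemma~\ref{General_Hilbert_Function_Equalities} to showing $\IMV(I)$ is an ideal, then for $m=\im(f)$ show that multiplying $f$ by a monomial and using the monomial-order hypothesis together with level linear independence forces the initial monomial of the product to be the product of the monomial with $m$. The only cosmetic difference is that you invoke Lemma~\ref{Monomial_Space_iff_Ideal} to reduce to multiplication by a single variable $x_j+H$, whereas the paper argues directly for an arbitrary monomial $g\in\mathscr{M}_S$; the core computation is identical.
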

\begin{proof}
	Note that there is a unique leveled monomial basis of $S$, since $S$ is level linearly independent.
	Because of Lemma \ref{General_Hilbert_Function_Equalities} we only need to show that $\IMV(I)$ is an ideal.
	Thus, for any $g\in S$ and any $m\in IMV(I)$ we need to show that $gm\in \IMV(I)$.
	Well, $g$ is a finite sum of homogeneous elements, 
	each of which is a finite sum of scalar multiples of monomials in $S$.
	Also, $m$ is a finite sum of homogeneous elements, 
	each of which is a finite sum of scalar multiples of initial monomials, 
	and each of the initial monomials is $\im(f)$ for some homogeneous element $f\in I$.
	Hence, $gm$ is a sum, such that each term in the sum is a product is a scalar multiple between a monomial and an initial monomial of some homogeneous element of $I$.
	We prove the claim when $g\in \mathscr{M}_S$ and $m\in \IMS(I)$, whence the general case will follow immediately afterwards.
	
	So, we assume that $g\in \mathscr{M}_S$ and $m\in \IMS(I)$, 
	and we want show that $gm\in \IMV(I)$.
	Also, assume that $gm\neq 0$, since $0\in \IMV(I)$.
	We can write $m=\im(f)$ for some homogeneous $f\in I$.
	Then we have $f = a_1m_1+\cdots + a_nm_n$,
	where $a_1,\dots, a_n\in K\setminus \{0\}$, 
	the terms $m_1,\dots, m_n$ are monomials of $\deg(f)$ in $S$,
	and we have $m_1< \cdots <  m_n$.
	Of course, $m=\im(f) = m_1$.
	Since $I$ is an ideal we have $gf\in I$,
	and since $g$ and $f$ are homogeneous we have that $gf$ is homogeneous.
	
	Well,
	\begin{align*}
		gf = a_1gm_1+\cdots +a_ngm_n.
	\end{align*}
	Note that $a_1gm_1 \neq 0$,
	otherwise $gm=gm_1=0$ ($a_1\neq 0$) a contradiction.
	Thus, $gf \neq 0$, since $S$ is level linearly independent.
	So, $\im(gf)\in \IMV(I)$.
	We show that $\im(gf)=g\im(f)=gm=gm_1$.
	First, for each $j\in [n]$ such that $gm_j\neq 0$ we have that $gm_j$ is a monomial in the basis of degree $\deg(gf)$ monomials,
	since $S$ is level linearly independent.
	That is, the expression (after the appropriate ordering) given above for $gf$ is used in the construction of $\IMS(I)$, after eliminating $0$ terms.
	Furthermore, for any $j_1,j_2\in [n]$ such that $gm_{j_1},gm_{j_2}\neq 0$ we have $gm_{j_1} < gm_{j_2}$,
	since $\mathcal{O}$ is a monomial order.
	All of this together gives $\im(gf)=g\im(f)=gm=gm_1$.
	
	Therefore, the proof is complete.
\end{proof}

\begin{cor}\label{Reducing_Homogeneous_Ideals_to_Monomial_Ideals_Corollary}
	Suppose that $H$ is a monomial ideal.
	If $I$ is a homogeneous ideal of $S$ then $\Hilb_I = \Hilb_{\IMV(I)} = \Hilb_{\IMI(I)}$.
\end{cor}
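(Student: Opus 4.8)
The plan is to deduce this from Theorem \ref{Reducing_Homogeneous_Ideals_to_Monomial_Ideals} by verifying that its two hypotheses — that $S$ is level linearly independent, and that $\mathscr{M}_S$ admits a monomial order — both hold whenever $H$ is a monomial ideal.

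First I would check level linear independence. Since $H$ is a monomial ideal, it is spanned as a $K$-vector space by the monomials of $R$ that lie in it, so $S = R/H$ has a $K$-basis consisting of the cosets $x_1^{p_1}\cdots x_d^{p_d}+H$ with $x_1^{p_1}\cdots x_d^{p_d}\notin H$. In each degree $i$ this basis restricts to a basis of $\Lvl_{i,S}$, and that basis is precisely the set of nonzero monomials of degree $i$, i.e.\ $\Lvl_{i,\mathscr{M}_S}$. Hence $\Lvl_{i,\mathscr{M}_S}$ is linearly independent for every $i$, so $S$ is level linearly independent in the sense of Definition \ref{Level_Linear_Independence_Definition}; this is also the source of the unique leveled monomial basis used in the proof of Theorem \ref{Reducing_Homogeneous_Ideals_to_Monomial_Ideals}.

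Next I would produce a monomial order on $\mathscr{M}_S$. By Proposition \ref{Monomial_Quotients_Produce_Subposets}, since $H$ is monomial the map sending $x_1^{p_1}\cdots x_d^{p_d}+H$ to $x_1^{p_1}\cdots x_d^{p_d}$ identifies $\mathscr{M}_S$ with a subposet of $\mathscr{M}_R$, and any total order on $\mathscr{M}_R$ pulls back to a well-defined total order $\mathcal{O}$ on $\mathscr{M}_S$. I would take the total order on $\mathscr{M}_R$ to be a domination order, which is a monomial order on $\mathscr{M}_R$ by the remark following Definition \ref{Monomial_Order_Definition}. I then need to check $\mathcal{O}$ is a monomial order on $\mathscr{M}_S$: if $m_1<_{\mathcal{O}} m_2$ and $m m_1, m m_2\in\mathscr{M}_S$ (so both are nonzero in $S$), then under the identification with $\mathscr{M}_R$ the representatives satisfy $\widetilde{m_1}<\widetilde{m_2}$, and $\widetilde{m}\,\widetilde{m_1}$, $\widetilde{m}\,\widetilde{m_2}$ are the monomials of $R$ representing $m m_1, m m_2$; since the chosen order is a monomial order on $\mathscr{M}_R$ we get $\widetilde{m}\,\widetilde{m_1}<\widetilde{m}\,\widetilde{m_2}$, hence $m m_1 <_{\mathcal{O}} m m_2$.

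With both hypotheses established, applying Theorem \ref{Reducing_Homogeneous_Ideals_to_Monomial_Ideals} to $\mathcal{O}$ yields $\Hilb_I = \Hilb_{\IMV(I)} = \Hilb_{\IMI(I)}$ for every homogeneous ideal $I\subseteq S$. The only point requiring care — the main obstacle, such as it is — is the bookkeeping underlying the last step: that the subposet identification of $\mathscr{M}_S$ with a subset of $\mathscr{M}_R$ is compatible with monomial multiplication, i.e.\ that a nonzero product $m m'$ in $S$ corresponds to the ordinary product of the representing monomials of $R$. This is immediate from Lemma \ref{Monomial_Partial_Order_Representatives}, but it should be spelled out so that the transfer of the monomial-order property from $\mathscr{M}_R$ to $\mathscr{M}_S$ is fully rigorous.
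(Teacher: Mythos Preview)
Your proposal is correct and follows essentially the same approach as the paper: verify the two hypotheses of Theorem \ref{Reducing_Homogeneous_Ideals_to_Monomial_Ideals} (level linear independence and existence of a monomial order on $\mathscr{M}_S$) using that $H$ is monomial, then apply the theorem. The paper's proof is terser --- it simply asserts that the lexicographic order on $\mathscr{M}_R$ induces a monomial order on $\mathscr{M}_S$ via Proposition \ref{Monomial_Quotients_Produce_Subposets} and that $H$ monomial implies level linear independence --- but your added detail (explaining why the monomial-order property transfers along the subposet identification, and why the nonzero monomial cosets form a basis in each degree) only makes the argument more complete.
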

\begin{proof}
	The lexicographic order on $\mathscr{M}_R$ is a monomial order, whence Proposition \ref{Monomial_Quotients_Produce_Subposets} gives us a monomial order on $\mathscr{M}_S$.
	Since $H$ is a monomial ideal we have that $S$ is level linearly independent.
	Therefore, the claim holds by Theorem \ref{Reducing_Homogeneous_Ideals_to_Monomial_Ideals}.
\end{proof}

 \subsection{Macaulay Rings}\label{Macaulay_Rings}

Unless otherwise stated $S=K[x_1,\dots, x_d]/H$, where $K$ is a field and $H$ is a homogeneous ideal of $R=K[x_1,\dots, x_d]$.
Furthermore, we assume that $H\neq R$, which means $1\not\in H$.

\begin{dfn}[Initial Segment Spaces]\label{initialSegmentSpace}
	Suppose that $V$ is a graded vector subspace of $S$ and let $\mathcal{O}$ be a total order on $\mathscr{M}_S$.
	We define the \textit{initial $\mathcal{O}$-segment space} of $V$ by (note that we defined $\Lvl_{i, \mathscr{M}_S}^\ast$ in Lemma \ref{Macaulay_Equivalence_Strong})
	\begin{align*}
		\mathcal{O}^\ast[V] = \bigoplus_{i=0}^\infty \Span_K(\Lvl_{i, \mathscr{M}_S}^\ast[\dim_K \Lvl_{i,V}]).
	\end{align*}
\end{dfn}

The Macaulay Correspondence Theorem \ref{Macaulay_Correspondence_Theorem} gives us conditions for when $\mathcal{O}^\ast[V]$ is an ideal.

\begin{rem}
	For a homogeneous ideal $I$, 
	$\mathcal{L}_{\mathscr{M}_S}^\ast[I]$ is known as a lexicographic ideal or lex ideal in the literature,
	when it holds that $\mathcal{L}_{\mathscr{M}_S}^\ast[I]$ is an ideal.
\end{rem}

\begin{exm}\label{hilbNotEqualExample}
	In general we have $\Hilb_{V}(i) \geq \Hilb_{\mathcal{O}^\ast[V]}(i)$ for all $i\in \N$.
	We could have $\Hilb_{V}(i) > \Hilb_{\mathcal{O}^\ast[V]}(i)$ for some $i$, when $\Lvl_{i, \mathscr{M}_S}^\ast[\dim_K \Lvl_{i,V}]$ is not linearly independent.
	Take $R=K[x,y,z]$ with $d=3$, $H=(x^2+xy-xz)$ and $S=R/H$.
	Then we consider the ideal $I=(z^2+H, yz+H, y^2 + H)$.
	Notice that $\{z^2+H, yz+H, y^2 + H\}$ is linearly independent.
	Hence, $\Hilb_I(2)=3$.
	However, $\{x^2+H, xy+H, xz+H\}$ is not linearly independent, since $(x^2+H)+(xy+H)=xz+H$.
	Also, $\mathcal{L}^\ast[I]\cap \mathscr{M}_S = \{x^2+H, xy+H, xz+H\}$.
	Thus, $\Hilb_{\mathcal{L}^\ast[I]}(2)=2$.
	Therefore, $\Hilb_I(2) > \Hilb_{\mathcal{L}^\ast[I]}(2)$ and $\Hilb_I\neq \Hilb_{\mathcal{L}^\ast[I]}$.
\end{exm}

Example \ref{hilbNotEqualExample} reinforces the importance of Definition \ref{Level_Linear_Independence_Definition} for preserving Hilbert functions.

\begin{lem}\label{levelLIimpliesHilb}
	If $S$ is level linearly independent and $\mathcal{O}$ is a total order on $\mathscr{M}_S$ then for any graded vector subspace $V$ of $S$ we have
	\begin{align*}
		\Hilb_{V} = \Hilb_{\mathcal{O}^\ast[V]}
	\end{align*}
\end{lem}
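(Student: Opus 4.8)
The plan is to verify the equality $\Hilb_V(i) = \Hilb_{\mathcal{O}^\ast[V]}(i)$ level by level, reducing everything to the definition of level linear independence. Fix $i \in \N$ and set $q_i = \dim_K \Lvl_{i,V}$. By construction (Definition \ref{initialSegmentSpace}), $\Lvl_{i, \mathcal{O}^\ast[V]} = \Span_K(\Lvl_{i, \mathscr{M}_S}^\ast[q_i])$, so what I need is $\dim_K \Span_K(\Lvl_{i, \mathscr{M}_S}^\ast[q_i]) = q_i$; equivalently, that the $q_i$ monomials forming this initial segment are linearly independent over $K$.

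First I would check that the initial segment $\Lvl_{i, \mathscr{M}_S}^\ast[q_i]$ is even well-defined, i.e.\ that $q_i \le |\Lvl_{i, \mathscr{M}_S}|$. Since $\Lvl_{i,S} = \Span_K(\Lvl_{i, \mathscr{M}_S})$ (as recorded in Definition \ref{Initial_Monomial_Sets_Vector_Spaces_and_Ideals_Definition}) and $\Lvl_{i, \mathscr{M}_S}$ is finite (there are only finitely many monomials of degree $i$ in $d$ variables), level linear independence of $S$ gives $\dim_K \Lvl_{i,S} = |\Lvl_{i, \mathscr{M}_S}|$. As $V\subseteq S$ is a graded subspace, $\Lvl_{i,V}\subseteq \Lvl_{i,S}$, so $q_i \le |\Lvl_{i, \mathscr{M}_S}|$ and the initial $\mathcal{O}^\ast$-segment of size $q_i$ makes sense.

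Then I would finish: $\Lvl_{i, \mathscr{M}_S}^\ast[q_i]$ is a subset of $\Lvl_{i, \mathscr{M}_S}$, which is linearly independent by hypothesis, so this subset is linearly independent and its $K$-span has dimension exactly $q_i$. Hence $\Hilb_{\mathcal{O}^\ast[V]}(i) = q_i = \Hilb_V(i)$ for every $i$, which is the claim. There is no substantive obstacle: the entire force of the statement is in having assumed level linear independence, which is exactly what rules out the pathology of Example \ref{hilbNotEqualExample} (a non-linearly-independent initial segment whose span is too small). The only point requiring a moment's care is the well-definedness of the initial segment, dealt with above.
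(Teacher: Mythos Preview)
Your proposal is correct and is exactly the argument the paper has in mind: the paper's proof is simply ``Follows from Definition~\ref{initialSegmentSpace} and Definition~\ref{Level_Linear_Independence_Definition},'' and you have carefully unpacked precisely that. Your extra care in verifying $q_i \le |\Lvl_{i,\mathscr{M}_S}|$ so that the initial segment is well-defined is a nice detail the paper leaves implicit.
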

\begin{proof}
	Follows from Definition \ref{initialSegmentSpace} and Definition \ref{Level_Linear_Independence_Definition}.
\end{proof}

\begin{lem}\label{levelLImonoLemma}
	If $S$ is level linearly independent and $\mathcal{O}$ is a total order on $\mathscr{M}_S$, 
	then for any graded vector subspace $V$ of $S$ and all $i\in \N$ we have
	\begin{align*}
		\Lvl_{i, \mathscr{M}_S}^\ast[\dim_K \Lvl_{i,V}] = \Lvl_{i, \mathcal{O}^\ast[V]}\cap \mathscr{M}_S.
	\end{align*}
\end{lem}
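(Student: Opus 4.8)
The plan is to unwind the definitions and reduce the statement to a single linear-algebra fact. Fix $i\in\N$ and set $W = \Lvl_{i,\mathscr{M}_S}^\ast[\dim_K\Lvl_{i,V}]$. Since $V$ is graded we have $\Lvl_{i,V}\subseteq\Lvl_{i,S}$, and since $S$ is level linearly independent the set $\Lvl_{i,\mathscr{M}_S}$ is a basis of $\Lvl_{i,S}=\Span_K(\Lvl_{i,\mathscr{M}_S})$; hence $\dim_K\Lvl_{i,V}\leq|\Lvl_{i,\mathscr{M}_S}|$, so $W$ is a well-defined subset of $\Lvl_{i,\mathscr{M}_S}$. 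By Definition \ref{initialSegmentSpace} the direct-sum decomposition appearing there is the level decomposition of $\mathcal{O}^\ast[V]$, so $\Lvl_{i,\mathcal{O}^\ast[V]}=\Span_K(W)$. Thus the statement to prove becomes $W=\Span_K(W)\cap\mathscr{M}_S$.

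For this, the inclusion $W\subseteq\Span_K(W)\cap\mathscr{M}_S$ is immediate, as each element of $W$ is a monomial of $S$ lying in $\Span_K(W)$. For the reverse inclusion I would take $m\in\Span_K(W)\cap\mathscr{M}_S$; then $m$ is a nonzero monomial lying in $\Span_K(W)\subseteq\Lvl_{i,S}$, so $\deg_S(m)=i$ and $m\in\Lvl_{i,\mathscr{M}_S}$. Now I would compare the two expansions $m=\sum_{w\in W}a_w w$ (coming from membership in $\Span_K(W)$) and $m=1\cdot m$, both viewed as $K$-linear combinations of the basis $\Lvl_{i,\mathscr{M}_S}$ of $\Lvl_{i,S}$; uniqueness of coordinates forces $m$ to occur in the first expansion with coefficient $1$, i.e.\ $m\in W$.

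I expect no real obstacle here: the only content is that level linear independence turns $\Lvl_{i,\mathscr{M}_S}$ into an honest basis, which gives uniqueness of monomial coordinates and hence the fact that a subspace spanned by a set of monomials of a fixed degree contains no ``hidden'' monomials beyond that set. The two points that need a little care are the well-definedness of the initial segment $W$ (the bound $\dim_K\Lvl_{i,V}\leq|\Lvl_{i,\mathscr{M}_S}|$ above) and the degenerate case $\dim_K\Lvl_{i,V}=0$, where both sides are empty since $0$ is not a monomial. It is worth remarking that the hypothesis of level linear independence is essential: Example \ref{hilbNotEqualExample} exhibits monomials of a fixed degree that are $K$-linear combinations of other monomials of that same degree, and there the conclusion fails.
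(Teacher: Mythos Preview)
Your proof is correct and follows essentially the same route as the paper's: both reduce to the linear-algebra fact that a monomial lying in the span of a set of monomials of a fixed degree must, by level linear independence, actually belong to that set. Your write-up is slightly more explicit about well-definedness of the initial segment and the degenerate case, but the argument is the same.
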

\begin{proof}
	One has, $\subseteq$ right away from Definition \ref{initialSegmentSpace}.
	
	So, let $m\in \Lvl_{i, \mathcal{O}^\ast(V)}\cap \mathscr{M}_S$.
	Then 
	\begin{align*}
		m\in \Span_K (\Lvl_{i, \mathscr{M}_S}^\ast[\dim_K \Lvl_{i,V}])\cap \mathscr{M}_S.
	\end{align*}
	Thus, $m$ is a linear combination of elements of $\Lvl_{i, \mathscr{M}_S}^\ast[\dim_K \Lvl_{i,V}]$.
	However, $S$ is level linearly independent. 
	Hence, every linear combination of $\Lvl_{i, \mathscr{M}_S}^\ast[\dim_K \Lvl_{i,V}]$ that involves at least two terms is not a monomial.
	Thus, we must have $m\in \Lvl_{i, \mathscr{M}_S}^\ast[\dim_K \Lvl_{i,V}]$.
	
	Therefore, the claim holds.
\end{proof}

Mermin and Peeva in \cite{MerminJeffrey2006Li, MerminJeffrey2007Hfal} asked about conditions on $H$ (for monomial ideals) such that for any homogeneous ideal $I$ of $S$,
there exists a lexicographic ideal $L$ of $S$ such that $\Hilb_I = \Hilb_L$.
They called such rings $S$ Macaulay-Lex, and $H$ a Macaulay-Lex ideal of $R$.
We are interested in generalizing this problem in two directions.
First, we want to allow an arbitrary order, not just the lexicographic one.
Second, we want to take quotients by homogeneous ideals, not just monomial ideals.

\begin{dfn}[Macaulay Rings and Ideals]\label{Macaulay_Rings_and_Ideals_Definition}
	Suppose that $\mathcal{O}$ is a total order on $\mathscr{M}_S$.
	We say that $(S, \mathcal{O})$ is \textit{Macaulay} if for every homogeneous ideal $I$ of $S$ we have that $\mathcal{O}^\ast[I]$ is an ideal,
	and $\Hilb_I = \Hilb_{\mathcal{O}^\ast[I]}$.
	If $(S, \mathcal{O})$ is Macaulay then we say $H$ is \textit{Macaulay}.
	We will sometimes say that $S$ is \textit{Macaulay} without specifying the total order,
	or say that $S$ is \textit{Macaulay} with $\mathcal{O}$.
\end{dfn}

\begin{prb}[The General Mermin--Peeva Problem]\label{The_General_Mermin_Peeva_Problem}
	Find classes of Macaulay rings.
\end{prb} \subsection{The Macaulay Correspondence Theorem}\label{Macaulay_implies_Hilbert}

Unless otherwise stated $S=K[x_1,\dots, x_d]/H$, where $K$ is a field and $H$ is a homogeneous ideal of $R=K[x_1,\dots, x_d]$.

\begin{lem}\label{Macaulay_Ring_to_Poset}
	Suppose that $S$ is level linearly independent and $\mathcal{O}$ is a total order on $\mathscr{M}_S$.
	If $(S,\mathcal{O})$ is Macaulay then $(\mathscr{M}_S, \mathcal{O})$ is Macaulay.
\end{lem}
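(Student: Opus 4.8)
The plan is to verify the upper-shadow form of the Macaulay condition recorded as part~(3) of Lemma~\ref{Macaulay_Equivalence_Strong} for the $2$-poset $(\mathscr{M}_S,\mathcal{O})$, and to extract that condition from the Macaulay-ring hypothesis by feeding it a single, carefully chosen homogeneous ideal. First I would dispatch the routine reductions that come from level linear independence. By Lemma~\ref{levelLIimpliesHilb} the Hilbert-function condition in Definition~\ref{Macaulay_Rings_and_Ideals_Definition} is automatic, so the hypothesis that $(S,\mathcal{O})$ is Macaulay amounts to: $\mathcal{O}^\ast[I]$ is an ideal of $S$ for every homogeneous ideal $I$. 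Moreover every level $\Lvl_{i,\mathscr{M}_S}$ is finite, being a set of residues of degree-$i$ monomials of $R$, so Lemma~\ref{Macaulay_Equivalence_Strong} applies and it suffices to prove that for all $i\in\N$ and all $A\subseteq\Lvl_{i,\mathscr{M}_S}$ one has $\uSdw(\Lvl_{i,\mathscr{M}_S}^\ast[A])\subseteq\Lvl_{i+1,\mathscr{M}_S}^\ast[\uSdw(A)]$.

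Fix such an $i$ and an $A$, and let $I$ be the monomial ideal of $S$ generated by $A$. Since all elements of $A$ share the degree $i$, none of them is a proper multiple of another, so $\Lvl_{j,I}=0$ for $j<i$, $\Lvl_{i,I}=\Span_K(A)$, and, by Lemma~\ref{Upper_Shadow_in_The_Poset_of_Monomials}, $\Lvl_{i+1,I}=\Span_K(\uSdw(A))$. This is the one place where level linear independence is essential: it makes $A$ and $\uSdw(A)$ linearly independent, so $\dim_K\Lvl_{i,I}=|A|$ and $\dim_K\Lvl_{i+1,I}=|\uSdw(A)|$. Applying the hypothesis, $\mathcal{O}^\ast[I]$ is an ideal; it is a monomial space by construction (Definition~\ref{initialSegmentSpace}, together with level linear independence), and Lemma~\ref{levelLImonoLemma} identifies its degree-$j$ monomials with $\Lvl_{j,\mathscr{M}_S}^\ast[\dim_K\Lvl_{j,I}]$, hence in degrees $i$ and $i+1$ with $\Lvl_{i,\mathscr{M}_S}^\ast[A]$ and $\Lvl_{i+1,\mathscr{M}_S}^\ast[\uSdw(A)]$ respectively.

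Finally I would invoke Lemma~\ref{Monomial_Space_iff_Ideal} for the monomial ideal $\mathcal{O}^\ast[I]$ at level $i$: it yields $\uSdw\bigl(\Lvl_{i,\mathcal{O}^\ast[I]}\cap\mathscr{M}_S\bigr)\subseteq\Lvl_{i+1,\mathcal{O}^\ast[I]}\cap\mathscr{M}_S$, which, after the substitutions from the previous paragraph, is exactly the needed containment $\uSdw(\Lvl_{i,\mathscr{M}_S}^\ast[A])\subseteq\Lvl_{i+1,\mathscr{M}_S}^\ast[\uSdw(A)]$. By Lemma~\ref{Macaulay_Equivalence_Strong} this establishes that $(\mathscr{M}_S,\mathcal{O})$ is Macaulay. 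The only real obstacle in the whole argument is the dimension bookkeeping in the middle step: without level linear independence one gets only $\dim_K\Lvl_{i,I}\le|A|$ and a similar inequality one rank up, which would destroy the exact matching of the monomial content of $\mathcal{O}^\ast[I]$ with dual initial segments of the right sizes and collapse the proof — in line with the hypothesis being genuinely needed, cf.\ Example~\ref{hilbNotEqualExample}. Everything else is unwinding the definitions of $\mathcal{O}^\ast[V]$ and of the upper shadow in $\mathscr{M}_S$.
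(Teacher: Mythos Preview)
Your proof is correct and follows essentially the same approach as the paper: generate the ideal $I$ from $A$, use level linear independence to identify $\dim_K\Lvl_{i,I}$ and $\dim_K\Lvl_{i+1,I}$ with $|A|$ and $|\uSdw(A)|$, then combine Lemma~\ref{levelLImonoLemma} with Lemma~\ref{Monomial_Space_iff_Ideal} applied to the ideal $\mathcal{O}^\ast[I]$ to obtain the upper-shadow containment, and conclude via Lemma~\ref{Macaulay_Equivalence_Strong}. The paper's proof is the same chain of equalities and the one containment, just written as a single display.
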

\begin{proof}
	First, from the definitions we have that $\mathscr{M}_S$ is ranked and all its levels are finite.
	Let $i\in \N$ and $A\subseteq \Lvl_{i, \mathscr{M}_S}$.
	Then let $I$ be the ideal generated by $A$.
	Thus, $\mathcal{O}^\ast[I]$ is an ideal, since we assumed that $(S,\mathcal{O})$ is Macaulay.
	One has,
	\begin{align*}
		\uSdw(\Lvl_{i,\mathscr{M}_S}^\ast[A])
		&= \uSdw(\Lvl_{i, \mathscr{M}_S}^\ast[\dim_K \Lvl_{i,I}]) \tag{By level linear independence}\\ 
		&=\uSdw (\Lvl_{i, \mathcal{O}^\ast[I]}\cap \mathscr{M}_S) \tag{By Lemma \ref{levelLImonoLemma}}\\
		&\subseteq \Lvl_{i+1, \mathcal{O}^\ast[I]} \cap \mathscr{M}_S \tag{By Lemma \ref{Monomial_Space_iff_Ideal}}\\
		&= \Lvl_{i+1, \mathscr{M}_S}^\ast[\dim_K \Lvl_{i+1,I}] \tag{By Lemma \ref{levelLImonoLemma}}\\
		&=\Lvl^\ast_{i+1, \mathscr{M}_S}[\uSdw(A)] \tag{By level linear independence and definition of $I$}.
	\end{align*}
\end{proof}

\begin{lem}\label{Macaulay_Poset_to_Ring_Only_Monomial_Ideals}
	Suppose that $S$ is level linearly independent and $\mathcal{O}$ is a total order on $\mathscr{M}_S$.
	If $M\subseteq S$ is a monomial ideal and $(\mathscr{M}_S, \mathcal{O})$ is Macaulay then $\mathcal{O}^\ast[M]$ is an ideal.
\end{lem}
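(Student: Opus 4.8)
The plan is to verify the criterion of Lemma~\ref{Monomial_Space_iff_Ideal}: a monomial space is an ideal precisely when, at every level $i$, the upper shadow of its level-$i$ monomials is contained in its level-$(i+1)$ monomials. First I would note that $\mathcal{O}^\ast[M]$ is a monomial space by its very definition (Definition~\ref{initialSegmentSpace} together with Definition~\ref{Monomial_Spaces_Definition}), so it only remains to check the shadow containment level by level. Using that $S$ is level linearly independent, Lemma~\ref{levelLImonoLemma} identifies $\Lvl_{i,\mathcal{O}^\ast[M]}\cap \mathscr{M}_S$ with the initial segment $\Lvl_{i,\mathscr{M}_S}^\ast[\dim_K \Lvl_{i,M}]$ of the $i$-th level of $\mathscr{M}_S$ taken under the dual of $\mathcal{O}$.

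Next I would translate the dimension $\dim_K \Lvl_{i,M}$ into a count of monomials. Since $M$ is a \emph{monomial} ideal, the set $A_i = \Lvl_{i,M}\cap \mathscr{M}_S$ is a $K$-basis of $\Lvl_{i,M}$, so $\dim_K \Lvl_{i,M} = |A_i|$ and hence $\Lvl_{i,\mathscr{M}_S}^\ast[\dim_K \Lvl_{i,M}] = \Lvl_{i,\mathscr{M}_S}^\ast[A_i]$ under the $\mathscr{T}[A] = \mathscr{T}[|A|]$ convention. Moreover, applying Lemma~\ref{Monomial_Space_iff_Ideal} to $M$ itself (which \emph{is} an ideal) gives $\uSdw(A_i)\subseteq A_{i+1}$ for every $i$, and in particular $|\uSdw(A_i)| \leq |A_{i+1}|$.

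Now I would invoke the hypothesis that $(\mathscr{M}_S,\mathcal{O})$ is Macaulay. By Lemma~\ref{Macaulay_Equivalence_Strong}, condition~(3), for each $i$ and each $A\subseteq \Lvl_{i,\mathscr{M}_S}$ one has $\uSdw(\Lvl_{i,\mathscr{M}_S}^\ast[A]) \subseteq \Lvl_{i+1,\mathscr{M}_S}^\ast[\uSdw(A)]$; note $\mathscr{M}_S$ is ranked with finite levels, so this applies. Taking $A = A_i$ and then using $|\uSdw(A_i)|\leq |A_{i+1}|$, the initial segment $\Lvl_{i+1,\mathscr{M}_S}^\ast[\uSdw(A_i)]$ is contained in $\Lvl_{i+1,\mathscr{M}_S}^\ast[A_{i+1}]$, since initial segments of a fixed toset are nested by size. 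Chaining these inclusions and translating back through Lemma~\ref{levelLImonoLemma} yields $\uSdw(\Lvl_{i,\mathcal{O}^\ast[M]}\cap \mathscr{M}_S) \subseteq \Lvl_{i+1,\mathcal{O}^\ast[M]}\cap \mathscr{M}_S$ for all $i$, so $\mathcal{O}^\ast[M]$ is an ideal by Lemma~\ref{Monomial_Space_iff_Ideal}.

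The only delicate point is bookkeeping: one must keep straight that the relevant initial segments are taken in the \emph{dual} order on each level, and that the ``set'' and ``size'' conventions for initial segments are used consistently; once the identifications $\Lvl_{i,\mathcal{O}^\ast[M]}\cap\mathscr{M}_S = \Lvl_{i,\mathscr{M}_S}^\ast[|A_i|]$ and $\uSdw(A_i)\subseteq A_{i+1}$ are in place, the Macaulay property of the poset does the rest. There is no genuine analytic obstacle here; this lemma is essentially the ``monomial-ideal half'' of the Macaulay Correspondence Theorem, and the work lies in assembling the already-established translation lemmas in the right order.
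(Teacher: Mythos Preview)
Your proposal is correct and follows essentially the same route as the paper's proof: identify $\Lvl_{i,\mathcal{O}^\ast[M]}\cap\mathscr{M}_S$ with $\Lvl_{i,\mathscr{M}_S}^\ast[|A_i|]$ via Lemma~\ref{levelLImonoLemma}, apply the upper-shadow form of the Macaulay property (Lemma~\ref{Macaulay_Equivalence_Strong}), use $\uSdw(A_i)\subseteq A_{i+1}$ from $M$ being an ideal (Lemma~\ref{Monomial_Space_iff_Ideal}), and conclude by Lemma~\ref{Monomial_Space_iff_Ideal}. The only cosmetic difference is that where the paper tags the step $\dim_K\Lvl_{i,M}=|A_i|$ as ``by level linear independence,'' you more explicitly invoke that $M$ is monomial to get spanning; both ingredients are in fact needed, so your justification is if anything slightly more precise.
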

\begin{proof}
	For all $i\in \N$ one has
	\begin{align*}
		\uSdw(\Lvl_{i,\mathcal{O}^\ast[M]}\cap \mathscr{M}_S) 
		&= \uSdw(\Lvl_{i, \mathscr{M}_S}^\ast[\dim_K \Lvl_{i,M}])\tag{By Lemma \ref{levelLImonoLemma}}\\
		&= \uSdw(\Lvl_{i, \mathscr{M}_S}^\ast[\Lvl_{i, M}\cap \mathscr{M}_S])\tag{By level linear independence}\\ 
		&\subseteq \Lvl_{i+1, \mathscr{M}_S}^\ast[\uSdw(\Lvl_{i, M}\cap \mathscr{M}_S)])\tag{By Lemma \ref{Macaulay_Equivalence_Strong}}\\
		&\subseteq \Lvl_{i+1, \mathscr{M}_S}^\ast[\Lvl_{i+1, M}\cap \mathscr{M}_S])\tag{Since $\uSdw(\Lvl_{i, M}\cap \mathscr{M}_S) \subseteq \Lvl_{i+1, M}\cap \mathscr{M}_S$}\\
		&=\Lvl_{i+1,\mathscr{M}_S}^\ast [\dim_K \Lvl_{i+1,M}]\tag{By level linear independence}\\
&= \Lvl_{i+1,\mathcal{O}^\ast[M]}\cap \mathscr{M}_S. \tag{By Lemma \ref{levelLImonoLemma}}
	\end{align*}
\end{proof}

We can now put all the developments in the previous sections into proving the Macaulay Correspondence Theorem \ref{Macaulay_Correspondence_Theorem}.

\begin{thm}[Macaulay Correspondence Theorem]\label{Macaulay_Correspondence_Theorem}
	Suppose that $S$ is level linearly independent, $\mathcal{O}$ is a total order on $\mathscr{M}_S$, and there exists a monomial order on $\mathscr{M}_S$.
	Then $(S,\mathcal{O})$ is Macaulay if and only if $(\mathscr{M}_S, \mathcal{O})$ is Macaulay.
\end{thm}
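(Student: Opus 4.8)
The plan is to prove the two implications separately, since one of them is essentially already in hand. The forward implication---if $(S,\mathcal{O})$ is Macaulay then $(\mathscr{M}_S,\mathcal{O})$ is Macaulay---is precisely Lemma \ref{Macaulay_Ring_to_Poset}, and it uses only hypotheses (1) and (2); hypothesis (3) plays no role there. So all the content of the theorem beyond what is already established lies in the converse.

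For the converse, I would start from an arbitrary homogeneous ideal $I$ of $S$ and first reduce it to a monomial ideal of the same Hilbert function. Fix a monomial order $\prec$ on $\mathscr{M}_S$, which exists by hypothesis (3). Since $S$ is level linearly independent by hypothesis (1), Theorem \ref{Reducing_Homogeneous_Ideals_to_Monomial_Ideals} applies and gives $\Hilb_I = \Hilb_{\IMV_{\prec}(I)} = \Hilb_{\IMI_{\prec}(I)}$. Set $M = \IMI_{\prec}(I)$; this is a monomial ideal of $S$ with $\Hilb_M = \Hilb_I$.

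Next I would transfer the Macaulay property of the poset to $M$ and then back up to $I$. Because $M$ is a monomial ideal, $S$ is level linearly independent, and $(\mathscr{M}_S,\mathcal{O})$ is Macaulay, Lemma \ref{Macaulay_Poset_to_Ring_Only_Monomial_Ideals} gives that $\mathcal{O}^\ast[M]$ is an ideal. The observation that closes the gap between $M$ and $I$ is that $\mathcal{O}^\ast[V]$ depends on $V$ only through $\Hilb_V$: by Definition \ref{initialSegmentSpace}, $\mathcal{O}^\ast[V] = \bigoplus_{i=0}^\infty \Span_K(\Lvl_{i,\mathscr{M}_S}^\ast[\dim_K \Lvl_{i,V}])$, and $\dim_K \Lvl_{i,V} = \Hilb_V(i)$. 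Since $\Hilb_I = \Hilb_M$, this forces $\mathcal{O}^\ast[I] = \mathcal{O}^\ast[M]$, so $\mathcal{O}^\ast[I]$ is an ideal. Finally, Lemma \ref{levelLIimpliesHilb}, again using level linear independence, gives $\Hilb_I = \Hilb_{\mathcal{O}^\ast[I]}$. Hence $(S,\mathcal{O})$ is Macaulay, completing the converse.

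I expect this assembly to be short; the genuine difficulty has already been absorbed into the supporting results, namely Theorem \ref{Reducing_Homogeneous_Ideals_to_Monomial_Ideals} (where the monomial-order hypothesis together with level linear independence is exactly what permits replacing a homogeneous ideal by a monomial ideal of the same Hilbert function) and Lemma \ref{Macaulay_Poset_to_Ring_Only_Monomial_Ideals} (which pushes the combinatorial Macaulay condition back into the ring in the monomial case, via Lemma \ref{Monomial_Space_iff_Ideal} and Lemma \ref{Macaulay_Equivalence_Strong}). The one subtle point in the present proof is recognizing that $\mathcal{O}^\ast[\cdot]$ is a function of the Hilbert function alone, which is what makes the reduction to monomial ideals suffice; everything else is bookkeeping with the cited lemmas.
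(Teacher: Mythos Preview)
Your proposal is correct and follows essentially the same route as the paper's proof: forward direction by Lemma \ref{Macaulay_Ring_to_Poset}, converse by reducing an arbitrary homogeneous ideal $I$ to the monomial ideal $M=\IMI_{\prec}(I)$ via Theorem \ref{Reducing_Homogeneous_Ideals_to_Monomial_Ideals}, applying Lemma \ref{Macaulay_Poset_to_Ring_Only_Monomial_Ideals} to get that $\mathcal{O}^\ast[M]$ is an ideal, noting $\mathcal{O}^\ast[I]=\mathcal{O}^\ast[M]$ because the construction depends only on the Hilbert function, and concluding with Lemma \ref{levelLIimpliesHilb}. The only cosmetic difference is that the paper invokes Lemma \ref{levelLIimpliesHilb} first and you invoke it last.
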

\begin{proof}
	The forward direction follows by Lemma \ref{Macaulay_Ring_to_Poset}.
	The backwards direction is slightly more complicated.
	Suppose that $(\mathscr{M}_S,\mathcal{O})$ is Macaulay and $I\subseteq S$ is a homogeneous ideal.
	Well, $\Hilb_{I} = \Hilb_{\mathcal{O}^\ast[I]}$ by Lemma \ref{levelLIimpliesHilb}.
	So, we just need to show that $\mathcal{O}^\ast[I]$ is an ideal.
	
	Let $B$ be the unique leveled monomial basis of $S$ and $\mathcal{O}'$ be a monomial order on $\mathscr{M}_S$.
	Then Theorem \ref{Reducing_Homogeneous_Ideals_to_Monomial_Ideals} gives us that $M=\IMI_{\mathcal{O}',B}(I)$ and $I$ have the same Hilbert function.
	Note that $M$ is a monomial ideal.
	Thus, Lemma \ref{Macaulay_Poset_to_Ring_Only_Monomial_Ideals} implies that $\mathcal{O}^\ast[M]$ is an ideal.
	However, $\mathcal{O}^\ast[M]=\mathcal{O}^\ast[I]$, 
	since $\Hilb_M=\Hilb_I$ and the sets $\mathcal{O}^\ast[M]$ and $\mathcal{O}^\ast[I]$ only depend on $\Hilb_M$ and $\Hilb_I$ respectively.
	Hence, $\mathcal{O}^\ast[I]$ is an ideal.
	Therefore, $(S,\mathcal{O})$ is Macaulay. 
\end{proof}

A couple remarks are in order for the proof of the Macaulay Correspondence Theorem \ref{Macaulay_Correspondence_Theorem}.
Level linear independence is crucial for many parts of several proofs that contributed to the final result.
The existence of a monomial order on $\mathscr{M}_S$ is only needed to reduce a homogeneous ideal to a monomial ideal in the backwards direction.
In particular, if $H$ was a monomial ideal, we don't need to worry about this condition because of Corollary \ref{Reducing_Homogeneous_Ideals_to_Monomial_Ideals_Corollary}.
Furthermore, if $H$ is a monomial ideal then it forces $S$ to be level linearly independent.
Therefore, we have the following corollary.

\begin{cor}[Macaulay Correspondence Theorem for Monomial Quotients]\label{Macaulay_Correspondence_Theorem_for_Monomial_Quotients}
	Suppose that $H$ is a monomial ideal.
	Then $(S,\mathcal{O})$ is Macaulay iff $(\mathscr{M}_S, \mathcal{O})$ is Macaulay.
\end{cor}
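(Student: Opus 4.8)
The plan is to deduce this directly from the Macaulay Correspondence Theorem \ref{Macaulay_Correspondence_Theorem} by checking that its three hypotheses hold automatically when $H$ is a monomial ideal. Hypothesis (2) (that $\mathcal{O}$ is a total order on $\mathscr{M}_S$) is part of the statement, so there are really two things to verify: that $S$ is level linearly independent, and that there exists a monomial order on $\mathscr{M}_S$. Once both are in hand, the theorem applies verbatim and gives the equivalence.

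For level linear independence I would argue that when $H$ is a monomial ideal, $\Lvl_{i,R}\cap H$ is spanned by the degree-$i$ monomials lying in $H$, so by Proposition \ref{Properties_of_Homogeneous_Ideals} and Proposition \ref{Quotients_of_Graded_Rings_by_Homogeneous_Ideals_are_Graded} the images of the remaining degree-$i$ monomials form a basis of $\Lvl_{i,S}$; in particular the elements of $\Lvl_{i,\mathscr{M}_S}$ are distinct and linearly independent. Equivalently one can invoke Proposition \ref{Monomial_Quotients_Produce_Subposets}: the assignment $x_1^{p_1}\cdots x_d^{p_d}+H \mapsto x_1^{p_1}\cdots x_d^{p_d}$ embeds $\mathscr{M}_S$ as a subposet of $\mathscr{M}_R$, and distinct monomials of $R$ of the same degree are linearly independent, hence so are the elements of each level of $\mathscr{M}_S$.

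For the existence of a monomial order I would transport the lexicographic order $\mathcal{L}_{\mathscr{M}_R}$ on $\mathscr{M}_R$ — which, being a domination order, is a monomial order — to $\mathscr{M}_S$ along the subposet embedding of Proposition \ref{Monomial_Quotients_Produce_Subposets}. The only point needing a line of checking is that this induced total order is again a monomial order: if $m_1 < m_2$ in $\mathscr{M}_S$ and $mm_1, mm_2 \in \mathscr{M}_S$, then under the embedding these become genuine monomials of $R$ satisfying the analogous relations, so the monomial-order property of $\mathcal{L}_{\mathscr{M}_R}$ passes back down to $\mathscr{M}_S$. With hypotheses (1) and (3) thus verified, Theorem \ref{Macaulay_Correspondence_Theorem} yields the conclusion. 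I do not anticipate a genuine obstacle: this corollary is essentially bookkeeping on top of the main theorem, and the only mild subtlety — that restricting a monomial order along the subposet embedding preserves the monomial-order property — follows immediately from the definitions.
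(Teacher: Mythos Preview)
Your proposal is correct and follows essentially the same route as the paper: the paragraph preceding the corollary observes that a monomial $H$ forces $S$ to be level linearly independent and that Proposition~\ref{Monomial_Quotients_Produce_Subposets} (via restricting the lexicographic order from $\mathscr{M}_R$) supplies a monomial order on $\mathscr{M}_S$, after which Theorem~\ref{Macaulay_Correspondence_Theorem} applies directly. Your only extra content is spelling out the easy checks that the paper leaves implicit.
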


Although the author discovered Theorem \ref{Macaulay_Correspondence_Theorem} independently,
it should mention that a less general version of it has been discovered before.
Shakin in \cite{ShakinDA2001SgoM} proves the case when $H$ is a monomial ideal and $\mathcal{O} = \mathcal{L}$.
Notice that we can combine Theorem \ref{Reducing_Homogeneous_Ideals_to_Monomial_Ideals} with Lemma \ref{Macaulay_Equivalence_Strong} to obtain even more equivalences.
A special case (again $H$ is a monomial ideal and $\mathcal{O}=\mathcal{L}$) of this was observed by Chong in \cite{ChongKaiFongErnest2015Hfoc}.
Chong used Shakin's result and Proposition 8.1.1 (Bezrukov's Dual Lemma \ref{Bezrukov_Dual_Lemma}) in Engel's book \cite{engel_1997}.
Engel mentions Macaulay's Theorem \ref{Macaulay_1927} in \cite{engel_1997},
but does not discuss Hilbert functions at all.

With the general version, Theorem \ref{Macaulay_Correspondence_Theorem}, 
we can deduce many results about Macaulay rings by using the existing theory on Macaulay posets.
To end this section, we give the following corollary as simple example of what is to come in later sections.
 
 \begin{cor}\label{Clements_Lindström_Macaulay_Theorem}
 	Let $p_1,\dots ,p_d\in (\N\cup \infty)\setminus \{0\}$.
 	If $H=(x_1^{p_1},\dots, x_d^{p_d})$ then $S$ is Macaulay with some domination order. 
 \end{cor}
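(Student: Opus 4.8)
The plan is to assemble this from two ingredients already in hand: the classification of Macaulay multiset lattices (Corollary \ref{Macaulay_Full_Multiset_Theorem}) and the Macaulay Correspondence Theorem in the monomial case (Corollary \ref{Macaulay_Correspondence_Theorem_for_Monomial_Quotients}). First I would note that $H=(x_1^{p_1},\dots,x_d^{p_d})$ is a monomial ideal (using $x_i^\infty = 0$ from Definition \ref{Infinite_Variable_Powers_Definition} to accommodate $p_i=\infty$). Hence Example \ref{Grid_Ring_Poset} applies and gives a poset isomorphism $\mathscr{M}_S \cong \mathscr{M}_{[d]}(p_1,\dots,p_d)$. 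This isomorphism is rank-preserving, since by Definition \ref{Poset_of_Monomials_Definition} the rank on $\mathscr{M}_S$ is the degree, while the rank on the multiset lattice is the coordinate sum.

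Next I would invoke Corollary \ref{Macaulay_Full_Multiset_Theorem}: the multiset lattice $\mathscr{M}_{[d]}(p_1,\dots,p_d)$ forms a Macaulay poset with some domination order $\mathcal{O}$. Transporting $\mathcal{O}$ along the isomorphism of Example \ref{Grid_Ring_Poset} yields a total order on $\mathscr{M}_S$; this is precisely one of the domination orders on $\mathscr{M}_S$ whose well-definedness is recorded in the discussion preceding Proposition \ref{Monomial_Quotients_Produce_Subposets}. Since the Macaulay property of a $2$-poset $(\mathscr{P},\mathcal{O})$ (Definition \ref{Macaulay_Posets_Definition}) depends only on the isomorphism type of $\mathscr{P}$ together with the induced order on each level, and the isomorphism above respects ranks (hence levels), it follows that $(\mathscr{M}_S,\mathcal{O})$ is Macaulay.

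Finally, because $H$ is a monomial ideal, Corollary \ref{Macaulay_Correspondence_Theorem_for_Monomial_Quotients} applies verbatim: $(S,\mathcal{O})$ is Macaulay if and only if $(\mathscr{M}_S,\mathcal{O})$ is Macaulay. The latter was just established, so $(S,\mathcal{O})$ is Macaulay, which is exactly the claim — and $\mathcal{O}$ is a domination order, as required by the statement.

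I do not expect a genuine obstacle here, since the corollary is meant as a showcase for the machinery; the only point deserving a moment of care is verifying that the abstract domination order from Corollary \ref{Macaulay_Full_Multiset_Theorem} transports to a bona fide total order on $\mathscr{M}_S$ satisfying all standing hypotheses — the rank function, finiteness of each level, and (in the infinite-rank case when some $p_i=\infty$) the direct-limit reasoning underlying Lemma \ref{Macaulay_Equivalence_Strong} that powers Corollary \ref{Macaulay_Correspondence_Theorem_for_Monomial_Quotients}. All of these are handled upstream, so the corollary follows simply by chaining the cited results.
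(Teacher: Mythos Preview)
Your proposal is correct and follows essentially the same route as the paper's proof, which simply cites Corollary \ref{Macaulay_Full_Multiset_Theorem} and Corollary \ref{Macaulay_Correspondence_Theorem_for_Monomial_Quotients}. You have merely unpacked the one-line argument by making explicit the use of Example \ref{Grid_Ring_Poset} and the rank-preserving transport of the domination order, which is exactly what is implicit in the paper's citation.
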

\begin{proof}
	Follows from Corollary \ref{Macaulay_Full_Multiset_Theorem} and Corollary \ref{Macaulay_Correspondence_Theorem_for_Monomial_Quotients}.
\end{proof}

\begin{dfn}
	The rings in Corollary \ref{Clements_Lindström_Macaulay_Theorem} are called \textit{Clements--Lindström rings},
	and in the special case when $p_1=\cdots =p_d=2$ we call them \textit{Kruskal-Katona rings}.
\end{dfn} \subsection{Products of Posets and Rings}\label{Products_of_Posets_and_Rings}

Results on Macaulay posets are often stated in terms of Cartesian products of other Macaulay posets.
For example, any multiset lattice is just the product of totally ordered sets (Decomposition of Multiset Lattices \ref{Decomposition_Of_Multiset_Latices}), 
each of which is Macaulay with the lexicographic order.
Since we will often use results about Macaulay posets to deduce results about Hilbert functions, it is useful to have an analog of products of posets for rings.
In this section we translate the Cartesian product operation on posets to the tensor product on rings.
A very useful result concerning the product of two vector spaces is the following lemma, and is Theorem 14.5 in \cite{RomanBook}.

\begin{lem}\label{Tensor_Linear_Independence}
	Suppose that $U$ and $V$ are vector spaces over $K$.
	If $u_1,\dots, u_n$ are linearly independent vectors in $U$ and $v_1,\dots, v_n$ are arbitrary vectors in $V$,
	then in $U \otimes_K V$ we have
	\begin{align*}
		\sum_{i=1}^n u_i\otimes v_i = 0 \implies v_i=0 \text{ for all } i\in [n].
	\end{align*}
	In particular, $u\otimes v = 0$ iff $u=0$ or $v=0$.
\end{lem}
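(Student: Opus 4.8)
The plan is to detect each coefficient $v_i$ separately by hitting the relation $\sum_i u_i\otimes v_i=0$ with a suitable linear map manufactured from the universal property of the tensor product. Since $u_1,\dots,u_n$ are linearly independent, I would first extend them to a basis $\{u_i\}_{i\in I}$ of $U$ with $[n]\subseteq I$. For each fixed $j\in[n]$ let $\varphi_j\colon U\to K$ be the unique linear functional with $\varphi_j(u_i)=\delta_{ij}$ for all $i\in I$; this is exactly where linear independence of the $u_i$ is used.

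Next, for each $j\in[n]$ the assignment $(u,v)\mapsto \varphi_j(u)\,v$ is a $K$-bilinear map $U\times V\to V$, so by the universal property of $\otimes_K$ it factors through a $K$-linear map $\Phi_j\colon U\otimes_K V\to V$ satisfying $\Phi_j(u\otimes v)=\varphi_j(u)\,v$ on simple tensors. Applying $\Phi_j$ to the hypothesis and using linearity gives $\sum_{i=1}^n \varphi_j(u_i)\,v_i = v_j = 0$. Since $j\in[n]$ was arbitrary, every $v_i$ vanishes, which is the first assertion.

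For the final assertion, one direction is immediate: if $u=0$ or $v=0$ then $u\otimes v=0$ because the canonical map $U\times V\to U\otimes_K V$ is bilinear. Conversely, if $u\otimes v=0$ with $u\neq 0$, then $\{u\}$ is a linearly independent one-element set, so applying the first part with $n=1$ forces $v=0$.

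I expect the only point that warrants care to be the construction of the maps $\Phi_j$, since that is precisely where the defining universal property of $\otimes_K$ enters and where the hypothesis of linear independence pays off through the existence of the dual functionals $\varphi_j$; the remainder is routine bookkeeping. As this is a textbook fact (Theorem 14.5 of \cite{RomanBook}), there is no genuine obstacle, and the argument above is essentially the standard one.
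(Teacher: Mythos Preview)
Your argument is correct and is the standard proof via coordinate functionals and the universal property of the tensor product. The paper itself does not supply a proof of this lemma at all; it merely records it as Theorem~14.5 of \cite{RomanBook}, so there is nothing to compare against beyond noting that your approach is the textbook one.
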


Lemma \ref{Tensor_Linear_Independence} is very useful when proving the poset properties of the next lemma.
Parts 1-4 of Lemma \ref{tensorsBasic} are standard facts, 
and included for completeness.

\begin{lem}\label{tensorsBasic}
	Let $R_x = K[x_1,\dots, x_{d_x}]$, $R_y = K[y_1,\dots, y_{d_y}]$ and consider the ring that has both classes of variables $R_{xy}=K[x_1,\dots, x_{d_x},y_1,\dots, y_{d_y}]$.
	Suppose $H_x$ and $H_y$ be homogeneous ideals of $R_x$ and $R_y$ respectively.
	Consider $S_x = R_x/H_x$, $S_y = R_y/H_y$, and  $S_{xy}=R_{xy}/(H_x'+H_y')$,
	where $H_x'$ is the image of $H_x$ under the natural inclusion map $R_x\xhookrightarrow{} R_{xy}$,
	and $H_y'$ is the image of $H_y$ under the natural inclusion map $R_y\xhookrightarrow{} R_{xy}$.
	Then:
	\begin{enumerate}
		\item $S_x\otimes_K S_y$ is a ring, 
		where the group structure comes form the definition of tensor product, 
		and multiplication is given by (naturally extending for sums of basic tensors)
		\begin{align*}
			((r_x+H_x)\otimes (r_y+H_y))((r_x'+H_x)\otimes (r_y'+H_y)) = (r_xr_x' + H_x)\otimes (r_yr_y' + H_y).
		\end{align*}
		\item There exists a ring homomorphism $\sigma: S_x\otimes_K S_y\rightarrow S_{xy}$
		such that
		\begin{align*}
			\sigma((r_x+H_x) \otimes (r_y+H_y)) = r_xr_y + (H_x'+H_y').
		\end{align*}
		\item There exists a ring homomorphism $\tau: S_{xy}\rightarrow S_x\otimes_K S_y$ such that
		\begin{align*}
			\tau (x_1^{p_1}\cdots x_{d_x}^{p_{d_x}}y_1^{q_1}\cdots y_{d_y}^{q_{d_y}} + (H_x'+H_y')) = (x_1^{p_1}\cdots x_{d_x}^{p_{d_x}}+H_x)\otimes (y_1^{q_1}\cdots y_{d_y}^{q_{d_y}} + H_y).
		\end{align*}
		\item We have that $\sigma$ and $\tau$ are inverses of each other and hence we have an isomorphism of rings
		\begin{align*}
			S_x\otimes_K S_y \iso S_{xy}.
		\end{align*}
		\item If every pair of distinct monomials of the same degree in $S_x$ is linearly independent then we have an isomorphism of posets
		\begin{align*}
			\mathscr{M}_{S_x} \times \mathscr{M}_{S_y} \iso \mathscr{M}_{S_{xy}}.
		\end{align*}
		\item If $\mathscr{M}_{S_x}$ and $\mathscr{M}_{S_y}$ are level linearly independent then $\mathscr{M}_{S_{xy}}$ is level linearly independent.
		\item If every pair of monomials of the same degree in $S_x$ is linearly independent, 
		and there exist monomial orders on $\mathscr{M}_{S_x}$ and $\mathscr{M}_{S_y}$, 
		then there exists a monomial order on $\mathscr{M}_{S_{xy}}$.
	\end{enumerate}
\end{lem}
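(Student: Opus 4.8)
The plan is to obtain parts 1--4 from the standard package of facts about tensor products of $K$-algebras, and then derive parts 5--7 from the resulting ring isomorphism $S_x\otimes_K S_y\iso S_{xy}$ together with Lemma \ref{Tensor_Linear_Independence}. For parts 1 and 2, I would build $\sigma$ from the bilinear map $S_x\times S_y\to S_{xy}$ sending $(r_x+H_x,r_y+H_y)$ to $r_xr_y+(H_x+H_y)$; this is well defined because $H_x$ and $H_y$, regarded inside $R_{xy}$, lie in the ideal $(H_x+H_y)$, and it factors through $S_x\otimes_K S_y$ by the universal property, simultaneously producing the multiplication rule of part 1 and the ring homomorphism $\sigma$ of part 2. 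For part 3, I would use $R_{xy}\iso R_x\otimes_K R_y$ and compose with the quotient maps $R_x\to S_x$ and $R_y\to S_y$ to obtain a ring map $R_{xy}\to S_x\otimes_K S_y$ whose kernel is an ideal containing $H_x$ and $H_y$, hence containing $(H_x+H_y)$, so it descends to $\tau$. Part 4 is then the verification that $\sigma\circ\tau$ and $\tau\circ\sigma$ fix monomials of $S_{xy}$ (respectively basic tensors), and these span; as a byproduct the kernel above is exactly $(H_x+H_y)$.

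For part 5, under this isomorphism a monomial $x^py^q+(H_x+H_y)$ of $S_{xy}$ corresponds to $(x^p+H_x)\otimes(y^q+H_y)$, and the ``in particular'' clause of Lemma \ref{Tensor_Linear_Independence} says this is nonzero precisely when each tensor factor is a monomial of its ring. Hence $(m_x,m_y)\mapsto m_x\otimes m_y$ is a well defined surjection $\phi:\mathscr{M}_{S_x}\times\mathscr{M}_{S_y}\to\mathscr{M}_{S_{xy}}$. For injectivity I would first note that $S_x\otimes_K S_y$ is the direct sum of the subspaces $\Lvl_{i,S_x}\otimes_K\Lvl_{j,S_y}$, so $m_x\otimes m_y=m_x'\otimes m_y'$ forces $\deg m_x=\deg m_x'$ and $\deg m_y=\deg m_y'$; then if $m_x\neq m_x'$ the hypothesis makes $m_x,m_x'$ linearly independent, and Lemma \ref{Tensor_Linear_Independence} applied to $m_x\otimes m_y+m_x'\otimes(-m_y')=0$ gives $m_y=m_y'=0$, impossible, so $m_x=m_x'$ and then $m_y=m_y'$ from $m_x\otimes(m_y-m_y')=0$. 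The poset statement follows from the partial multiplicativity $\phi(a_xm_x,a_ym_y)=\phi(a_x,a_y)\phi(m_x,m_y)$, valid whenever both products are nonzero monomials (a direct consequence of the isomorphism of part 4): if $m_x\mid m_x'$ and $m_y\mid m_y'$, writing $m_x'=a_xm_x$ and $m_y'=a_ym_y$ gives $\phi(m_x,m_y)\mid\phi(m_x',m_y')$; conversely a divisor of $\phi(m_x',m_y')$ in $\mathscr{M}_{S_{xy}}$ is $\phi(c_x,c_y)$ by surjectivity, and cancelling via injectivity recovers $c_xm_x=m_x'$ and $c_ym_y=m_y'$.

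For part 6, every degree-$n$ monomial of $S_{xy}$ is $\phi(m_x,m_y)$ with $\deg m_x+\deg m_y=n$ (part 5 applies, since level linear independence implies the pairwise hypothesis there), so any $K$-linear relation among them splits, along the decomposition $\bigoplus_{i+j=n}\Lvl_{i,S_x}\otimes_K\Lvl_{j,S_y}$, into relations living in a single $\Lvl_{i,S_x}\otimes_K\Lvl_{j,S_y}$; grouping such a relation by its $S_x$-factor, level linear independence makes the distinct monomials of $S_x$ that occur linearly independent, so Lemma \ref{Tensor_Linear_Independence} kills each $S_y$-side coefficient combination, and level linear independence then kills all remaining scalars, so $\Lvl_{n,\mathscr{M}_{S_{xy}}}$ is linearly independent. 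For part 7, since part 5 applies I would transport along $\phi$ the total order on $\mathscr{M}_{S_x}\times\mathscr{M}_{S_y}$ that compares the $S_x$-coordinates by a monomial order $\mathcal{O}_x$ and breaks ties by a monomial order $\mathcal{O}_y$ on the $S_y$-coordinate; a short case split according to whether the two $S_x$-coordinates agree, using that $\mathcal{O}_x$ and $\mathcal{O}_y$ are monomial orders and the partial multiplicativity of $\phi$, shows the transported order is a monomial order on $\mathscr{M}_{S_{xy}}$.

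The step I expect to be the main obstacle is part 5: getting the asymmetric application of Lemma \ref{Tensor_Linear_Independence} exactly right for injectivity, and, more fussily, isolating and using the partial multiplicativity of $\phi$ (it is only multiplicative where products of monomials stay nonzero) to push the order equivalence through in both directions. Parts 1--4 and 7 are essentially bookkeeping once the isomorphism of part 4 is available, and part 6 is a double application of the same tensor lemma.
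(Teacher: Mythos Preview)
Your treatment of parts 1--5 and 7 is essentially the paper's proof. The paper also builds $\sigma$ from the bilinear multiplication map, builds $\tau$ by defining it on monomials of $R_{xy}$ and extending $K$-linearly (your route through $R_{xy}\iso R_x\otimes_K R_y$ amounts to the same thing), and checks they invert each other on generators. For part 5 the paper writes down an explicit inverse $\psi$ and shows it is well defined via a case split on whether $m_x+H_x=m_x'+H_x$, invoking homogeneity of $H_x$ in the unequal case; your use of the bigrading $S_x\otimes_K S_y=\bigoplus_{i,j}\Lvl_{i,S_x}\otimes_K\Lvl_{j,S_y}$ to force equal bidegrees first is a cleaner reorganization of that argument. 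For the order statement the paper simply quotes Lemma~\ref{Monomial_Partial_Order_Representatives}, while you push divisibility back and forth using surjectivity, injectivity, and the partial multiplicativity of $\phi$; both work. Part 7 is identical in spirit.

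There is a genuine gap in your part 6. After grouping by the $S_x$-factor and applying Lemma~\ref{Tensor_Linear_Independence} to get each $S_y$-side combination $v_k=0$, you write that ``level linear independence then kills all remaining scalars''. But $v_k$ is a $K$-linear combination of \emph{distinct monomials of $S_y$}, and the hypothesis gives level linear independence only for $S_x$. Without it for $S_y$, $v_k=0$ need not force the coefficients to vanish: with $S_x=K[x]$ and $S_y=K[y_1,y_2,y_3]/(y_1+y_2+y_3)$, the elements $y_1,y_2,y_3$ are three distinct degree-one monomials of $S_{xy}$ satisfying $y_1+y_2+y_3=0$, so $S_{xy}$ is not level linearly independent. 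The paper's own argument shares this lacuna: its dichotomy ``the list $m_{x_1}+H_x,\dots,m_{x_n}+H_x$ is linearly independent / linearly dependent'' overlooks that the list may contain repetitions (precisely when several $S_{xy}$-monomials share an $S_x$-factor), and in that case the derived ``contradiction'' with level linear independence of $S_x$ is vacuous. In every later application in the paper all tensor factors are level linearly independent, so the intended statement is the symmetric one; under that extra hypothesis your argument goes through.
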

\begin{proof}

The first four claims are standard in commutative algebra.
	The fifth claim is straightforward from the the fourth claim.
	
	We prove the sixth claim.
	Suppose that $\mathscr{M}_{S_x}$ and $\mathscr{M}_{S_x}$ are level linearly independent.
	Take distinct monomials $m_{x_1}m_{y_1}+(H_x'+H_y'),\dots, m_{x_n}m_{y_n}+(H_x'+H_y') \in \mathscr{M}_{S_{xy}}$ and $a_1,\dots, a_n\in K$ such that 
	\begin{align*}
		a_1m_{x_1}m_{y_1}+(H_x'+H_y')+ \cdots + a_nm_{x_n}m_{y_n}+(H_x'+H_y') = 0.
	\end{align*}
	By using $\tau$ we get 
	\begin{align*}
		(m_{x_1}+ H_x)\otimes (a_1m_{y_1}+H_y)+ \cdots + (m_{x_n}+ H_x)\otimes (a_nm_{y_n}+H_y) = 0.
	\end{align*}
	First, assume that $m_{x_1}+ H_x,\dots, m_{x_n}+ H_x$ are distinct.
	If $m_{x_1}+ H_x,\dots, m_{x_n}+ H_x$ are linearly independent then Lemma \ref{Tensor_Linear_Independence} implies $a_1m_{y_1}+H_y = \cdots = a_nm_{y_n}+H_y =0$,
	which implies that $a_1=\cdots = a_n = 0$ since $\mathscr{M}_{S_y}$ is level linearly independent.
	Assume for the purposes of a contradiction that $m_{x_1}+ H_x,\dots, m_{x_n}+ H_x$ are linearly dependent.
	Then there are $b_1,\dots , b_n\in K$, not all $0$, such that for $f=b_1m_{x_1}+\cdots + b_nm_{x_n}$ we have $f+H_x=0$.
	Note that the homogeneous components of $f$ are in $H_x$ by Lemma \ref{Properties_of_Homogeneous_Ideals}.
	But, this contradicts level linear independence of $S_x$.
	
	Next, assume that $m_{x_1}+ H_x,\dots, m_{x_n}+ H_x$ are not distinct.
	Then we can rewrite the sum
	\begin{align*}
		(m_{x_1}+ H_x)\otimes (a_1m_{y_1}+H_y)+ \cdots + (m_{x_n}+ H_x)\otimes (a_nm_{y_n}+H_y) = 0,
	\end{align*}
	by grouping terms with the same $m_{x_i}+ H_x$.
	The proof from here follows similarly as in the distinct case by using Lemma \ref{Tensor_Linear_Independence} and level linear independence of $\mathscr{M}_{S_x}$ and $\mathscr{M}_{S_y}$.
	Therefore, $S_{xy}$ is level linearly independent.
	
	Finally, the existence of a monomial order follows easily from the poset isomorphism.
	In particular, $\mathscr{M}_{S_{xy}}$ is isomorphic to a product of two posets.
	If each of these posets has a total order on them then we can induce the lexicographic order on $\mathscr{M}_{S_{xy}}$.
	It is easily seen that if each of the total orders is a monomial order that the lexicographic order on $\mathscr{M}_{S_{xy}}$ will be a monomial order as well.
\end{proof}

Note that the linear independence condition in Lemma \ref{tensorsBasic} is a weaker version of level linear independence.
Lemma \ref{tensorsBasic} also gives us the following very useful result.

\begin{thm}[The Cartesian and Tensor Correspondence]\label{The_Cartesian_and_Tensor_Correspondence}
	Suppose that for all $i\in [d]$ we have $S_i = R_i/H_i$ for some homogeneous ideal $H_i$ of $R_i=K[x_{i,1},\dots, x_{i,n_i}]$,
	and that $S_i$ is level linearly independent for all $i\in [d]$.
	Let
	\begin{align*}
		S &=  \frac{K[x_{1,1},\dots, x_{1,n_1}, \dots , x_{d,1},\dots, x_{d,n_d} ]}{(H_1 + H_2 + \cdots + H_d)}.
	\end{align*}	
	Then
	\begin{enumerate}
		\item $S_1\otimes_K \cdots \otimes_K S_d \iso S$.
		\item $\mathscr{M}_{S_1}\times \cdots \times \mathscr{M}_{S_d} \iso \times \mathscr{M}_{S}$.
		\item $\mathscr{M}_{S}$ is level linearly independent.
		\item If there exist monomial orders for $\mathscr{M}_{S_1},\dots, \mathscr{M}_{S_d}$ then there is a monomial order on $\mathscr{M}_S$.
	\end{enumerate}
\end{thm}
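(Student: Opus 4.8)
The plan is to prove all four statements at once by induction on $d$, peeling off the last tensor factor and invoking Lemma \ref{tensorsBasic} at each step. For $d=1$ there is nothing to do: $S = K[x_{1,1},\dots,x_{1,n_1}]/H_1 = S_1$, so statements (1)--(2) are trivial identifications, statement (3) is precisely the hypothesis that $S_1$ is level linearly independent, and statement (4) is the hypothesis itself.

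For the inductive step, assume the theorem holds for any $d-1$ rings satisfying the stated hypotheses. Put $R' = K[x_{1,1},\dots,x_{d-1,n_{d-1}}]$, let $H' = H_1+\cdots+H_{d-1}$ (which is homogeneous, being a sum of homogeneous ideals), and set $S' = R'/H'$. The induction hypothesis applies to $S_1,\dots,S_{d-1}$, since its only standing requirement --- level linear independence of the first factor $S_1$ --- is part of our data; it yields $S' \iso S_1\otimes_K\cdots\otimes_K S_{d-1}$, an isomorphism $\mathscr{M}_{S'}\iso \mathscr{M}_{S_1}\times\cdots\times\mathscr{M}_{S_{d-1}}$, and the fact that $\mathscr{M}_{S'}$ is level linearly independent; moreover, if monomial orders exist on $\mathscr{M}_{S_1},\dots,\mathscr{M}_{S_{d-1}}$, then one exists on $\mathscr{M}_{S'}$.

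Now I apply Lemma \ref{tensorsBasic} with $S_x = S' = R'/H'$ and $S_y = S_d = K[x_{d,1},\dots,x_{d,n_d}]/H_d$; the combined ring in that lemma, namely $K[x_{1,1},\dots,x_{d-1,n_{d-1}},x_{d,1},\dots,x_{d,n_d}]/(H'+H_d)$, is exactly $S$. Because $\mathscr{M}_{S'}$ is level linearly independent, every pair of distinct monomials of the same degree in $S'$ is linearly independent, so the hypotheses on $S_x$ needed for parts 5--7 of Lemma \ref{tensorsBasic} are met (part 4 requires none). Part 4 gives $S \iso S'\otimes_K S_d \iso S_1\otimes_K\cdots\otimes_K S_d$; part 5 gives $\mathscr{M}_S \iso \mathscr{M}_{S'}\times\mathscr{M}_{S_d}\iso \mathscr{M}_{S_1}\times\cdots\times\mathscr{M}_{S_d}$; part 6 gives that $\mathscr{M}_S$ is level linearly independent, using that $\mathscr{M}_{S_x}=\mathscr{M}_{S'}$ is; and if monomial orders exist on every $\mathscr{M}_{S_i}$, then by the induction hypothesis one exists on $\mathscr{M}_{S'}=\mathscr{M}_{S_x}$ and one is given on $\mathscr{M}_{S_d}=\mathscr{M}_{S_y}$, so part 7 produces a monomial order on $\mathscr{M}_S$. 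This closes the induction.

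The argument is essentially bookkeeping on top of Lemma \ref{tensorsBasic}, and the only point that requires care --- the main obstacle --- is the side from which the product is split. Lemma \ref{tensorsBasic} imposes its (weak) linear-independence hypothesis only on the factor $S_x$, so the auxiliary factor at each step must be the growing prefix $S_1\otimes_K\cdots\otimes_K S_{d-1}$, which inherits level linear independence from $S_1$ through the induction hypothesis. One cannot instead strip off $S_1$ on the left: the suffix $S_2\otimes_K\cdots\otimes_K S_d$ carries no such guarantee, and the induction hypothesis is not even available for $S_2,\dots,S_d$ since $S_2$ need not be level linearly independent.
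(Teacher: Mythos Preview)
Your proof is correct and follows exactly the approach the paper intends: the paper's own proof is the single sentence ``All the claims follow from induction, Lemma \ref{tensorsBasic}, and associativity of tensor products,'' and your argument is simply a careful unpacking of that induction, including the observation that one must peel off $S_d$ rather than $S_1$ so that the level-linear-independence hypothesis on $S_x$ in Lemma \ref{tensorsBasic} is available at each step.
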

\begin{proof}
	All the claims follow from induction, Lemma \ref{tensorsBasic}, and associativity of tensor products.
\end{proof}

We will have results that involve products of copies of a single poset or ring.
For this reason we make the following definition to distinguish between the operations.

\begin{dfn}[Cartesian and Tensor Powers]\label{Cartesian_and_Tensor_Powers}
	For a poset $\mathscr{P}$ and a ring $S$ we define the \textit{$n$-th Cartesian power} of $\mathscr{P}$ and the \textit{$n$-th tensor power} of $S$ to be
	\begin{align*}
		\mathscr{P}^{\times, n} &= \mathscr{P}\times \cdots \times \mathscr{P},\\
		S^{\otimes,n} &= S\otimes \cdots \otimes S.
	\end{align*}
\end{dfn}  \section{Advanced Orders}\label{Advanced_Orders}
In this section we define general classes of orders that are Macaulay orders for posets in later sections.

\subsection{The Hyperrectangle Chaser Orders}\label{The_Hyperrectangle_Chaser_Orders}

Informally, a hyperrectangle chaser order is a total order on a multiset lattices that prioritizes making a hyperrectangle of the highest dimension possible.
Every initial segment of a hyperrectangle chaser order is a downset.
A hyperrectangle chaser order can be seen in Figure \ref{Hyperrectangle_Chaser_Example}.

\begin{figure}
	\centering
	\begin{subfigure}[t]{0.16\textwidth}
		\includegraphics[width=\textwidth]{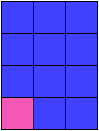}
	\end{subfigure}
	\hfill
	\begin{subfigure}[t]{0.16\textwidth}
		\includegraphics[width=\textwidth]{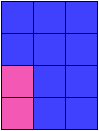}
	\end{subfigure}
	\hfill
	\begin{subfigure}[t]{0.16\textwidth}
		\includegraphics[width=\textwidth]{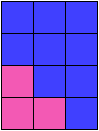}
	\end{subfigure}
	\hfill
	\begin{subfigure}[t]{0.16\textwidth}
		\includegraphics[width=\textwidth]{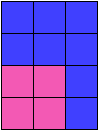}
	\end{subfigure}
	\hfill
	\begin{subfigure}[t]{0.16\textwidth}
		\includegraphics[width=\textwidth]{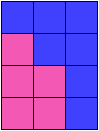}
	\end{subfigure}
	\hfill
	\begin{subfigure}[t]{0.16\textwidth}
		\includegraphics[width=\textwidth]{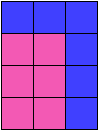}
	\end{subfigure}

	\vspace{0.5cm}
	\begin{subfigure}[t]{0.16\textwidth}
		\includegraphics[width=\textwidth]{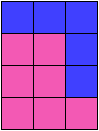}
	\end{subfigure}
	\hfill
	\begin{subfigure}[t]{0.16\textwidth}
		\includegraphics[width=\textwidth]{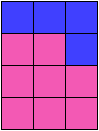}
	\end{subfigure}
	\hfill
	\begin{subfigure}[t]{0.16\textwidth}
		\includegraphics[width=\textwidth]{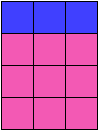}
	\end{subfigure}
	\hfill
	\begin{subfigure}[t]{0.16\textwidth}
		\includegraphics[width=\textwidth]{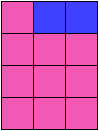}
	\end{subfigure}
	\hfill
	\begin{subfigure}[t]{0.16\textwidth}
		\includegraphics[width=\textwidth]{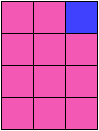}
	\end{subfigure}
	\hfill
	\begin{subfigure}[t]{0.16\textwidth}
		\includegraphics[width=\textwidth]{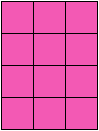}
	\end{subfigure}
	\caption{A hyperrectangle chaser order on $\mathscr{M}_{[2]}(3,4)$.}
	\label{Hyperrectangle_Chaser_Example}
\end{figure}

The reader might find reviewing Definition \ref{Indices_Elements_And_Intervals_Definition} and Definition \ref{Subproducts_Definition} before reading Definition \ref{The_Hyperrectangle_Chaser_Orders_Definition}.

\begin{dfn}[The Hyperrectangle Chaser Orders]\label{The_Hyperrectangle_Chaser_Orders_Definition}
	Let $d\geq 1$, consider finite tosets $\mathscr{T}_1,\dots, \mathscr{T}_d$ and put $\mathscr{T}=\mathscr{T}_1\times \cdots \times \mathscr{T}_d$.
	For $x=(x_1,\dots, x_d)\in \mathscr{T}$ we define the \textit{single coordinate distance} of $x$ from the origin to be
	\begin{align*}
		\scd(x) = \max_{i\in [d]} \mathscr{T}_i(x_i).
	\end{align*}
	Next, define the \textit{initial complement $\scd$} of $x$ to be the element that replaces any entry of $x$ whose index is not equal to $\scd(x)$ with the first element of the corresponding toset,
	\begin{align*}
		\ICscd(x) = (x_1',\dots, x_d'),
	\end{align*}
	where $x_i' = x_i$ if $\mathscr{T}_i(x_i)=\scd(x)$,
	and $x_i' = \mathscr{T}^{-1}(1)$ if $\mathscr{T}_i(x_i)\neq \scd(x)$.
	Finally we define a \textit{hyperrectangle chaser} order $\mathcal{HC}_{\mathscr{T}}$.
	We do this inductively, such that for each $S\subseteq [d]$ we define $\mathcal{HC}_S$ on $\mathscr{T}_S$, and of course $\mathcal{HC}_{\mathscr{T}} = \mathcal{HC}_{[d]}$.
	If $|S|=1$ then we define $\mathcal{HC}_S$ to be the original total order on $\mathscr{T}_S=\mathscr{T}_i$, where $i\in S\subseteq [d]$.
	So, suppose that $|S|>1$ and that $\mathcal{HC}_{S'}$ is defined for all $S'\subseteq [d]$ with $|S'| < |S|$.
	Furthermore, pick $\pi \in \mathfrak{S}_{|S|}$ and consider the domination order $\mathcal{D}=\mathcal{D}_{\pi}$.
	For $x,y\in \mathscr{T}_S$ we say that $x <_{\mathcal{HC}_S} y$ if one of the following conditions holds:
	\begin{enumerate}
		\item $\scd(x) < \scd(y)$
		\item $\scd(x) = \scd(y)$ and $\ICscd(x) <_{\mathcal{D}} \ICscd(y)$
		\item $\scd(x) = \scd(y) > 2$, $\ICscd(x) = \ICscd(y)$ and $x'<_{\mathcal{HC}_{S'}} y'$,
		where $S'$ is the set of coordinate for which the entries of $x$ and $y$ do not have index $\scd(x) = \scd(y)$,
		and $x'$ and $y'$ are obtained from $x$ and $y$ by deleting all entries equal to $\scd(x)$.
	\end{enumerate}
\end{dfn}

\begin{rem}
	Notice that there are many hyperrectangle chaser orders for one multiset lattice.
	A hyperrectangle chaser order depends on all the domination orders chosen for each subproduct.
\end{rem}

\begin{dfn}[The Lexicographic Hyperrectangle Chaser Order]\label{The_Lexicographic_Hyperrectangle_Chaser_Order}
	The \textit{lexicographic hyperrectangle chaser order} is denoted by $\mathcal{HC}_{\mathcal{L}}$, 
	and is the hyperrectangle chaser order for which the domination order chosen for every subproduct is always the lexicographic order.
\end{dfn}

Of course, when talking about the lexicographic hyperrectangle chaser order one needs to specify the multiset lattice.

It is important to note that hyperrectangle chaser orders appear in different discrete extremal problems, not just in the study of Macaulay posets.
Bollobás and Leader used them while studying edge isoperimetric problems in the grid \cite{Bollobs1991EdgeisoperimetricII}.
Ahlswede and Bezrukov used them when they extended the results of Bollobás and Leader to the arbitrary product of trees \cite{AhlswedeR.1995Eitf}. \subsection{The Border Chaser Orders}\label{The_Border_Chaser_Orders}

Informally, 
a border chaser order is a total order on a multiset lattice that prioritizes getting to the border,
where the border of a multiset lattice of dimension $d$ is all the points that have an upper shadow of less than $d$ points.
Every initial segment of a border chaser order is a downset.
A border chaser order can be seen in Figure \ref{border_Chaser_Example}.

\begin{figure}
	\centering
	\begin{subfigure}[t]{0.16\textwidth}
		\includegraphics[width=\textwidth]{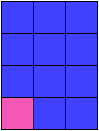}
	\end{subfigure}
	\hfill
	\begin{subfigure}[t]{0.16\textwidth}
		\includegraphics[width=\textwidth]{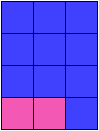}
	\end{subfigure}
	\hfill
	\begin{subfigure}[t]{0.16\textwidth}
		\includegraphics[width=\textwidth]{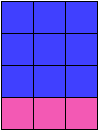}
	\end{subfigure}
	\hfill
	\begin{subfigure}[t]{0.16\textwidth}
		\includegraphics[width=\textwidth]{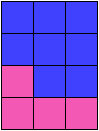}
	\end{subfigure}
	\hfill
	\begin{subfigure}[t]{0.16\textwidth}
		\includegraphics[width=\textwidth]{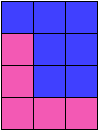}
	\end{subfigure}
	\hfill
	\begin{subfigure}[t]{0.16\textwidth}
		\includegraphics[width=\textwidth]{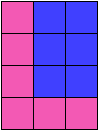}
	\end{subfigure}

	\vspace{0.5cm}
	\begin{subfigure}[t]{0.16\textwidth}
		\includegraphics[width=\textwidth]{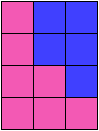}
	\end{subfigure}
	\hfill
	\begin{subfigure}[t]{0.16\textwidth}
		\includegraphics[width=\textwidth]{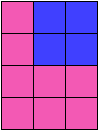}
	\end{subfigure}
	\hfill
	\begin{subfigure}[t]{0.16\textwidth}
		\includegraphics[width=\textwidth]{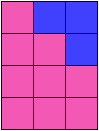}
	\end{subfigure}
	\hfill
	\begin{subfigure}[t]{0.16\textwidth}
		\includegraphics[width=\textwidth]{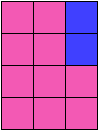}
	\end{subfigure}
	\hfill
	\begin{subfigure}[t]{0.16\textwidth}
		\includegraphics[width=\textwidth]{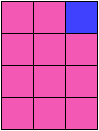}
	\end{subfigure}
	\hfill
	\begin{subfigure}[t]{0.16\textwidth}
		\includegraphics[width=\textwidth]{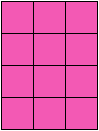}
	\end{subfigure}
	\caption{A border chaser order on $\mathscr{M}_{[2]}(3,4)$.}
	\label{border_Chaser_Example}
\end{figure}

Notice that a hyperrectangle chaser order prioritizes staying away from the border.
That is, a border chaser order is the dual of a hyperrectangle chaser order.
Let us be a little bit more precise.

\begin{dfn}[The Border Chaser Orders]\label{The_Border_Chaser_Orders_Definition}
	First, we consider the finite multiset lattice $\mathscr{M} = \mathscr{M}_{[d]}(\ell_1,\dots, \ell_d)$.
	Then $\mathscr{M}\iso \mathscr{M}^\ast$ by using the isomorphism of posets $\sigma$,
	that sends $(x_1,\dots, x_d)$ to $(\ell_1-x_1,\dots, \ell_d - x_d)$.
	Furthermore, notice that we have an isomorphism of tosets $(\mathscr{M}, \mathcal{D}) \iso (\mathscr{M}^\ast, \mathcal{D}^\ast)$ by $\sigma$,
	where $\mathcal{D}$ is a domination order on $\mathscr{M}$.
	
	Thus, for any hyperrectangle chaser order $\mathcal{HC}_{\mathscr{M}}$,
	the isomorphism $\sigma$ and $\mathcal{HC}_{\mathscr{M}}^\ast$ induce a total order on $\mathscr{M}$,
	such that $\sigma$ remains an isomorphism of tosets.
	We call this order the \textit{border chaser order} induced from $\mathcal{HC}_{\mathscr{M}}$ and denote it by $\mathcal{BC}_{\mathscr{M}}$.
\end{dfn}

\begin{rem}
	There are many border chaser orders on a multiset lattices, since there are many hyperrectangle chaser orders.
	A border chaser order depends on the choices for domination orders made for the corresponding hyperrectangle chaser order.
\end{rem}

\begin{dfn}[The Lexicographic border Chaser Order]\label{The_border_Hyperrectangle_Chaser_Order}
	The \textit{lexicographic border chaser order} is denoted by $\mathcal{BC}_{\mathcal{L}}$, and is the border chaser order induced from $\mathcal{HC}_{\mathcal{L}}$.
\end{dfn}
 \subsection{Partitioned Tosets and Block Orders}\label{Partitioned_Tosets}

Block orders are orders on multiset lattices,
where we first partition the multiset lattice,
then totally order the elements in each partition,
and finally we order the partitions.
An example of a block order can be seen in Figure \ref{Block_Order_Example}.

\begin{figure}
	\centering
	\begin{subfigure}[t]{0.2\textwidth}
		\includegraphics[width=\textwidth]{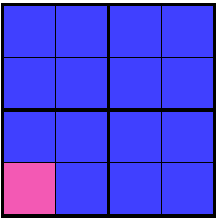}
	\end{subfigure}
	\hfill
	\begin{subfigure}[t]{0.2\textwidth}
		\includegraphics[width=\textwidth]{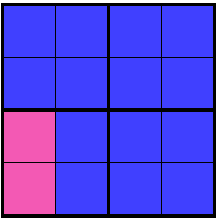}
	\end{subfigure}
	\hfill
	\begin{subfigure}[t]{0.2\textwidth}
		\includegraphics[width=\textwidth]{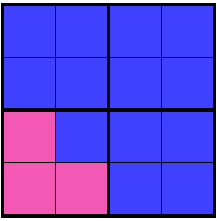}
	\end{subfigure}
	\hfill
	\begin{subfigure}[t]{0.2\textwidth}
		\includegraphics[width=\textwidth]{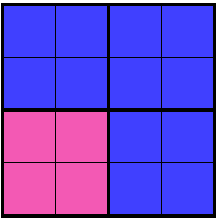}
	\end{subfigure}
	\hfill

	\vspace{0.5cm}
	\begin{subfigure}[t]{0.2\textwidth}
		\includegraphics[width=\textwidth]{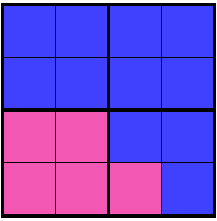}
	\end{subfigure}
	\hfill
	\begin{subfigure}[t]{0.2\textwidth}
		\includegraphics[width=\textwidth]{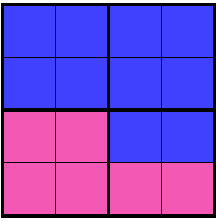}
	\end{subfigure}
	\hfill
	\begin{subfigure}[t]{0.2\textwidth}
		\includegraphics[width=\textwidth]{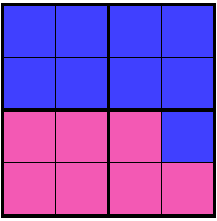}
	\end{subfigure}
	\hfill
	\begin{subfigure}[t]{0.2\textwidth}
		\includegraphics[width=\textwidth]{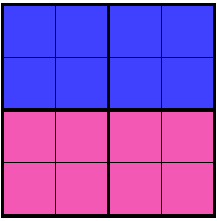}
	\end{subfigure}
	\hfill

	\vspace{0.5cm}
	\begin{subfigure}[t]{0.2\textwidth}
		\includegraphics[width=\textwidth]{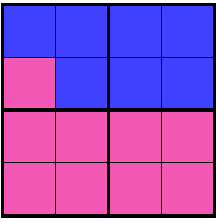}
	\end{subfigure}
	\hfill
	\begin{subfigure}[t]{0.2\textwidth}
		\includegraphics[width=\textwidth]{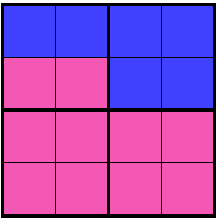}
	\end{subfigure}
	\hfill
	\begin{subfigure}[t]{0.2\textwidth}
		\includegraphics[width=\textwidth]{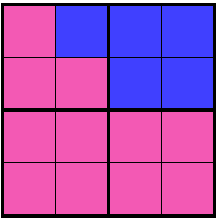}
	\end{subfigure}
	\hfill
	\begin{subfigure}[t]{0.2\textwidth}
		\includegraphics[width=\textwidth]{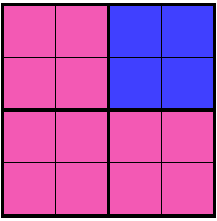}
	\end{subfigure}
	\hfill

	\vspace{0.5cm}
	\begin{subfigure}[t]{0.2\textwidth}
		\includegraphics[width=\textwidth]{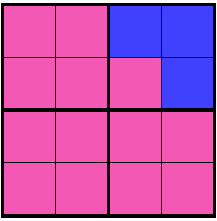}
	\end{subfigure}
	\hfill
	\begin{subfigure}[t]{0.2\textwidth}
		\includegraphics[width=\textwidth]{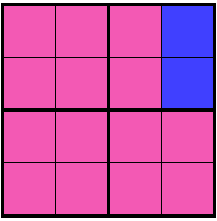}
	\end{subfigure}
	\hfill
	\begin{subfigure}[t]{0.2\textwidth}
		\includegraphics[width=\textwidth]{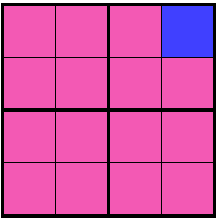}
	\end{subfigure}
	\hfill
	\begin{subfigure}[t]{0.2\textwidth}
		\includegraphics[width=\textwidth]{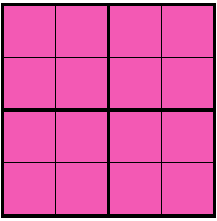}
	\end{subfigure}
	\hfill
	\caption{A block order on $\mathscr{M}_{[2]}(4,4)$, where each of the tosets is partitioned into $2$ parts.}
	\label{Block_Order_Example}
\end{figure}

\begin{dfn}[Ordered Partitions and Partitioned Tosets]\label{Ordered_Partitions_and_Partitioned_Tosets_Definition}
	Suppose that $\mathscr{T}$ is a finite toset of size $n$.
	We say that a partition $P$ of $\mathscr{T}$ is an \textit{ordered partition} if there are integers $1= a_1<a_2 \cdots < a_{k-1}<a_k=n$ such that 
	\begin{align*}
		P &= \{\mathscr{T}[a_i,a_{i+1}) \bigm | i\in [k-1]\}.
	\end{align*}
	We call $\mathscr{T}$ a \textit{partitioned toset} with $P$.
	The set $\Starts_P = \{\mathscr{T}^{-1}[a_i]\bigm | i\in [k-1]\}$ is called the \textit{set of starts of $P$}. 
\end{dfn}

Next, we extend the ideas in Definition \ref{Ordered_Partitions_and_Partitioned_Tosets_Definition} to Cartesian products.

\begin{dfn}[Products of Partitioned Tosets]\label{Products_of_Partitioned_Tosets}
	Suppose that we have finite tosets $\mathscr{T}_1,\dots, \mathscr{T}_d$ with corresponding ordered partitions $P_1,\dots, P_d$.
	Then we can consider the products 
	\begin{align*}
		\mathscr{T} &=\mathscr{T}_d\times \cdots \times \mathscr{T}_d,\\
		P &= P_1\times \cdots \times P_d,\\
		\Starts_P &= \Starts_{P_1}\times \cdots \times \Starts_{P_d}.
	\end{align*}
	We call the elements of $P$ the \textit{blocks} of $\mathscr{T}$ and the elements of $\Starts_P$ the \textit{starts} of $\mathscr{T}$.
	Clearly, $P$ and $\Starts_P$ are in a bijective correspondence. 
	For $B\in P$ we define $\start_P(B)$ to be the corresponding element in $\Starts_P$ under this bijective correspondence,
	and for $s\in \Starts_P$ we define $\Block_P(s)$ to be the corresponding element in $P$ under this bijective correspondence.
	Notice that $P$ is a partition for $\mathscr{T}$.
	Thus, for every $x\in \mathscr{T}$ we write $\Block_P(x)$ for the corresponding block that $x$ belongs to.
	Furthermore, we define the start of $x$ to be the start of the block that $x$ belongs to, $\start_P(x) = \start_P(\Block_P(x))$.
	We omit the subscript $P$ when it is clear from context.
	
	We know that $\mathscr{T}$ is a poset and we are going to make it into a toset now.
	First, we pick a total order $\mathcal{O}_{\Starts}$ on $\Starts$.
	Second, for each $B\in P$ we pick a total order $\mathcal{O}_B$.
	Then we define the total order $\mathcal{O} = (\mathcal{O}_B)_{B\in P}\mathcal{O}_{\Starts}$, 
	such that for $x,y\in \mathscr{T}$ we have $x<_{\mathcal{O}} y$ iff
	\begin{enumerate}
		\item $\start(x) <_{\mathcal{O}_{\Starts}} \start(y)$, or
		\item $\start(x) = \start(y)$ and $x <_{\mathcal{O}_{\Block(x)}} y$.
	\end{enumerate}
	The order $\mathcal{O}$ is called a \textit{block} order.
	It is clear that $\mathcal{O}$ and $P$ make $\mathscr{T}$ into a partitioned toset.
\end{dfn}  \section{Quotients by Monomial Ideals}\label{Quotients_by_Monomial_Ideals}

Monomial ideals have gathered the most attention when it comes to analogs of Macaulay theorem stated in terms of rings.
We start by showing that the Mermin--Murai Theorem on colored square-free rings follows from the Macaulay theorem on star posets.
After that we provide new results that were unknown to either algebraists or combinatorialist.

\numberwithin{thm}{subsection}
\subsection{Star Posets: A New Proof of the Mermin--Murai Theorem}\label{stars}

\begin{dfn}[Star Posets]
	A \textit{basic star poset} is a set of $n+1$ elements with $n\in \N\setminus \{0\}$,
	such that it forms a ranked poset that has two levels,
	where $|\Lvl_{0}| = n$, $|\Lvl_1| = 1$ and every element in $\Lvl_0$ is less than the unique element in $\Lvl_1$.
	We write $\Star(n)$ for the basic star with $n+1$ elements.
	Then a \textit{star poset} is the Cartesian product of basic star posets,
	and we denote it by $\Star(n_1,\dots, n_d) = \Star(n_1)\times \cdots \times \Star(n_d)$.
\end{dfn}

We will often say that the basic star poset $\Star(n)$ has $n$ legs.
Note that $\Star(1) = \mathscr{M}_{[1]}(2)$.
Another way to think of $\Star(n)$ is to take $n$ copies of $\mathscr{M}_{[1]}(2)$ and glue the largest elements together.
Some Hasse graphs of basic star posets are shown in Figure \ref{Hasse_Graph_Basic_Star_Example}.

\begin{figure}
	\centering
	\hfill
	\begin{subfigure}[t]{0.0145\textwidth}
		\includegraphics[width=\textwidth]{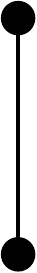}
	\end{subfigure}
	\hfill
	\begin{subfigure}[t]{0.10667\textwidth}
		\includegraphics[width=\textwidth]{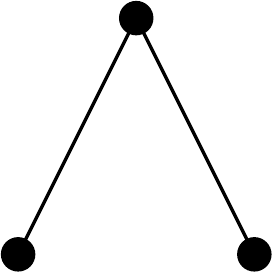}
	\end{subfigure}
	\hfill
	\begin{subfigure}[t]{0.21333\textwidth}
		\includegraphics[width=\textwidth]{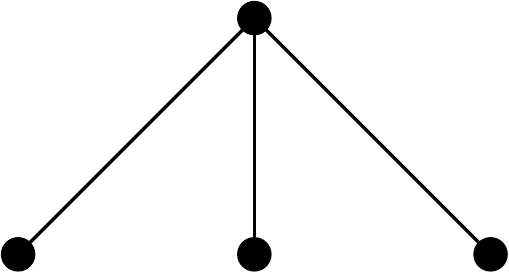}
	\end{subfigure}
	\hfill
	\begin{subfigure}[t]{0.32\textwidth}
		\includegraphics[width=\textwidth]{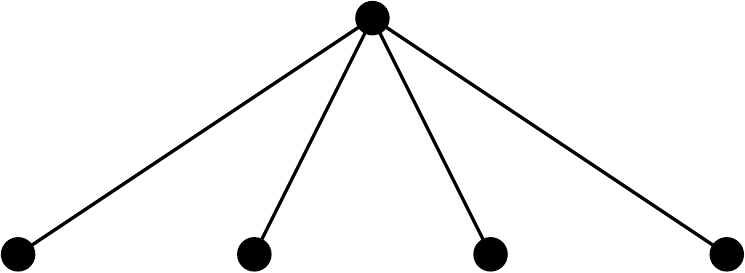}
	\end{subfigure}
	
	\caption{Hasse graphs of some basic star posets.}\label{Hasse_Graph_Basic_Star_Example}
\end{figure}

\begin{thm}[Star Macaulay Theorem]\label{starTheorem}
	All star posets are Macaulay.
\end{thm}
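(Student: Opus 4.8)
The plan is to reduce the statement to a classical Kruskal--Katona type theorem by dualising. A single basic star $\Star(n)$ has only the two levels $\Lvl_0$ (of size $n$) and $\Lvl_1$ (a single element), so the defining inclusion of Definition \ref{Macaulay_Posets_Definition} has only the instance $A=\Lvl_1$, where both sides equal $\Lvl_0$; hence $\Star(n)$ is Macaulay with any total order on $\Lvl_0$. For the product, observe that $\Star(n_1,\dots,n_d)$ is ranked of finite rank $d$, so by Bezrukov's Dual Lemma \ref{Bezrukov_Dual_Lemma} it is Macaulay with some order if and only if its dual is. By Proposition \ref{Properties_of_Duals}, $\Star(n_1,\dots,n_d)^\ast \iso \Star(n_1)^\ast \times \cdots \times \Star(n_d)^\ast$, and $\Star(n_i)^\ast$ consists of a unique minimum below $n_i$ pairwise incomparable maximal elements. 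Consequently $\Star(n_1,\dots,n_d)^\ast$ is precisely the face poset $\mathscr{F}$ of the complete $d$-coloured simplicial complex with colour classes of sizes $n_1,\dots,n_d$: a rank-$k$ element picks $k$ of the $d$ colours and one vertex in each, and the lower shadow deletes one vertex. (Equivalently $\mathscr{F}\iso\mathscr{M}_S$ for the coloured squarefree ring $S$ via Theorem \ref{The_Cartesian_and_Tensor_Correspondence}, but we will not use this.)

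It therefore suffices to exhibit a Macaulay order on $\mathscr{F}$. After reordering the factors if necessary --- harmless, since being Macaulay only asks for the existence of some order --- the assertion that, within each level, initial segments of the appropriate coloured colex-type order minimise the lower shadow is exactly the coloured Kruskal--Katona theorem of Frankl, Füredi and Kalai; the accompanying continuity statement (the lower shadow of such an initial segment is again an initial segment) comes out of the same compression analysis, and this is also the content of the spider/star Macaulay results of Bezrukov and collaborators. By Proposition \ref{Macaulay_Basic_Equivalence} these two facts say $\mathscr{F}$ is Macaulay with that order, and dualising back via Lemma \ref{Bezrukov_Dual_Lemma} yields that $\Star(n_1,\dots,n_d)$ is Macaulay with the dual order.

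A self-contained argument would instead run the compression method directly on $\mathscr{F}$: for each colour $i$ and each ordered pair of vertices $u,v$ in class $i$, use the compression that replaces $v$ by $u$ wherever possible, show it never enlarges the lower shadow, and show that a family stable under all these compressions (and left-shifted on the \emph{set of colours used} coordinate) is an initial segment of the chosen order; nestedness and continuity then follow by routine bookkeeping. The main obstacle is the shadow estimate for a single compression: one must control how the \emph{which colours are used} coordinate interacts with the \emph{which vertex in each colour} coordinates, and this case analysis is precisely the technical heart of the Frankl--Füredi--Kalai argument --- everything else (the two dualisations and the product bookkeeping) is formal or already packaged in the excerpt. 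I would take the first route and simply invoke the coloured Kruskal--Katona / spider theorem for $\mathscr{F}$. It bears emphasis that this input is purely order-theoretic; because it uses no ring theory, it can be combined with the Macaulay Correspondence Theorem \ref{Macaulay_Correspondence_Theorem} and Theorem \ref{The_Cartesian_and_Tensor_Correspondence} to \emph{deduce} the Mermin--Murai theorem, rather than following from it.
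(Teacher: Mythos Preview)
The paper does not give its own proof of this theorem: immediately after stating it and its dual, it writes ``A proof of Theorem \ref{starTheorem} can be found in \cite{engel_1997}'' and then surveys the history (Lindstr\"om, Leeb, Bezrukov, Leck on stars; Frankl--F\"uredi--Kalai and London on coloured complexes; Engel's observation that coloured complexes are exactly the duals of stars). So both you and the paper ultimately defer to the literature; your proposal simply makes the reduction explicit --- dualise via Lemma \ref{Bezrukov_Dual_Lemma}, identify the dual product with the coloured simplicial complex, and quote a coloured Kruskal--Katona theorem --- and that reduction is precisely what the paper's historical paragraph describes.

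Two caveats worth tightening. First, the paper says Engel \emph{generalised} the Frankl--F\"uredi--Kalai theorem using Corollary \ref{starDual}, which signals that the original FFK statement may not be the full Macaulay property for arbitrary colour-class sizes; if you want to invoke a single reference for nestedness \emph{and} continuity in full generality, the safe citation is Engel \cite{engel_1997} (or Leck's thesis), not FFK alone. Second, when you write that continuity ``is also the content of the spider/star Macaulay results of Bezrukov and collaborators'', be aware that those results are stated for stars themselves, i.e.\ they \emph{are} Theorem \ref{starTheorem}; citing them inside a proof of Theorem \ref{starTheorem} is circular unless you frame it, as the paper does, as an alternative literature pointer rather than as your logical input. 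Your actual logical input should be the coloured-complex (dual) statement, and the compression sketch you give is the right outline for producing it from scratch.
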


Bezrukov's Dual Lemma gives us a corollary right away.

\begin{cor}[Star Dual Macaulay Theorem]\label{starDual}
	The dual of any star poset is Macaulay.
\end{cor}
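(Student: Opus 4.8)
The plan is to invoke Bezrukov's Dual Lemma \ref{Bezrukov_Dual_Lemma} directly, with essentially no additional work. First I would record that every star poset $\mathscr{P} = \Star(n_1,\dots,n_d)$ is finite: each basic star poset $\Star(n_i)$ has $n_i + 1$ elements, and a Cartesian product of finitely many finite posets is finite. Consequently $r(\mathscr{P}) < \infty$, so the finiteness hypothesis required to apply Bezrukov's Dual Lemma is met.

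Next I would use the Star Macaulay Theorem \ref{starTheorem}: there is a total order $\mathcal{O}$ on $\mathscr{P}$ such that the $2$-poset $(\mathscr{P},\mathcal{O})$ is Macaulay. Applying Bezrukov's Dual Lemma \ref{Bezrukov_Dual_Lemma} to $(\mathscr{P},\mathcal{O})$ yields that $(\mathscr{P},\mathcal{O})^\ast = (\mathscr{P}^\ast, \mathcal{O}^\ast)$ is Macaulay, which is precisely the statement that $\mathscr{P}^\ast$ is Macaulay (with witnessing order $\mathcal{O}^\ast$).

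For concreteness one could also unwind the dual using Proposition \ref{Properties_of_Duals}, since $\Star(n_1,\dots,n_d)^\ast = \Star(n_1)^\ast \times \cdots \times \Star(n_d)^\ast$, so the corollary asserts that Cartesian products of ``inverted'' basic stars (a single minimum covered by $n_i$ maximal elements) form Macaulay posets. This description is not needed for the argument and would only be included as a remark.

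I do not expect any real obstacle: the entire content is checking that the finiteness hypothesis of Bezrukov's Dual Lemma holds and that dualizing the $2$-poset $(\mathscr{P},\mathcal{O})$ just dualizes the total order to $\mathcal{O}^\ast$, both of which are immediate from the definitions in Section \ref{Posets} and Section \ref{Macaulay_Posets}.
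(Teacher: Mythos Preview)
Your proposal is correct and matches the paper's approach exactly: the paper simply states that Bezrukov's Dual Lemma \ref{Bezrukov_Dual_Lemma} gives the corollary right away, and you have just supplied the one-line verification (finiteness of $\mathscr{P}$, hence $r(\mathscr{P})<\infty$) needed to invoke it.
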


A proof of Theorem \ref{starTheorem} can be found in \cite{engel_1997}.
Many authors contributed to the proof of the star Macaulay Theorem over a long period of time.
Lindström \cite{LindstromBernt1971Tono} proved the claim for $\Star(2,\dots, 2)$.
Leeb \cite{leeb1978salami} handled the case $\Star(n,\dots, n)$ for any $n\in \N$ with $n\geq 2$.
Bezrukov handled the same case as Leeb independently in \cite{BezrukovStars}.
Leeb in \cite{leeb1978salami} mentioned that the result can be extended to all stars,
and Leck provided proof of this in \cite{leck1995extremalprobleme}.

Bollobás and Radcliffe studied face isoperimetric inequalities of $\mathscr{M}_{[d]}(2,\dots, 2)$ in \cite{BollobasB.1990IIfF}.
These results were later extended to face isoperimetric inequalities in finite multiset lattices by Bollobás and Leader in \cite{BollobasBela1990EFI}.
Bezrukov in \cite{bezrukovDual} noticed that these face isoperimetric inequalities are special cases of the Macaulay poset problem on stars.
Engel has a nice exposition of this in \cite{engel_1997}.

Frankl, F{\"u}redi and Kalai studied shadows of colored complexes in \cite{FRANKLPETER1988SOCC}.
This result came to be known as the colored Kruskal--Katona Theorem.
A simpler proof of this result was given by London in \cite{LondonEran1994Anpo}.
Engel in \cite{engel_1997} noticed that colored complexes are isomorphic to the duals of star posets, 
and generalized the Frankl--F{\"u}redi--Kalai Theorem by using Corollary \ref{starDual}.

It is clear that many authors found the star posets and their duals to be important.
All of the work mentioned so far was done before the 21st century.
In more recent times, Mermin and Murai in \cite{MerminJeff2010Bnol} introduced colored square-free rings and studied Hilbert functions of homogeneous ideals in them.
In particular, they proved that colored square-free rings are Macaulay.
This result was inspired by the Frankl--F{\"u}redi--Kalai Theorem.
In fact, the Mermin--Murai Theorem generalizes the Frankl--F{\"u}redi--Kalai Theorem.
Murai used the Frankl--F{\"u}redi--Kalai Theorem to study Koszul toric rings \cite{MURAISATOSHI2011Frol}.
It turns out that we can easily prove the Mermin--Murai Theorem by applying Engel's generalization of the Frankl--F{\"u}redi--Kalai Theorem and The Macaulay Correspondence Theorem \ref{Macaulay_Correspondence_Theorem}.

\begin{dfn}[Power and Disjoint Product Ideals]\label{Power_and_Disjoint_Product_Ideals}
	For $R=K[x_1,\dots, x_d]$ and $n_1,\dots, n_d\in \N\cup\{\infty\}$,
	we define the \textit{power ideal of $R$ and $(n_1,\dots, n_d)$} to be ideal generated by $x_1^{n_1},\dots, x_d^{n_d}$,
	\begin{align*}
		\Pow_R(n_1,\dots, n_d) = (x_1^{n_1},\dots, x_d^{n_d}).
	\end{align*}
	For an integer $n\geq 2$ we define the \textit{disjoint variable product ideal of $R$ and $n$} to be
	\begin{align*}
		\DVP_{R}(n) = \left( \left\lbrace \prod_{i\in A} x_i \bigm | A\subseteq [d] \textit{ and } |A| = n \right\rbrace\right).
	\end{align*}
	Alexandra Seceleanu pointed out to the author that $\DVP_{R}(n)$ is called the \textit{$n$-th square-free Veronese ideal}.
\end{dfn}

\begin{dfn}[Colored Square-Free Rings]
	Let $R = K[x_{1},\dots, x_{d}]$ and define the homogeneous ideal $H=\Pow_R(2,\dots, 2) + \DVP_{R}(2)$.
	So, $H=(x_{1},\dots, x_{d})^2$.
	We call $S=R/H$ a \textit{basic colored square-free ring}.
	A \textit{colored square-free ring} is a tensor product of basic colored square-free rings.
\end{dfn}

\begin{cor}[Mermin--Murai \cite{MerminJeff2010Bnol} 2010]\label{MerminMurai}
	All colored square-free rings are Macaulay.
\end{cor}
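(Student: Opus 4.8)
The plan is to translate the statement into one about Macaulay posets, recognize the relevant poset as the dual of a star poset, and then apply the Star Dual Macaulay Theorem \ref{starDual} together with the Macaulay Correspondence Theorem for monomial quotients \ref{Macaulay_Correspondence_Theorem_for_Monomial_Quotients}. The guiding observation is that a colored square free ring is a quotient by a monomial ideal, so its Macaulay property is equivalent to the Macaulay property of its poset of monomials, and that poset turns out to be a dual star poset.

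First I would describe $\mathscr{M}_S$ for a basic colored square free ring $S = R/H$, with $R = K[x_1,\dots,x_d]$ and $H = (x_1,\dots,x_d)^2$. Since every monomial of degree $\geq 2$ lies in $H$, the only nonzero monomials of $S$ are $1 + H$ in degree $0$ and $x_1 + H,\dots,x_d + H$ in degree $1$; hence $\mathscr{M}_S$ has a unique minimum $1+H$ lying below the $d$ maximal elements $x_1 + H,\dots,x_d + H$. Comparing with the definition of $\Star(d)$ — a bottom level of $d$ legs, a one-element top level, every leg below the apex — this poset is precisely $\Star(d)^\ast$, i.e. $\mathscr{M}_S \iso \Star(d)^\ast$.

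Next I would pass to a general colored square free ring $S = S_1 \otimes_K \cdots \otimes_K S_k$, a tensor product of basic colored square free rings $S_i$ on $n_i$ variables, which is the quotient of the polynomial ring on all the variables by the (monomial) ideal that is the sum of the individual defining ideals. Each $S_i$ is level linearly independent because its defining ideal is monomial, so Theorem \ref{The_Cartesian_and_Tensor_Correspondence} yields $\mathscr{M}_S \iso \mathscr{M}_{S_1} \times \cdots \times \mathscr{M}_{S_k} \iso \Star(n_1)^\ast \times \cdots \times \Star(n_k)^\ast$. By Proposition \ref{Properties_of_Duals}(1), which says the dual distributes over Cartesian products, this is $\bigl( \Star(n_1) \times \cdots \times \Star(n_k) \bigr)^\ast = \Star(n_1,\dots,n_k)^\ast$, the dual of a star poset.

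It then remains to assemble the pieces. The Star Dual Macaulay Theorem \ref{starDual} gives that $\Star(n_1,\dots,n_k)^\ast$ is Macaulay, i.e. admits a total order $\mathcal{O}$ making it a Macaulay $2$-poset; carrying $\mathcal{O}$ across the poset isomorphism of the previous paragraph makes $(\mathscr{M}_S, \mathcal{O})$ Macaulay, and then Corollary \ref{Macaulay_Correspondence_Theorem_for_Monomial_Quotients} (applicable since $H$ is monomial) gives that $(S, \mathcal{O})$ is Macaulay, as desired. I do not anticipate a genuine obstacle: all the real combinatorics is already contained in the proof of Theorem \ref{starTheorem} and in Bezrukov's Dual Lemma \ref{Bezrukov_Dual_Lemma}. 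The only points needing care are the bookkeeping facts that a $2$-poset isomorphism preserves the Macaulay property — immediate from Definition \ref{Macaulay_Posets_Definition}, since isomorphisms respect rank, levels, initial segments, and shadows — and the identification $\mathscr{M}_S \iso \Star(d)^\ast$ for the basic pieces; so the content of the argument is precisely that the Mermin--Murai Theorem is a translation of facts established long before it.
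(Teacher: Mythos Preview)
Your proposal is correct and follows essentially the same route as the paper: identify the poset of monomials of a basic colored square free ring with the dual of a basic star, use Theorem~\ref{The_Cartesian_and_Tensor_Correspondence} and Proposition~\ref{Properties_of_Duals} to pass to the tensor product, invoke Corollary~\ref{starDual}, and then translate back via the Macaulay correspondence. The only cosmetic difference is that you cite Corollary~\ref{Macaulay_Correspondence_Theorem_for_Monomial_Quotients} rather than the full Theorem~\ref{Macaulay_Correspondence_Theorem}, which is in fact the tighter reference here since the defining ideal is monomial.
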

\begin{proof}
	Note that the poset of monomials of a basic colored square-free ring is the dual of a basic star poset (see Figure \ref{Star_to_Color}).
	The claim now follows from Theorem \ref{The_Cartesian_and_Tensor_Correspondence}, Proposition \ref{Properties_of_Duals}, Corollary \ref{starDual} and the Macaulay Correspondence Theorem \ref{Macaulay_Correspondence_Theorem}.
\end{proof}

\begin{figure}
	\centering
	\hfill
	\begin{subfigure}[t]{0.2\textwidth}
		\includegraphics[width=\textwidth]{Star_legs_3.pdf}
	\end{subfigure}
	\hfill
	\begin{subfigure}[t]{0.18\textwidth}
		\raisebox{0.25\hsize}{\includegraphics[width=\textwidth]{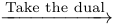}}
	\end{subfigure}
	\hfill
	\begin{subfigure}[t]{0.2\textwidth}
		\includegraphics[width=\textwidth]{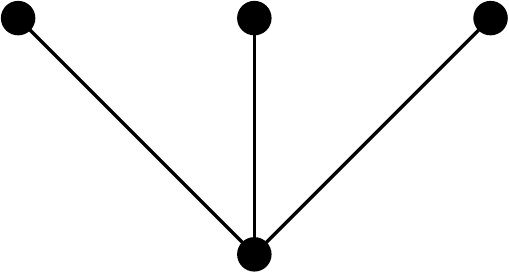}
	\end{subfigure}
	\begin{subfigure}[t]{0.18\textwidth}
		\raisebox{0.25\hsize}{\includegraphics[width=\textwidth]{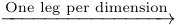}}
	\end{subfigure}
	\hfill
	\begin{subfigure}[t]{0.2\textwidth}
		\includegraphics[width=\textwidth]{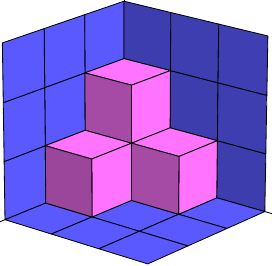}
	\end{subfigure}
	\hfill
	
	\caption{Relationship between star posets and colored square-free rings.}\label{Star_to_Color}
\end{figure}

The only thing left to do is describe a Macaulay order for the above posets.
We describe the Macaulay order from \cite{MerminJeff2010Bnol} and show that it is equivalent to a block order.
It turns out that all the Macaulay orders we are using in this paper are block orders,
and these types of orders allow for some of the most general Macaulay theorems available to us at the moment.

Suppose that we have colored square-free rings $S_1,\dots, S_d$,
such that $S_i$ has $n_i$ variable and $n_1\geq n_2\geq \cdots \ge n_d$.
Let $\mathscr{M}_{S_i} = \{1,x_{i,1},\cdots , x_{i, n_i}\}$.
Then we define an order on the variables such that $x_{k,\ell} > x_{k, \ell'}$ if $\ell > \ell'$ or $\ell = \ell'$ and $k<k$.
This ordering of the variables generates a permutation and hence a domination order on the isomorphic ring to $S_1\otimes \cdots \otimes S_d$ given by Theorem \ref{The_Cartesian_and_Tensor_Correspondence}.
A proof that this order is Macaulay order is given by Mermin and Murai in \cite{MerminJeff2010Bnol}.
We will call this order the Mermin--Murai order.

We now describe a block order on $S_1,\dots, S_d$.
We abuse notation and write $\mathscr{M}_{S_i} = \{1<x_{i,1}<\cdots < x_{i, n_i}\}$ to form a toset out of each poset of monomials.
Now we partition each of these tosets such that $\{1, x_{i,1}\}$ is the first partition and then every other variable is in a partition with itself.
The block order we want on the product $S_1\otimes\cdots \otimes S_d$ is formed by using the lexicographic hyperrectangle chaser order to order the starts,
and we use the lexicographic order to order each of the individual blocks.

The lexicographic hyperrectangle chaser order on the starts gives us that monomials with variables $x_{i,j}$ with smaller $j$ will come earlier in the block order.
The condition that we order each block with the lexicographic order gives us that monomials (in the same block) with variables $x_{i,j}$ with larger $i$ will come earlier in the block order.
These two conditions gives us equality between the block order and the Mermin--Murai domination order.

 \subsection{Spider Posets: Extending the Mermin--Murai Theorem}\label{Spider_Posets}

Spider posets are generalizations of stars,
where we extend each of the legs by the same length.
Another way to think about spider posets is to take copies of $\mathscr{M}_{[1]}(\ell)$ and glue the largest elements together.
The Hasse graphs of some spider posets are given in Figure \ref{Hasse_Graphs_of_some_spider_posets},
and a formal definition of spider posets is given in Definition \ref{Spider_Posets_Definition}.

\begin{figure}
	\centering
	\hfill
	\begin{subfigure}[t]{0.014\textwidth}
		\includegraphics[width=\textwidth]{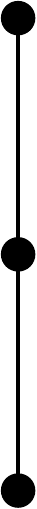}
	\end{subfigure}
	\hfill
	\begin{subfigure}[t]{0.10667\textwidth}
		\includegraphics[width=\textwidth]{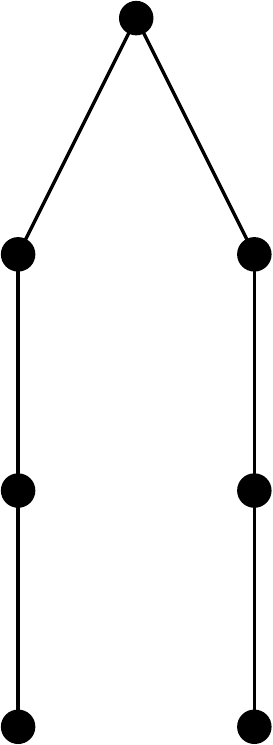}
	\end{subfigure}
	\hfill
	\begin{subfigure}[t]{0.21333\textwidth}
		\includegraphics[width=\textwidth]{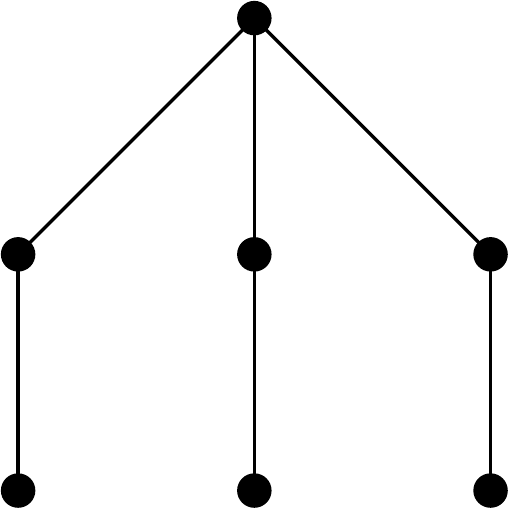}
	\end{subfigure}
	\hfill
	\begin{subfigure}[t]{0.32\textwidth}
		\includegraphics[width=\textwidth]{Star_legs_4.pdf}
	\end{subfigure}
	
	\caption{Hasse graphs of some spider posets.}\label{Hasse_Graphs_of_some_spider_posets}
\end{figure}

\begin{dfn}[Spider Posets]\label{Spider_Posets_Definition}
	For integers $k,\ell\in \N$ we define the \textit{spider set} $\Spider(k,\ell) = \{0,1,2,\dots (k+1)\ell\}$,
	and a total order $\mathcal{O}$ on $\Spider(k,\ell)$ such that for $a,b\in \Spider(k,\ell)$ we have $a\leq_{\mathcal{O}} b$ iff
	\begin{enumerate}
		\item $a \equiv b \pmod{(k+1)}$ and $a\leq b$, or
		\item $b = (k+1)\ell$.
	\end{enumerate}
	We say that $(\Spider(k,\ell), \mathcal{O})$ is a \textit{spider poset}.
	We will often abuse notation and refer to $(\Spider(k,\ell), \mathcal{O})$ just by the set $\Spider(k,\ell)$.
	We call $(k+1)\ell$ the \textit{head} of $\Spider(k,\ell)$.
	We say that $\Spider(k,\ell)$ has $k+1$ legs each of length $\ell$,
	where a \textit{leg} consists of all numbers that are not the head and are in the same equivalence class modulo $(k+1)$. 
\end{dfn}

Clearly $\Spider(k,2) \iso \Star(k+1)$.
Thus, spider posets are a natural generalization of basic star posets.
Spider posets were first considered by Bezrukov in \cite{BezrukovSpiders}, 
and then later by Bezrukov and Elsässer in \cite{BezrukovElsasserSpiders}.
In particular, the Cartesian power of a spider poset was considered in both cases.
For this reason we make the following definition.

\begin{dfn}[Bezrukov--Elsässer Posets]
	A \textit{Bezrukov--Elsässer poset} is a poset of the form $(\Spider(k,\ell))^{\times, n}$ for some $k,\ell,n\in \N$.
\end{dfn}

\begin{thm}[Bezrukov--Elsässer \cite{BezrukovElsasserSpiders} (2000)]\label{Spider_Macaulay_Theorem}
	All Bezrukov--Elsässer posets are Macaulay.
\end{thm}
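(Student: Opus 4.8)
The plan is to prove that $\Spider(k,\ell)^{\times,n}$ is Macaulay by the classical compression method, with an outer induction on the number $n$ of factors and the bulk of the difficulty in the inductive step. First I would pin down an explicit candidate order $\mathcal{O}$ on each level of $\Spider(k,\ell)^{\times,n}$: a block order in the sense of Definition \ref{Products_of_Partitioned_Tosets}, obtained by partitioning each copy of $\Spider(k,\ell)$ into the ``central column'' (one fixed leg together with the head) and the remaining $k$ legs, then ordering the resulting starts by a border-chaser-type rule and ordering within each block by a domination order. By Proposition \ref{Macaulay_Basic_Equivalence} it suffices to verify, for this $\mathcal{O}$ and every level, nestedness (initial segments have the smallest down-shadow) and continuity (the down-shadow of an initial segment is an initial segment). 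Continuity reduces to a direct check once the order is fixed and the leg-orderings are chosen compatibly across consecutive levels, so the real content is nestedness.

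The base case $n=1$ is easy: every non-top level of a single spider has $k+1$ elements, the down-shadow of any subset $A$ of such a level has size exactly $|A|$ (each represented leg contributes one lower cover), and at the top the head's shadow is all of $\Lvl_{\ell-1}$; hence every order is nested and the compatible leg-ordering makes it continuous. For the inductive step I would introduce compression operators $C$: choose two coordinates, slice $\Spider(k,\ell)^{\times,n}$ into fibers isomorphic to $\Spider(k,\ell)^{\times,2}$ (the remaining $n-2$ coordinates held fixed), and replace $A \cap (\text{fiber})$ by the initial segment of the same cardinality in the induced order on that two-dimensional fiber. Exactly as in the Kruskal-Katona and Clements-Lindström proofs, one shows $|\dSdw(C(A))| \le |\dSdw(A)|$ and that sweeping through all coordinate pairs terminates at a \emph{compressed} set $A^\ast$ fixed by every $C$; if $A^\ast$ is already an initial segment of its level we are done, and otherwise $|\dSdw(A^\ast)|$ must be bounded directly.

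The step I expect to be the main obstacle — and this is where the genuine Bezrukov-Elsässer argument lies — has two layers. First, the two-dimensional shadow-non-increasing lemma for $\Spider(k,\ell)\times\Spider(k,\ell)$ is itself nontrivial, since a product of two spiders is not a grid, so one cannot simply quote Clements-Lindström; the head coordinates and leg coordinates interact asymmetrically and the ``push'' that works for products of chains must be adapted. Second, one must analyze compressed, non-initial sets $A^\ast$: extract structural information (most fibers are themselves spider-power initial segments of a constrained shape, with deviations concentrated near the heads), and then either exhibit a further admissible compression — contradicting compressedness — or compare $|\dSdw(A^\ast)|$ with the shadow of the true initial segment by a weighted count over legs. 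I expect this to require a secondary induction (on the rank $i$, or on $k$, so that the innermost case collapses to a star and is discharged by Theorem \ref{starTheorem}) together with a split according to whether $A^\ast$ meets the top level.

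Finally, it is worth recording the consistency check the paper's own machinery provides. By Proposition \ref{Properties_of_Duals} and Bezrukov's Dual Lemma \ref{Bezrukov_Dual_Lemma}, the theorem is equivalent to the Macaulayness of $(\Spider(k,\ell)^\ast)^{\times,n}$; and $\Spider(k,\ell)^\ast \iso \mathscr{M}_{S_{k,\ell}}$ for the monomial quotient $S_{k,\ell} = K[x_1,\dots,x_{k+1}]\big/\big(\{x_ix_j : i\ne j\}+(x_1^{\ell+1},\dots,x_{k+1}^{\ell+1})\big)$, since the only surviving monomials are $1$ and the single-variable powers $x_i^a$ with $1\le a\le \ell$, and these form $k+1$ chains of length $\ell$ glued at $1$. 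Because $S_{k,\ell}$ is level linearly independent (its defining ideal is monomial), the Cartesian--Tensor Correspondence \ref{The_Cartesian_and_Tensor_Correspondence} and the Macaulay Correspondence Theorem \ref{Macaulay_Correspondence_Theorem} turn the whole statement into the assertion that $S_{k,\ell}^{\otimes,n}$ is a Macaulay ring — precisely the ring-theoretic form exploited later in Section \ref{Spider_Posets} — so the logical flow of the section is poset theorem $\Rightarrow$ ring theorem, with the combinatorial input taken from \cite{BezrukovElsasserSpiders}.
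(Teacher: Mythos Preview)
The paper does not prove this theorem. Theorem \ref{Spider_Macaulay_Theorem} is quoted from \cite{BezrukovElsasserSpiders} as an established result in the combinatorics literature and then used as a black box: Bezrukov's Dual Lemma \ref{Bezrukov_Dual_Lemma} yields Corollary \ref{Dual_Spider_Macaulay_Theorem}, and the Macaulay Correspondence Theorem \ref{Macaulay_Correspondence_Theorem} together with the Cartesian--Tensor Correspondence \ref{The_Cartesian_and_Tensor_Correspondence} then yields Theorem \ref{Bezrukov_Elsässer_Macaulay_Theorem} on the ring side. There is therefore no proof in this paper to compare your proposal against.

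For what it is worth, your outline is broadly faithful to the strategy of the original Bezrukov--Els\"asser argument: fix the explicit block order (the one the paper spells out at the end of Section \ref{Spider_Posets}, with a border-chaser order on starts and domination orders on blocks), verify continuity directly, and establish nestedness by two-coordinate compression followed by an analysis of fully compressed non-initial sets. You correctly identify the two genuine difficulties --- the two-dimensional shadow lemma for $\Spider(k,\ell)\times\Spider(k,\ell)$ is not a consequence of Clements--Lindstr\"om, and the endgame for compressed-but-not-initial sets requires real work --- but your proposal stops short of carrying either of them out, so what you have is a credible plan rather than a proof.

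One caution about your final paragraph: the equivalence you record via Theorem \ref{Macaulay_Correspondence_Theorem} is exactly how this paper \emph{derives} the ring statement from the poset statement, so it cannot be invoked in the other direction here without circularity. It is a good sanity check on the definitions, but it does not shortcut the combinatorics.
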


Of course, Bezrukov's Dual Lemma \ref{Bezrukov_Dual_Lemma} yields a corollary right away.

\begin{cor}\label{Dual_Spider_Macaulay_Theorem}
	All duals of Bezrukov--Elsässer posets are Macaulay.
\end{cor}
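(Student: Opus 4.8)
The plan is to derive this immediately from the Bezrukov--Elsässer Theorem \ref{Spider_Macaulay_Theorem} and Bezrukov's Dual Lemma \ref{Bezrukov_Dual_Lemma}. First I would record the elementary structural facts needed to invoke the Dual Lemma. A Bezrukov--Elsässer poset $\mathscr{P} = (\Spider(k,\ell))^{\times,n}$ is \emph{finite}: each $\Spider(k,\ell)$ has $(k+1)\ell + 1$ elements, so $\mathscr{P}$ has $((k+1)\ell+1)^n$ elements. It is also ranked: inside $\Spider(k,\ell)$ each of the $k+1$ legs is a chain contributing ranks $0,\dots,\ell-1$ and the head receives rank $\ell$, so $\Spider(k,\ell)$ is ranked of rank $\ell$, and then the product rank function of Definition \ref{Cartesian_Product_Of_Posets_Definition} makes $\mathscr{P}$ ranked with $r(\mathscr{P}) = n\ell < \infty$. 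In particular all levels of $\mathscr{P}$ are finite.

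Next I would apply the theorems. By Theorem \ref{Spider_Macaulay_Theorem} there is a total order $\mathcal{O}$ on $\mathscr{P}$ such that the $2$-poset $(\mathscr{P},\mathcal{O})$ is Macaulay. Since $r(\mathscr{P}) < \infty$, Bezrukov's Dual Lemma \ref{Bezrukov_Dual_Lemma} applies verbatim and yields that $(\mathscr{P},\mathcal{O})^\ast = (\mathscr{P}^\ast,\mathcal{O}^\ast)$ is Macaulay. By definition $\mathscr{P}^\ast$ is the dual of the Bezrukov--Elsässer poset $\mathscr{P}$, so this is exactly the assertion of the corollary. If one wishes to describe the dual concretely, Proposition \ref{Properties_of_Duals}(1) gives $\mathscr{P}^\ast = (\Spider(k,\ell)^\ast)^{\times,n}$ — the $n$-th Cartesian power of the order-reversed spider — but this identification is not logically needed.

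I do not expect a genuine obstacle here: the content is entirely in Theorem \ref{Spider_Macaulay_Theorem}, and the only hypothesis of Lemma \ref{Bezrukov_Dual_Lemma} to be checked, $r(\mathscr{P}) < \infty$, is immediate from finiteness. The one point worth making explicit in the writeup is that Theorem \ref{Spider_Macaulay_Theorem} provides \emph{some} Macaulay order $\mathcal{O}$, and it is the dual order $\mathcal{O}^\ast$ (rather than any a priori canonical order) that witnesses the Macaulay property of $\mathscr{P}^\ast$.
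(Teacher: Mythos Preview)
Your proposal is correct and matches the paper's approach exactly: the paper simply states that the corollary follows right away from Bezrukov's Dual Lemma \ref{Bezrukov_Dual_Lemma} applied to Theorem \ref{Spider_Macaulay_Theorem}. Your additional verification that $r(\mathscr{P}) = n\ell < \infty$ so that Lemma \ref{Bezrukov_Dual_Lemma} applies is a helpful detail the paper leaves implicit.
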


The reader is probably anticipating at this point that we plan to use the same technique we used for stars and the Bezrukov--Elsässer Theorem \ref{Spider_Macaulay_Theorem} to produce a result on rings.
In honor of the founders of Theorem \ref{Spider_Macaulay_Theorem} we make the following definition.

\begin{dfn}[Bezrukov--Elsässer Rings]\label{Bezrukov_Elsässer_Ring_Definition}
	Let $R=K[x_1,\dots, x_d]$, choose positive integer $\ell \in \N$ and set $H=\Pow_R(\ell,\dots, \ell) + \DVP_{R}(2) = (x_1^\ell,\dots, x_d^\ell) + (x_ix_j \bigm | i <j)$.
	We call $R/H$ a \textit{basic Bezrukov--Elsässer ring},
	and for $n\in \N$ we call $(R/H)^{\otimes, n}$ a \textit{Bezrukov--Elsässer ring}.
\end{dfn}

From here we get a generalization of the Mermin--Murai Theorem \ref{MerminMurai} to rings which are not square-free.

\begin{thm}\label{Bezrukov_Elsässer_Macaulay_Theorem}
	All Bezrukov--Elsässer rings are Macaulay.
\end{thm}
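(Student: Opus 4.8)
The plan is to run the same argument used for the Mermin--Murai Theorem \ref{MerminMurai}, with spider posets in place of star posets. First I would identify the poset of monomials of a basic Bezrukov--Els\"asser ring. Write $S_0 = R/H$ with $R = K[x_1,\dots,x_d]$ and $H = (x_1^\ell,\dots,x_d^\ell) + (x_ix_j : i<j)$. The nonzero monomials of $S_0$ are exactly $1+H$ together with $x_i^a+H$ for $1\le i\le d$ and $1\le a\le \ell-1$, and by Lemma \ref{Monomial_Partial_Order_Representatives} the only divisibilities among them are $x_i^a\mid x_i^b$ when $a\le b$, together with $1$ dividing everything. Thus $\mathscr{M}_{S_0}$ is $d$ copies of the chain $\mathscr{M}_{[1]}(\ell) \iso \{1 < x_i < \dots < x_i^{\ell-1}\}$ glued along their \emph{minimum} elements; comparing with Definition \ref{Spider_Posets_Definition}, in which the glued vertex is the head (a maximum), this is precisely $\Spider(d-1,\ell)^\ast$, the dual of a spider poset.

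Next I would pass to tensor powers. Since $H$ is a monomial ideal, $S_0$ is level linearly independent, so the Cartesian and Tensor Correspondence \ref{The_Cartesian_and_Tensor_Correspondence} applies and gives, for $S = S_0^{\otimes,n}$, a poset isomorphism $\mathscr{M}_S \iso \mathscr{M}_{S_0}^{\times,n} = \bigl(\Spider(d-1,\ell)^\ast\bigr)^{\times,n}$. By Proposition \ref{Properties_of_Duals}(1) the right-hand side equals $\bigl(\Spider(d-1,\ell)^{\times,n}\bigr)^\ast$, i.e.\ the dual of a Bezrukov--Els\"asser poset. Now Corollary \ref{Dual_Spider_Macaulay_Theorem} (the Bezrukov--Els\"asser Theorem \ref{Spider_Macaulay_Theorem} combined with Bezrukov's Dual Lemma \ref{Bezrukov_Dual_Lemma}) says this poset is Macaulay with some total order $\mathcal{O}$; transporting $\mathcal{O}$ through the isomorphism, $(\mathscr{M}_S,\mathcal{O})$ is Macaulay.

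Finally I would invoke the Macaulay Correspondence. Because $S = S_0^{\otimes,n}$ is, by part (1) of Theorem \ref{The_Cartesian_and_Tensor_Correspondence}, isomorphic to a quotient of a polynomial ring by a sum of copies of $H$, hence by a monomial ideal, Corollary \ref{Macaulay_Correspondence_Theorem_for_Monomial_Quotients} gives that $(S,\mathcal{O})$ is Macaulay if and only if $(\mathscr{M}_S,\mathcal{O})$ is Macaulay. The latter was just established, so $S$ is Macaulay, as desired. (When $\ell=1$ the ring degenerates to $S_0 = K$ and $S = K$, which is trivially Macaulay and is also the $\ell=1$ instance of the computation above.)

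I do not anticipate a real obstacle; once step one is in hand the argument is a short citation chain. The only point requiring care is the bookkeeping in identifying $\mathscr{M}_{S_0}$: one must check that it is the \emph{dual} spider rather than the spider itself — the element $1+H$ is the minimum of $\mathscr{M}_{S_0}$, whereas the glued vertex of a spider is its head (maximum) — and that the parameters match, namely $\Spider(k,\ell)$ with $k+1 = d$ legs, each leg being the chain $x_i < x_i^2 < \dots < x_i^{\ell-1}$. Everything downstream (Theorem \ref{The_Cartesian_and_Tensor_Correspondence}, Proposition \ref{Properties_of_Duals}, Corollary \ref{Dual_Spider_Macaulay_Theorem}, Corollary \ref{Macaulay_Correspondence_Theorem_for_Monomial_Quotients}) is then applied verbatim, exactly as in the proof of Corollary \ref{MerminMurai}.
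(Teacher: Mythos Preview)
Your proposal is correct and follows essentially the same route as the paper's proof: identify $\mathscr{M}_{S_0}$ as the dual of a spider, then invoke Theorem~\ref{The_Cartesian_and_Tensor_Correspondence}, Proposition~\ref{Properties_of_Duals}, Corollary~\ref{Dual_Spider_Macaulay_Theorem}, and the Macaulay Correspondence (the paper cites Theorem~\ref{Macaulay_Correspondence_Theorem}; your use of Corollary~\ref{Macaulay_Correspondence_Theorem_for_Monomial_Quotients} is equally valid since $H$ is monomial). One small bookkeeping slip: with the paper's convention $\Spider(k,\ell)$ has $k+1$ legs each containing $\ell$ non-head elements, so $\mathscr{M}_{S_0}\iso\Spider(d-1,\ell-1)^\ast$, not $\Spider(d-1,\ell)^\ast$; this does not affect the argument, since all that is needed is that it is the dual of \emph{some} spider.
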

\begin{proof}
	Note that the poset of monomials of a basic Bezrukov--Elsässer ring is the dual of a spider poset (see Figure \ref{Spider_to_Color}).
	The claim now follows from Theorem \ref{The_Cartesian_and_Tensor_Correspondence}, Proposition \ref{Properties_of_Duals}, Corollary \ref{Dual_Spider_Macaulay_Theorem} and the Macaulay Correspondence Theorem \ref{Macaulay_Correspondence_Theorem}.
\end{proof}

\begin{figure}
	\centering
	\hfill
	\begin{subfigure}[t]{0.2\textwidth}
		\includegraphics[width=\textwidth]{Spider_legs_3_length_2.pdf}
	\end{subfigure}
	\hfill
	\begin{subfigure}[t]{0.17\textwidth}
		\raisebox{0.45\hsize}{\includegraphics[width=\textwidth]{Arrow_Dual.pdf}}
	\end{subfigure}
	\hfill
	\begin{subfigure}[t]{0.2\textwidth}
		\includegraphics[width=\textwidth]{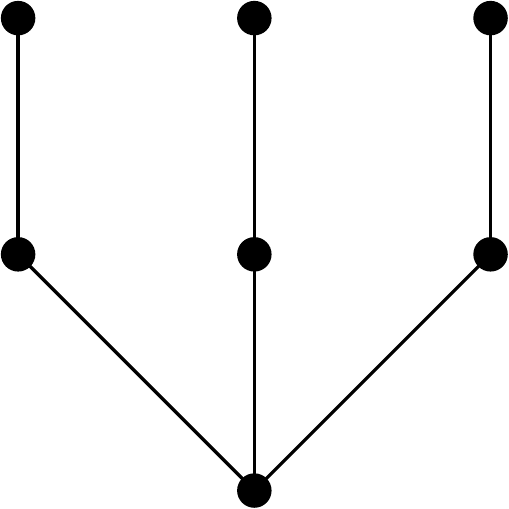}
	\end{subfigure}
	\begin{subfigure}[t]{0.17\textwidth}
		\raisebox{0.45\hsize}{\includegraphics[width=\textwidth]{Arrow_Dual_Euclid.pdf}}
	\end{subfigure}
	\hfill
	\begin{subfigure}[t]{0.22\textwidth}
		\includegraphics[width=\textwidth]{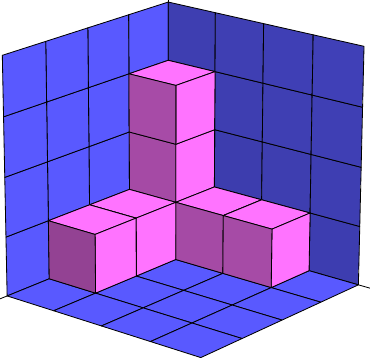}
	\end{subfigure}
	\hfill
	
	\caption{Relationship between spider posets and Bezrukov--Elsässer rings.}\label{Spider_to_Color}
\end{figure}

The only thing left to do is to specify a Macaulay order on Bezrukov--Elsässer posets.
If one starts with the simpler definition of the Macaulay order on stars \cite{Bezrukov2004,engel_1997,MerminJeff2010Bnol},
then it takes more effort to generalize to a Macaulay order on Bezrukov--Elsässer posets.
However, with the block orders the generalization is natural, and just focuses on grouping the legs of spider posets just like we did for basic star posets.
The main result in \cite{BezrukovElsasserSpiders} is the proof that the order below is Macaulay.

First, for every copy of $\Spider(k,\ell)$ in the product $\Spider(k,l)^{\times, n}$,
we form a partitioned toset.
The first part is made up of the leg that contains $0$,
Then the second part is made up of the leg that contains $1$, and so on,
until we get to the leg that contains $k$.
The last part is made up of the leg that contains $k$ together with the head of the spider poset.
Inside each part we order the elements by the standard order on $\N$, which is a restriction of the partial order on $\Spider(k,\ell)$.
Note that this is a similar partitioning like the one we did on the basic star posets, 
the legs were just shorter.

The starts of $\Spider(k,l)^{\times, n}$ are ordered by the lexicographic border chaser order.
Again, note that this is similar to the case for duals of basic stars. 
There we just considered the dual of the border chaser order, which is the hyperrectangle chaser order.

We have to be a little careful with the ordering in each block.
The order is not the lexicographic order, but a domination order.
To describe this order we will construct a permutation $\pi_B = (p_1,\dots ,p_n)$ for each block $B$.

Consider a block of $\Spider(k,\ell)^{\times, n}$, say $B=B_1\times \cdots \times B_{n}$.
Then each $B_i$ is a partition (leg) of $\Spider(k,\ell)$ that contains exactly one element from the set $\{0,1,\dots, k\}$.
Let $m_1$ be the largest integer in $(B_1\cup \cdots \cup B_n)\cap \{1,\dots, k\}$.
Then let $j_1< \cdots < j_{q_1}$ be such that $m\in B_{j_1},\dots, B_{j_q}$.
We define $ p_{n-q_1+1} = j_1,\dots, p_n=j_{q_1}$.
Let $m_2$ be the largest integer in $(B_1\cup \cdots \cup B_n)\cap (\{1,\dots, k\}\setminus \{m_1\})$.
Then let $i_1< \cdots < i_{q_2}$ be such that $m\in B_{i_1},\dots, B_{i_2}$.
We define $p_{n-q_1-q_2+1} = i_1,\dots, p_{n-q_1}=i_{q_2}$.
Continue like this and define $\pi_B = (p_1,\dots, p_n)$.
We order the block $B$ by the domination order $\mathcal{D}_{\pi_B}$.

With every block being ordered by a domination order, 
and all the starts being ordered by the lexicographic border chaser order,
we finally have a total order on $\Spider(k,\ell)^{\times, n}$.
It is not obvious that this order is Macaulay and a proof that it is Macaulay is given in \cite{BezrukovElsasserSpiders}.
The notation in \cite{BezrukovElsasserSpiders} is different from the one we have used here.
In \cite{BezrukovElsasserSpiders},
for $a\in \Spider^{\times, n}(k,\ell)$ the authors define $\underline{a}$ and $\vec{a}$,
and use these to define a total order.
The definition of $\underline{a}$ in \cite{BezrukovElsasserSpiders} in our notation means the start of the block that $a$ belongs to.
The definition of $\vec{a}$ in \cite{BezrukovElsasserSpiders} in our notation means $\pi_B(a)$, where $B$ is the block that $a$ belongs to.
These facts follow from the definitions. 
\subsection{Products With Multiset Lattices}\label{Products_With_Multiset_Lattices}

The oldest results on Macaulay posets and rings concern multiset lattices.
It is natural to ask about Macaulay theorems that contain multiset lattices.
This has been studied by several authors.
The first such result we are aware of, is by Bezrukov and Leck.

\begin{thm}[Bezrukov--Leck \cite{Bezrukov2004} 2004]
	Suppose that $\mathscr{P}$ is a ranked poset with $r(\mathscr{P}) = 1$ and let $q\in \N$ with $q\geq 1$.
	Then $\mathscr{P}\times \mathscr{M}_{[1]}(q)$ is Macaulay if and only if $\mathscr{P}$ is Macaulay.
\end{thm}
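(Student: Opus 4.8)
The plan is to prove both implications directly from Definition \ref{Macaulay_Posets_Definition}, using the nestedness/continuity reformulation of Proposition \ref{Macaulay_Basic_Equivalence}. Write $\Lvl_0 = \Lvl_{0,\mathscr{P}}$, $\Lvl_1 = \Lvl_{1,\mathscr{P}}$, $m=|\Lvl_0|$, $n=|\Lvl_1|$, and $N(b) = \dSdw_{\mathscr{P}}(b)\subseteq\Lvl_0$ for $b\in\Lvl_1$; identify $\mathscr{M}_{[1]}(q)$ with the chain $0<1<\cdots<q-1$. Then $\mathscr{P}\times\mathscr{M}_{[1]}(q)$ has rank $q$, its level $k$ is $(\Lvl_0\times\{k\})\cup(\Lvl_1\times\{k-1\})$ (with the evident conventions at $k=0$ and $k=q$), and $\dSdw(a,k)=\{(a,k-1)\}$ for $a\in\Lvl_0$ while $\dSdw(b,k-1)=(N(b)\times\{k-1\})\cup(\{b\}\times\{k-2\})$ for $b\in\Lvl_1$ (the last summand absent when $k=1$). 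The case $q=1$ is immediate since $\mathscr{P}\times\mathscr{M}_{[1]}(1)\iso\mathscr{P}$, so I assume $q\geq 2$.

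For the forward implication I would inspect only the top two levels. Given a Macaulay order $\mathcal{O}$ on $\mathscr{P}\times\mathscr{M}_{[1]}(q)$, transport the restriction of $\mathcal{O}$ to level $q=\Lvl_1\times\{q-1\}$ to a total order $\prec_1$ on $\Lvl_1$, and the restriction of $\mathcal{O}$ to $\Lvl_0\times\{q-1\}\subseteq\text{level }q-1$ to a total order $\prec_0$ on $\Lvl_0$. For $A_1\subseteq\Lvl_1$ with $|A_1|=k$, the initial segment of level $q$ of size $k$ is $\Lvl_1[k]\times\{q-1\}$, with shadow $(\dSdw_{\mathscr{P}}(\Lvl_1[k])\times\{q-1\})\cup(\Lvl_1[k]\times\{q-2\})$. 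Nestedness of $\mathcal{O}$ gives $|\dSdw_{\mathscr{P}}(\Lvl_1[k])|+k\leq|\dSdw_{\mathscr{P}}(A_1)|+k$, and continuity of $\mathcal{O}$ says that shadow is an initial segment of level $q-1$; since the intersection of an initial segment with a subset is an initial segment of the restricted order, $\dSdw_{\mathscr{P}}(\Lvl_1[k])\times\{q-1\}$ is a $\prec_0$-initial segment of $\Lvl_0\times\{q-1\}$. These are exactly nestedness and continuity for $\mathscr{P}$ with the orders $\prec_0,\prec_1$, so $\mathscr{P}$ is Macaulay.

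For the backward implication — the main direction — I would fix a Macaulay order on $\mathscr{P}$, restrict it to a Macaulay order $\prec_1$ on $\Lvl_1$ and a total order $\prec_0$ on $\Lvl_0$, and record $d_u=|\dSdw_{\mathscr{P}}(\Lvl_1[u])|$, so that by continuity of $\mathscr{P}$ one has $\dSdw_{\mathscr{P}}(\Lvl_1[u])=\Lvl_0[d_u]$ with $0=d_0\leq d_1\leq\cdots\leq d_n\leq m$. On $\mathscr{P}\times\mathscr{M}_{[1]}(q)$ I would put, level by level, the merge of the chain $\Lvl_0\times\{k\}$ (in $\prec_0$-order) with the chain $\Lvl_1\times\{k-1\}$ (in $\prec_1$-order) obtained by giving the $i$-th element $(a,k)$ the key $i$ and the $j$-th element $(b,k-1)$ the key $d_j$, breaking ties in favour of $\Lvl_0$ and then of $\prec_1$. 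The key steps are then: (i) verify that the initial segments of level $k$ are exactly the ``staircase'' sets $(\Lvl_0[t]\times\{k\})\cup(\Lvl_1[u]\times\{k-1\})$ with $d_u\leq t\leq d_{u+1}$ (set $d_{n+1}=m$), and that every size in $\{0,\dots,m+n\}$ occurs; (ii) continuity — since $\dSdw_{\mathscr{P}}(\Lvl_1[u])=\Lvl_0[d_u]\subseteq\Lvl_0[t]$, the shadow of such a set is $(\Lvl_0[t]\times\{k-1\})\cup(\Lvl_1[u]\times\{k-2\})$, again a staircase set, hence an initial segment of level $k-1$; (iii) nestedness — for $A=(A_0\times\{k\})\cup(A_1\times\{k-1\})$ of size $s$ one has $|\dSdw(A)|=|A_0\cup\dSdw_{\mathscr{P}}(A_1)|+|A_1|$, and combining $|A_0\cup\dSdw_{\mathscr{P}}(A_1)|\geq\max(|A_0|,|\dSdw_{\mathscr{P}}(A_1)|)\geq\max(|A_0|,d_{|A_1|})$ with $\mathscr{P}$-nestedness forces $|\dSdw(A)|\geq s$, which is the shadow size of the size-$s$ staircase set. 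By Proposition \ref{Macaulay_Basic_Equivalence} the merge order is Macaulay. Minor adjustments handle the boundary levels $k=1$ and $k=q$ (where the formula for $|\dSdw(A)|$ loses its last summand, or where $A_0=\emptyset$).

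The routine parts are this end-of-range bookkeeping and the verification that the merge order's initial segments are precisely the staircase sets. The genuinely delicate point is choosing the product order: the naïve ``block'' order that lists all of $\Lvl_0\times\{k\}$ before all of $\Lvl_1\times\{k-1\}$ already violates nestedness as soon as $\mathscr{P}$ has a maximal vertex of $\Lvl_0$ covered by nothing in $\Lvl_1$ (so $d_n<m$); the interleaving must be governed by the cumulative-neighbourhood sizes $d_u$, and this is the only place where both the continuity and the nestedness of the given Macaulay order on $\mathscr{P}$ are actually used.
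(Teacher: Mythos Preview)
The paper states this theorem with a citation to the Bezrukov--Leck survey and gives no proof of its own, so there is nothing in the paper to compare your argument against.

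Your proof is correct. The forward direction is clean: restricting the product order to the top level $q$ and to the $\Lvl_0\times\{q-1\}$ piece of level $q-1$ yields a pair of orders on $\Lvl_1$ and $\Lvl_0$ for which nestedness and continuity in $\mathscr{P}$ follow from those in the product, exactly as you write. The backward direction, where you interleave $\Lvl_0\times\{k\}$ and $\Lvl_1\times\{k-1\}$ according to the cumulative-neighbourhood sizes $d_u$, is the right construction, and your verification goes through. Note in fact that at a middle level $2\le k\le q-1$ the shadow of a size-$s$ staircase set again has size $s$, so nestedness there reduces to $|A_0\cup\dSdw_{\mathscr{P}}(A_1)|\ge|A_0|$, which is automatic; the $\mathscr{P}$-nestedness hypothesis is genuinely needed only at the boundary levels $k=q$ (where $A_0=\emptyset$) and $k=1$ (where the case split $|A_1|\le u$ versus $|A_1|\ge u+1$ uses both $\mathscr{P}$-nestedness and the staircase constraint $t\le d_{u+1}$). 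Your remark that the na\"ive block order fails when $d_n<m$ is also correct.

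One cosmetic omission: you specify the order level by level, but Definition~\ref{Macaulay_Posets_Definition} asks for a single total order on the whole product. This is harmless, since any splice of the level orders (e.g.\ rank-first) works and only the restrictions to levels enter the Macaulay condition.
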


Unfortunately, the above result does not give us a lot of information when it comes to rings.
The only monomial posets of rank $1$ are just duals of stars.
So, this gives us that the tensor product of a basic colored square-free ring with $K[x]/(x^q)$ is Macaulay.

The result above of Bezrukov and Leck was inspired by work done by Clements \cite{ClementsG.F.1977Motg}, 
which characterized the Macualay property in terms of additivity,
for the product of two posets where one poset has rank $0$.
Bezrukov and Leck suggested that the next step to take is to characterize all Macaulay posets that are the product of a Macaulay poset with $\mathscr{M}_{[1]}(\ell)$ where $\ell\in \N$.
It turns out that such a result was first discovered by Mermin and Peeva, and independently by Shakin, in terms of Macaulay rings.
\begin{thm}[Mermin--Peeva \cite{MerminJeffrey2006Li} 2006, Shakin \cite{ShakinD.2007Mi} 2007]\label{Mermin_Peeva_and_Shakin}
	Suppose that $R=K[x_1,\dots, x_d]$ and $M$ is a monomial ideal of $R$.
	If $(R/M, \mathcal{L})$ is Macaulay then $(R[x_{d+1}]/(M'), \mathcal{L})$ is Macaulay,
	where $M'$ is the image of $M$ under the natural inclusion map $R\xhookrightarrow{} R[x_{d+1}]$. 
\end{thm}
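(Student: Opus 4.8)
The plan is to push the statement, via the correspondence machinery, down to a purely combinatorial claim about Cartesian products with a chain, and then to isolate the one genuinely hard step.

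First I would write $S=R/M$ and $S'=R[x_{d+1}]/(M)$, observing that $(M)$ is again a monomial ideal (now of $R[x_{d+1}]$), so that $S'\iso S\otimes_K K[x_{d+1}]$ and $S$ is level linearly independent. Applying the Cartesian and Tensor Correspondence~\ref{The_Cartesian_and_Tensor_Correspondence} with $S_1=S$ and $S_2=K[x_{d+1}]=K[x_{d+1}]/(0)$ then gives $\mathscr{M}_{S'}\iso\mathscr{M}_S\times\mathscr{M}_{[1]}(\infty)$, that $\mathscr{M}_{S'}$ is level linearly independent, and that $\mathscr{M}_{S'}$ carries a monomial order. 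Because $x_{d+1}$ is the least variable for $\mathcal{L}$, inspection of Definition~\ref{Lexicographic_Order_Definition} shows that under this isomorphism $\mathcal{L}$ on $\mathscr{M}_{S'}$ corresponds to the lexicographic order on the product having $(\mathscr{M}_S)_{\mathcal{L}}$ as primary factor and $\mathscr{M}_{[1]}(\infty)=\N$ as secondary factor. The Macaulay Correspondence Theorem~\ref{Macaulay_Correspondence_Theorem}, applicable in both directions by the hypotheses just verified, then reduces the claim to the poset statement: \emph{if $(\mathscr{M}_S,\mathcal{L})$ is Macaulay, then $(\mathscr{M}_S\times\mathscr{M}_{[1]}(\infty),\mathcal{L})$ is Macaulay}. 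The same reduction goes through verbatim with $\mathscr{M}_{[1]}(\infty)$ replaced by a finite chain $\mathscr{M}_{[1]}(\ell)$, which is precisely why this theorem is the lever behind the Bezrukov--Leck answer.

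Next I would treat the combinatorics in stages. By Proposition~\ref{Macaulay_Direct_Limit} and Lemma~\ref{Macaulay_Equivalence_Strong} it is enough to show that $\mathscr{Q}\times\mathscr{M}_{[1]}(q)$ is Macaulay for every finite-rank truncation $\mathscr{Q}=(\mathscr{M}_S)_{\le n}$ (still Macaulay, since the condition in Definition~\ref{Macaulay_Posets_Definition} is checked level by level) and every finite $q$. Given $A\subseteq\Lvl_j(\mathscr{Q}\times\mathscr{M}_{[1]}(q))$, slicing by the chain coordinate as $A=\bigsqcup_k A_k\times\{k\}$ with $A_k\subseteq\Lvl_{j-k}(\mathscr{Q})$ makes the shadow split as $\dSdw(A)=\bigsqcup_k\bigl(\dSdw_{\mathscr{Q}}(A_k)\cup A_{k+1}\bigr)\times\{k\}$; on the ring side this is the familiar decomposition of an ideal $I\subseteq S'$ as $\bigoplus_{k\ge0}J_kx_{d+1}^k$ with $J_0\subseteq J_1\subseteq\cdots$ an ascending chain of ideals of $S$ and $\Hilb_I(j)=\sum_k\Hilb_{J_k}(j-k)$. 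Using that $(\mathscr{M}_S,\mathcal{L})$ is Macaulay, I would then compress each slice $A_k$ to the initial segment of $\Lvl_{j-k}(\mathscr{Q})$ of the same size (equivalently, replace each $J_k$ by its lex ideal $L_k$); since lex ideals of $S$ with comparable Hilbert functions are nested, this yields a candidate $L=\bigoplus_k L_kx_{d+1}^k$ that is an ideal of $S'$ with $\Hilb_L=\Hilb_I$.

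The hard part will be the final step: showing that this candidate, compressed only in the $\mathscr{M}_S$-direction, can be pushed further into a genuine $\mathcal{L}$-initial segment of $\Lvl_j(\mathscr{M}_S\times\mathscr{M}_{[1]}(q))$ without enlarging its shadow --- i.e.\ that the ``staircase'' $\bigl(\dim(L_k)_{j-k}\bigr)_k$ obtained by lexifying the slices is exactly the one carved out by the $\mathcal{L}$-extremal $\Hilb_I(j)$ monomials of $S'$, and that this staircase shape is stable under passing to the shadow. This compatibility between compression in the $\mathscr{M}_S$-direction and the profile forced by the total order is the substantive content, and I expect it to be the main obstacle. It amounts to a Clements--Lindström-style compression argument specialized to multiplication by a single free variable, and can be supplied either by invoking the ring-theoretic proofs of Mermin--Peeva~\cite{MerminJeffrey2006Li} and Shakin~\cite{ShakinD.2007Mi}, or by a direct shadow-compression proof on the poset (Bezrukov's Dual Lemma~\ref{Bezrukov_Dual_Lemma} lets one switch freely between lower and upper shadows as convenient). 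Granting this step, everything else --- the reductions above, together with the passages from finite $q$ back to $\mathscr{M}_{[1]}(\infty)$ and from $(\mathscr{M}_S)_{\le n}$ back to $\mathscr{M}_S$ --- is formal, via the correspondence theorems, Proposition~\ref{Macaulay_Direct_Limit}, and Lemma~\ref{Macaulay_Equivalence_Strong}.
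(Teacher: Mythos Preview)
The paper does not actually prove Theorem~\ref{Mermin_Peeva_and_Shakin}; it is stated as a known result with citations to Mermin--Peeva~\cite{MerminJeffrey2006Li} and Shakin~\cite{ShakinD.2007Mi}, and then immediately applied to derive Corollary~\ref{Bezrukov_Leck_Chain}. So there is no ``paper's own proof'' to compare against.

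That said, your reduction is exactly the kind of translation the paper performs elsewhere: the passage from $S'$ to $\mathscr{M}_S\times\mathscr{M}_{[1]}(\infty)$ via Theorem~\ref{The_Cartesian_and_Tensor_Correspondence} and the Macaulay Correspondence Theorem~\ref{Macaulay_Correspondence_Theorem} is precisely the paragraph following the theorem statement in the paper (``Note that $R[x_{d+1}]/(M)\iso\ldots$''), and your direct-limit reduction to finite truncations is clean. You have also correctly isolated the genuine content: the compression step showing that once each slice $A_k$ is replaced by a lex segment, the resulting staircase is already the $\mathcal{L}$-initial segment in the product. This is indeed the substantive argument in the original Mermin--Peeva and Shakin proofs, and your proposal openly defers it (``Granting this step\ldots''). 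So what you have written is an accurate roadmap, not a self-contained proof --- which is appropriate, since the paper itself treats the result as a black box from the literature.
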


Note that $R[x_{d+1}]/(M')\iso K[x_1,\dots, x_{d+1}]/(M) \iso R/M \otimes_K K[x_{d+1}]/(0)$.
Thus, if the monomial poset of $R/M$ is Macaulay with $\mathcal{L}$,
then the Cartesian product of the monomial poset of $R/M$ with $\mathscr{M}_{[1]}(\infty)$ is Macaulay.
This gives us an answer to the problem posed by Bezrukov and Leck.

\begin{cor}\label{Bezrukov_Leck_Chain}
	Suppose that $\mathscr{P}$ is a poset representable by a ring, where the quotient ideal is monomial.
	If $(\mathscr{P}, \mathcal{L})$ is Macaulay then $(\mathscr{P}\times \mathscr{M}_{[1]}(\infty),\mathcal{L})$ is Macaulay.
\end{cor}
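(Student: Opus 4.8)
The plan is to bootstrap from the one-variable case of the Mermin--Peeva and Shakin Theorem \ref{Mermin_Peeva_and_Shakin}, using the Macaulay Correspondence Theorem for monomial quotients \ref{Macaulay_Correspondence_Theorem_for_Monomial_Quotients} and the Cartesian--tensor dictionary \ref{The_Cartesian_and_Tensor_Correspondence} to move between the poset and ring sides. First I would invoke the hypothesis that $\mathscr{P}$ is representable by a ring with monomial quotient ideal to fix $R = K[x_1,\dots,x_d]$ and a monomial ideal $M$ of $R$ with $\mathscr{P} \iso \mathscr{M}_{R/M}$. Pulling back the lexicographic order along this isomorphism, the hypothesis that $(\mathscr{P},\mathcal{L})$ is Macaulay says exactly that $(\mathscr{M}_{R/M},\mathcal{L})$ is Macaulay; since $M$ is monomial, Corollary \ref{Macaulay_Correspondence_Theorem_for_Monomial_Quotients} then upgrades this to the statement that the ring $(R/M,\mathcal{L})$ is Macaulay.

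Second, I would apply Theorem \ref{Mermin_Peeva_and_Shakin} to deduce that $(R[x_{d+1}]/(M),\mathcal{L})$ is Macaulay, where $R[x_{d+1}] = K[x_1,\dots,x_{d+1}]$ and $(M)$ is the extension of $M$, which is again a monomial ideal. A second application of Corollary \ref{Macaulay_Correspondence_Theorem_for_Monomial_Quotients} gives that $(\mathscr{M}_{R[x_{d+1}]/(M)},\mathcal{L})$ is Macaulay. To recognize this poset, note that $M$ monomial forces $R/M$ to be level linearly independent, so Theorem \ref{The_Cartesian_and_Tensor_Correspondence} applied to $S_1 = R/M$ and $S_2 = K[x_{d+1}]/(0)$ yields $\mathscr{M}_{R[x_{d+1}]/(M)} \iso \mathscr{M}_{R/M} \times \mathscr{M}_{K[x_{d+1}]/(0)}$. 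By Example \ref{Grid_Ring_Poset}, $\mathscr{M}_{K[x_{d+1}]/(0)} \iso \mathscr{M}_{[1]}(\infty)$, so this reads $\mathscr{M}_{R[x_{d+1}]/(M)} \iso \mathscr{P} \times \mathscr{M}_{[1]}(\infty)$, and transporting the Macaulay property along this isomorphism finishes the argument.

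The step that requires genuine care is verifying that the isomorphism $\mathscr{M}_{R[x_{d+1}]/(M)} \iso \mathscr{P} \times \mathscr{M}_{[1]}(\infty)$ carries the lexicographic order to the lexicographic order of Definition \ref{Lexicographic_Order_Definition} on the Cartesian product (placing the $\mathscr{M}_{[1]}(\infty)$-factor last). Concretely, the isomorphism from Lemma \ref{tensorsBasic} sends $x_1^{p_1}\cdots x_{d+1}^{p_{d+1}}+(M)$ to the pair $(x_1^{p_1}\cdots x_d^{p_d}+M,\ x_{d+1}^{p_{d+1}})$, and one must check that comparing $(d+1)$-tuples lexicographically is the same as first comparing the initial $d$-tuples lexicographically and breaking ties by the last coordinate; this is the associativity of the lexicographic order built into Definition \ref{Lexicographic_Order_Definition}, combined with the fact (Proposition \ref{Monomial_Quotients_Produce_Subposets}) that $\mathcal{L}$ on $\mathscr{M}_R$ restricts to $\mathcal{L}$ on the subposet $\mathscr{M}_{R/M}$, and likewise on $R[x_{d+1}]$. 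Since all levels are finite and the Macaulay condition of Definition \ref{Macaulay_Posets_Definition} is checked level by level, once the two orders are identified the conclusion is immediate; everything else is routine bookkeeping within the already-established dictionary between rings and posets.
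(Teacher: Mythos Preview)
Your proposal is correct and follows essentially the same route as the paper: the paper's argument (given in the paragraph preceding the corollary) is exactly to observe $R[x_{d+1}]/(M)\iso R/M \otimes_K K[x_{d+1}]/(0)$ and combine Theorem \ref{Mermin_Peeva_and_Shakin} with the Macaulay Correspondence for monomial quotients and the Cartesian--tensor dictionary. Your write-up is more explicit than the paper's, in particular in checking that the lexicographic order is preserved under the poset isomorphism, which the paper leaves implicit.
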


The result of Mermin and Peeva, and Shakin, does not have a rank restriction, 
but it only works for a poset that is the poset of monomials of some quotient of a polynomial ring by a monomial ideal.
Chong \cite{ChongKaiFongErnest2015Hfoc} was studying generalizations of the Mermin--Murai Theorem \ref{MerminMurai} and discovered more results like the two previous theorems.
After translation, Chong's results have a very natural description in our framework.

\begin{thm}[Chong \cite{ChongKaiFongErnest2015Hfoc} 2015]\label{Chong_Macaulay_1}
	The tensor product of a colored square-free ring and a Clements--Lindström ring, is Macaulay.
	In the language of posets, the product of a star and a multiset lattice is Macaulay. 
\end{thm}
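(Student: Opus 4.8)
The plan is to transport the statement to the poset side via the dictionary of Section \ref{Translating_Between_Posets_and_Rings}, isolate the combinatorial core ``a star poset times a multiset lattice is Macaulay'', and attack that core by compression. First I would decompose a colored square free ring as a tensor product of basic colored square free rings $S_i = K[x_{i,1},\dots,x_{i,n_i}]/(x_{i,1},\dots,x_{i,n_i})^2$ and a Clements-Lindström ring as a tensor product of rings $S_j' = K[y_j]/(y_j^{p_j})$ with $p_j \in (\N\cup\{\infty\})\setminus\{0\}$. Each of these is a quotient by a monomial ideal, hence level linearly independent and carrying a monomial order on its poset of monomials, so by Theorem \ref{The_Cartesian_and_Tensor_Correspondence} the tensor product $S$ of all of them is level linearly independent, carries a monomial order, and satisfies $\mathscr{M}_S \iso \prod_i \mathscr{M}_{S_i} \times \prod_j \mathscr{M}_{S_j'}$. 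By the computation inside the proof of Corollary \ref{MerminMurai} (see Figure \ref{Star_to_Color}) each $\mathscr{M}_{S_i}$ is the dual of the basic star $\Star(n_i)$, and by Example \ref{Grid_Ring_Poset} each $\mathscr{M}_{S_j'}$ is the chain $\mathscr{M}_{[1]}(p_j)$.

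Next I would apply the Macaulay Correspondence Theorem \ref{Macaulay_Correspondence_Theorem} to replace ``$(S,\mathcal{O})$ is Macaulay'' by ``$(\mathscr{M}_S,\mathcal{O})$ is Macaulay'' for the relevant order $\mathcal{O}$. When all $p_j$ are finite, $\mathscr{M}_S$ has finite rank, each $\mathscr{M}_{[1]}(p_j)$ is isomorphic to its own dual (reverse the chain), and duals commute with Cartesian products (Proposition \ref{Properties_of_Duals}), so $\mathscr{M}_S \iso \bigl(\Star(n_1,\dots,n_d) \times \mathscr{M}_{[m]}(p_1,\dots,p_m)\bigr)^\ast$; Bezrukov's Dual Lemma \ref{Bezrukov_Dual_Lemma} then makes ``$\mathscr{M}_S$ is Macaulay'' equivalent to ``$\Star(n_1,\dots,n_d) \times \mathscr{M}_{[m]}(p_1,\dots,p_m)$ is Macaulay''. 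The case where some $p_j = \infty$ reduces to the finite-exponent case by Proposition \ref{Macaulay_Direct_Limit} (or Lemma \ref{Macaulay_Equivalence_Strong}), since the relevant poset is then the direct limit of its finite-exponent truncations, each handled above and each carrying a witnessing order that restricts to the next. Hence the two halves of the theorem are equivalent and everything reduces to showing that a star poset times a multiset lattice is Macaulay.

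For the core claim I would run a compression (shifting) argument in the spirit of Clements-Lindström, with the witnessing Macaulay order taken to be a block order (Definition \ref{Products_of_Partitioned_Tosets}): partition each star factor along its legs, as in the block order used after Corollary \ref{MerminMurai}, take the trivial partition on each chain factor, order the starts by a lexicographic chaser order (Definition \ref{The_border_Hyperrectangle_Chaser_Order}), and order inside blocks lexicographically. Fixing a level $\Lvl_i$ of the product and a set $A \subseteq \Lvl_i$, I would introduce two families of compressions: \emph{multiset compressions}, which over each fixed tuple of coordinates in the star factors replace the fibre of $A$ by the lexicographic initial segment of the same size inside that multiset-lattice fibre, and \emph{star compressions}, which do the analogous thing over the multiset coordinates. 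Each such compression is arranged so as not to increase the total shadow in the product --- this is where both the nestedness and the continuity halves of Clements-Lindström \ref{Clements_Lindstrom_Theorem} and of the Star Macaulay Theorem \ref{starTheorem} enter --- and applying them alternately drives $A$ towards a configuration on which the nestedness and continuity conditions of Proposition \ref{Macaulay_Basic_Equivalence} can be checked directly.

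The step I expect to be the real obstacle is making the two compression families coexist: a multiset compression can destroy structure that a star compression has just imposed, and conversely, so the alternating process does not obviously terminate. The standard remedy is to attach to $A$ a weight monovariant --- a sum over the elements of $A$ of a weight strictly respecting the block order --- that strictly decreases under every nontrivial compression of either family while being bounded below, forcing the process to halt at a set that is simultaneously compressed in all directions; the technical work is then to exhibit such a weight and to prove that a fully compressed set of the prescribed size must be an initial segment. If one prefers not to redo this piece of extremal combinatorics, an alternative is to quote Chong's ring-theoretic proof \cite{ChongKaiFongErnest2015Hfoc} that these tensor products are Macaulay-Lex and feed it through the dictionary above; in either case the new content here is the translation, not a new isoperimetric inequality.
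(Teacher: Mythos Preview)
The paper does not prove this theorem at all: it is stated as a result of Chong with a citation to \cite{ChongKaiFongErnest2015Hfoc}, and no argument is given. So there is no ``paper's own proof'' to compare against beyond the implicit one-line proof ``see \cite{ChongKaiFongErnest2015Hfoc}''.

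Your reduction in the first two paragraphs is correct and is precisely in the spirit of the paper's dictionary: the decomposition of each side into basic factors, the use of Theorem \ref{The_Cartesian_and_Tensor_Correspondence} to identify $\mathscr{M}_S$ with the product of the factor posets, the self-duality of finite chains together with Proposition \ref{Properties_of_Duals} and Bezrukov's Dual Lemma \ref{Bezrukov_Dual_Lemma} to pass from $\mathscr{M}_S$ to $\Star(n_1,\dots,n_d)\times\mathscr{M}_{[m]}(p_1,\dots,p_m)$, and the direct-limit trick (Proposition \ref{Macaulay_Direct_Limit}) for infinite exponents are all valid and are exactly how the paper would have you move between the ring and poset formulations. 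This part you may regard as complete.

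For the combinatorial core, your compression outline is the right genre of argument, but as you yourself flag, the interaction of the two compression families and the termination/monovariant step is where the actual work lives, and what you have written is a plan rather than a proof. Since the paper is content to cite Chong for this step, your closing alternative --- invoke \cite{ChongKaiFongErnest2015Hfoc} directly and use the dictionary only for the equivalence of the two formulations --- is not a cop-out but is literally what the paper does. If you want a self-contained proof you will need to carry out the weight-monovariant argument in full; otherwise your proposal, with Chong cited for the core, is already adequate and slightly more informative than the paper's bare citation.
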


\begin{thm}[Chong \cite{ChongKaiFongErnest2015Hfoc} 2015]\label{Chong_Macaulay_2}
	Suppose that $S=K[x_1,\dots, x_d]/(x_1,\dots, x_d)^{a+1}$ for some integer $1\leq a \leq \infty$,
	and consider integers $\ell_{d'}\geq \ell_{d'-1}\geq \cdots \geq \ell_1 \geq a+1$ and positive $d'\in \N$.
	Then the tensor product of $S$ with a Clements--Lindström ring whose poset of monomials is isomorphic to $\mathscr{M}_{[d']}(\ell_1,\dots, \ell_{d'})$, 
	is Macaulay.
\end{thm}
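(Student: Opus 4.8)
The statement is the first of Chong's theorems on colored quotient rings \cite{ChongKaiFongErnest2015Hfoc}, and the plan is to recover it inside the framework of Section~\ref{Translating_Between_Posets_and_Rings}, so that it becomes at once a statement about rings and about posets. First I would note that both quotient ideals are monomial: $(x_1,\dots,x_d)^{a+1}$ is generated by the monomials of degree $a+1$, and a Clements--Lindström ring is a quotient $K[y_1,\dots,y_{d'}]/(y_1^{\ell_1},\dots,y_{d'}^{\ell_{d'}})$. Deleting from $\mathscr{M}_R$ the upset of each degree-$(a+1)$ monomial leaves exactly the monomials of degree $\le a$, so $\mathscr{M}_S$ is the ranked subposet $\bigl(\mathscr{M}_{[d]}(\infty,\dots,\infty)\bigr)_{\le a}$ (Definition~\ref{Ranked_Subposet}); since no monomial of degree $\le a$ carries an exponent as large as $a+1$, we may equally write $\mathscr{M}_S\iso\bigl(\mathscr{M}_{[d]}(a+1,\dots,a+1)\bigr)_{\le a}$. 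By Example~\ref{Grid_Ring_Poset} the Clements--Lindström factor has poset of monomials $\mathscr{M}_{[d']}(\ell_1,\dots,\ell_{d'})$.

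Next I would invoke Theorem~\ref{The_Cartesian_and_Tensor_Correspondence}: the tensor product ring has poset of monomials $\bigl(\mathscr{M}_{[d]}(\infty,\dots,\infty)\bigr)_{\le a}\times\mathscr{M}_{[d']}(\ell_1,\dots,\ell_{d'})$, it is level linearly independent, and it admits a monomial order, all three facts being automatic because the combined quotient ideal is monomial. Corollary~\ref{Macaulay_Correspondence_Theorem_for_Monomial_Quotients} then says that the ring is Macaulay with a total order $\mathcal{O}$ if and only if this Cartesian product of posets is Macaulay with $\mathcal{O}$. So everything reduces to exhibiting one total order on the product $\bigl(\mathscr{M}_{[d]}(\infty,\dots,\infty)\bigr)_{\le a}\times\mathscr{M}_{[d']}(\ell_1,\dots,\ell_{d'})$ for which it is Macaulay; the natural candidate is a block order in the spirit of the Mermin--Murai order of Section~\ref{stars}, but for the argument below the precise order is secondary.

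For the combinatorial core I would argue in two moves. The easy one: $\mathscr{M}_S\iso\bigl(\mathscr{M}_{[d]}(a+1,\dots,a+1)\bigr)_{\le a}$ is a ranked subposet of $\mathscr{M}_{[d]}(a+1,\dots,a+1)$, which is Macaulay by the Clements--Lindström Theorem~\ref{Clements_Lindstrom_Theorem} (all its lengths coincide), so by Lemma~\ref{Macaulay_Equivalence_Strong} the simplex factor is already Macaulay on its own. The substantial one: one must show that taking the Cartesian product of such a factor with a finite multiset lattice all of whose lengths are $\ge a+1$ preserves being Macaulay. This is exactly Chong's theorem, and I would either transport his Hilbert-function argument across the Macaulay Correspondence Theorem~\ref{Macaulay_Correspondence_Theorem} verbatim, or re-prove it poset-theoretically by a compression argument that builds the product one coordinate at a time, $\mathscr{M}_S\times\mathscr{M}_{[1]}(\ell_1)\times\cdots\times\mathscr{M}_{[1]}(\ell_{d'})$, where at each stage the incoming length $\ell_i\ge a+1$ is large enough that the relevant one-variable addition step carries through.

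The main obstacle is precisely this second move. When $d\ge 2$ the product $\bigl(\mathscr{M}_{[d]}(\infty,\dots,\infty)\bigr)_{\le a}\times\mathscr{M}_{[d']}(\ell_1,\dots,\ell_{d'})$ is not a multiset lattice --- its top level is an antichain of size $\binom{a+d-1}{d-1}>1$ --- so the Clements--Lindström Theorem does not apply to it directly; it embeds into the genuine multiset lattice $\mathscr{M}_{[d+d']}(a+1,\dots,a+1,\ell_1,\dots,\ell_{d'})$, which is Macaulay since $a+1\le\ell_1\le\cdots\le\ell_{d'}$, only as a downset and not as a ranked subposet, and downsets of Macaulay posets need not be Macaulay. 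Forcing the compression to respect the discarded ``degree $>a$'' part of the lattice is the delicate point, and the hypothesis $\ell_i\ge a+1$ is exactly the inequality that keeps the required shadow estimates intact under that restriction.
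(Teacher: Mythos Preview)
The paper does not give its own proof of this theorem; it is stated with attribution to Chong \cite{ChongKaiFongErnest2015Hfoc} and no argument is supplied beyond the remark that ``after translation, Chong's results have a very natural description in our framework.'' Your proposal is therefore not competing against any proof in the paper --- it \emph{is} the translation the paper alludes to.

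That said, be clear about what your proposal actually accomplishes. You correctly identify the posets of monomials of the two factors, correctly invoke Theorem~\ref{The_Cartesian_and_Tensor_Correspondence} and Corollary~\ref{Macaulay_Correspondence_Theorem_for_Monomial_Quotients} to reduce the ring statement to the poset statement, and correctly diagnose the obstacle: the product $\bigl(\mathscr{M}_{[d]}(\infty,\dots,\infty)\bigr)_{\le a}\times\mathscr{M}_{[d']}(\ell_1,\dots,\ell_{d'})$ is not itself a multiset lattice and sits in $\mathscr{M}_{[d+d']}(a+1,\dots,a+1,\ell_1,\dots,\ell_{d'})$ only as a downset. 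But at the point where an argument is needed you write ``This is exactly Chong's theorem, and I would either transport his Hilbert-function argument \dots\ verbatim, or re-prove it poset-theoretically by a compression argument.'' Neither option is carried out, so what you have is a reformulation, not a proof. The compression sketch you offer --- adjoining one chain $\mathscr{M}_{[1]}(\ell_i)$ at a time --- is plausible in spirit but is not justified by anything in the paper: Theorem~\ref{Mermin_Peeva_and_Shakin} handles only the case $\ell_i=\infty$, and Conjecture~\ref{Bezrukov_Leck_Chain_Conjecture} (finite $\ell_i$) is listed as open. So if you intend an independent proof, the inductive step still has to be supplied; if you intend only to restate Chong's result in the paper's language, then your write-up is already adequate and matches what the paper does.
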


Chong's theorem also give results to the problem posed by Bezrukov and Leck.
It should be mentioned that Chong's results in \cite{ChongKaiFongErnest2015Hfoc} are more general than what we have presented here.
A generalization of colored square-free rings is defined, 
and Chong shows that among these generalized colored square-free rings, the only Macaulay rings with a certain domination order, 
are the ones in Theorem \ref{Chong_Macaulay_1} and Theorem \ref{Chong_Macaulay_2}.
 \subsection{Posets of Monomials Whose Hasse Graphs are Trees}\label{Posets_of_Monomials_that_are_Trees}

There is a general theme to all the Macaulay rings presented in the previous sections.
They all involve taking the tensor product of a Macaulay ring with another Macaulay ring whose poset of monomials is a tree.
In fact, something much stronger is true.
Most of the rings can be decomposed into a tensor product of Macaulay rings whose posets of monomials are all trees.
The exceptions to this come from Section \ref{Products_With_Multiset_Lattices},
where one of the posets is allowed to not be a tree,
but again we are still taking the product of a general poset with a tree.
In this section, for a large class of rings whose posets are trees, we settle the problem of when the tensor product is Macaulay.

\begin{dfn}[Tree Rings]\label{Tree_Rings_Definition}
	Suppose that $H\subseteq K[x_1,\dots, x_d]$ is a homogeneous ideal.
	We say that $S=K[x_1,\dots, x_d]/H$ is a \textit{tree ring} if the Hasse graph of $\mathscr{M}_S$ is a tree.
\end{dfn}

The following result shows that if $S$ is a tree ring then the Hasse graph of $\mathscr{M}_S$ is very close to the dual of a spider poset.

\begin{lem}\label{Tree_Rings_Concrete}
	Suppose that $S=K[x_1,\dots, x_d]/H$ is a tree.
	There exists a set $A=\{i_1,\dots, i_n\}\subseteq [d]$ such that:
	\begin{enumerate}
		\item For all $i\in A$ we have $x_i+H\neq 0$.
		\item For all $i\in [d]\setminus A$ we have $x_i+H=0$.
		\item For all $i,j\in A$ with $i\neq j$ we have $x_i+H \neq x_j+H$.
		\item For all $i,j\in A$ with $i\neq j$ we have that $x_ix_j+H=0$.
		\item For all $m\in \mathscr{M}_S$ there exists $i\in A$ and $p\in \N$ such that $m=x_i^p+H$.
		\item If $i,j\in A$ such that $i\neq j$ and we take positive $p,q\in \N$ such that $x_i^p +H,x_j^q +H \neq 0$,
		then we have $x_i^p +H \neq x_j^q+H$.
	\end{enumerate}
	In particular, $\mathscr{M}_S$ is isomorphic to the poset of monomials of
	\begin{align*}
	\frac{K[x_{i_1},\dots, x_{i_n}]}{(x_1^{e_1},\dots, x_{i_n}^{e_n}) + (x_ix_j \bigm | i<j)},
	\end{align*}
	for some $e_1,\dots, e_n\geq 2$.
	Note that this tells us that $\mathscr{M}_S$ is isomorphic to a poset that is representable by a ring that is a quotient by a monomial ideal.
	Therefore, Lemma \ref{Monomial_Quotients_Produce_Subposets} guarantees that there is always a monomial order on $\mathscr{M}_S$.
\end{lem}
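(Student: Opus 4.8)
The plan is to read off the structure of $\mathscr{M}_S$ from acyclicity of its Hasse graph. Recall from Definition \ref{Poset_of_Monomials_Definition} that $\mathscr{M}_S$ is ranked by degree, that $1+H$ is its unique minimum, and that the first level $\Lvl_{1,\mathscr{M}_S}$ consists of the pairwise distinct nonzero monomials among $x_1+H,\dots,x_d+H$; by Lemma \ref{Upper_Shadow_in_The_Poset_of_Monomials} these are exactly the neighbours of $1+H$ in $\Hasse(\mathscr{M}_S)$. First I would establish the key fact that \emph{distinct elements of $\Lvl_{1,\mathscr{M}_S}$ have no common upper bound in $\mathscr{M}_S$}. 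Suppose $a\neq b$ in $\Lvl_{1,\mathscr{M}_S}$ had one, and pick $u$ minimal among common upper bounds; then $u>a$ and $u>b$ (as $a,b$ are incomparable, being distinct elements of the same rank), so there are saturated chains $a=c_0\lessdot c_1\lessdot\dots\lessdot c_r=u$ and $b=d_0\lessdot d_1\lessdot\dots\lessdot d_s=u$ with $r,s\ge 1$. The closed walk $1+H,\,c_0,\dots,c_r=d_s,\,d_{s-1},\dots,d_0,\,1+H$ in $\Hasse(\mathscr{M}_S)$ is in fact a cycle: $1+H$ differs from every $c_i$ and $d_j$ for rank reasons, $c_0=a\neq b=d_0$, and any other coincidence $c_i=d_j$ would be a common upper bound of $a$ and $b$ lying weakly below $u$, hence equal to $u$ by minimality. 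This contradicts the tree hypothesis. Consequently, whenever $x_a+H\neq x_b+H$ are nonzero we get $x_ax_b+H=0$, and no nonzero monomial can be divisible by two distinct elements of $\Lvl_{1,\mathscr{M}_S}$.

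Next I would show by induction on degree that every monomial is a power of a single variable. Let $m\in\mathscr{M}_S$, $m\neq 1+H$. Since $\mathscr{M}_S$ is ranked with minimum $1+H$, some $x_a+H\in\Lvl_{1,\mathscr{M}_S}$ lies below $m$, and it is the only one (two would make $m$ a common upper bound, contradicting the previous paragraph). Write $m=(x_a+H)m'$ with $m'\in\mathscr{M}_S$ of degree $\deg m-1$. If $m'\neq 1+H$, its unique degree-one predecessor must again be $x_a+H$, since otherwise $m$ would be a multiple of $(x_a+H)(x_c+H)=0$; the inductive hypothesis then gives $m'=x_a^{\,p-1}+H$, so $m=x_a^{\,p}+H$. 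Now let $A=\{i_1,\dots,i_n\}\subseteq[d]$ pick out one index for each element of $\Lvl_{1,\mathscr{M}_S}$; the statements (1)--(6) follow directly from this description --- notably (4) is the vanishing just established, (5) is the single-variable reduction, and (6) holds because a nonzero $x_{i_j}^{\,p}+H$ lies above $x_{i_j}+H$ and above no other element of $\Lvl_{1,\mathscr{M}_S}$, so powers attached to different legs are distinct.

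Finally, for the ``in particular'' clause I would set $e_j=\min\{p\ge 1:x_{i_j}^{\,p}+H=0\}\in\{2,3,\dots\}\cup\{\infty\}$ (these are $\ge 2$ by (1)) and put $T=K[x_{i_1},\dots,x_{i_n}]/(\Pow(e_1,\dots,e_n)+\DVP(2))$. Running the same elementary analysis for the monomial ideal defining $T$ --- using Lemma \ref{Monomial_Partial_Order_Representatives} --- shows $\mathscr{M}_T$ is the ``claw'' poset: a minimum $1$ with $n$ otherwise disjoint chains $x_{i_j}<x_{i_j}^2<\dots<x_{i_j}^{e_j-1}$ hanging off it. The bijection $1+H\mapsto 1$, $x_{i_j}^{\,p}+H\mapsto x_{i_j}^{\,p}$ matches $\mathscr{M}_S$ with $\mathscr{M}_T$, and Lemma \ref{Monomial_Partial_Order_Representatives} applied to both rings shows it preserves and reflects the partial order, giving $\mathscr{M}_S\iso\mathscr{M}_T$; a monomial order on $\mathscr{M}_S$ then comes from Proposition \ref{Monomial_Quotients_Produce_Subposets}. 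I expect the only real obstacle to be the cycle argument of the first paragraph --- in particular checking that the two ascending chains meet only at $u$, so that the closed walk is a genuine cycle; after that everything is an induction on degree together with routine bookkeeping.
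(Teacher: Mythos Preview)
Your proof is correct and follows the same underlying idea as the paper's: both arguments hinge on exhibiting cycles in $\Hasse(\mathscr{M}_S)$ to rule out the offending relations. The organization is slightly different. The paper establishes (4) directly by the short $4$-cycle $1+H,\,x_i+H,\,x_ix_j+H,\,x_j+H$ and handles (6) by observing two distinct paths from $1+H$ to the hypothetical coincidence $x_i^{\,p}+H=x_j^{\,q}+H$; it then asserts (5) as an immediate consequence of (4) without spelling out the induction. You instead isolate a single stronger fact --- distinct level-one elements have no common upper bound --- using a minimal common upper bound $u$ and the two saturated chains up to it, and then read off (4), (5), and (6) uniformly from that fact. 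Your version costs a little more to set up (the minimality argument to ensure the two chains intersect only at $u$) but pays for it by making (6) and the single-variable reduction (5) fall out with no further cycle-chasing; the paper's version is shorter on (4) but terser on (5). Either way the content is the same.
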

\begin{proof}
	First, we define a sequence of sets $A_1,A_2,A_3,\dots \subseteq [d]$ by induction.
	Let $A_1=\{i_1\}$, where $i_1\in [d]$ is the smallest integer such that $x_{i_{1}}+H \neq 0$.
	Suppose that $k\geq 2$ and that $A_{k'}$ is defined for all $1\leq k'<k$.
	We define $A_k$ now.
	Let $B_k$ be the set of all integers $i\in [d]$ such that:
	\begin{enumerate}
		\item $x_{i} + H \neq 0$, and
		\item $x_{i} + H \neq x_{j}+H$ for all $j\in A_1\cup \cdots \cup A_{k-1}$.
	\end{enumerate}
	If $B_k \neq \emptyset$ then we define $A_{k}=A_{k-1}\cup \{i_k\}$ where $i_k$ is the smallest integer in $B_k$,
	and if $B_k=\emptyset$ we define $A_k = A_{k-1}$.
	
	Thus, for all $k\in \N\setminus \{0\}$ we have defined $A_k$ and we have the sequence $(A_{k})_{k=1}^\infty$.
	This sequence is eventually constant because $[d]$ is finite.
	We define $A$ to be the infinitely repeating term of the sequence $(A_{k})_{k=1}^\infty$.
	Hence, we can write $A=\{i_1,\dots, i_n\}\subseteq [d]$.
	The first three claims follow from the inductive definition of $A$.
	
	The fourth claim is now forced because $S$ is a tree ring,
	otherwise we have a contradiction with the cycle $\{1+H,x_{i_1}+H, x_{i_1}x_{i_2}+H,x_{i_2}+H\}$.
	The fifth claim now follows immediately from the fourth claim.
	The sixth claim is also forced because $S$ is a tree ring,
	otherwise we have two different paths between the vertices $1+H$ and $x_i^p+H=x_j^q+H$.
\end{proof}

Bezrukov in \cite{BezrukovSpiders} studied powers of Macaulay posets whose Hasse graphs are trees.
He found that the Macaulay property among upper semilattices forces the posets to be spider posets.

\begin{dfn}[Upper Semilattice]\label{Upper_Semilattice_Definition}
	Suppose that we have a poset $\mathscr{P}$.
	For $a,b,s\in \mathscr{P}$ we say that $s$ is a \textit{supremum} of $a$ and $b$ if $a,b\leq s$,
	and if for all $c\in \mathscr{P}$ with $a\leq c$ and $b\leq c$ we have $s\leq c$.
	We say that $\mathscr{P}$ is an \textit{upper semilattice} if for any $a,b\in \mathscr{P}$ a supremum exists and is unique.
\end{dfn}

\begin{thm}[Bezrukov \cite{BezrukovSpiders} 1998]\label{Bezrukov_Upper_Semilattice}
	Suppose that $\mathscr{P}$ is a finite ranked upper semilattice and $n\geq r(\mathscr{P}) + 3$.
	If $\mathscr{P}^{\times, n}$ is Macaulay then $\mathscr{P}$ is isomorphic to a spider poset. 
\end{thm}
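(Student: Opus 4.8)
The plan is to argue by contradiction: assume $\mathscr{P}$ is a finite ranked upper semilattice, $n\geq r(\mathscr{P})+3$, and $\mathscr{P}^{\times,n}$ is Macaulay, yet $\mathscr{P}$ is not a spider, and then exhibit a level of $\mathscr{P}^{\times,n}$ at which no initial segment has the smallest shadow, contradicting the nestedness clause of Proposition \ref{Macaulay_Basic_Equivalence}.

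First I would extract the global shape of $\mathscr{P}$. Since $\mathscr{P}$ is finite and any two elements have a supremum, $\mathscr{P}$ has a unique maximum $\hat1$; as $\hat1$ lies above everything, $\mathscr{P}$ is connected and $r(\hat1)=r(\mathscr{P})$. I would then reduce ``spider'' to a local condition: a finite ranked upper semilattice is a spider precisely when $\mathscr{P}\setminus\{\hat1\}$ is a disjoint union of chains. For the nontrivial direction, if $\mathscr{P}\setminus\{\hat1\}$ splits into chains $C_1,\dots,C_m$, then the top $t_j$ of each $C_j$ must be covered by $\hat1$ --- any element strictly between $t_j$ and $\hat1$ would itself lie in $C_j$ and exceed $t_j$ --- so rankedness forces $r(t_j)=r(\mathscr{P})-1$ and hence $|C_j|=r(\mathscr{P})$ for every $j$, giving $\mathscr{P}\iso\Spider(m-1,r(\mathscr{P}))$. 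Consequently ``not a spider'' means some element $v\neq\hat1$ has two lower covers inside $\mathscr{P}\setminus\{\hat1\}$, or two upper covers inside $\mathscr{P}\setminus\{\hat1\}$ (the latter forcing a diamond-shaped interval above $v$). Applying Bezrukov's Dual Lemma \ref{Bezrukov_Dual_Lemma} to the truncations, $\mathscr{P}^{\times,n}$ is Macaulay iff $(\mathscr{P}^\ast)^{\times,n}$ is Macaulay on the corresponding ranks, and dualizing turns an upper-cover branch into a lower-cover branch; so it suffices to treat the lower-cover case.

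The core is a Kruskal--Katona-type shadow minimization inside $\mathscr{P}^{\times,n}$. Fix a branch element $v$ of rank $j$ with distinct lower covers $a,b$. Using the $n\geq r(\mathscr{P})+3$ coordinates --- one to house $v$ and its branch, and at least two more kept in reserve so the count is not contaminated by elements near the top rank $r(\mathscr{P})$ --- I would build, at a suitably chosen level $i$ of $\mathscr{P}^{\times,n}$, a family $A$ that ``follows the branch at $v$'' and a second family $A'$ of the same size obtained by straightening that branch, and show $|\dSdw(A')|<|\dSdw(A)|$. Then, by the continuity clause of Proposition \ref{Macaulay_Basic_Equivalence}, the Macaulay order must render initial segments of $\Lvl_i$ as compressed, downset-like sets, and such a set of size $|A|=|A'|$ cannot beat $A'$ in shadow size unless $\mathscr{P}$ has no branch at $v$; hence the initial segment of that size has a strictly larger shadow than $A'$, contradicting nestedness. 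I would organize this by induction on $r(\mathscr{P})$, running the argument inside the appropriate truncation of $\mathscr{P}^{\times,n}$ (legitimate by Lemma \ref{Macaulay_Equivalence_Strong}) and starting the induction from the observation that the only ranked upper semilattices of rank $\leq 1$ are already spiders (a point, or a basic star).

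I expect the last step to be the main obstacle: one must make the shadow-minimizing families of $\mathscr{P}^{\times,n}$ sufficiently explicit to dominate the branch configuration, and verify that $n\geq r(\mathscr{P})+3$ is exactly the amount of room the estimate needs --- with fewer spare coordinates the boundary contributions near rank $r(\mathscr{P})$ can wash out the gap between $|\dSdw(A)|$ and $|\dSdw(A')|$. In essence this is a relative Kruskal--Katona inequality for the base poset $\mathscr{P}$, and establishing its sharp form, rather than the structural reductions above, is where the real work lies.
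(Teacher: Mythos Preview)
The paper does not prove this theorem; it is quoted from Bezrukov \cite{BezrukovSpiders} and used as a black box in the proof of Theorem~\ref{treeRingClassification}. There is thus no in-paper argument to compare against, and your proposal must stand on its own.

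Two genuine gaps remain.

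First, your characterization of spiders is incomplete under the paper's Definition~\ref{Ranked_Posets_Definition}, which only requires \emph{one} minimal element of rank~$0$; other minimal elements may sit at positive rank. Hence ``$\mathscr{P}\setminus\{\hat 1\}$ is a disjoint union of chains'' does not force all legs to have equal length. Concretely, take $\mathscr{P}=\{0,a,b,\hat 1\}$ with $0<a<\hat 1$, $b<\hat 1$, $b$ minimal, and $r(0)=0$, $r(a)=r(b)=1$, $r(\hat 1)=2$. This is a ranked upper semilattice, $\mathscr{P}\setminus\{\hat 1\}$ is the disjoint union of the chains $\{0<a\}$ and $\{b\}$, every $v\neq\hat 1$ has at most one upper and one lower cover, yet $\mathscr{P}$ is not a spider. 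Your dichotomy ``not a spider $\Rightarrow$ some $v\neq\hat 1$ has two lower covers or two upper covers in $\mathscr{P}\setminus\{\hat 1\}$'' therefore misses a case.

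Second, and more importantly, the core step is not carried out. The inequality $|\dSdw(A')|<|\dSdw(A)|$ by itself yields no contradiction: nestedness says the initial segment beats \emph{every} competitor, so what you actually need is a family $A'$ that beats every initial segment of its size, for \emph{every} admissible total order. Your sentence ``such a set of size $|A'|$ cannot beat $A'$ in shadow size unless $\mathscr{P}$ has no branch at $v$'' is precisely the assertion requiring proof, and no mechanism for it is offered. You acknowledge this yourself (``establishing its sharp form \dots\ is where the real work lies''). As written this is a plan, not a proof. A smaller point: your duality reduction sends $\mathscr{P}$ to $\mathscr{P}^\ast$, which is a lower semilattice, not an upper one; if the eventual shadow estimate uses the join, the two branch cases are not symmetric and must be handled separately.
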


Combining everything we have developed so far, we get the following classification result.

\begin{thm}[Tree Ring Classification]\label{treeRingClassification}
	Suppose that $S$ is a level linearly independent tree ring such that $\mathscr{M}_S$ is finite and let $n\geq r(\mathscr{M}_S) + 3$.
	Then $S^{\otimes, n}$ is Macaulay iff $\mathscr{M}_S$ is isomorphic to the poset of monomials of a basic Bezrukov--Elsässer ring. 
\end{thm}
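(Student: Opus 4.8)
The plan is to push everything through the Macaulay Correspondence Theorem \ref{Macaulay_Correspondence_Theorem} and then feed the resulting poset statement to the two Bezrukov results on spiders. Note first that the hypotheses of Theorem \ref{Macaulay_Correspondence_Theorem} hold on both sides: $S$ is level linearly independent by assumption and $\mathscr{M}_S$ carries a monomial order by Lemma \ref{Tree_Rings_Concrete}, and then Theorem \ref{The_Cartesian_and_Tensor_Correspondence} yields that $S^{\otimes,n}$ is level linearly independent, that it carries a monomial order, and that $\mathscr{M}_{S^{\otimes,n}} \iso \mathscr{M}_S^{\times,n}$. Hence, for any total order, $S^{\otimes,n}$ is Macaulay if and only if $\mathscr{M}_S^{\times,n}$ is Macaulay as a poset. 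Since (by the proof of Theorem \ref{Bezrukov_Elsässer_Macaulay_Theorem} together with a direct check that every $\Spider(k,\ell)^\ast$ is realized) the posets of monomials of basic Bezrukov--Elsässer rings are precisely the duals of spider posets, the theorem reduces to the equivalence: $\mathscr{M}_S^{\times,n}$ is Macaulay $\iff$ $\mathscr{M}_S \iso \Spider(k,\ell)^\ast$ for some $k,\ell \in \N$.

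For the backward implication, assume $\mathscr{M}_S \iso \Spider(k,\ell)^\ast$. By Proposition \ref{Properties_of_Duals}, $\mathscr{M}_S^{\times,n} \iso (\Spider(k,\ell)^\ast)^{\times,n} = (\Spider(k,\ell)^{\times,n})^\ast$, the dual of a Bezrukov--Elsässer poset, which is Macaulay by Corollary \ref{Dual_Spider_Macaulay_Theorem}; transporting a Macaulay order along $\mathscr{M}_{S^{\otimes,n}} \iso \mathscr{M}_S^{\times,n}$ and applying Theorem \ref{Macaulay_Correspondence_Theorem} to $S^{\otimes,n}$ gives that $S^{\otimes,n}$ is Macaulay. (Equivalently, $\mathscr{M}_{S^{\otimes,n}}$ is isomorphic to the poset of monomials of the Bezrukov--Elsässer ring $T^{\otimes,n}$, where $T$ is a basic Bezrukov--Elsässer ring with $\mathscr{M}_T \iso \mathscr{M}_S$, and $T^{\otimes,n}$ is Macaulay by Theorem \ref{Bezrukov_Elsässer_Macaulay_Theorem}.)

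For the forward implication, assume $S^{\otimes,n}$ is Macaulay. By Lemma \ref{Macaulay_Ring_to_Poset} and $\mathscr{M}_{S^{\otimes,n}} \iso \mathscr{M}_S^{\times,n}$, the poset $\mathscr{M}_S^{\times,n}$ is Macaulay with some total order. As $\mathscr{M}_S$ is finite, $\mathscr{M}_S^{\times,n}$ is finite, hence of finite rank, so Bezrukov's Dual Lemma \ref{Bezrukov_Dual_Lemma} and Proposition \ref{Properties_of_Duals} give that $(\mathscr{M}_S^\ast)^{\times,n} = (\mathscr{M}_S^{\times,n})^\ast$ is Macaulay. Now $\mathscr{M}_S$ has the unique minimum $1+H$, and by Lemma \ref{Tree_Rings_Concrete} it consists of that minimum together with finitely many disjoint upward chains (legs), possibly of different lengths; consequently $\mathscr{M}_S^\ast$ is a finite ranked upper semilattice --- it has a unique maximum, its Hasse graph is a tree so no two upward branches meet except at that maximum, and it is ranked with $r(\mathscr{M}_S^\ast) = r(\mathscr{M}_S) \le n-3$. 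Theorem \ref{Bezrukov_Upper_Semilattice} then forces $\mathscr{M}_S^\ast \iso \Spider(k,\ell)$ for some $k,\ell$, i.e.\ $\mathscr{M}_S \iso \Spider(k,\ell)^\ast$, which is the poset of monomials of a basic Bezrukov--Elsässer ring.

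The genuinely hard combinatorial content is imported wholesale from the Bezrukov--Elsässer Theorem \ref{Spider_Macaulay_Theorem} and Bezrukov's upper semilattice theorem \ref{Bezrukov_Upper_Semilattice}; the work here is the bookkeeping, and its only delicate point is the duality shuffle. The posets of monomials in play (duals of spiders) are themselves \emph{not} upper semilattices, so Theorem \ref{Bezrukov_Upper_Semilattice} cannot be applied to $\mathscr{M}_S$ or $\mathscr{M}_S^{\times,n}$ directly; one must first move the Macaulay property to the dual via Bezrukov's Dual Lemma and the identity $(\mathscr{P}^{\times,n})^\ast = (\mathscr{P}^\ast)^{\times,n}$, and then check --- from the explicit description in Lemma \ref{Tree_Rings_Concrete} --- that $\mathscr{M}_S^\ast$ is ranked with the same rank as $\mathscr{M}_S$, so that the given bound $n \ge r(\mathscr{M}_S)+3$ is exactly the one Theorem \ref{Bezrukov_Upper_Semilattice} requires.
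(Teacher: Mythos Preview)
Your proof is correct and follows essentially the same route as the paper's: verify the hypotheses of the Macaulay Correspondence Theorem via Lemma \ref{Tree_Rings_Concrete} and Theorem \ref{The_Cartesian_and_Tensor_Correspondence}, then for the forward direction pass to the dual via Bezrukov's Dual Lemma \ref{Bezrukov_Dual_Lemma} and Proposition \ref{Properties_of_Duals}, observe $\mathscr{M}_S^\ast$ is an upper semilattice, and invoke Theorem \ref{Bezrukov_Upper_Semilattice}; the backward direction is Corollary \ref{Dual_Spider_Macaulay_Theorem}. Your write-up is in fact more explicit than the paper's on the points that matter (finiteness for the Dual Lemma, the rank equality $r(\mathscr{M}_S^\ast)=r(\mathscr{M}_S)$ needed for the bound $n\ge r(\mathscr{M}_S)+3$), but the argument is the same.
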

\begin{proof}
	By Theorem \ref{The_Cartesian_and_Tensor_Correspondence} we have that $S^{\otimes, n}$ is level linearly independent.
	Notice that Lemma \ref{Tree_Rings_Concrete} gives us that there is always a monomial order on $\mathscr{M}_S$,
	whence by Theorem \ref{The_Cartesian_and_Tensor_Correspondence} we always have a monomial order on $\mathscr{M}_{S^{\otimes, n}}$.
	Also, by using Theorem \ref{The_Cartesian_and_Tensor_Correspondence} again we have that $\mathscr{M}_{S^{\otimes, n}} = \mathscr{M}_{S}^{\times, n}$,
	and by Proposition \ref{Properties_of_Duals} we have $(\mathscr{M}_{S}^{\times, n})^\ast = (\mathscr{M}_{S}^\ast)^{\times, n}$.
	Therefore, $\mathscr{M}_{S^{\otimes, n}}^\ast = (\mathscr{M}_{S}^{\times, n})^\ast =(\mathscr{M}_{S}^\ast)^{\times, n}$.
	
	We deal with the forward direction first, so suppose $S^{\otimes, n}$ is Macaulay.
	Thus, by the Macaulay Correspondence Theorem \ref{Macaulay_Correspondence_Theorem} we have that $\mathscr{M}_{S^{\otimes, n}}$ is Macaulay.
	Hence, Bezrukov's Dual Lemma \ref{Bezrukov_Dual_Lemma} gives us that $\mathscr{M}_{S^{\otimes, n}}^\ast$ is Macaulay.
	Furthermore, by Lemma \ref{Tree_Rings_Concrete} we have that $\mathscr{M}_{S}^\ast$ is an upper semilattice.
	So, $\mathscr{M}_{S}^\ast$ is isomorphic to a spider poset by Theorem \ref{Bezrukov_Upper_Semilattice}.
	Therefore, $\mathscr{M}_{S}$ is isomorphic to the poset of monomials of a basic Bezrukov--Elsässer ring.
	
	The backwards direction follows from Corollary \ref{Dual_Spider_Macaulay_Theorem} and the Macaulay Correspondence Theorem \ref{Macaulay_Correspondence_Theorem}.
\end{proof} \subsection{Macaulay Rings that are not Tree Rings}\label{submatrix}

Chong's theorems \ref{Chong_Macaulay_1} and \ref{Chong_Macaulay_2} arose from the study of tensor products that involve quotients of polynomial rings by an ideal generated from some level.
Chong called these structures colored quotient rings as they were a generalization of the colored squarefree rings that Mermin and Murai studied.
This idea has been studied in Macaulay poset theory as well.

\begin{dfn}[Leck Rings]
	A \textit{basic Leck} ring has the form
	\begin{align*}
		\frac{K[x_1,\dots, x_d]}{\Pow(2,\dots, 2) + (x_1x_2\cdots x_d)}.
	\end{align*}
	A \textit{Leck} ring is the tensor product of basic Leck rings together with one Kruskal--Katona Ring.
\end{dfn}

Notice that when $d=2$, the poset of monomials of a basic Leck ring is just the dual of a star.
Thus, the following theorems provide another direction in generalizing the Mermin--Murai Theorem \ref{MerminMurai}.

\begin{thm}[Leck \cite{LeckUwe2001Osai, LeckUwe2002AGoL} 2001,2002]\label{Leck_Posets}
	The poset of monomials of a Leck ring is Macaulay.
\end{thm}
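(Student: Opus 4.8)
The plan is to reduce the statement to a purely combinatorial claim about Cartesian products of truncated Boolean lattices and then to invoke Leck's theorem \cite{LeckUwe2001Osai, LeckUwe2002AGoL}, which is precisely that claim. First I would unravel the definition: a Leck ring has the form $S = L_1 \otimes_K \cdots \otimes_K L_k \otimes_K T$, where each $L_j = K[x_1,\dots,x_{d_j}]/(\Pow(2,\dots,2) + (x_1x_2\cdots x_{d_j}))$ is a basic Leck ring and $T = K[y_1,\dots,y_m]/\Pow(2,\dots,2)$ is a Kruskal--Katona ring (with $m = 0$ allowed, giving $T = K$). Each tensor factor here is the quotient of a polynomial ring by a monomial ideal, so all of its monomials are $K$-linearly independent; in particular every factor is level linearly independent. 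Hence Theorem \ref{The_Cartesian_and_Tensor_Correspondence} applies and yields the poset isomorphism
\[
\mathscr{M}_S \iso \mathscr{M}_{L_1} \times \cdots \times \mathscr{M}_{L_k} \times \mathscr{M}_{T}.
\]

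Next I would identify the factors. By Example \ref{Grid_Ring_Poset}, $\mathscr{M}_{T} \iso \mathscr{M}_{[m]}(2,\dots,2)$ is the $m$-dimensional cube, i.e.\ the Boolean lattice on $[m]$. For a basic Leck ring $L_j$: the ideal is monomial, so by the deletion observation recorded just before Example \ref{Grid_Ring_Poset}, $\mathscr{M}_{L_j}$ is obtained from $\mathscr{M}_{[d_j]}(2,\dots,2)$ (the squarefree monomials in $d_j$ variables, ordered by divisibility) by deleting the upset generated by the monomial $x_1\cdots x_{d_j}$. Within the cube that upset is exactly the single top element $(1,\dots,1)$, so $\mathscr{M}_{L_j}$ is the \emph{truncated Boolean lattice} $\mathscr{M}_{[d_j]}(2,\dots,2) \setminus \{\mathrm{top}\}$ --- the poset of proper faces (together with the empty face) of a $(d_j-1)$-simplex.

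Therefore the theorem is equivalent to the statement that the Cartesian product of finitely many truncated Boolean lattices with one cube is a Macaulay poset, and this is exactly the content of Leck's results in \cite{LeckUwe2001Osai, LeckUwe2002AGoL} on the orthogonal product of simplices (with the cube factor included in the scope). The only things to verify on our side are that every tensor factor is a monomial quotient --- so that Theorem \ref{The_Cartesian_and_Tensor_Correspondence} applies cleanly and no issue about level linear independence or the existence of a monomial order arises --- and that Leck's theorem is stated (or immediately reduces to) the version that allows the extra cube factor.

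The main obstacle is that the hard work is not in this reduction but in Leck's original argument, which is a compression/shifting analysis on these products that I would be citing rather than reproving. I do not expect a self-contained derivation from the Macaulay theorems already assembled in the excerpt: a truncated Boolean lattice of dimension at least $3$ is neither a multiset lattice nor a star nor a spider, so neither Clements--Lindström (Theorem \ref{Clements_Lindstrom_Theorem}), nor the star theorem (Theorem \ref{starTheorem}), nor the spider theorem (Theorem \ref{Spider_Macaulay_Theorem}), nor Mermin--Peeva--Shakin (Theorem \ref{Mermin_Peeva_and_Shakin}, which adjoins a free variable rather than a two-element chain) appears to reach it. Consequently the proof is essentially the translation above plus the citation; any improvement would require re-running Leck's compression machinery, which is outside what the present machinery buys for free.
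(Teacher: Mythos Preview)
Your proposal is correct and matches the paper's approach: the paper states this theorem purely as a citation to Leck's work \cite{LeckUwe2001Osai, LeckUwe2002AGoL} with no further argument, and your write-up simply makes explicit the translation (via Theorem \ref{The_Cartesian_and_Tensor_Correspondence} and the monomial-quotient deletion observation) that identifies the poset of monomials of a Leck ring with the Cartesian product of truncated Boolean lattices and a cube, which is precisely the setting of Leck's combinatorial theorem. There is no independent proof to compare against; your honest acknowledgment that the substance lies in Leck's compression argument, and not in anything derivable from the other Macaulay theorems assembled here, is exactly right.
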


This time we don't need to deal with dual posets.
Everything is setup the right way and we get a corollary from the Macaulay Correspondence Theorem \ref{Macaulay_Correspondence_Theorem}.

\begin{cor}\label{Leck_Ring_Macaulay}
	Every Leck ring is Macaulay.
\end{cor}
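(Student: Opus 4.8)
The plan is to reduce the statement to Theorem \ref{Leck_Posets} by means of the monomial-quotient form of the Macaulay Correspondence, Corollary \ref{Macaulay_Correspondence_Theorem_for_Monomial_Quotients}. The first step is to observe that a Leck ring is, up to isomorphism, a quotient of a polynomial ring by a monomial ideal. Indeed, a basic Leck ring $K[x_1,\dots,x_d]/(\Pow(2,\dots,2)+(x_1x_2\cdots x_d))$ is manifestly such a quotient, and a Kruskal-Katona ring is a Clements-Lindström ring of the form $K[x_1,\dots,x_d]/(x_1^2,\dots,x_d^2)$, again a monomial quotient. A Leck ring is the tensor product of finitely many basic Leck rings with one Kruskal-Katona ring, so by Theorem \ref{The_Cartesian_and_Tensor_Correspondence}(1) (equivalently Lemma \ref{tensorsBasic}(4) applied inductively) it is isomorphic to $K[x_1,\dots,x_N]/H$, where $N$ is the total number of variables and $H$ is the sum of the monomial ideals defining the tensor factors. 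A sum of monomial ideals is a monomial ideal, so $H$ is monomial.

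Next I would bring in Theorem \ref{Leck_Posets}, which asserts precisely that the poset of monomials of a Leck ring is Macaulay; that is, writing $S$ for a Leck ring, there is a total order $\mathcal{O}$ on $\mathscr{M}_S$ for which the $2$-poset $(\mathscr{M}_S,\mathcal{O})$ is Macaulay.

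Finally, since $S = R/H$ with $H$ a monomial ideal, Corollary \ref{Macaulay_Correspondence_Theorem_for_Monomial_Quotients} applies verbatim: $(S,\mathcal{O})$ is Macaulay if and only if $(\mathscr{M}_S,\mathcal{O})$ is Macaulay. The latter holds by the previous step, hence $(S,\mathcal{O})$ is Macaulay, so $S$ is Macaulay, as desired. There is no real obstacle here: the combinatorial heart of the matter is Theorem \ref{Leck_Posets}, and the only work in the present corollary is the routine identification of a tensor product of monomial quotients with a single monomial quotient so that Corollary \ref{Macaulay_Correspondence_Theorem_for_Monomial_Quotients} can be invoked. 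One could alternatively verify the three hypotheses of the full Macaulay Correspondence Theorem \ref{Macaulay_Correspondence_Theorem} directly, using that monomial quotients are level linearly independent and carry monomial orders, together with Theorem \ref{The_Cartesian_and_Tensor_Correspondence}(3),(4); the monomial-quotient corollary simply packages this.
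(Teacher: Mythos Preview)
Your proof is correct and follows the same route as the paper: invoke Leck's Theorem \ref{Leck_Posets} for the poset side and then apply the Macaulay Correspondence Theorem (in the monomial-quotient form, Corollary \ref{Macaulay_Correspondence_Theorem_for_Monomial_Quotients}) to pass to the ring side. The paper's proof is a single sentence pointing to Theorem \ref{Macaulay_Correspondence_Theorem}; you have simply spelled out the easy verification that a Leck ring is a monomial quotient so that the correspondence applies.
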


Leck's results are very interesting for two reasons.
The first one is that the product does not involve any trees in many cases.
The second one is that the Macaulay order is not a domination order and not a block order.  \section{Quotients by Binomial Ideals}\label{Quotients_by_Binomial_Ideals}
\numberwithin{thm}{subsection}

The research around Macaulay rings has been very focused on rings that are quotients by a monomial ideal.
Toric ideals have also been considered \cite{GasharovVesselin2008Hfot, GasharovVesselin2011Hsam,MURAISATOSHI2011Frol}.
As far as the authors are aware, 
there are no published results that show that quotients by ideals that are not monomial and not toric can be Macaulay.
By using all the results in Section \ref{Translating_Between_Posets_and_Rings} and some solutions to discrete extremal problems,
we give two families of Macaulay rings that fall outside these previously studied categories.

\subsection{The Discrete Even Torus}\label{Tori_Posets}

\begin{dfn}(Karakhanyan--Riordan Rings)
	A \textit{basic Karakhanyan--Riordan} ring has the from
	\begin{align*}
		\frac{K[x_1, x_2]}{H}= 
		\frac{K[x_1, x_2]}{\Pow(p,p) + \DVP(2) + (x_1^{p-1}-x_2^{p-1})} = \frac{K[x_1, x_2]}{(x_1^p,x_2^p,x_1x_2, x_1^{p-1}-x_2^{p-1})}.
	\end{align*}
	We will denote the poset of monomials of a basic Karakhanyan--Riordan ring by $T(p)$.
	From here, a \textit{Karakhanyan--Riordan} ring is a tensor product of basic Karakhanyan--Riordan rings.
	The poset of monomials of a Karakhanyan--Riordan ring will be denoted by $T(k_1,\dots, k_n)$.
\end{dfn}

Karakhanyan--Riordan rings are similar to some Bezrukov--Elsässer rings.
In particular, the Hasse graph of a Karakhanyan--Riordan ring can be obtained from the Hasse graph of Bezrukov-Elsässer ring by gluing the top elements.
Some examples of Hasse graphs of basic Karakhanyan--Riordan rings can be seen in Figure \ref{Hasse_Graphs_of_some_basic_Karakhanyan_Riordan_Rings}.

\begin{figure}
	\centering
	\begin{subfigure}[t]{0.1\textwidth}
		\includegraphics[width=\textwidth]{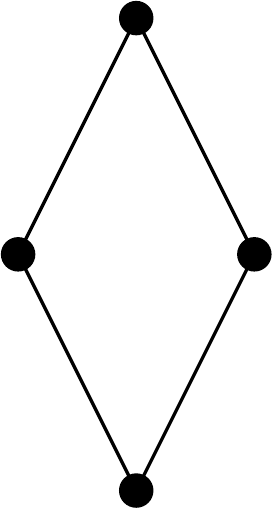}
	\end{subfigure}
	\hfill
	\begin{subfigure}[t]{0.1\textwidth}
		\includegraphics[width=\textwidth]{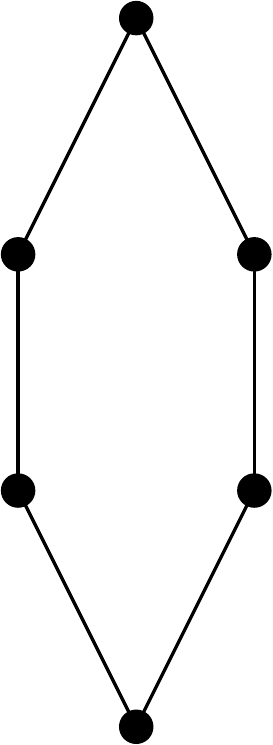}
	\end{subfigure}
	\hfill
	\begin{subfigure}[t]{0.1\textwidth}
		\includegraphics[width=\textwidth]{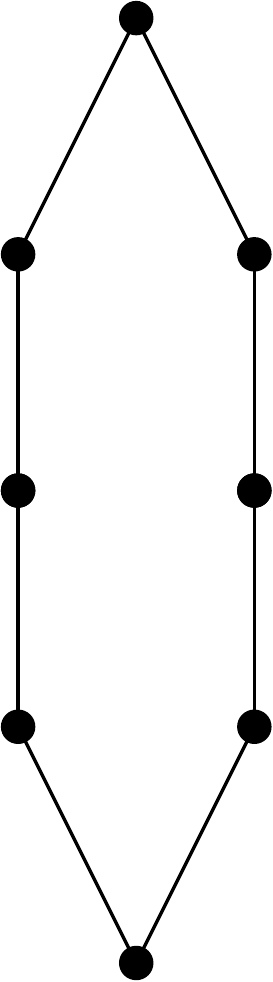}
	\end{subfigure}
	\hfill
	\begin{subfigure}[t]{0.1\textwidth}
		\includegraphics[width=\textwidth]{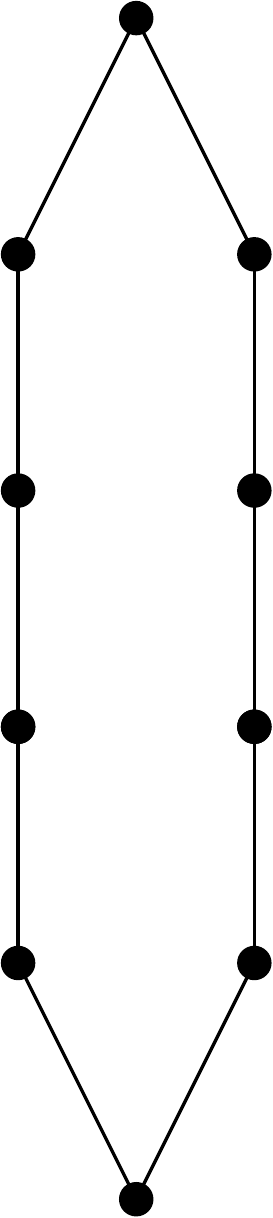}
	\end{subfigure}
	\hfill
	\caption{Hasse graphs of some basic Karakhanyan--Riordan rings.}\label{Hasse_Graphs_of_some_basic_Karakhanyan_Riordan_Rings}
\end{figure}

Hasse graphs of basic Karakhanyan--Riordan rings are even cycles.
These graphs have received a lot of attention in the area of vertex isoperimetric problems.
In many cases, the solution to a vertex isoperimetric problem implies the solution to a Macaulay problem.
Informally, 
the vertex isoperimetric problem is concerned with finding sets of vertices in simple graphs with minimum boundary.
We refer the reader to \cite{HarperBook} for an introduction to vertex isoperimetric problems.

The vertex isoperimetric problem on $T(k_1,\dots, k_n)$ was first posed by Wang and Wang in \cite{WangDa-Lun1977ECoa}.
The problem was completely settled by Karakhanyan in \cite{KarakhanyanTorus}.
Later, the case $T(k,\dots, k)$ was independently settled by Bollobás and Leader in \cite{BelaImreTorus}.
Building on the work of Bollobás and Leader,
Riordan in \cite{RiordanOliver1998AOot} independently discovered the general case that Karakhanyan figured out.
It known in the literature that many vertex isoperimetric problems imply the Macaulay poset property,
and the following result was observed by Bezrukov and Leck in \cite{Bezrukov2004}.

\begin{thm}[Karakhanyan--Riordan Macaulay Theorem]\label{Karakhanyan-Riordan_Theorem}
	$T(k_1,\dots, k_n)$ is Macaulay.
\end{thm}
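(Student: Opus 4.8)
The plan is to read off the Macaulay property of $T(k_1,\dots,k_n)$ from the solution of the vertex‑isoperimetric problem on the discrete even torus, which is the route indicated by the Bezrukov--Leck observation in \cite{Bezrukov2004}.

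First I would set up the dictionary. Write $S$ for the Karakhanyan--Riordan ring in question and $\mathscr{P}:=\mathscr{M}_S$. Since $S$ is a tensor product of basic Karakhanyan--Riordan rings, each of which is level linearly independent (in degree $j<p$ the surviving monomials are $x_1^j,x_2^j$, and in the top degree a single monomial, once the two legs are glued), Theorem \ref{The_Cartesian_and_Tensor_Correspondence} gives $\mathscr{P}\iso T(k_1)\times\cdots\times T(k_n)$. Each factor $T(k_i)$ has Hasse graph the even cycle on $2k_i$ vertices, and in that cycle the graph‑distance of a vertex from the minimum monomial $1$ is exactly its rank. Hence $\Hasse(\mathscr{P})$ is the Cartesian product of these cycles --- the $(2k_1,\dots,2k_n)$‑torus --- and $\mathscr{P}$ is a finite ranked poset (rank $=$ total degree) whose levels are finite and which satisfies $\uSdw(\Lvl_j)=\Lvl_{j+1}$ for all $j<r(\mathscr{P})$, because every non‑minimal monomial has a lower cover coming from one of the cyclic factors. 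By Proposition \ref{Macaulay_Basic_Equivalence} together with Bezrukov's Dual Lemma \ref{Bezrukov_Dual_Lemma} (equivalently, by Lemma \ref{Macaulay_Equivalence_Strong}) it is then enough to exhibit a total order on the levels of $\mathscr{P}$ for which the \emph{upper} shadow is nested and continuous.

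Next I would bring in the isoperimetric input. By Karakhanyan's theorem --- and, when $k_1=\dots=k_n$, independently by Bollob\'as--Leader and by Riordan --- the vertex‑isoperimetric problem on this torus is solved by the initial segments of an explicit ``ball‑growing'' total order $\mathcal{O}'$ on the vertex set. Because distance from the origin coincides with rank in every cyclic factor, the isoperimetric balls are exactly the rank‑truncations $\mathscr{P}_{\le j}$ of Definition \ref{Ranked_Subposet}, so $\mathcal{O}'$ refines the rank stratification: its initial segments are precisely the sets $\mathscr{P}_{\le j-1}\cup\Lvl_j[B]$, where $\Lvl_j[B]$ is an initial segment of $\Lvl_j$ under $\mathcal{O}:=\mathcal{O}'|_{\text{levels}}$. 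Now fix $j$ and $A\subseteq\Lvl_j$. The sets $\mathscr{P}_{\le j-1}\cup\Lvl_j[A]$ and $\mathscr{P}_{\le j-1}\cup A$ are equicardinal, and using $\uSdw(\Lvl_{j-1})=\Lvl_j$ their vertex boundaries in $\Hasse(\mathscr{P})$ are the disjoint unions $(\Lvl_j\setminus\Lvl_j[A])\cup\uSdw(\Lvl_j[A])$ and $(\Lvl_j\setminus A)\cup\uSdw(A)$ respectively; isoperimetric optimality of the first set forces $|\uSdw(\Lvl_j[A])|\le|\uSdw(A)|$, which is nestedness of the upper shadow, while continuity --- that $\uSdw(\Lvl_j[A])$ is again an initial segment of $\Lvl_{j+1}$ under $\mathcal{O}$ --- falls out of the rank‑greedy manner in which $\mathcal{O}'$ is built. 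Combining this with Proposition \ref{Macaulay_Basic_Equivalence} (using Bezrukov's Dual Lemma \ref{Bezrukov_Dual_Lemma} / Lemma \ref{Macaulay_Equivalence_Strong} to switch between the two shadows) gives that $T(k_1,\dots,k_n)$ is Macaulay.

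The main obstacle is outsourced: the existence of nested solutions to the torus vertex‑isoperimetric problem, and their identification with the rank‑layered balls above, is the hard theorem of Karakhanyan and of Riordan, which I would use as a black box; that a softer argument cannot replace it is already clear from the sub‑case $k_1=\dots=k_n=2$, where $T(2)\iso\mathscr{M}_{[2]}(2,2)$ and hence $T(2,\dots,2)\iso\mathscr{M}_{[2n]}(2,\dots,2)$ already encodes the Kruskal--Katona Theorem \ref{KruskalKatona}. The only genuinely delicate step in the reduction itself is that the isoperimetric statement controls the \emph{full} vertex boundary $\dSdw(\cdot)\cup\uSdw(\cdot)$ of a set lying inside one colour class of the bipartite graph $\Hasse(\mathscr{P})$, whereas Definition \ref{Macaulay_Posets_Definition} demands a one‑sided shadow within a single level; bridging this gap requires precisely the rank‑layered form of the optimal sets --- so that the ``downward'' half of the boundary is already saturated --- together with a careful choice of orientation, supplied by Bezrukov's Dual Lemma. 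This bridging is the observation of Bezrukov and Leck in \cite{Bezrukov2004}.
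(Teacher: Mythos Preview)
Your proposal is correct and follows exactly the route the paper indicates: the paper does not give its own proof of this theorem but attributes it to the observation of Bezrukov and Leck in \cite{Bezrukov2004} that the Karakhanyan/Riordan solution of the vertex--isoperimetric problem on the even torus implies the Macaulay property, and you have spelled out precisely that reduction. The only place where your write-up is thinner than it should be is the continuity step (``falls out of the rank-greedy manner in which $\mathcal{O}'$ is built''): to make this rigorous you should invoke the standard fact, also in \cite{Bezrukov2004} and in Engel's book, that for a nested VIP order every initial segment $I$ has $I\cup\partial I$ again an initial segment, which combined with your boundary computation $\partial(\mathscr{P}_{\le j-1}\cup\Lvl_j[A])=(\Lvl_j\setminus\Lvl_j[A])\cup\uSdw(\Lvl_j[A])$ forces $\uSdw(\Lvl_j[A])$ to be an initial segment of $\Lvl_{j+1}$.
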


Of course, we want to get a result on Macaulay rings from the above result.
We need to be a little bit more delicate now, 
since the ideal involved in the quotient is not monomial.

\begin{cor}\label{Tori_Rings_Macaulay}
	All Karakhanyan--Riordan rings are Macaulay.
\end{cor}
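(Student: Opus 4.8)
The plan is to deduce this from the known solution of the vertex isoperimetric problem on the discrete torus, recorded here as Theorem \ref{Karakhanyan-Riordan_Theorem}, by feeding it through the Macaulay Correspondence Theorem \ref{Macaulay_Correspondence_Theorem} together with the Cartesian--tensor dictionary of Theorem \ref{The_Cartesian_and_Tensor_Correspondence}. Unlike the star and spider corollaries, this cannot be done via Corollary \ref{Macaulay_Correspondence_Theorem_for_Monomial_Quotients}: a basic Karakhanyan--Riordan ring is a quotient by an honestly binomial ideal, and its poset of monomials is an even cycle, which is not representable by any monomial quotient, so Lemma \ref{Monomial_Quotients_Produce_Subposets} is unavailable. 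Accordingly, the two standing hypotheses of Theorem \ref{Macaulay_Correspondence_Theorem} that come for free for monomial quotients --- level linear independence of the ring, and existence of a monomial order on its poset of monomials --- have to be checked directly.

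First I would treat a single basic Karakhanyan--Riordan ring $B = K[x_1,x_2]/H$. Every graded component of $B$ is spanned by at most the two monomials $x_1^i+H$ and $x_2^i+H$, and inspecting $H$ shows that in each degree these are either linearly independent or one of them vanishes: the only homogeneous elements of $H$ below the top degree are multiples of $x_1x_2$, which are combinations of the ``mixed'' monomials and so cannot witness a relation between the two ``pure'' ones. Hence $B$ is level linearly independent. For the monomial order, observe that since $x_1x_2+H=0$, the product of any two monomials of positive degree in $B$ is zero unless both are powers of the same variable. Therefore the total order on $\mathscr{M}_B$ that orders first by degree and, within a fixed degree, places powers of $x_1$ before powers of $x_2$ satisfies the inequality of Definition \ref{Monomial_Order_Definition} vacuously whenever the multiplier has positive degree and monotonically otherwise, so it is a monomial order.

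Next I would assemble the general case. Write a Karakhanyan--Riordan ring as $S = B_1 \otimes_K \cdots \otimes_K B_n$ with each $B_i$ basic. The hypotheses of Theorem \ref{The_Cartesian_and_Tensor_Correspondence} are supplied by the previous paragraph, and it yields simultaneously that $\mathscr{M}_S \iso \mathscr{M}_{B_1}\times\cdots\times\mathscr{M}_{B_n} = T(k_1,\dots,k_n)$, that $S$ is level linearly independent, and that $\mathscr{M}_S$ carries a monomial order. By Theorem \ref{Karakhanyan-Riordan_Theorem} the poset $T(k_1,\dots,k_n)$ is Macaulay with some total order $\mathcal{O}$, and transporting $\mathcal{O}$ along the isomorphism makes $(\mathscr{M}_S,\mathcal{O})$ Macaulay, since the Macaulay property of a $2$-poset is immediate from Definition \ref{Macaulay_Posets_Definition} and hence isomorphism-invariant. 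All three hypotheses of Theorem \ref{Macaulay_Correspondence_Theorem} now hold for $(S,\mathcal{O})$, so $(\mathscr{M}_S,\mathcal{O})$ being Macaulay forces $(S,\mathcal{O})$ to be Macaulay, which is exactly the assertion.

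The step I expect to be the crux is the explicit construction of the monomial order on $\mathscr{M}_B$ and the argument that it genuinely is one: because an even cycle does not arise as the poset of monomials of a monomial quotient, the convenient reductions of Section \ref{Translating_Between_Posets_and_Rings} do not apply, and one must exploit the multiplicative collapse $x_1x_2+H=0$ to see that the branch-by-branch order is compatible with multiplication. Once that is in place, the remainder is routine bookkeeping with Theorem \ref{The_Cartesian_and_Tensor_Correspondence} and Theorem \ref{Macaulay_Correspondence_Theorem}.
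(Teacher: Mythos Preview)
Your proposal is correct and follows essentially the same route as the paper: verify level linear independence and the existence of a monomial order on a basic Karakhanyan--Riordan ring directly, propagate both to the tensor product via Theorem \ref{The_Cartesian_and_Tensor_Correspondence}, and then invoke Theorem \ref{Karakhanyan-Riordan_Theorem} together with Theorem \ref{Macaulay_Correspondence_Theorem}. The paper's monomial order (lexicographic below the top, with the top element last) differs cosmetically from yours (degree first, then $x_1$ before $x_2$), but either works; your phrase ``vacuously whenever the multiplier has positive degree'' is a slight overstatement---the case where $m,m_1,m_2$ all lie on the same branch is not vacuous but follows immediately from the degree-first convention---though this does not affect the validity of the argument.
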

\begin{proof}
	First, 
	it is easy to see that the poset of monomials of a basic Karakhanyan--Riordan ring is level linearly independent,
	since the only concern is with the relation $x_1^{p-1} - x_2^{p-1}$ and this relation occurs on a level that has one element.
	Second there is a monomial order on every basic Karakhanyan--Riordan ring.
	We order every monomial that is not on the top level by the lexicographic order and we set the element on the top level to be the last element in our new monomial order.
	Thus, Theorem \ref{The_Cartesian_and_Tensor_Correspondence} implies that every Karakhanyan--Riordan ring is level linearly independent and there exists a monomial order on it.
	So, the conditions for the Macaulay Correspondence Theorem \ref{Macaulay_Correspondence_Theorem} are satisfied,
	whence Theorem \ref{Karakhanyan-Riordan_Theorem} implies that every Karakhanyan--Riordan ring is Macaulay.
\end{proof}

We just need to describe a Macaulay order on $T(k_1,\dots, k_n)$.
The Macaulay order we present here is given by Bezrukov and Leck in \cite{BezrukovSergeiL.2009ASPo},
and we reformulate it in terms of block orders.
Bezrukov and Leck gave a simpler proof of the Karakhanyan--Riordan Theorem by using a vertex isoperimetric inequality on finite grids by Bollob\'{a}s and Leader \cite{BollobásBéla1991Iiaf}.
Assume that $2\leq k_1 \leq \cdots \leq k_n$.
We are going to form a block order made up of domination orders.
Consider the ordered partitions
\begin{align*}
	T(k_i) = \{1+H < x_1+H < \cdots < x_1^{k_i-2}+H\} \cup \{x_2+H < x_2^2+H < \cdots < x_1^{k_i-1}+H=x_2^{k_i-1}+H\}.
\end{align*}
The starts of $T(k_1,\dots, k_n)$ are ordered by the colexicographic order,
and each block is ordered by the lexicographic order.
The dual of this block order is a Macaulay order on $T(k_1,\dots, k_n)$. \subsection{The Diamond Poset}\label{Diamond_Poset}

\begin{dfn}[Bezrukov--Piotrowski--Pfaff Rings]
	The \textit{basic Bezrukov--Piotrowski--Pfaff} ring is
	\begin{align*}
		\frac{K[x_1,x_2,x_3]}{\Pow(3,3,3) + \DVP(2) + (x_1-x_2, x_2-x_3)}
		= \frac{K[x_1,x_2,x_3]}{(x_1^3, x_2^3, x_3^3, x_1x_2, x_1x_3, x_2x_3,x_1^2-x_2^2, x_2^2-x_3^2)}.
	\end{align*}
	A \textit{Bezrukov--Piotrowski--Pfaff} ring is a tensor power of the above basic ring.
\end{dfn}

The Hasse graph of the basic Bezrukov--Piotrowski--Pfaff is called the \textit{diamond poset},
and can be seen in Figure \ref{Hasse_Graphs_of_Diamond}.

\begin{figure}
	\centering
	\begin{subfigure}[t]{0.3\textwidth}
		\includegraphics[width=\textwidth]{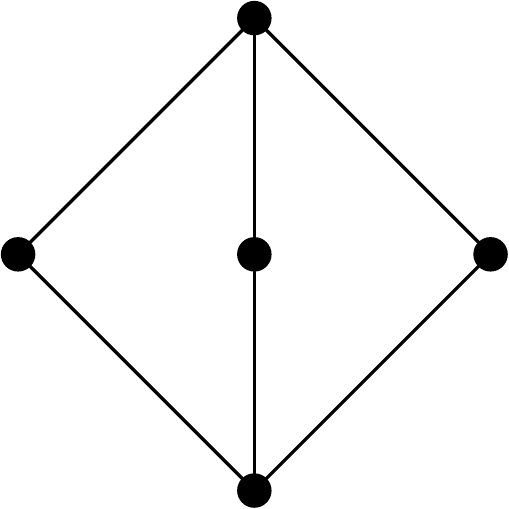}
	\end{subfigure}
	\caption{Hasse graph of the basic Bezrukov--Piotrowski--Pfaff ring.}\label{Hasse_Graphs_of_Diamond}
\end{figure}

\begin{thm}[Bezrukov--Piotrowski--Pfaff \cite{Diamond} 2004]
	All powers of the diamond poset are Macaulay.
\end{thm}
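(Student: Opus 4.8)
The plan is to reduce the claim to the two conditions of Proposition \ref{Macaulay_Basic_Equivalence} for the finite poset $D^{\times, n}$, where $D$ denotes the diamond poset, and to verify those conditions by a compression argument; in \cite{Diamond} the same content is packaged as a vertex-isoperimetric problem on the Cartesian power of the Hasse graph. First I would record the structure of $D$: it has a minimum $b$, a maximum $t$, and three pairwise incomparable middle elements $m_1, m_2, m_3$ with $b < m_i < t$, so its Hasse graph is the complete bipartite graph $K_{2,3}$ and $\Hasse(D^{\times, n})$ is the $n$-fold Cartesian graph product of $K_{2,3}$ with itself. I would also note that $D$ is self-dual (swap $b$ and $t$, fix the $m_i$), so $(D^{\times, n})^\ast \iso D^{\times, n}$ by Proposition \ref{Properties_of_Duals}; by Bezrukov's Dual Lemma \ref{Bezrukov_Dual_Lemma} it then suffices to control downward shadows of initial segments, and the same bookkeeping governs upward shadows.

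Next I would fix the candidate Macaulay order $\mathcal{O}$ on $D^{\times, n}$, in the block-order style used in this paper for spiders and for Karakhanyan-Riordan rings: partition the toset of monomials of each factor into three consecutive blocks (one containing $b$, one containing $t$, and one containing the remaining middle element), order the starts of $D^{\times, n}$ by a lexicographic border chaser order (Definition \ref{The_border_Hyperrectangle_Chaser_Order}), and order each block by a suitable domination order. The heart of the proof is to verify nestedness and continuity for $\mathcal{O}$. On a subset $A$ of a fixed level and for a coordinate $i$, define the pushing-down operator that replaces the $i$-th entry of a point by a smaller value in the induced factor order whenever the result does not already lie in $A$; one then shows that no such operator increases the downward shadow and that a subset stable under all of them equals an initial segment of $\mathcal{O}$, the stabilization happening after finitely many steps since $D^{\times, n}$ is finite. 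Continuity follows because the shadow of a fully pushed-down set is again fully pushed-down, and nestedness follows from the non-increase of the shadow, so Proposition \ref{Macaulay_Basic_Equivalence} yields that $(D^{\times, n}, \mathcal{O})$ is Macaulay.

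The main obstacle is the non-increase step. Unlike the cube or the grid $\mathscr{M}_{[d]}(\ell_1, \dots, \ell_d)$, the middle layer of $D$ branches into three incomparable elements, so a single coordinate offers several incomparable directions in which to decrease, and whether pushing down along one of them gains or loses shadow depends on the configuration of the other coordinates, in particular on which of the three legs they occupy and how close they sit to $b$ or to $t$. The verification therefore splits into a genuine case analysis, and the delicate point is that the domination orders inside the blocks must assign a fixed priority to the three legs in order for ``fully pushed-down'' to coincide with ``initial segment of $\mathcal{O}$''; fixing that priority and driving the case analysis through it is precisely the technical work carried out in \cite{Diamond}, whose bookkeeping I would follow. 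No limiting argument is needed here, but if one preferred to present $D^{\times, n}$ as a union of its ranked truncations, Proposition \ref{Macaulay_Direct_Limit} would justify that reformulation.
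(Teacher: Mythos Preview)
The paper does not prove this theorem at all; it is quoted as an external result from \cite{Diamond} and then immediately fed into the Macaulay Correspondence Theorem to obtain Corollary \ref{Diamond_Macaulay}. The only content in the paper to compare against is the description of the Macaulay order given after that corollary. Your outline is a reasonable sketch of a compression proof, and you correctly identify that the non-increase of the shadow under the pushdown operator through the three-legged middle layer is the entire difficulty and defer it to \cite{Diamond}; as a proof \emph{proposal} that is honest, though it is not a self-contained argument.

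One concrete discrepancy: your candidate order does not match the one the paper records. The paper's partition of each factor is $\{1+H < x_1+H\} \cup \{x_2+H\} \cup \{x_3+H < x_1^2+H\}$, so the block containing $b$ also contains one middle element and the block containing $t$ also contains another; the third block is a singleton. More importantly, the paper orders the starts by the \emph{lexicographic} order and the blocks by the \emph{colexicographic} order, whereas you propose a lexicographic border chaser on the starts. The border chaser was the device for spiders (Section \ref{Spider_Posets}), not for the diamond. If you intend your compression to terminate at initial segments of the order from \cite{Diamond}, you need to use that order; with a different order your ``fully pushed-down equals initial segment'' step would have to be reproved from scratch, and there is no guarantee it holds.
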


The following corollary follows right away and its proof is identical to the proof of Corollary \ref{Tori_Rings_Macaulay}.
Note that we again need to worry about level linear independence and the existence of a monomial order,
but it is easy to see that both of these requirements are satisfied.

\begin{cor}\label{Diamond_Macaulay}
	All Bezrukov--Piotrowski--Pfaff rings are Macaulay.
\end{cor}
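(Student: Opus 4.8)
The plan is to mirror, essentially verbatim, the argument used for Corollary \ref{Tori_Rings_Macaulay}: verify the three hypotheses of the Macaulay Correspondence Theorem \ref{Macaulay_Correspondence_Theorem} for the basic Bezrukov-Piotrowski-Pfaff ring, propagate them to all tensor powers via Theorem \ref{The_Cartesian_and_Tensor_Correspondence}, and then feed in the combinatorial input that every power of the diamond poset is Macaulay. The only genuine work is the pair of ``by hand'' checks the paper already flags, namely level linear independence and the existence of a monomial order, which are needed because the quotient ideal is binomial rather than monomial.

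First I would analyze the basic ring $S=K[x_1,x_2,x_3]/(x_1^3,x_2^3,x_3^3,x_1x_2,x_1x_3,x_2x_3,x_1^2-x_2^2,x_2^2-x_3^2)$ directly. Its poset of monomials has exactly five elements, $1+H$, $x_1+H$, $x_2+H$, $x_3+H$ and $x_1^2+H=x_2^2+H=x_3^2+H$, so that $\mathscr{M}_S$ is isomorphic to the diamond poset of Figure \ref{Hasse_Graphs_of_Diamond}. For level linear independence, the key observation is that the defining ideal contains no form of degree $1$, so $\Lvl_{1,\mathscr{M}_S}=\{x_1+H,x_2+H,x_3+H\}$ is linearly independent, while the only nontrivial linear relations among monomials, namely $x_1^2-x_2^2$ and $x_2^2-x_3^2$, occur in degree $2$, where $\Lvl_{2,\mathscr{M}_S}$ consists of the single monomial $x_1^2+H$; degrees $0$ and $\geq 3$ are trivial. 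Hence $S$ is level linearly independent.

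Next I would produce a monomial order on $\mathscr{M}_S$. Ordering the five monomials first by degree and breaking the degree-one tie arbitrarily suffices: the only products of two monomials that remain nonzero in $S$ are $1+H$ times anything and $x_i+H$ times $x_i+H$, so the monomial-order inequality is vacuous except when $m=x_i+H$ and $1+H<x_i+H$, in which case the two products are $x_i+H$ and $x_1^2+H$, which are already in the correct order since degree strictly increases. (Alternatively, since $\mathscr{M}_S$ is isomorphic to the poset of monomials of a suitable monomial quotient, one could invoke Proposition \ref{Monomial_Quotients_Produce_Subposets}.) With level linear independence and a monomial order established, Theorem \ref{The_Cartesian_and_Tensor_Correspondence} gives that $S^{\otimes,n}$ is level linearly independent, carries a monomial order, and satisfies $\mathscr{M}_{S^{\otimes,n}}\iso \mathscr{M}_S^{\times,n}$. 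The latter is the $n$-th Cartesian power of the diamond poset, which is Macaulay by the Bezrukov-Piotrowski-Pfaff Theorem, so the Macaulay Correspondence Theorem \ref{Macaulay_Correspondence_Theorem} yields that $S^{\otimes,n}$ is Macaulay, that is, every Bezrukov-Piotrowski-Pfaff ring is Macaulay. I do not anticipate any real obstacle: the combinatorial difficulty is entirely absorbed by the cited diamond theorem, and the only point requiring care is precisely the one the paper highlights, namely that the two technical hypotheses cannot be quoted from the monomial setting and must be verified, which is short here because $\mathscr{M}_S$ has only five elements.
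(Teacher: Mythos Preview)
Your argument is correct and is precisely the proof the paper has in mind: the paper states only that the proof is identical to that of Corollary \ref{Tori_Rings_Macaulay} and that level linear independence and the existence of a monomial order are easy to check, which is exactly what you carry out in detail. One small caveat: your parenthetical alternative via Proposition \ref{Monomial_Quotients_Produce_Subposets} does not actually apply here, since the diamond poset is not isomorphic to the poset of monomials of any monomial quotient (a degree-$2$ monomial in $\mathscr{M}_R$ cannot be divisible by three distinct variables), but your direct verification of the monomial order already suffices.
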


The Macaulay order is again given by a block order.
This order can by found in \cite{Diamond} in an equivalent form.
Form the ordered partition
\begin{align*}
	\{1+H < x_1+H\} \cup \{x_2+H\} \cup \{x_3+H < x_1^2+H=x_2^2+H=x_3^2+H\}.
\end{align*}
We order the starts by the lexicographic order and each block by using the colexicographic order.
The resulting block order is a Macaulay order on the Cartesian powers of the diamond poset.  \numberwithin{thm}{section}
\section{Open Problems}\label{Open_Problems}

We state several open problems in this section.
Of course, we have the main topics of this paper.

\begin{prb}
	Find classes of Macaulay posets.
\end{prb}

\begin{prb}
	Find classes of Macaulay rings.
\end{prb}

However, let us give more detailed problems.
One of the things that makes the Macaulay Correspondence Theorem \ref{Macaulay_Correspondence_Theorem} work is the existence of monomial orders.

\begin{prb}
	Find classes of rings for which monomial orders exist.
\end{prb}

The other things that is very important for the Macaulay Correspondence Theorem \ref{Macaulay_Correspondence_Theorem} is level linear independence.

\begin{prb}
	Find classes of level linearly independent rings.
\end{prb}

The Macaulay problem on star posets and squarefree rings is completely settled,
but there is still work to be done when it comes to tree rings.
The classification in Theorem \ref{treeRingClassification} does not handle the cases for small powers of tree rings.
It is an easy exercise to check that the following power of a tree ring is Macaulay
\begin{align*}
	\left( \frac{K[x_1,x_2]}{(x_1^2,x_2^3, x_1x_2)} \right)^{\otimes, 2}.
\end{align*}
Of course, Theorem \ref{treeRingClassification} does not handle the case when we can have different tree rings in the product.
Bezrukov and Leck in \cite{Bezrukov2004} conjecture that one can have spider posets with the same leg length, 
but different number of legs,
and the product will still be Macaulay.
The same conjecture is stated by Harper in \cite{HarperBook}.
Chong's Theorem \ref{Chong_Macaulay_1} shows that infinite trees can appear in the product.

\begin{prb}
	Completely classify the posets of monomials for all Macaulay rings that are tensor products of tree rings.
	If possible, do this with a unified approach by using block orders that involve the hyperrectangle chaser order on the starts and domination orders on the blocks.
	Give a general Macaulay theorem that encompasses as special cases the original Macaulay Theorem \ref{Macaulay_1927},
	the Clements--Lindström Theorem \ref{Clements_Lindstrom_Theorem},
	the star Macaulay Theorem \ref{starTheorem} and the Mermin--Murai Theorem \ref{MerminMurai},
	the Bezrukov--Elsässer Theorem \ref{Spider_Macaulay_Theorem},
	Chong's Theorem \ref{Chong_Macaulay_1},
	and the Tree Ring Classification Theorem \ref{treeRingClassification}. 
\end{prb}

The Mermin--Peeva and Shakin Theorem \ref{Mermin_Peeva_and_Shakin} gives a nice answer to the problem of Bezrukov and Leck.
But Bezrukov and Leck wanted a more general statement that involves a chain of any length.
This suggests a very natural generalization of the Mermin--Peeva and Shakin Theorem,
note that the case $n=\infty$ in Conjecture \ref{Bezrukov_Leck_Chain_Conjecture} is Theorem \ref{Mermin_Peeva_and_Shakin}.

\begin{con}\label{Bezrukov_Leck_Chain_Conjecture}
	If $S$ is a Macaulay ring then $S\otimes K[x]/(x^n)$ is Macaulay.
\end{con}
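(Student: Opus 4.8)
The plan is to transport the statement through the Macaulay Correspondence Theorem \ref{Macaulay_Correspondence_Theorem} to the poset of monomials, where it becomes an instance of the product-with-a-chain problem, and then to attack that instance with the slicing technique underlying the Mermin-Peeva and Shakin Theorem \ref{Mermin_Peeva_and_Shakin}. First I would add the running hypotheses that $S$ is level linearly independent and that $\mathscr{M}_S$ carries a monomial order (both automatic when the defining ideal of $S$ is monomial, by Corollary \ref{Reducing_Homogeneous_Ideals_to_Monomial_Ideals_Corollary}); by the Cartesian and Tensor Correspondence \ref{The_Cartesian_and_Tensor_Correspondence} these pass to $S\otimes_K K[x]/(x^n)$, which moreover has $\mathscr{M}_{S\otimes_K K[x]/(x^n)}\iso\mathscr{M}_S\times\mathscr{M}_{[1]}(n)$. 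By the backward direction of Theorem \ref{Macaulay_Correspondence_Theorem} it then suffices to build, out of a Macaulay order $\mathcal{O}$ of $\mathscr{M}_S$, a total order $\mathcal{O}'$ on $\mathscr{M}_S\times\mathscr{M}_{[1]}(n)$ for which $(\mathscr{M}_S\times\mathscr{M}_{[1]}(n),\mathcal{O}')$ is Macaulay.

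The concrete attack, following Mermin and Peeva, is to slice a monomial ideal $J$ of $S\otimes_K K[x]/(x^n)$ by powers of $x$: one gets $J=\bigoplus_{j=0}^{n-1}x^j J_j$ with $J_j=\{m\in\mathscr{M}_S:x^j m\in J\}$ generating a monomial ideal of $S$, these forming an ascending chain $J_0\subseteq J_1\subseteq\cdots\subseteq J_{n-1}$, and $\Hilb_J(k)=\sum_{j=0}^{n-1}\Hilb_{J_j}(k-j)$. Since $(S,\mathcal{O})$ is Macaulay, replace each $J_j$ by the monomial ideal $\mathcal{O}^\ast[J_j]$; because initial segments of a fixed level nest, $\Hilb_{J_j}\le\Hilb_{J_{j+1}}$ forces $\mathcal{O}^\ast[J_j]\subseteq\mathcal{O}^\ast[J_{j+1}]$, so $\widetilde J:=\bigoplus_{j=0}^{n-1}x^j\,\mathcal{O}^\ast[J_j]$ is again an ideal (Lemma \ref{Monomial_Space_iff_Ideal}) with $\Hilb_{\widetilde J}=\Hilb_J$. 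As $\mathcal{O}'^\ast[J]$ depends only on $\Hilb_J$ and Theorem \ref{Reducing_Homogeneous_Ideals_to_Monomial_Ideals} reduces general homogeneous ideals to monomial ones, the whole problem collapses to choosing $\mathcal{O}'$ so that $\widetilde J=\mathcal{O}'^\ast[J]$ for every monomial $J$.

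The hard part is precisely this matching of orders, which is where the finite chain departs from the case $n=\infty$ already handled by Theorem \ref{Mermin_Peeva_and_Shakin}. A naive block order that sorts each level first by the $\mathscr{M}_{[1]}(n)$-coordinate and then by $\mathcal{O}$ cannot work: for $J=(y_1)$ with $y_1$ a variable of $S$ the slices are $J_j=(y_1)$ for all $j$, so $\widetilde J=(y_1)$ fills a proper, identical fraction of every $x$-layer, which is not an initial segment of a level in any order that first sorts by the power of $x$. The correct $\mathcal{O}'$ must interleave the layers: when $\mathcal{O}$ is a domination order it should be the domination order on $\mathscr{M}_S\times\mathscr{M}_{[1]}(n)$ obtained by appending the $\mathscr{M}_{[1]}(n)$-coordinate as the lowest-priority coordinate --- this recovers the Clements-Lindström Theorem \ref{Clements_Lindstrom_Theorem} when $\mathscr{M}_S$ is itself a multiset lattice and Theorem \ref{Mermin_Peeva_and_Shakin} when $n=\infty$ --- but proving this order Macaulay for a \emph{general} Macaulay $\mathscr{M}_S$, and even deciding what $\mathcal{O}'$ should be when the given Macaulay order of $\mathscr{M}_S$ is not a domination order (as for the Leck posets of Theorem \ref{Leck_Posets}), is exactly the still-open product-with-a-chain problem of Bezrukov and Leck, of which Corollary \ref{Bezrukov_Leck_Chain} settles only the $n=\infty$, monomial-quotient case.

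I expect a uniform resolution to need the block-order machinery of Section \ref{Advanced_Orders} --- a border or hyperrectangle chaser order on the starts together with domination orders on the blocks --- in the spirit of the program laid out among the open problems; the instances already known ($\mathscr{M}_S$ a multiset lattice via Clements-Lindström, $\mathscr{M}_S$ a star via Chong's Theorem \ref{Chong_Macaulay_1}, and $n=\infty$ via Mermin-Peeva and Shakin) make the conjecture very plausible, and if the order-matching above proves too rigid the natural fallback is a compression argument carried out directly on $\mathscr{M}_S\times\mathscr{M}_{[1]}(n)$, exploiting that one of the two factors is a chain.
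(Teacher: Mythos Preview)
The statement you were asked to prove is not a theorem in the paper: it is Conjecture \ref{Bezrukov_Leck_Chain_Conjecture}, listed explicitly in the Open Problems section, and the paper offers no proof. Your write-up is consistent with this --- you do not claim to close the gap, you correctly isolate the obstruction (matching the interleaved order $\mathcal{O}'$ on $\mathscr{M}_S\times\mathscr{M}_{[1]}(n)$ to the ideal $\widetilde J$ built from the slice replacements), and you correctly observe that the $n=\infty$ case is Theorem \ref{Mermin_Peeva_and_Shakin} while finite $n$ is exactly the unresolved Bezrukov--Leck product-with-a-chain problem. So there is nothing to compare against on the paper's side; your diagnosis of where the difficulty lies agrees with the paper's own discussion surrounding the conjecture, and your proposed line of attack via block orders is precisely the program the paper sketches in the problem list.
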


Here is a slightly weaker version.

\begin{con}
	Suppose that $S$ is a quotient by a monomial ideal.
	If $S$ is Macaulay with a domination order then $S\otimes K[x]/(x^n)$ is Macaulay.
\end{con}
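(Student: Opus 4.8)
The plan is to push the statement through the two correspondence theorems of Section \ref{Translating_Between_Posets_and_Rings}, reduce it to a purely combinatorial assertion about Macaulay posets, and then confront the one genuinely open step, which is a restricted instance of the Bezrukov--Leck problem. Write $S=K[x_1,\dots,x_d]/M$ with $M$ monomial. Since $S\otimes_K K[x]/(x^n)\iso K[x_1,\dots,x_d,x]/(M+(x^n))$ is again a quotient by a monomial ideal, Corollary \ref{Macaulay_Correspondence_Theorem_for_Monomial_Quotients} turns ``$S\otimes_K K[x]/(x^n)$ is Macaulay'' into ``$\mathscr{M}_{S\otimes_K K[x]/(x^n)}$ is a Macaulay poset for some total order,'' and Theorem \ref{The_Cartesian_and_Tensor_Correspondence} together with Example \ref{Grid_Ring_Poset} identifies that poset with $\mathscr{M}_S\times\mathscr{M}_{[1]}(n)$. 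Likewise ``$S$ is Macaulay with a domination order $\mathcal{D}$'' becomes ``$(\mathscr{M}_S,\mathcal{D})$ is Macaulay,'' and since $\mathcal{D}=\mathcal{D}_\pi$ for some $\pi\in\mathfrak{S}_d$, relabeling the variables by $\pi$ (a ring isomorphism, hence a poset isomorphism of the monomial posets carrying $\mathcal{D}_\pi$ to $\mathcal{L}$) lets us assume $\mathcal{D}=\mathcal{L}$. So it suffices to prove: if $\mathscr{P}=\mathscr{M}_S$ for a monomial quotient $S$ and $(\mathscr{P},\mathcal{L})$ is Macaulay, then $\mathscr{P}\times\mathscr{M}_{[1]}(n)$ is Macaulay for some order. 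Note $\mathscr{P}$ is isomorphic to a downset of $\mathscr{M}_{[d]}(\infty,\dots,\infty)$.

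The $n=\infty$ case is already available: Theorem \ref{Mermin_Peeva_and_Shakin} (equivalently Corollary \ref{Bezrukov_Leck_Chain}) gives that $(\mathscr{M}_S,\mathcal{L})$ Macaulay implies $\mathscr{M}_S\times\mathscr{M}_{[1]}(\infty)$ is Macaulay with $\mathcal{L}$. The outstanding step is therefore to truncate: on the ring side, to quotient $S\otimes_K K[x]$ further by the single monomial $x^n$. One should not expect the naive lexicographic order on $\mathscr{M}_S\times\mathscr{M}_{[1]}(n)$ with the chain coordinate least significant to be Macaulay — already for $S=K[x_1,\dots,x_d]$ that order contradicts Clements--Lindström (Theorem \ref{Clements_Lindstrom_Theorem}), since a finite ``leg'' must occupy a \emph{significant} slot. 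The natural candidate is the order that makes the chain coordinate most significant and uses $\mathcal{L}$ on $\mathscr{M}_S$ for the rest, that is, $\bigl(K[x,x_1,\dots,x_d]/((x^n)+M),\mathcal{L}\bigr)$ with $x$ the largest variable; a block order (Definition \ref{Products_of_Partitioned_Tosets}) partitioning the chain coordinate, in the style of the orders used for spiders and multiset lattices elsewhere in the paper, is a fallback if plain lex does not suffice.

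The hard part is precisely that quotienting a Macaulay ring by an extra monomial does not preserve Macaulayness in general — this is essentially the Bezrukov--Leck phenomenon — so the argument must use that $x^n$ is a pure power of the variable we place in the most significant slot and that we are multiplying by a \emph{chain}. I would attempt two routes. First, re-examine the Mermin--Peeva/Shakin construction of lex segments in $K[x_1,\dots,x_d,x_{d+1}]/M$: that construction proceeds degree by degree and, within each degree, by the exponent of the new variable, and capping that exponent at $n-1$ should yield the lex segment in $K[x_1,\dots,x_d,x_{d+1}]/(M+(x_{d+1}^n))$, provided one verifies that deleting the tall columns in the new direction does not spoil the shadow comparison; this is where a Clements--Lindström-type compression on the chain coordinate enters, checked against the nestedness and continuity criteria of Proposition \ref{Macaulay_Basic_Equivalence}. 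Second, induct on $n$, passing from $\mathscr{M}_S\times\mathscr{M}_{[1]}(n)$ to $\mathscr{M}_S\times\mathscr{M}_{[1]}(n+1)$ by adjoining a single shifted copy of $\mathscr{M}_S$ atop the chain, with trivial base case $n=1$ since $K[x]/(x)=K$. I expect the first route to be the workable one, and the real obstacle to be the continuity half of Proposition \ref{Macaulay_Basic_Equivalence} after truncation: showing that the shadow of an initial segment is still an initial segment once the tall fibers in the chain direction have been removed.
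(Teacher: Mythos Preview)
The statement you are attempting to prove is labeled \textbf{Conjecture} in the paper and sits in Section~\ref{Open_Problems} (Open Problems). There is no proof in the paper to compare your proposal against: the authors present it precisely as the weakened form of Conjecture~\ref{Bezrukov_Leck_Chain_Conjecture}, i.e., the restriction of that conjecture to monomial quotients equipped with a domination order, and they leave it open.

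Your reductions are all correct and track exactly how the paper situates the problem. The passage from rings to posets via Corollary~\ref{Macaulay_Correspondence_Theorem_for_Monomial_Quotients} and Theorem~\ref{The_Cartesian_and_Tensor_Correspondence}, the identification $\mathscr{M}_{S\otimes_K K[x]/(x^n)}\iso\mathscr{M}_S\times\mathscr{M}_{[1]}(n)$, the relabeling that lets you assume $\mathcal{D}=\mathcal{L}$, and the observation that the $n=\infty$ case is Theorem~\ref{Mermin_Peeva_and_Shakin} are all sound; indeed this last point is exactly the remark preceding the conjecture in the paper. You are also right that the chain coordinate should be the most significant variable, by analogy with the Clements--Lindstr\"om ordering constraint.

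But your proposal is not a proof: it is a plan that stops at the genuinely open step. You explicitly write ``I would attempt two routes'' and ``I expect the first route to be the workable one,'' which is honest, but neither route is carried out. The first route --- tracing through the Mermin--Peeva/Shakin lex-segment construction and arguing that capping the new exponent at $n-1$ preserves the shadow comparison --- is a reasonable line of attack, and you correctly flag the continuity half of Proposition~\ref{Macaulay_Basic_Equivalence} as the likely sticking point. The second route (induction on $n$) is vaguer; adjoining one layer to a chain product changes shadows across all levels simultaneously, and there is no obvious inductive invariant. In short: your framework is correct, your identification of the crux matches the paper's, but the crux itself remains unresolved, as the paper itself acknowledges by stating the assertion as a conjecture.
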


In Section \ref{Quotients_by_Binomial_Ideals} the quotients by binomial ideals that we considered have something in common.
The posets of monomials are obtained by taking chains of the same length and joining the bottom and top elements.
Here is a problem in this direction.

\begin{prb}
	Let $R=K[x_1,\dots, x_d]$ and put
	\begin{align*}
		H = \Pow(n,\dots, n) + \DVP(2) + (x_1^{n-1}-x_2^{n-1},x_2^{n-1}-x_3^{n-1},\dots, x_{d-1}^{n-1}- x_d^{n-1}).
	\end{align*}
	Find all cases when $R/H$ is Macaulay.
\end{prb}

In Section \ref{submatrix} we discussed Leck's results which show that we can have a tensor product that is Macaulay, 
but none of the individual rings in the product are tree rings.
The products in Leck's results concern posets of that form $\mathscr{M}_{[d]}(2,\dots, 2)$,
but with the top element removed.
In the case $d=2$, this is just the dual of a basic star poset.
Chong's results in Section \ref{Products_With_Multiset_Lattices} are obtained from a similar point of view.
Chong considered, 
after translating from rings to posets, 
products that are made up of $\mathscr{M}_{[d]}(\infty, \dots, \infty)$ but we also remove everything above a certain level.
This leads to asking about a natural generalization.

\begin{prb}\label{Leck_Chong}
	Find all Macaulay rings that are tensor products of rings of the form
	\begin{align*}
		\frac{K[x_1,\dots, x_d]}{\Pow(\ell_1,\dots, \ell_d) + (\Lvl_i)},
	\end{align*}
	for some $\ell_1,\dots, \ell_d\in \N \cup \{\infty\}$ and $i\in \N$.
\end{prb}

Of course, Leck's results concern the case $\ell_1=\dots = \ell_d = 2$,
and Chong's results concern the case $\ell_1=\dots = \ell_d = \infty$.
Solving Problem \ref{Leck_Chong} might lead to finding new types of orders, 
as Leck's results do not involve a domination order or a block order.
Both Leck and Chong discovered results that say that there must be very strict restrictions on $i$ for the Macaulay property to hold.
We refer the reader to \cite{ChongKaiFongErnest2015Hfoc, LeckUwe2001Osai, LeckUwe2002AGoL, LeckUwe2003Otop}.  
\section{Acknowledgments}
The author would like to thank Alexandra Seceleanu for all the support she provided.
It would have been impossible to get this done on time, if at all, without her help.
Steven J. Rosenberg for the numerous corrections of an early draft.
The author would like to thank the anonymous referee for many corrections and suggestions.

%\section{Competing Interests and Funding}
%This project did not receive any funding and there no competing interests.

\bibliographystyle{acm}
\bibliography{./references}

\end{document}